\newcommand{\Z}{\mathbb{Z}}
\newcommand{\R}{\mathbb{R}}
\newcommand{\C}{\mathbb{C}}
\newcommand{\F}{\mathbb{F}}
\newcommand{\Fq}{{\mathbb{F}_q}}
\newcommand{\FF}{{\mathbb{F}}}
\newcommand{\Qp}{{\mathbb{Q}_p}}
\newcommand{\mcB}{\mathcal{B}}
\newcommand{\mcD}{\mathcal{D}}
\newcommand{\mcG}{\mathcal{G}}
\newcommand{\mcJ}{\mathcal{J}}
\newcommand{\bbG}{\mathbb{G}}
\newcommand{\bbbT}{{\underline{\mathbb{T}}}}
\newcommand{\bbT}{\mathbb{T}}
\newcommand{\mcH}{\mathcal{H}}
\newcommand{\mcI}{\mathcal{I}}
\newcommand{\fg}{\mathfrak{g}}
\newcommand{\bA}{\mathbf{A}}
\newcommand\IC{{\mathrm{IC}}}
\newcommand\ii{{\mathrm{i}}}
\newcommand{\Fr}{{\mathrm{Fr}}}
\newcommand{\St}{{\mathrm{St}}}
\newcommand{\Sym}{{\mathrm{Sym}}}
\newcommand{\vol}{{\mathrm{Vol}}}
\newcommand\Perv{{\mathrm{Perv}}}
\newcommand\Unip{{\mathrm{U}}}
\newcommand\rG{{\mathrm{G}}}
\newcommand\rO{{\mathrm{O}}}
\newcommand{\cus}{\mathrm{c}}
\newcommand{\Oo}{\mathcal{O}}
\newcommand{\End}{\operatorname{End}}
\newcommand{\Image}{\operatorname{Im}}
\newcommand{\Id}{\operatorname{Id}}
\newcommand{\sign}{\operatorname{sgn}}
\newcommand{\Ind}{\operatorname{Ind}}
\newcommand{\Frob}{\mathrm{Fr}}
\newcommand{\rU}{\mathrm{U}}
\newcommand{\sr}{\mathrm{sr}}
\newcommand{\lr}{\mathrm{lr}}
\newcommand{\bG}{\mathbf{G}}
\newcommand{\bM}{\mathbf{M}}
\newcommand{\bP}{\mathbf{P}}
\newcommand{\bS}{\mathbf{S}}
\newcommand{\bT}{\mathbf{T}}
\newcommand{\ad}{\mathrm{ad}}
\newcommand{\ab}{\operatorname{ab}}
\newcommand{\un}{\operatorname{un}}
\newcommand{\unip}{\mathrm{unip}}
\newcommand{\uni}{\mathrm{uni}}
\newcommand{\rank}{\operatorname{rank}}
\newcommand{\Hom}{\operatorname{Hom}}
\newcommand{\cInd}{\operatorname{c-Ind}}
\newcommand{\Cent}{\operatorname{Z}}
\newcommand{\Nor}{\operatorname{N}}
\newcommand{\fR}{\mathfrak{R}}
\newcommand{\fB}{\mathfrak{B}}
\newcommand{\fini}{\mathrm{f}}
\newcommand{\res}{\mathrm{res}}
\newcommand{\Irr}{\operatorname{Irr}}
\newcommand{\fa}{\mathfrak{a}}
\newcommand{\fo}{\mathfrak{o}}
\newcommand{\fp}{\mathfrak{p}}
\newcommand{\ft}{\mathfrak{t}}
\newcommand{\fX}{{\mathfrak{X}}}
\newcommand{\der}{{\mathrm{der}}}
\newcommand{\nr}{{\mathrm{nr}}}
\newcommand{\red}{{\mathrm{red}}}
\newcommand{\enh}{{\mathrm{e}}}
\newcommand{\bbbG}{{\underline{\mathbb{G}}}}
\newcommand{\bbbH}{{\underline{\mathbb{H}}}}
\newcommand{\rZ}{{\mathrm{Z}}}
\newcommand{\bbH}{{\mathbb{H}}}
\newcommand{\Ad}{\operatorname{Ad}}
\newcommand{\Gal}{\operatorname{Gal}}
\newcommand{\sep}{\operatorname{sep}}
\newcommand{\GL}{\mathrm{GL}}
\newcommand{\PGL}{\operatorname{PGL}}
\newcommand{\Sp}{\operatorname{Sp}}
\newcommand{\SO}{\operatorname{SO}}
\newcommand{\GSp}{\operatorname{GSp}}
\newcommand{\SU}{\operatorname{SU}}
\newcommand{\rN}{\mathrm{N}}
\newcommand{\rS}{\mathrm{S}}
\newcommand{\ra}{\rightarrow}
\newcommand{\cusp}{\mathrm{cusp}}
\renewcommand{\tilde}{\widetilde}
\newtheorem{thm}{Theorem}[subsection]
\newtheorem*{thm*}{Theorem}
\newtheorem{theorem}[thm]{Theorem}
\newtheorem{corollary}[thm]{Corollary}
\newtheorem{lemma}[thm]{Lemma}
\newtheorem{proposition}[thm]{Proposition}
\newtheorem{notation}[thm]{Notation}
\theoremstyle{definition}
\newtheorem{definition}[thm]{Definition}
\newtheorem{remark}[thm]{Remark}
\newtheorem{example}[thm]{Example}
\newtheorem*{acknow}{Acknowledgments}
\newcommand{\matr}[1]{\left(\begin{smallmatrix}#1\end{smallmatrix}\right)}
\newcommand{\LLC}{{\mathrm{LLC}}}
\newcommand{\ZZ}{{\mathbb{Z}}}
\newcommand{\RR}{{\mathbb{R}}}
\newcommand\fdeg{{\mathrm{fdeg}}}
\newcommand\Qlbar{{\overline{\mathbb{Q}}_\ell}}
\newcommand\rA{{\mathrm{A}}}
\newcommand\til{\tilde}
\newcommand\SL{{\mathrm{SL}}}
\newcommand\fs{{\mathfrak{s}}}
\newcommand\cA{{\mathcal{A}}}
\newcommand\cC{{\mathcal{C}}}
\newcommand\cE{{\mathcal{E}}}
\newcommand\cF{{\mathcal{F}}}
\newcommand\cG{{\mathcal{G}}}
\newcommand\cH{{\mathcal{H}}}
\newcommand\cL{{\mathcal{L}}}
\newcommand\cM{{\mathcal{M}}}
\newcommand\cT{{\mathcal{T}}}
\newcommand\cU{{\mathcal{U}}}
\newcommand\ri{{\rm{i}}}
\newcommand\rrr{{\rm{r}}}
\newcommand\orb{{\mathcal C}}
\newcommand\rI{{\mathrm{I}}}
\newcommand\superc{{\mathrm{scusp}}}
\newcommand\triv{{\mathrm{triv}}}
\newcommand\Sc{{\mathrm{Sc}}}
\newcommand\bB{{\mathbf B}}
\newcommand\bH{{\mathbf H}}
\newcommand\bL{{\mathbf L}}
\newcommand\bU{{\mathbf U}}
\newcommand\bZ{{\mathbf Z}}
\newcommand\bt{{\mathbf t}}
\newcommand\cB{{\mathcal B}}
\newcommand\cD{{\mathcal D}}
\newcommand\cP{{\mathcal P}}
\renewcommand{\arraystretch}{1.5}
  \def\title@font{\normalsize\bfseries}
  \let\ltx@maketitle\@maketitle
  \def\@maketitle{\bgroup%
    \let\ltx@title\@title%
    \def\@title{\resizebox{\textwidth}{!}{%
      \mbox{\title@font\ltx@title}%
    }}%
    \ltx@maketitle%
  \egroup}
\begin{document}

\title[Local Langlands correspondence for $\rG_2$]{THE EXPLICIT LOCAL LANGLANDS CORRESPONDENCE FOR $\bG_2$}
\author[Anne-Marie Aubert and Yujie Xu]{Anne-Marie Aubert and Yujie Xu
}

\address{Sorbonne Universit\'e and Universit\'e Paris Cit\'e, CNRS,
IMJ-PRG, F-75005 Paris, France }
\email{anne-marie.aubert@imj-prg.fr}

\address{M.I.T., 77 Massachusetts Avenue,
Cambridge, MA, USA}
\email{yujiexu@mit.edu}

\numberwithin{equation}{subsection}
\newtheorem{prop}[equation]{Proposition}

\newtheorem{lem}[equation]{Lemma}
\newtheorem{Coro}[equation]{Corollary}

\newtheorem{property}[equation]{Property}
\theoremstyle{definition}
\newtheorem{Defn}[equation]{Definition}
\newtheorem{numberedparagraph}[equation]{}
\newtheorem{conj}[equation]{Conjecture}

\begin{abstract}
We develop a general strategy for constructing the explicit Local Langlands Correspondences for $p$-adic reductive groups via reduction to LLC for supercuspidal representations of proper Levi subgroups, using Hecke algebra techniques. 

As an example of our general strategy, we construct the explicit Local Langlands Correspondence for the exceptional group $\rG_2$ over a nonarchimedean local field, with explicit $L$-packets and explicit matching between the group and Galois sides. We also give a list of characterizing properties for our LLC. In \cite{G2-stability}, we complete unique characterization using stability property of our $L$-packets. For intermediate series, we build on our previous results on Hecke algebras.~For principal series, we improve previous works of Muic etc.~and obtain more explicit descriptions on both group and Galois sides. 

Moreover, we show the existence of non-unipotent \textit{singular} 
supercuspidal representations of $\rG_2$, and exhibit them in \textit{mixed} $L$-packets mixing supercuspidal representations with non-supercuspidal ones. 
Furthermore, our LLC satisfies a list of expected properties, including the compatibility with cuspidal support. 

\end{abstract}

\maketitle

\bigskip

\tableofcontents

\section{ Introduction} \label{sec:Introduction}
Let $F$ be a non-archimedean local field and $\bG$ a connected reductive algebraic group over $F$. Let $G^{\vee}$ be the group of $\C$-points of the reductive group whose root datum is the coroot datum of $\bG$. The Local Langlands Conjecture predicts a surjective map\footnote{To avoid overunning the margins, we use abbreviations ``irred.'' for ``irreducible'', ``repres.'' for ``representations'', ``iso.'' for ``isomorphism'', ``cont.'' for ``continuous'' and ``conj.'' for ``conjugacy''.} 
\[\begin{Bmatrix}\text{irred. smooth}\\
\text{repres. } \pi\text{ of }\bG(F)\end{Bmatrix}/\text{iso.}\;\longrightarrow
\; \begin{Bmatrix}L\text{-parameters
}
\\ \text{i.e.~cont. homomorphisms}
\\
\varphi_{\pi}\colon W_F\times\SL_2(\C)\to G^{\vee}\rtimes W_F\end{Bmatrix}/\text{$G^\vee$-conj.},\]
where $W_F$ is the Weil group of $F$. The fibers of this map, called \textit{$L$-packets}, are expected to be finite. 
In order to obtain a bijection between the group side and the Galois side, the above Conjecture was later \textit{enhanced}.
On the Galois side, one considers \textit{enhanced $L$-parameters}. 

\noindent Many cases of the Local Langlands Conjecture have been established, most notably: 
\begin{itemize}
    \item for $\GL_n(F)$: \cite{Harris-Taylor, Henniart-LLC-GLn,Scholze-LLC};
    \item for $\SL_n(F)$: \cite{Hiraga-Saito} for $\mathrm{char}(F)=0$ and \cite{ABPS-SL} for $\mathrm{char}(F)>0$;
    \item quasi-split classical groups for $F$ of characteristic zero: \cite{Arthur-classification,Moeglin} etc.
\end{itemize}
    For classical groups, the main methods in literature are either (1) to classify representations of these groups in terms of representations of the general linear groups via twisted endoscopy, and to compare the stabilized twisted trace formula on the general linear group side and the stabilized (twisted) trace formula on the classical group side, or (2) to use the theta correspondence. 
    
    In this article, we take a completely different approach--from the above existing literature--to the construction of explicit Local Langlands Correspondences for $p$-adic reductive groups via reduction to LLC for supercuspidal representations of proper Levi subgroups. 
    We apply this general strategy and construct explicit Local Langlands Correspondence for the exceptional group $\rG_2$ over an arbitrary non-archimedean local field of residual characteristic $\neq 2,3$, with explicit $L$-packets and explicit matching between the group and Galois sides. Our methods are inspired by previous works such as \cite{Moussaoui-Bernstein-center,AMS18,AMS2,FOS-Jussieu,Solleveld-LLC-unipotent}.

    More precisely, we use a combination of the Langlands-Shahidi method, (extended affine) Hecke algebra techniques, Kazhdan-Lusztig theory and generalized Springer correspondence--in particular, the AMS Conjecture on cuspidal support \cite[Conjecture 7.8]{AMS18}. For \textit{intermediate series}, i.e.~Bernstein series with supercuspidal support ``in between'' a torus and $G$ itself (in our case, the supercuspidal support has to lie in a Levi subgroup isomorphic to $\GL_2$), we use our previous result on Hecke algebra isomorphisms and local Langlands correspondence for Bernstein series obtained in \cite{aubert-xu-Hecke-algebra}, which builds on the work of many others such as \cite{Bernstein-centre,Solleveld-endomorphism-algebra,Solleveld-Lparameters,Heiermann-intertwining-operators-Hecke-algebras}. For principal series (i.e.~Bernstein series with supercuspidal support in a torus), we improve on previous work of Muic's \cite{Muic-G2} and obtain more explicit description on the group side; and we use \cite{Roche-principal-series, Reeder-isogeny,ABPS-KTheory,ABP-G2,Ram} to match the group and Galois sides.

    For supercuspidal representations, we make explicit the theory of \cite{Kal-reg,Kaletha-nonsingular} for the non-singular supercuspidal representations and their $L$-packets. 
    For 
    \textit{singular}\footnote{which we define to be simply the ones that are \textit{not non-singular in the sense of \cite{Kaletha-nonsingular}}} supercuspidal representations, which are 
    not covered in \textit{loc.cit.}
    , we use \cite[Conjecture 7.8]{AMS18} (see Property \ref{property:AMS-conjecture-7.8}) to exhibit them in \textit{mixed} $L$-packets with non-supercuspidal representations. These mixed $L$-packets are drastically different from the supercuspidal $L$-packets of \cite{Kal-reg,Kaletha-nonsingular}.

Furthermore, our LLC satisfies several expected properties, including the expectation that $\Irr(S_{\varphi})$ parametrizes the internal structure of the $L$-packet $\Pi_{\varphi}(G)$, where $S_{\varphi}$ is the component group of the centralizer of the (image of the) $L$-parameter $\varphi$. In \cite{G2-stability}, we complete the unique characterization of our LLC using a strenghened version of \textit{atomic stability} for our $L$-packets. We achieved this by computing coefficients of local character expansions of Harish-Chandra characters building on methods in \cite{harish-chandra-local-character,Debacker-Sally-germs,Barbasch-Moy-LCE}.

To illustrate the general nature of our strategy for constructing LLC, we refer the reader to \cite{LLC-GSp4} where Suzuki and the second author recently constructed the explicit LLC for $\GSp_4$ and $\Sp_4$ and gave a unique characterization in terms of stability and other properties.

\subsection{Main results}
We now state our main results. Let $\Irr^{\fs}(G)$ be the Bernstein series attached to the inertial class $\fs=[L,\sigma]$ (for more details, see \eqref{eqn:Bernstein decomposition}). Let $\Phi_e(G)$ denote the set of $G^\vee$-conjugacy classes of enhanced $L$-parameters for $G$. Let $\Phi_e^{\fs^{\vee}}(G)\subset \Phi_e(G)$ be the Bernstein series on the Galois side, whose cuspidal support lies in $\fs^{\vee}=[L^{\vee},(\varphi_{\sigma},\rho_{\sigma})]$, i.e.~the image under LLC for $L$ of $\fs$ (for more details, see $\mathsection$\ref{subsec: Galois Bernstein}). 
For any $\fs=[L,\sigma]_G\in\fB(G)$, the LLC for $L$ given by $\sigma\mapsto(\varphi_\sigma,\rho_\sigma)$ is expected to induce a bijection (see \cite[Conjecture~2]{AMS18} and Conjecture~\ref{conj:matching}):
\begin{equation}\label{Bernstein-block-isom-intro}
    \mathrm{Irr}^{\fs}(G)\xrightarrow{1--1}\Phi_\enh^{\fs^{\vee}}(G). 
\end{equation}
For the group $\rG_2$, by \cite[Main Theorem]{aubert-xu-Hecke-algebra}, we have such a bijection \eqref{Bernstein-block-isom-intro} for each Bernstein series $\mathrm{Irr}^{\fs}(G)$ of \textit{intermediate series}. On the other hand, the analogous bijection to \eqref{Bernstein-block-isom-intro} holds for \textit{principal series} Bernstein blocks thanks to \cite{Roche-principal-series,Reeder-isogeny,ABPS-KTheory, AMS18}. 

Let $G=\rG_2(F)$ and $p\neq 2,3$. Combined with the detailed analysis in all of $\mathsection$\ref{sec:Galois-G2} through $\mathsection$\ref{sec:ppal-series}, we construct an explicit Local Langlands Correspondence
\begin{equation} \label{main-thm-bijection}
\begin{split}
   \LLC\colon \mathrm{Irr}(G)&\xrightarrow{1\text{-}1}\Phi_\enh(G)\\
    \pi &\mapsto (\varphi_{\pi},\rho_{\pi}),
\end{split}\end{equation}
and obtain the following result (see Theorem~\ref{main-thm}). 
\begin{theorem} \label{main-thm-summary} 
The explicit Local Langlands Correspondence  ~\eqref{main-thm-bijection} verifies\\ $\Pi_{\varphi_\pi}(G)\xrightarrow{1-1}\Irr(S_{\varphi_\pi})$ for any $\pi\in \Irr(G)$, and satisfies \eqref{Bernstein-block-isom-intro} for any $\fs\in\fB(G)$,
where $\fs^\vee=[L^\vee,(\varphi_\sigma,\rho_\sigma)]_{G^\vee}$, as well as
a list of properties, including stability, (see \S\ref{subsec:expected properties}) that uniquely determine it. 
\end{theorem}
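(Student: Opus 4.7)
The proof proceeds by partitioning $\Irr(G)$ via the Bernstein decomposition
\[
\Irr(G)=\bigsqcup_{\fs\in\fB(G)}\Irr^{\fs}(G),
\]
and constructing the matching \eqref{main-thm-bijection} one Bernstein series at a time. For $G=\rG_2(F)$, the proper Levi subgroups up to conjugacy are the torus $\bT$ and two non-conjugate copies of $\GL_2$ (associated with the long and short roots, respectively), so the series to be treated are: principal series, two families of intermediate series, and the supercuspidal series. Once the bijection is established on each $\Irr^{\fs}(G)$, the matching with $\Irr(S_{\varphi})$ on each $L$-packet follows by collecting the fibers of the composition with the parameter map, and the listed properties are verified case by case.

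For the intermediate series, I would directly invoke the main theorem of \cite{aubert-xu-Hecke-algebra}, which produces for each such $\fs$ an explicit Hecke algebra isomorphism between $\Irr^{\fs}(G)$ and the block $\Phi_\enh^{\fs^{\vee}}(G)$ on the Galois side, compatible with the AMS cuspidal support map and hence yielding \eqref{Bernstein-block-isom-intro}. For the principal series, my plan is to enumerate on the group side the irreducible constituents of unramified principal series following Muic's analysis, refined with the Hecke algebra isomorphism of Roche, and on the Galois side list the enhanced parameters factoring through $T^\vee$; the matching is then obtained by transporting Kazhdan–Lusztig parametrization via \cite{Reeder-isogeny, ABPS-KTheory, ABP-G2, Ram}, with the component-group bookkeeping supplying the bijection with $\Irr(S_{\varphi})$.

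For the supercuspidal series, I would split into non-singular and singular representations. The non-singular case is handled by making explicit Kaletha's construction \cite{Kal-reg, Kaletha-nonsingular}, which directly produces $L$-packets in bijection with $\Irr(S_{\varphi})$ and lands in the expected Galois Bernstein component (whose cuspidal support is a discrete supercuspidal parameter of $G$ itself). For the singular supercuspidals, which fall outside Kaletha's framework, I would apply \cite[Conjecture~7.8]{AMS18} (Property~\ref{property:AMS-conjecture-7.8}): such a representation must have cuspidal support on the Galois side supported in a proper Levi, hence lies in a mixed packet together with certain non-supercuspidal representations already placed by the previous steps. The component-group calculation then determines the correspondence on the nose, identifying each singular supercuspidal with the appropriate enhanced parameter.

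The hard part will be the singular supercuspidal step: establishing that the expected non-supercuspidal companions actually exist in the relevant Bernstein series, and that the enhancements combine so that the whole mixed packet bijects with $\Irr(S_{\varphi})$, requires a delicate cross-series argument using the generalized Springer correspondence together with the explicit descriptions produced in \S\ref{sec:Galois-G2} through \S\ref{sec:ppal-series}. The remaining expected properties from \S\ref{subsec:expected properties} (compatibility with parabolic induction, with cuspidal support, with central and infinitesimal characters, and depth preservation) then follow from the construction block by block, with stability postponed to \cite{G2-stability}.
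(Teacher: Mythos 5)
Your proposal mirrors the paper's own proof: Bernstein decomposition, the Hecke algebra isomorphism from \cite{aubert-xu-Hecke-algebra} for the intermediate series, Muic's reducibility analysis transported through Roche's isomorphism and the Kazhdan--Lusztig parametrization (\cite{Reeder-isogeny,ABPS-KTheory,ABP-G2,Ram}) for the principal series, Kaletha's construction for non-singular supercuspidals, and Property~\ref{property:AMS-conjecture-7.8} to place the singular supercuspidals in mixed packets with non-supercuspidal companions already produced in the earlier steps, with uniqueness via stability deferred to \cite{G2-stability}. The one presentational difference is that the paper treats the unipotent supercuspidal case separately via Lusztig's and Morris's parametrization rather than re-deriving it through the AMS cuspidal-support mechanism, but since those unipotent supercuspidals indeed sit in mixed packets with Iwahori-spherical discrete series, the two descriptions agree and your argument is essentially identical to the paper's.
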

In other words,  
\begin{enumerate}
    \item to each explicitly described $\pi\in\Irr(G)$, we attach an explicit $L$-parameter $\varphi_{\pi}$ and determine its enhancement $\rho_{\pi}$ explicitly; 
    \item to each $\varphi\in \Phi(G)$, we describe (the shape of) 
    its $L$-packet $\Pi_{\varphi}(G)$, and give an internal parametrization in terms of $\rho\in\Irr(S_{\varphi})$;
    \item Moreover, for non-supercuspidal representations, we specify the precise parabolic induced representation that it occurs in. 
\end{enumerate}

We now comment on several works related to our paper. 

A few weeks after our paper first appeared, Gan and Savin had also claimed a construction of a  
local Langlands correspondence using completely different methods.~For the convenience of the reader, we briefly comment on the differences between our work and theirs: 
\begin{enumerate}
    \item Our LLC is completely \textit{explicit}, with explicit tables, explicit Kazhdan-Lusztig triples, explicit information on the $L$-packets and their packet members, etc. 
    In particular, \textit{explicit} computations of unipotent classes (in Kazhdan-Lusztig triples) such as the ones in our paper have found many important applications in number theory, such as modularity lifting theorems (see for example \cite[Appendix A]{LLC-GSp4}). 
    \item Our construction and proofs of LLC are \textit{purely local}, whereas Gan-Savin's construction relies on the global construction of Kret-Shin \cite{Kret-Shin}. As such, our construction and proofs work for cases even in the lack of global constructions; yet on the other hand, due to the explicit nature of our construction, it is not hard to check local-global compatibility when, indeed, global constructions are made. 
    \item It is still unclear whether Gan-Savin's construction gives \textit{the} LLC nor the right unique characterization. Contrary to what is claimed, Gan-Savin's construction also involves many choices. 
    For example, Gan-Savin's construction relies on Bin Xu's construction of $L$-packets for $\mathrm{PGSp}_6$, which are not known to be canonical. Moreover, Gan-Savin's characterization does not establish stability.  
     \item On the other hand, our construction is compatible with that of 
    \cite{Kal-reg,Kaletha-LLC}, for which stability is known \cite{Fintzen-Kaletha-Spice}. Moreover, in \cite{G2-stability}, we use stability to uniquely pin down our $L$-packets.
    \item Our strategy for constructing the LLC works for arbitrary reductive groups and does not rely on the coincidental existences of theta correspondences. e.g. in \cite{LLC-GSp4}, we carry out our general strategy for $\GSp_4$ and $\Sp_4$.  
    \item Furthermore, our methods provide explicit local Langlands correspondence in a uniform way, independent of the characteristic of $F$.
\end{enumerate}
We also note the construction of an LLC for generic supercuspidal representations for $\rG_2$ in \cite{Harris-Khare-Thorne} using global methods and theta correspondences. 
It would be very interesting to compare our construction with that of \textit{loc. cit.}. 

Moreover, as mentioned earlier, our work complements that of \cite{Kal-reg,Kaletha-nonsingular} as our constructions cover even the non-supercuspidal representations, and specifically the $L$-packets that \textit{mix} supercuspidal representations with non-supercuspidal ones, i.e.~the $L$-packets that do not appear in \textit{loc. cit.}. Note that such ``mixed'' $L$-packets necessarily consist of \textit{singular} supercuspidal representations not addressed in the constructions of \textit{loc. cit.}. 

Lastly, it would also be interesting to compare our construction with those of \cite{Zhu-LLC}, \cite{Fargues-Scholze} and \cite{Genestier-Lafforgue}.

\bigbreak\noindent
{\sc Notation.} 
Let $F$ be a non-archimedean local  field. Let $\fo_F$ denote the ring of integers of $F$, $\fp_F$ the maximal ideal in $\fo_F$ and $k_F:=\fo_F/\fp_F$ the residue field of $F$. The group of units in $\fo_F$ will be denoted $\fo_F^\times$.
We assume that $k_F$ is finite and denote by  $q=q_F$ its cardinality. Let $\nu_F:=|\!|\;|\!|_F$ denote the normalized absolute value of $F$. 

We fix a separable closure $F_{\sep}$ of $F$ and denote by $W_F\subset\Gal(F_{\sep}/F)$ the absolute Weil group of $F$.  Let $I_F$ be the inertia group of $F$, and $P_F$ the wild inertia group (i.e., the maximal pro-$p$ open normal subgroup of $I_F$).
We denote by $F_{\nr}$ the maximal unramified extension of $F$ inside $F_{\sep}$ and by $\Frob_F$ the element of $\Gal(F_{\nr}/F)$ that induces the automorphism $\Frob\colon a\mapsto a^q$ on the residue field $\overline k_F$ of $F_{\nr}$. Then $W_F=I_F\rtimes\langle\Frob_F\rangle$. Let $W_F^{\der}$ denote the closure of the commutator subgroup of $W_F$, and write $W_F^{\ab}=W_F/W^{\der}_F$. We take  $W_F':=W_F\times\SL_2(\C)$ for the Weil-Deligne group of $F$. 

For $n$ a positive integer, let $\mu_n$ denote the group on $n$th roots of unity and let $\zeta_n$ be a primitive $n$th root of the unity.

For $b \in F^\times/F^{\times 2}$, let $\rU_b(1,1)$ denote the quasi-split unitary group, and $\rU_b(2)$ the compact unitary group in two variables in $F (\sqrt{b})$. 

For $\bH$ any reductive algebraic group, we denote by $\bH^\circ$ the identity component of $\bH$ and by $\rZ_\bH$ the center of $\bH$. Let $h$ be an element of $\bH$. We denote by $\Cent_{\bH}(h)$ the centralizer of $h$ in $\bH$ and by
$A_\bH(h):=\Cent_{\bH}(h)/\Cent_{\bH}(h)^\circ$ the component group of $\Cent_{\bH}(h)$. 

Let $\bG$ be a connected reductive algebraic group defined over $F$, let $\bG_{\ad}=\bG/\rZ_\bG$ be the adjoint group of $\bG$ and let $\bA_\bG$ denote the maximal split central subtorus of $\bG$. Let $W_\bG=W_\bG(\bT)$ denote the Weyl group of $\bG$ with respect to a maximal torus $\bT$. By a Levi subgroup of we mean an $F$-subgroup $\bL$ of $\bG$ which is a Levi factor of a parabolic $F$-subgroup of $\bG$. We denote by $G$, $A$, $T$, $L$ the groups of $F$-rational points of $\bG$, $\bA$, $\bT$, $\bL$, respectively. Let $X^*(L):= X^*(\bL)_F$ be the group of all rational characters $\chi\colon \bL\to\GL_1$.

Let $\mcB(\bG,F)$ denote the (enlarged) building of $G$. For $x$ a point in $\mcB(\bG,F)$, let $G_x$ denote the subgroup of $G$ fixing $x$, and  $G_{x,0}\subset G_x$ the associated parahoric subgroup. Let $G_{x,0^+}$ denote the pro-$p$ unipotent radical of $G_{x,0}$, and $\bbG_{x,0}$ the reductive quotient of $G_{x,0}$. If $\tau$ is a representation of $\bbG_{x,0}$ we denote by $\boldsymbol{\tau}$ its inflation to $G_{x,0}$. 

\smallskip

\begin{acknow}
The authors would like to thank Dan Ciubotaru, Stephen DeBacker, Tom Haines, Michael Harris, Tasho Kaletha, Dipendra Prasad and David Schwein for interesting conversations. We would also like to thank Dick Gross, Guy Henniart, Herv\'e Jacquet, David Kazhdan, Mark Kisin, George Lusztig, Barry Mazur, Dinakar Ramakrishnan, Sug Woo Shin, Jack Thorne, Akshay Venkatesh, David Vogan, Zhiwei Yun and Wei Zhang for their interest in the paper. We thank Chuan Qin and Kenta Suzuki for useful remarks on a previous version of our paper.

Our special thanks go to Cheng-Chiang Tsai, who had originally provided the content of section \ref{sec:Galois-G2} as an appendix and has now very generously offered to simply let us include it in the main text. We cannot thank him enough for his generosity, his brilliant mathematical input and numerous illuminating conversations. 

Parts of this project were done while Y.X.~was supported by a graduate student fellowship at Harvard University, and while she was supported by NSF grant DMS~2202677 at MIT. She heartily thanks both institutions for providing wonderful working environments. 
\end{acknow}

\section{ Review on enhanced \texorpdfstring{$L$}{L}-parameters} \label{sec:Lp}
Let $\bG$ be a connected reductive algebraic group defined over $F$, and $G:=\bG(F)$ the group of $F$-rational points of $\bG$. 
The main purpose of this section is to recall in \S\ref{subsec:expected properties} a list of properties that are expected to be satisfied by the local Langlands correspondence for $G$. In later sections, we will use these properties to build an explicit local Langlands correspondence for the exceptional group $\rG_2$.

\subsection{Definitions and Examples} \label{subsec:eLp defs}
Let $G^\vee$ be the complex reductive group with root datum dual to that of $G$. We suppose that the group $\bG$ is $F$-split. 

A Langlands parameter (or $L$-parameter) for $G$ is a continuous morphism
$\varphi\colon W_F\times\SL_2(\C)\to G^\vee$ such that 
$\varphi(w)$ is semisimple for each $w\in W_F$, and that the restriction $\varphi|_{\SL_2(\C)}$ of $\varphi$ to $\SL_2(\C)$ is a morphism of complex algebraic groups. Since $G$ is $F$-split, the Weil group $W_F$ acts trivially on $G^\vee$. Hence the $L$-group of $G$ is the direct product $G^\vee\times W_F$ and $L$-parameters may be taken with values in $G^\vee$. 
The group $G^\vee$ acts on the set of $L$-parameters, and we denote by $\Phi(G)$ the set of $G^\vee$-conjugacy classes of $L$-parameters for $G$.

\begin{definition} \label{defn:infinitesimal parameter} {\rm \cite{Vogan-LLC-1993}}
The \textit{infinitesimal parameter} of an $L$-parameter $\varphi$ for $G$ is a morphism $\lambda_\varphi\colon W_F\to G^\vee$ defined by
\begin{equation} \label{eqn:infinitesimal parameter}
\lambda_\varphi(w):=\varphi\left(w,\left(\begin{smallmatrix}|\!|w|\!|^{1/2}&0\\ 0&|\!|w|\!|^{-1/2}\end{smallmatrix}\right)\right)\quad\text{for any $w\in W_F$.}
\end{equation}
\end{definition}

Obviously, if $\varphi$ has trivial restriction to $\SL_2(\C)$, then it coincides with its infinitesimal parameter.

\begin{definition} \label{defn:Lparameters}
An $L$-parameter $\varphi$ (resp.~its infinitesimal parameter $\lambda_\varphi$) is 
\begin{enumerate}
    \item[{\rm (1)}] \textit{bounded} if $\varphi(W_F)$ is bounded;
    \item[{\rm (2)}] \textit{discrete} if $\varphi(W_F')$ (resp.~$\lambda_\varphi(W_F)$) is not contained in any proper Levi subgroup of $G^\vee$;
    \item[{\rm (3)}] \textit{supercuspidal} if it is discrete and has trivial restriction to $\SL_2(\C)$. 
\end{enumerate} 
\end{definition}

\begin{remark} \label{rem:discretness}
{\rm An $L$-parameter is supercuspidal if and only if its  infinitesimal parameter is supercuspidal (since they coincide). On the other hand, if $\lambda_\varphi$ is discrete, then $\varphi$ is also discrete. Indeed, if $\varphi$ is a non-discrete $L$-parameter, then $\varphi(W_F')\subset L^\vee$ for some proper Levi subgroup $L^\vee$ of $G^\vee$. Then,  \eqref{eqn:infinitesimal parameter} implies that $\lambda_\varphi(W_F)$ is contained in $L^\vee$, thus $\lambda_\varphi$ is also non-discrete. However, in general there exist discrete $L$-parameters with non-discrete infinitesimal parameters.} 
\end{remark}

Let $\Irr(G)$ be the set of isomorphism classes of irreducible smooth representations of $G$. 
A supercuspidal irreducible representation $\pi$ of $G$ is compact modulo center, and so a fortiori, it is square integrable modulo center. Let $\Irr_{\superc}(G)$ denote the subset of $\Irr(G)$ formed by the isomorphism classes of supercuspidal irreducible representations of $G$.
The Langlands correspondence for $G$ is expected to partition $\Irr(G)$ into finite sets, called $L$-packets, indexed by $\Phi(G)$. If $\pi\in\Irr(G)$, and $\varphi_\pi$ denotes the $L$-parameter of $\pi$, we denote by $\Pi_\varphi(G)$ the $L$-packet attached to $\varphi$:
\begin{equation} \label{eqn:L-packet}
\Pi_\varphi(G):=\left\{\pi\in\Irr(G)\,:\,\varphi_\pi=\varphi\right\}.
\end{equation}
In order to both characterize the images of supercuspidal representations under the LLC and
parametrize the elements of every $L$-packet, we need to \textit{enhance the $L$-parameter} by a representation of a certain finite group (as explained in Definition~\ref{defn:cuspidal-enhanced-L-parameter}). In order to to do this, we start by recalling in \S\ref{subsubsec:cuspidal-pairs} the notion of \textit{cuspidal unipotent pair} introduced by Lusztig in \cite[Definition~2.4]{Lusztig-IC}.

\subsubsection{Cuspidal unipotent pairs} \label{subsubsec:cuspidal-pairs}
Let $\cG$ be a complex Lie group, $u$ a unipotent element in $\cG$ and $\rho$ an irreducible representation of $A_{\cG}(u)$, which is the group of component of the centralizer of $u$ in $\cG$. The pair $(u,\rho)$ is called \textit{cuspidal} if it determines a $\cG$-equivariant cuspidal local system on the $\cG$-conjugacy class of $u$ in the sense of Lusztig \cite{Lusztig-IC}. We denote by $\fB(\Unip_\enh(\cG)$ the set of $\cG$-conjugacy classes of cuspidal unipotent pairs. The pairs $(u,\rho)$ such that $\rho$ is cuspidal are called \textit{cuspidal unipotent pairs}.  If $(u,\rho)$ is cuspidal, then $u$ is a distinguished unipotent element in $\cG$ (that is, $u$ does not meet the unipotent variety of  any proper Levi subgroup  of $\cG$), see \cite[Proposition~2.8]{Lusztig-IC}. However, in general, there exist distinguished unipotent elements whose conjugacy classes do not support cuspidal local systems.

\begin{example} \label{ex: Gvee torus} {\rm
The case $\cG=T^\vee$:
The pair $(T^\vee,\triv)$ is the unique cuspidal unipotent pair in $T^\vee$. }
\end{example}

For $\GL_n(\C)$ and $\SL_n(\C)$, the unipotent conjugacy classes are completely described by the sizes of the Jordan blocks of the elements in these classes. Therefore, the unipotent conjugacy classes correspond to partitions of $n$. We denote by $\orb_\nu$ (or just $\nu$ for abbreviation) the unipotent class corresponding to the partition $\nu$: it consists of unipotent matrices with Jordan blocks of sizes equal to the parts of the partition $\nu$. For example, the trivial class is parametrized by the partition $(1^n):=(1,1,\ldots,1)$, and the regular unipotent class corresponds to the partition $(n)$. Note that the centralizer of a unipotent element of $\GL_n(\C)$ is always connected. 

\begin{example} \label{ex:GLn} {\rm
The pair $((1),\triv)$ is a  cuspidal unipotent pair in $\GL_1(\C)$, and there is no cuspidal unipotent pair in $\GL_n(\C)$ for $n\ge 2$. Any Levi subgroup of $\GL_n(\C)$ is isomorphic to a product of the form $\GL_{n_1}(\C)\times\GL_{n_2}(\C)\times\cdots\times\GL_{n_r}(\C)$ with $n_1+n_2+\cdots+n_r=n$, thus admits a cuspidal pair only if $n_1=n_2=\cdots=n_r=1$, i.e. is a maximal torus $\cT$. The pair $((1^n),\triv)$ is a cuspidal unipotent pair in $\cT$. }
\end{example}

\begin{example} \label{ex:SLn} {\rm
The cuspidal unipotent pairs in $\SL_n(\C)$ are the pairs $(\orb_{(n)},\rho)$, where $\rho$ is an order-$n$ character of $\mu_n$ (see \cite[(10.3.2)]{Lusztig-IC}).}
\end{example}

\begin{example} {\rm
A Levi subgroup of $\SL_n(\C)$ is isomorphic to a product of the form $\rS(\GL_{n_1}(\C)\times\GL_{n_2}(\C)\times \cdots\times\GL_{n_r}(\C))$, i.e.~the subgroup of $\GL_{n_1}(\C)\times\GL_{n_2}(\C)\times\cdots\times\GL_{n_r}(\C)$ consisting of elements with determinant equal to $1$. We will denote it as $\cM_{n_1,\ldots,n_r}$. 

The case where $n=3$ will be of special interest to us, as we have $\SL_3$ as a pseudo-Levi subgroup of $G_{2,\C}$. Denote by $\rho[\zeta_3]$ and $\rho[\zeta_3^2]$ the two order-$3$ characters of $\mu_3$. Thus the cuspidal unipotent pairs in $\cM_{(3)}=\SL_3(\C)$ are $(\orb_{(3)},\rho[\zeta_3])$ and $(\orb_{(3)},\rho[\zeta_3^2])$.
The group $\cM_{2,1}$ has no cuspidal unipotent pair (see the proof of \cite[(10.3.2)]{Lusztig-IC}), and the unique cuspidal unipotent pair in the maximal torus $\cM_{(1^3)}$ of $\cG$ is $((1^3),\triv)$. }
\end{example}

\subsubsection{\texorpdfstring{$L$}{L}-packets} \label{subsec:Lpackets}
We shall use the following notations: 
\begin{equation} \label{eqn:Gphis}
\rZ_{G^\vee}(\varphi):=\Cent_{G^\vee}(\varphi(W'_F))\quad\text{and}\quad\cG_\varphi:=\Cent_{G^\vee}(\varphi(W_F)).
\end{equation} 
We also consider the following component groups
\begin{equation} \label{eqn:A-phi}
A_\varphi:=\rZ_{G^\vee}(\varphi)/\rZ_{G^\vee}(\varphi)^\circ\quad\text{and}\quad
S_\varphi:=\rZ_{G^\vee}(\varphi)/\rZ_{G^\vee}\cdot\rZ_{G^\vee}(\varphi)^\circ.
\end{equation}
We recall that $A_{\cG_\varphi}(u_\varphi)$ denotes the component group of $\rZ_{\cG_\varphi}(u_\varphi)$. By \cite[\S~3.1]{Moussaoui-Bernstein-center},  
\begin{equation} \label{eqn:Moussaoui-iso}
A_\varphi\simeq A_{\cG_\varphi}(u_\varphi), \text{where $u_\varphi:=\varphi\left(1,\left(\begin{smallmatrix}
1&1\cr
0&1
\end{smallmatrix}\right)\right)$.}
\end{equation}
\begin{remark} {\rm 
In the case where $\bG=\rG_2$, we have $\rZ_{G^\vee}=\{1\}$. Thus $S_\varphi=\Cent_{G^\vee}(\varphi)/\Cent_{G^\vee}(\varphi)^\circ=A_\varphi$.}
\end{remark}
\begin{definition} \label{defn:enhanced-L-parameter}
An \textit{enhancement} of $\varphi$ is an irreducible representation $\rho$ of $S_\varphi$.  The pairs $(\varphi,\rho)$ are called \textit{enhanced $L$-parameters} for $G$. 
\end{definition}
The group $G^\vee$ acts on the set of enhanced $L$-parameters in the following way
\begin{equation} \label{eqn:Gvee-action}
g\cdot (\varphi,\rho) = (g \varphi g^{-1},g \cdot \rho).
\end{equation}
We denote by $\Phi_\enh(G)$ the set of $G^\vee$-conjugacy classes of enhanced $L$-parameters.

\begin{definition} \label{defn:cuspidal-enhanced-L-parameter}
An enhanced $L$-parameter $(\varphi,\rho)\in\Phi_\enh(G)$ is called \textit{cuspidal} if $\varphi$ is discrete and $(u_\varphi,\rho)$ is a cuspidal unipotent pair in $\cG_\varphi$.
\end{definition}
We denote by $\Phi_{\enh,\cusp}(G)$ the subset of $\Phi_{\enh}(G)$ consisting of  $G^\vee$-conjugacy classes of cuspidal enhanced $L$-parameters.

\begin{example} \label{ex:SO4} {\rm
The proper Levi subgroups of $\SO_4(\C)$ are isomorphic to one of the following groups: $\GL_2(\C)$, $\GL_1(\C)\times\SO_2(\C)$, $\GL_1(\C)\times\GL_1(\C)$.

The group $\SO_4(\C)$ has a unique cuspidal unipotent pair: $((3,1),\rho)$ (see \cite[Corollary~13.4]{Lusztig-IC}). The group $\GL_1(\C)\times\SO_2(\C)$ has no cuspidal unipotent pair, and $((1^2),\triv)$ is the unique cuspidal unipotent pair in $\GL_1(\C)\times\GL_1(\C)$.}
\end{example}

The construction of enhanced $L$-parameters $(\varphi,\rho)$ for $G$ is based on \textit{the generalized Springer correspondence} for the group $\cG_{\varphi}$, which we now recall in \S\ref{subsubsection-generalized-Springer}.

\subsection{Complex groups} \label{subsec:complex groups}
In this subsection, we recall several results involving complex reductive groups. 
Let $\cG$ be a connected reductive group over $\C$, and let $\Unip(\cG)$ denote the unipotent variety of $\cG$. 

\subsubsection{Generalized Springer Correspondence} \label{subsubsection-generalized-Springer} 
Let $D^b_\cG (\Unip(\cG))$ be the constructible $\cG$-equivariant derived category on $\Unip(\cG)$, and $\Perv_\cG(\Unip(\cG))$ its subcategory of $\cG$-equivariant perverse sheaves. We denote by $\Unip_\enh(\cG)$ the set of $\cG$-conjugacy classes of pairs
$(u,\rho)$, with $u\in \cG$ unipotent and  $\rho\in\Irr(A_{\cG}(u))$, where $A_{\cG}(u):=\Cent_{\cG}(u)/\Cent_{\cG}(u)^\circ$. The elements of $\Unip_\enh(\cG)$ are called \textit{enhanced unipotent classes}.

Let $\IC(\cC,\cE)$ be the Deligne-Goresky-MacPherson intersection cohomology complex (see \cite{BBD}, \cite{Goresky-MacPherson} or \cite[(0.1)]{Lusztig-IC}) of the closure of $\cC$ with coefficients in $\cE$.
The simple objects in $\Perv_{\cG}(\Unip(\cG))$ are the $\IC(\cC,\cE)$, where $\cC$ is a unipotent class in $\cG$ and $\cE$ is an irreducible $\cG$-equivariant $\Qlbar$-local system on $\cC$. We recall that there is a canonical bijection $\rho\mapsto\cE_\rho$ between $\Irr(A_{\cG}(u))$, where $u\in\cC$, and the set of isomorphism classes of irreducible $\cG$-equivariant local systems on $\cC$.

Let $\cP=\cL\cU$ be a parabolic subgroup of $\cG$, with Levi factor $\cL$ and unipotent radical $\cU$. 
By a \textit{$\cP$-resolution} of an algebraic variety $X$, we mean a variety $Y$ endowed with a free $\cP$-action and a smooth $\cP$-equivariant morphism $Y\to X$.
From \cite[\S3.7]{Bernstein-Lunts}, we recall the integration functor 
\begin{equation} \label{eqn:integration-functor}
\gamma_\cP^\cG\colon D^b_\cP (\Unip(\cG))\to D^b_\cG (\Unip(\cG)),
\end{equation} 
given by: for any object $A$ of $D^b_\cP (\Unip(\cG))$ and $Y$ a $\cG$-resolution of $\Unip(\cG)$,
\begin{equation}
(\gamma_\cP^\cG A)(Y):=(q_Y)_!A(Y)[2 \dim \cG/\cP],
\end{equation}
where $q_Y\colon \cP\backslash Y\to \cG\backslash Y$ is the quotient functor and $A(Y)$ is defined by regarding $Y$ as a $\cP$-resolution of $\Unip(\cG)$. Let 
\begin{equation}
m\colon\Unip(\cP)\hookrightarrow \Unip(\cG)
\quad\text{and}\quad p\colon\Unip(\cP)\to\Unip(\cL)
\end{equation} 
denote the inclusion and projection, respectively. Then the parabolic induction functor is the functor 
\begin{equation}
    \rI_{\cL\subset \cP}^{\cG}:=\gamma_{\cP}^{\cG}\,\circ\,m_!\,\circ\,p^*\,\colon \Perv_{\cL}(\Unip(\cL))\to\Perv_{\cG}(\Unip(\cG)).
\end{equation}
If $\cF_L$ is a simple object in $\Perv_{\cL}(\Unip(\cL))$, then $\ri_{\cL\subset \cP}^{\cG}(\cF_{\cL})$ is semisimple. A simple object $\cF$ in $\Perv_{\cG}(\Unip(\cG))$ is called \emph{cuspidal} if for any simple object $\cF_{\cL}$ in $\Perv_{\cL}(\Unip(\cL))$, $\cF$ does not occur in $\ri_{\cL\subset \cP}^{\cG}(\cF_{\cL})$ (equivalently, if 
	$\rrr_{\cL\subset \cP}^{\cG}(\cF)=0$) for any proper parabolic subgroup $\cP$ of $\cG$ with Levi factor $\cL$.~The functor $\ri_{\cL\subset \cP}^{\cG}$ is left adjoint to $\rrr_{\cL\subset \cP}^{\cG}:=p_!\,\circ\,m^*$.

Let $\cF_\rho:=\IC(\cC,\cE_\rho)$, where $(\cC,\rho)\in\Unip_\enh(\cG)$.
Then $\cF_\rho$ occurs as a summand of $\ri_{\cL\subset\cP}^\cG(\IC(\cC_{\cusp},\cE_{\cusp}))$, for some quadruple $(\cP,\cL,\cC_{\cusp},\cE_{\cusp})$, where $\cP$ is a parabolic subgroup of $\cG$ with Levi subgroup $\cL$ and $(\cC_{\cusp},\cE_{\cusp})$ is a cuspidal enhanced unipotent class in $\cL$ (see \cite[\S~6.2]{Lusztig-IC}). Moreover, the triple $(\cP,\cL,\cC_{\cusp},\cE_{\cusp})$ is unique up to $\cG$-conjugation 
(see \cite[Proposition~6.3]{Lusztig-IC}). We denote by $\epsilon:=\rho_{\cE_{\cusp}}$ the (equivalence class of) irreducible representation of $A_{\cG}(u)$ which corresponds to  $\cE_{\cusp}$, and by
$\ft:=(\cL,(\cC_{\cusp},\epsilon))_{\cG}$ the $\cG$-conjugacy class of 
$(\cL,(\cC_{\cusp},\epsilon))$. We call $\ft$ the \textit{cuspidal support} of  $(\cC,\rho)$ and denote by $\fB(\Unip_\enh(\cG))$ the set of cuspidal supports for the group $\cG$. Then the \textit{cuspidal support map} for $\Unip_\enh(\cG)$ is defined to be the map
\begin{equation} \label{eqn:SCUe}
\Sc_{\cG}\colon\Unip_\enh(\cG)\to \fB(\Unip_\enh(\cG)),
\end{equation}
which sends the $\cG$-conjugacy class of $(\cC,\rho)$ 
to its cuspidal support $\ft=(\cL,(\cC_{\cusp},\epsilon))_{\cG}$. For simplicity, we often refer to the unipotent class $\cC_{\cusp}$ in $\cL$ by a unipotent element $v$ in it.

By \cite[Theorem~9.2]{Lusztig-IC}, the fiber of \eqref{eqn:SCUe} is in bijection with $\Irr(W_\ft)$, where $W_{\ft}:=\Nor_{\cG}(\cL)/\cL$ is a finite Weyl group, i.e.
\begin{equation} \label{eqn:generalized-Springer-correspondence}
    \Sc_{\cG}^{-1}(\ft)\simeq\Irr(W_\ft)\quad\text{for any cuspidal support $\ft=(\cL,(\orb_{\cusp},\epsilon))_{\cG}$.}
\end{equation}

\subsection{Cuspidal support map of enhanced \texorpdfstring{$L$}{L}-parameters} \label{subsec:cuspidal support}
Let $(\varphi,\rho)$ be an enhanced $L$-parameter for $G$. Recall that $u_\varphi:=\varphi\left(1,\left(\begin{smallmatrix} 1&1\cr 0&1
\end{smallmatrix}\right)\right)$. Then $u_\varphi$ is a unipotent element of the (possibly disconnected) complex reductive group $\cG_\varphi$ defined in \eqref{eqn:Gphis}, and $\rho\in\Irr(A_{\cG_\varphi}(u_\varphi))$ by \eqref{eqn:Moussaoui-iso}. Let $\ft_\varphi:=(\cL^\varphi,(v^\varphi,\epsilon^\varphi))$ denote the cuspidal support of $(u_\varphi,\rho)$, i.e.~
\begin{equation} \label{eqn:Sc u,rho}
(\cL^\varphi,(v^\varphi,\epsilon^\varphi)):=\Sc_{\cG_\varphi}(u_\varphi,\rho).
\end{equation}
In particular, $(v^\varphi,\epsilon^\varphi)$ is a cuspidal unipotent pair in $\cL^{\varphi}$. 

Upon conjugating $\varphi$ with a suitable element of $\rZ_{\cG_\varphi^\circ}(u_\varphi)$, we may assume that the identity component of $\cL^\varphi$ contains $\varphi\left(\left(1, \left(\begin{smallmatrix} z&0\cr 0&z^{-1}
\end{smallmatrix}\right)\right)\right)$ for all $z\in\C^\times$. 
Recall that by the Jacobson–Morozov theorem (see for example \cite[\S~5.3]{Carter-book}), any unipotent element $v$ of $\cL^\varphi$ can be extended to a homomorphism of algebraic groups
\begin{equation} \label{eqn:JM}
j_v\colon \SL_2(\C)\to \cL^\varphi \text{ satisfying }j_v\left(\begin{smallmatrix}1&1\\0&1\end{smallmatrix}\right)=v.
\end{equation}
Moreover, by \cite[Theorem~3.6]{Kostant}, this extension is unique up to conjugation in $\Cent_{\cL^\varphi}(v)^\circ$. We shall call a homomorphism $j_v$ satisfying these conditions to be \textit{adapted to $\varphi$}.

By \cite[Lemma~7.6]{AMS18}, up to $G^\vee$-conjugacy, there exists a unique homomorphism $j_{v}\colon \SL_2(\C)\to \cL^{\varphi}$ which is adapted to $\varphi$, and moreover, the cocharacter
\begin{equation} \label{eqn:cocharacter}
\chi_{\varphi,v}\colon z\mapsto \varphi\left(1, \left(\begin{smallmatrix} z&0\cr 0&z^{-1}
\end{smallmatrix}\right)\right)\cdot j_v\left(\begin{smallmatrix} z^{-1}&0\cr 0&z
\end{smallmatrix}\right)
\end{equation}
has image in $\rZ_{\cL^\varphi}^\circ$. 
We define an $L$-parameter $\varphi_{v} \colon W_F \times\SL_2(\C) \to\rZ_{G^\vee}(\rZ_{\cL^\varphi}^\circ)$ by
\begin{equation} \label{eqn:varphi_v}
\varphi_v(w,x):= \varphi(w,1)\cdot\chi_{\varphi,v}(|\!|w|\!|^{1/2})\cdot j_v(x)\quad\text{for any $w\in W_F$ and any $x\in \SL_2(\C)$.}
\end{equation}
\begin{remark} \label{remark:infinitesimal parameter}
{\rm Let $w\in W_F$ and $x_w:=\left(\begin{smallmatrix}|\!|w|\!|^{1/2}&0\\ 0&|\!|w|\!|^{-1/2}\end{smallmatrix}\right)$. By \eqref{eqn:infinitesimal parameter}, we have
\begin{equation}
\begin{matrix}
&\lambda_{\varphi_v}(w)=\varphi_v(w,x_w)=
\varphi(w,1)\cdot\chi_{\varphi,v}(|\!|w|\!|^{1/2})\cdot j_v(x_w)\cr
&=\varphi(w,1)\cdot\varphi(1,x_w)\cdot j_v(x_w^{-1})\cdot j_v(x_w)=\varphi(w,x_w)=\lambda_\varphi(w).
\end{matrix}
\end{equation}
}\end{remark}
\begin{definition} \label{defn:cuspidal support} {\rm \cite[Definition~7.7]{AMS18}}
The \textit{cuspidal support} of $(\varphi,\rho)$ is 
\begin{equation} \label{eqn:Sc Galois side}
\Sc(\varphi,\rho):=(\rZ_{G^\vee}(\rZ_{\cL^\varphi}^\circ),(\varphi_{v^\varphi},\epsilon^\varphi)).
\end{equation}
\end{definition}
\subsection{Bernstein series of \texorpdfstring{$L$}{L}-enhanced paramters} \label{subsec: Galois Bernstein}
Let $L^\vee$ be the Langlands dual group of $L$ and $\iota_{L^\vee}\colon L^\vee \hookrightarrow G^\vee$ the canonical embedding. Define
\begin{equation} \label{eqn:Xnrdual}
\fX_\nr(L^\vee):=\left\{\zeta\colon W_F/I_F\to \rZ_{L^\vee}^\circ\right\}.
\end{equation}
There is a canonical bijection between $\fX_\nr(L^\vee)$ and $\fX_\nr(L)$ (see \cite[\S3.3.1]{Haines-Bst}). The group $\fX_\nr(L^\vee)$ acts on the set of cuspidal enhanced $L$-parameters for $L$ in the following way: Given $(\varphi,\rho)\in\Phi_\enh(L)$ and $\xi\in \fX_\nr({}^LL)$, we define $(\xi\varphi,\varrho)\in\Phi_\enh(L)$ by 
$\xi\varphi:=\varphi$ on $I_F\times\SL_2(\C)$ and $(\xi\varphi)(\Frob_F):=\tilde \xi \varphi(\Frob_F)$. Here $\tilde \xi\in \rZ_{L^\vee\rtimes I_F}^\circ$ represents $z$. We denote by $\fX_\nr(L^\vee).(\varphi_\sigma,\rho_\sigma)$ the orbit of $(\varphi_\sigma,\rho_\sigma)\in\Phi_\enh(L)$.
\begin{definition} \label{defn:sdual}
Let $\fs^{\vee}:=[L^\vee,(\varphi_{\sigma},\rho_\sigma)]_{G^{\vee}}$ be the $G^\vee$-conjugacy class of
\[(L^\vee,\fX_\nr(L^\vee).(\varphi_\sigma,\rho_\sigma)).\] Let $\fB(G^\vee)$ be the set of such $\fs^\vee$.
\end{definition}
We define 
\begin{equation}
\Nor_{G^\vee}(\fs^\vee_{L^\vee}):=\left\{n\in \Nor_{G^\vee}(L^\vee)\,:\, {}^n(\varphi_\cus,\varrho_\cus)\simeq (\varphi_\cus,\varrho_\cus)\otimes\chi^\vee\;\text{ for some $\chi^\vee\in\fX_\nr({}^LL)$}\right\},
\end{equation}
where $\fs^\vee_{L^\vee}=[L^\vee,(\varphi_{\sigma},\rho_\sigma)]_{G^{\vee}}\in\fB(L^\vee)$ and denote by $W^{\fs^\vee}_{G^\vee}$ the extended finite Weyl group 
$W(L^\vee):=\Nor_{G^\vee}(\fs^\vee_{L^\vee})/L^\vee$.

By \cite[(115)]{AMS18}, the set $\Phi_\enh(G)$ is partitioned into \textit{series \`a la Bernstein} as
\begin{equation} \label{eqn:decPhi_e}
\Phi_\enh(G)=\prod_{\fs^\vee\in\fB(G^\vee)}\Phi(G)^{\fs^\vee},
\end{equation}
where for each $\fs^\vee$, the subset $\Phi_\enh(G)^{\fs^\vee}$
is defined to be the fiber of $\fs^\vee$ under the supercuspidal map $\Sc$ defined in \eqref{eqn:Sc Galois side}. 
\subsection{Explicit Construction of enhanced \texorpdfstring{$L$}{L}-parameters}\label{subsec:enhanced L-parameters}
We give a construction of enhanced $L$-parameters for the irreducible constituents of any parabolically induced representations from any supercuspidal pair $(L,\sigma)_G$ for which the ``crude local Langlands correspondence'' is established. We emphasize that it does not require knowledge of the internal structure of the corresponding $L$-packets for $L$, but only the definition of $\varphi_\sigma$. 
Our construction extends--and is inspired by--\cite{Kazhdan-Lusztig}, \cite[\S4.2]{Reeder-isogeny}, \cite[\S4.1]{ABPS-KTheory} and \cite[\S5]{ABPS-LMS}, which all treat principal series cases (i.e.~when $\bL$ is a torus).

Let $L$ be a Levi subgroup of $G$ and let $\sigma$ be an irreducible supercuspidal representation of $L$. Suppose that an $L$-parameter $\varphi_\sigma\colon W_F\times\SL_2(\C)\to L^\vee$ for $L$ has been constructed.
Then we define 
\begin{equation} \label{eqn:Hsigma}
t:=\varphi_\sigma(\Frob_F,1)\;\;\text{and}\;\;H^\vee_\sigma:=\Cent_{G^\vee}(\varphi_\sigma|_{I_F}).
\end{equation}
Let $u$ be a unipotent element of $H^\vee_\sigma$ such that $tut^{-1}=u^q$. Similar to \eqref{eqn:JM}, the unipotent element $u$ can be extended to a morphism
\begin{equation} \label{eqn:ju}
j_u\colon \SL_2(\C)\to H^\vee_\sigma \text{ satisfying }j_u\left(\begin{smallmatrix}1&1\\0&1\end{smallmatrix}\right)=u.
\end{equation}
\begin{definition} \label{defn:phisu}
We attach to the pair $(\sigma,u)$ an $L$-parameter $\varphi_{\sigma,u}\colon W_F\times\SL_2(\C)\to G^\vee$ defined by
\begin{equation}
\begin{split} 
\varphi_{\sigma,u} \left(w,\left(\begin{smallmatrix} 1&0\cr 0&1\end{smallmatrix}\right)\right)&:= \varphi_\sigma(w)\cdot j_u\left(\begin{smallmatrix}|\!|w|\!|^{1/2}&0\\0&|\!|w|\!|^{-1/2}\end{smallmatrix}\right)^{-1}, \quad\text{for any $w\in W_F$.}\cr
\varphi_{\sigma,u}\left(1,\left(\begin{smallmatrix} 1&1\cr 0&1\end{smallmatrix}\right)\right)&:=u.
\end{split}
\end{equation}
\end{definition}
We define the group
\begin{equation} \label{eqn:GLphisigma}
 \cG_{\sigma,u}:=\cG_{\varphi_{\sigma,u}}=\Cent_{G^\vee}(\varphi_{\sigma,u}(W_F)),
\end{equation}
and we observe that $H^\vee_\sigma=\cG_{\varphi_{\sigma,u}|_{I_F}}$.
\subsubsection{Loop groups} \label{subsubsection:loop groups}
\begin{proposition}\label{loop} {\rm \cite[Proposition~3.8]{Reeder-torsion}} Suppose $\cG$ is a connected reductive group over $\C$ and $\boldsymbol{\sigma}$ is a semisimple automorphism of $\cG$. Then $(\cG^{\boldsymbol{\sigma}})^\circ$ is a reductive quotient of the twisted loop group $L^{\theta}\cG$ where $\theta$ is a pinned automorphism of $\cG$ in the same outer class as $\boldsymbol{\sigma}$. Moreover $\cG^{\boldsymbol{\boldsymbol{\sigma}}}=(\cG^{\boldsymbol{\sigma}})^\circ$ is connected if $\cG$ is simply connected.
\end{proposition}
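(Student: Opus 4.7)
The plan is to reduce $\boldsymbol{\sigma}$ to a normal form via $\cG$-conjugacy and then identify $(\cG^{\boldsymbol{\sigma}})^\circ$ with the reductive quotient of a parahoric subgroup of the twisted loop group $L^\theta\cG$, combining Bruhat--Tits theory with Kac's classification of semisimple automorphisms.

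First I would invoke the classical theorem of Steinberg (extended by Kac) that every semisimple automorphism in the outer class of $\theta$ is $\cG$-conjugate to one of the form $\mathrm{Int}(s)\circ\theta$, with $\theta$ a fixed pinned representative and $s\in \cT^\theta$ semisimple. Since $\cG$-conjugation preserves $(\cG^{\bullet})^\circ$ up to isomorphism, it suffices to treat this normal form. Writing $e$ for the order of $\theta$, the twisted loop group can be realized as $L^\theta\cG = \{g(t)\in \cG(\C((t^{1/e}))) : \theta(g(t)) = g(\zeta_e t)\}$, whose standard apartment is modeled on $(X_*(\cT)\otimes_{\Z}\R)^\theta$. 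A choice of $\lambda \in (X_*(\cT)\otimes_{\Z}\R)^\theta$ with $s = \exp(2\pi\sqrt{-1}\,\lambda)$ picks out a point $x$ in this apartment, and the associated parahoric $L^\theta\cG_x$ has reductive quotient $\overline{L^\theta\cG_x}$ whose root system consists of those affine roots (in Kac's twisted affine root system) vanishing at $x$.

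Next I would match the two descriptions combinatorially. Kac's classification (see Chapter~8 of his book on infinite-dimensional Lie algebras) describes $(\cG^{\mathrm{Int}(s)\circ\theta})^\circ$ as the reductive group whose root datum is read off from the twisted affine Dynkin diagram by deleting the node(s) corresponding to $x$---exactly the same recipe that describes $\overline{L^\theta\cG_x}$. Since both sides contain $\cT^\theta$ as a maximal torus and their roots, coroots, and pinnings match node-by-node, one obtains a canonical isomorphism $(\cG^{\boldsymbol{\sigma}})^\circ \simeq \overline{L^\theta\cG_x}$.

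For the connectedness claim, I would pass to the disconnected group $\widetilde\cG := \cG\rtimes\langle\boldsymbol{\sigma}\rangle$, whose identity component is the simply connected $\cG$ and in which $(1,\boldsymbol{\sigma})$ is a semisimple element with centralizer $\cG^{\boldsymbol{\sigma}}$ inside $\cG$. Steinberg's connectedness theorem for centralizers of semisimple elements, in the form that applies when the identity component is simply connected, then yields connectedness of $\cG^{\boldsymbol{\sigma}}$. The hard part is making precise the identification of the twisted Bruhat--Tits apartment with the combinatorial data of Kac diagrams, and in particular tracking the base change $\C((t^{1/e}))/\C((t))$ and the resulting Galois twist in the definition of $L^\theta\cG$; once the dictionary between Kac's affine Dynkin data and parahoric subgroups of the twisted loop group is set up, the identification itself is formal.
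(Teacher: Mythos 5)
Your overall strategy (Steinberg normal form for $\boldsymbol{\sigma}$, identification of a point in the apartment of $L^{\theta}\cG$, matching of reductive quotients) is in the same spirit as the paper's, and your treatment of the connectedness claim via Steinberg's theorem applied to the disconnected group $\cG\rtimes\langle\boldsymbol{\sigma}\rangle$ is fine. However, there is a genuine gap at the normalization step: you write the torus part as $s=\exp(2\pi\sqrt{-1}\,\lambda)$ with $\lambda\in(X_*(\cT)\otimes_{\Z}\R)^{\theta}$, which forces $s$ to lie in the maximal compact subtorus $X_*(\cT^{\theta})\otimes S^1$. A semisimple automorphism of a complex reductive group need not be of this form: after Steinberg's normalization $\boldsymbol{\sigma}=\boldsymbol{\theta}\circ c_t$ with $t\in\cT^{\boldsymbol{\theta}}$, the element $t$ has a polar decomposition $t=s\cdot t_{\uni}$ with a possibly nontrivial hyperbolic part $s\in X_*(\cT^{\boldsymbol{\theta}})\otimes\R_+$ (for instance $c_t$ with $t=\mathrm{diag}(2,1/2)$ in $\SL_2(\C)$). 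Your Kac-coordinate dictionary only produces the reductive quotient for the unitary automorphism $\boldsymbol{\theta}\circ c_{t_{\uni}}$, so as written you prove the proposition only when the torus part is unitary (in particular for torsion automorphisms, which is the native setting of Kac's classification), not for arbitrary semisimple ones.

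The paper closes exactly this gap in the second half of its proof: it observes that $(\cG^{\boldsymbol{\sigma}})^\circ=((\cG^{\boldsymbol{\sigma}_{\uni}})^\circ)^{c_s}$, i.e.\ the further fixed points under the hyperbolic part cut out a Levi subgroup of the reductive quotient at the facet $\cF$ determined by a lift of $t_{\uni}$, and then identifies that Levi as the full reductive quotient at the facet $\cF'$ containing $\cF+\epsilon\tilde{s}$ for all $0<\epsilon\ll 1$. You would need to add this (or an equivalent) step. Once that is done, your combinatorial matching via the twisted affine Dynkin diagram and the paper's direct facet computation give the same conclusion; the Kac-diagram route buys a clean node-deletion description of the root datum but requires setting up the dictionary between Kac diagrams and parahorics, which the paper avoids by working directly with facets of the apartment.
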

\begin{proof} By a classical result of Steinberg's \cite[p.51]{Steinberg-Memoir}, $\boldsymbol{\sigma}$ stabilizes a Borel subgroup and a maximal torus $\cT\subset\cB\subset\cG$. Thus we may write by \cite[Lemma~3.2]{Reeder-torsion}  $\boldsymbol{\sigma}=\boldsymbol{\theta}\circ c_t$, where $\boldsymbol{\theta}$ preserves some pinning for $(\cB,\cT)$, $t\in\cT^{\boldsymbol{\theta}}$ and $c_t$ is the conjugation by $t$. Write $t=s\cdot t_{\uni}$, where $s\in X_*(\cT^{\boldsymbol{\theta}})\otimes\RR_+$ and $t_{\uni}\in X_*(\cT^{\boldsymbol{\theta}})\otimes S^1$. By taking $\boldsymbol{\sigma}_{\uni}:=\boldsymbol{\theta}\circ c_{t_{\uni}}$, we have $(\cG^{\boldsymbol{\sigma}})^\circ=((\cG^{\boldsymbol{\sigma}_{\uni}})^\circ)^{c_s}$. 

Consider $\RR/\ZZ\cong S^1$ with which we take a lift $\til{t}_{\uni}\in X_*(\cT^{\theta})\otimes\R$ of $t_{\uni}$. Then $\til{t}_{\uni}$ corresponds to a point on some facet $\cF$ on the apartment in the building of $L^{\boldsymbol{\theta}}\cG
$ associated to the maximal untwisted loop (i.e. $\C((\varpi))$-split) torus $L^{\boldsymbol{\theta}}((\cT^{\boldsymbol{\theta}})^\circ)=L((\cT^{\boldsymbol{\theta}})^\circ)$. We have that $(\cG^{\boldsymbol{\sigma_{\uni}}})^\circ$ is the reductive quotient at this facet $\cF$. Consider $\RR\xrightarrow[\sim]{\exp}\RR_+$, and let $\til{s}\in X_*(\cT^{\boldsymbol{\theta}})\otimes\RR$ be the pullback of $s$. Let $\cF'$ be the facet containing $\cF+\epsilon\til{s}$ for all $0<\epsilon\ll1$. Since $(\cG^{\boldsymbol{\sigma}})^\circ=((\cG^{\boldsymbol{\sigma}_{\uni}})^\circ)^{c_s}$ is a Levi subgroup of $(\cG^{\boldsymbol{\sigma}_{\uni}})^\circ$, we have that $(\cG^{\boldsymbol{\sigma}})^\circ$ is the reductive quotient at $\cF'$. This proves the proposition except for the last statement, which is also a result of Steinberg's.
\end{proof}

\section{ Review on representations of \texorpdfstring{$p$}{p}-adic groups} \label{sec:padic groups}
Let $\fR(G)$ denote the category of all smooth complex representations of $G$.
This is an abelian category admitting arbitrary coproducts.

\subsection{Supercuspidal support} \label{subsec:supercuspidal support}
Let $\pi\in\Irr(G)$. There exists a parabolic subgroup $P=LU$ of $G$ and a supercuspidal irreducible representation $\sigma$ of $L$ such that $\pi$ embeds in $\ii_P^G\sigma$. If $P'=L'U'$ is a parabolic subgroup of $G$ and $\sigma'$ a supercuspidal irreducible representation of $L'$, then $\pi$ is isomorphic to a subquotient of $\ii_{P'}^G\sigma'$ if and only if there exists an element of $G$ conjugating $(L,\sigma)$ and $(L',\sigma')$. The $G$-conjugacy class $(L,\sigma)_G$ of $(L,\sigma)$ is called the \textit{supercuspidal support} of $\pi$. 
We denote by $\Sc$ the map defined by $\Sc(\pi):=(L,\sigma)_G$.

Two supercuspidal pairs $(L,\sigma_1)$ and $(L,\sigma_2)$ are $G$-conjugate if and only if $\sigma_1$ and $\sigma_2$ are in the same orbit under $W_G(L):=\Nor_G(L)/L$.

\subsection{Langlands classification} \label{subsec:Langlands classification}
Let $\bB$ be a Borel subgroup of $\bG$ defined over $F$, and let $\bT\subset\bB$ be a maximal $F$-torus in $\bG$. A parabolic subgroup $P$ of $G$, with Levi subgroup $L$ is said to be \textit{standard} if $P\supset \bB(F)$ and $L\supset\bT(F)$. Let $\fa^*_L:=\RR\otimes X^*(L)$. We denote by $\nu\mapsto \chi_\nu$ the isomorphism from $\fa^*_L$ to the group of positive real valued unramified characters of $L$ as defined in \cite[(2)]{Silberger-Zink}.

\begin{definition}
A \textit{standard triple} $(P,\pi,\nu)$ for $G$ consists of:
\begin{itemize}
\item $P$ a standard parabolic subgroup of $G$;
\item $\pi$ an irreducible tempered representation of the standard Levi subgroup $L$ of $P$;
\item $\nu\in\fa^*_L$ a real parameter which is \textit{regular} and \textit{positive}, i.e. properly contained in the positive chamber determined by $P$ (for more details, see for instance \cite[\S1.3]{Silberger-Zink}).
\end{itemize}
\end{definition}
Let $(P,\pi,\nu)$ be a standard triple for $G$. Let $\ii_P^G\colon \fR(L)\to \fR(G)$ be the normalized parabolic induction functor.~Then $\ii_P^G(\pi\otimes\chi_\nu)$ has a unique irreducible quotient
\begin{equation} \label{eqn:Langlands quotient}
J(P,\pi,\nu) := J(\ii_P^G(\pi\otimes\chi_\nu)),
\end{equation}
called the \textit{Langlands quotient} of $\ii_P^G(\pi\otimes\chi_\nu)$, 
and the map $J$ defines a bijection between the set of all standard triples for $G$ and $\Irr(G)$ (see \cite[Theorem~3.5]{Konno}).

Let $\bU$ denote the unipotent radical of the Borel subgroup $\bB$ and $U:=\bU(F)$. A character $\chi\colon U\to\C^\times$ is called \textit{generic} if the stabilizer of $\chi$ in $\bT(F)$ is exactly $\bZ_{\bG}(F)$. Such a pair ${\mathfrak w}:=(U,\chi)$ is called a \textit{Whittaker datum} for $G$. There are only finitely many $G$-conjugacy classes of Whittaker data for $G$, since the group $\bG_{\ad}(F)$ acts transitively on the set of these pairs.  

\begin{definition} \label{defn:generic}
A smooth irreducible representation $\pi$ of $G$ is called \textit{${\mathfrak w}$-generic} if 
$\Hom_G(\pi,\Ind_U^G\chi)\ne 0$. 
\end{definition}
\subsection{Bernstein series} \label{subsec:Bernstein}
Let $L$ be a Levi subgroup of a parabolic subgroup $P$ of $G$, and let $\fX_\nr(L)$ denote the group of unramified characters of $L$. Let $\sigma$ be an irreducible supercuspidal smooth representation of $L$. 
\begin{notation} \label{notation:inertial class}
We write:
\begin{itemize}
\item $(L,\sigma)_G$ for the $G$-conjugacy class of the pair $(L,\sigma)$;
\item $\fs:=[L,\sigma]_G$ for the $G$-conjugacy class of the pair $(L,\fX_\nr(L)\cdot\sigma)$. 
\end{itemize}
We denote by $\fB(G)$ the set of such classes $\fs$. We set $\fs_L:=[L,\sigma]_L$.
\end{notation}
Let $\fR^\fs(G)$ be the full subcategory of $\fR(G)$ whose objects are the representations $(\pi,V)$ such that every $G$-subquotient of $\pi$ is equivalent to a subquotient of a parabolically induced representation $\ii_P^G(\sigma')$, where $\ii_P^G$ is the functor of normalized parabolic induction  and $\sigma'\in\fX_\nr(M)\cdot\sigma$. 
The categories $\fR^\fs(G)$ are indecomposable and split the full smooth category $\fR(G)$ into a direct product (see \cite{Bernstein-centre}):
\begin{equation} \label{eqn:Bernstein decompositionI}
\fR(G)=\prod_{\fs\in\fB(G)}\fR^\fs(G).
\end{equation}
We denote by $\Irr^\fs(G)$ the classes of irreducible objects in $\fR^\fs(G)$ --i.e.~irreducible representations whose supercuspidal support lies in $\fs$-- and call $\Irr^\fs(G)$ the \textit{Bernstein series} attached to $\fs$. By \eqref{eqn:Bernstein decompositionI}, Bernstein series give a partition of the set $\Irr(G)$ of isomorphism classes of irreducible smooth irreducible representations of $G$:
\begin{equation} \label{eqn:Bernstein decomposition}
\Irr(G)=\bigsqcup_{\fs\in\fB(G)}\Irr^\fs(G).
\end{equation}
For $\fs=[L,\sigma]_G\in \fB(G)$, we define
\begin{equation} \label{eqn:NsI}
    \Nor_G(\fs_L):=\left\{g\in G\,:\, \text{${}^gL=L$ and ${}^g\sigma\simeq\chi\otimes\sigma$, for some $\chi\in\fX_\nr(L)$}\right\},
\end{equation}
where $\fs_L=[L,\sigma]_L\in \fB(L)$ and denote by $W_G^\fs$ the extended finite Weyl group $\Nor_G(\fs_L)/L$.
\subsection{Formal degrees.}
Let $\bA$ denote the maximal $F$-split torus of the center of $\bG$, and $A:=\bA(F)$. Let $\pi$ be an irreducible square-integrable representation of $G$ on a Hilbert space $V$. Let $\fdeg(\pi)\in \R_{>0}$ denote the formal degree of $\pi$. 
By definition, we have
\begin{equation} \label{eqn:defn formal degree}
    \int_{G/A}(\pi(g)v_1,v_1')\overline{(\pi(g)(v_2,v_2')} dg =\fdeg(\pi)^{-1}\cdot (v_1,v_1')\overline{(v_2,v_2')}
\end{equation}
holds for all $v_1,v_1',v_2,v_2'\in V$, where $(\cdot,\cdot)$ is a $\bG(F)$-invariant inner product on $V$. We remark that $\fdeg(\pi)$ depends on the choice of the Haar measure $dg$ on $G$. We normalize the measure $dg$ as follows. When $\bG$ splits over an unramified extension of $F$, by \cite[(6)]{FOS-Jussieu}, the volume of the parahoric subgroup $G_{x,0}$ is then
\begin{equation} \label{eqn:DR}
    \mathrm{Vol}(G_{x,0})
    =q^{-\dim(\bbG_{x,0})/2}|\bbG_{x,0}|_{p'}.
\end{equation}
\subsection{Unipotent representations} \label{subsec:unipotents}
Let $x\in\mcB(G,F)$ and let $\tau$ be an irreducible cuspidal unipotent representation of $\bbG_{x,0}$. Let $\boldsymbol{\tau}^\vee$ denote the contragredient of the inflation $\boldsymbol{\tau}$ of $\tau$ to $G_{x,0}$. Consider the algebra $\mcH(G,\boldsymbol{\tau})$ of locally constant locally supported $\End(\boldsymbol{\tau}^\vee)$-valued functions $f$ satisfying $f(p_1gp_2)=\boldsymbol{\tau}^\vee(p_1)f(g)\boldsymbol{\tau}^\vee(p_2)$ for any $p_1,p_2\in G_{x,0}$ and $g\in G$.

We have an anti-involution on $\mcH(G,\boldsymbol{\tau})$ given on the Iwahori-Matsumoto basis $(T_w)$ by $\overline{T_w}:=T_{w^{-1}}$. It induces a Hermitian form $h$ on $\mcH(G,\boldsymbol{\tau})$ defined by
$h(f_1,f_2):=(f_1\overline{f_2})(1)$. Let $\mcH^2(G,\boldsymbol{\tau})$ denote the corresponding completion. A finite dimensional simple left $\mcH(G,\boldsymbol{\tau})$-module $\pi^{\mcH}$ is called square-integrable if it may be realized as an $\mcH(G,\boldsymbol{\tau})$-submodule $M_{\pi^\mcH}$ of $\mcH^2(G,\boldsymbol{\tau})$. There is a positive 
real number $d(\pi^{\mcH})$, depending only on the isomorphism class of $\pi^{\mcH}$, such that
\begin{equation} \label{eqn:dpiH}
 h(f_1,f_2)\cdot\overline{h(g_1,g_2)}=d(\pi^\mcH)\cdot h(f_1\overline{g_1},f_2\overline{g_2})\quad \text{for any $f_1,f_2,g_1,g_2\in M_{\pi^{\mcH}}$.}   
\end{equation}
\begin{proposition} \label{prop: formal degrees for non-supercuspidals}
Let $\pi$ be a unipotent square-integrable irreducible representation of $G$ such that $\pi^\cH:=\Hom_{G_{x,0}}(\boldsymbol{\tau},\pi)\ne 0$. We have
\begin{equation} \label{eqn:formal degree square integrable}
\fdeg(\pi)=\frac{\dim\tau}{\mathrm{Vol}(G_{x,0})}\cdot d(\pi^\cH). 
\end{equation}
\end{proposition}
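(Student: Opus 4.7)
The proof is a transfer-of-matrix-coefficients argument between the group $G$ and the Hecke algebra of the type $(G_{x,0},\boldsymbol{\tau})$. The plan is to express matrix coefficients of $\pi$ in terms of elements of $\mcH(G,\boldsymbol{\tau})$ that represent $\pi^\cH\subset \mcH^2(G,\boldsymbol{\tau})$, then compare the Plancherel-type identity \eqref{eqn:defn formal degree} with the Hecke-algebra identity \eqref{eqn:dpiH}. Since $\pi^\cH\neq 0$, one fixes a $G_{x,0}$-equivariant embedding $\boldsymbol{\tau}\hookrightarrow \pi|_{G_{x,0}}$; the image $V^{\boldsymbol{\tau}}\subset V$ is isomorphic to $\tau$ as a $\bbG_{x,0}$-representation, and I will work with vectors $v_i,v_i'\in V^{\boldsymbol{\tau}}$.

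First I would attach to a pair $(v,w)\in V^{\boldsymbol{\tau}}\times V^{\boldsymbol{\tau}}$ the function $f_{v,w}\colon G\to\End(\boldsymbol{\tau}^\vee)$ determined by the matrix coefficient $g\mapsto (\pi(g)v,w)$: concretely, using the isomorphism $\Hom_{G_{x,0}}(\boldsymbol{\tau},V)\otimes\tau\simeq V^{\boldsymbol{\tau}}$ and an orthonormal basis $\{e_i\}$ of $\tau$, write $v=\sum_i \phi_v(e_i)$ and define
\begin{equation*}
f_{v,w}(g):=\frac{1}{\dim\tau}\bigl((\pi(g)\phi_v(e_j),\phi_w(e_i))\bigr)_{i,j}\in \End(\boldsymbol{\tau}^\vee).
\end{equation*}
A direct check using the transformation rule of $\phi_v,\phi_w$ under $G_{x,0}$ shows $f_{v,w}\in\mcH(G,\boldsymbol{\tau})$, and one verifies that as $v,w$ range over $V^{\boldsymbol{\tau}}$ these functions span $M_{\pi^\cH}\subset\mcH^2(G,\boldsymbol{\tau})$. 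The point is that the assignment $(v,w)\mapsto f_{v,w}$ intertwines the two Hermitian structures, up to the normalization constant we are after.

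Next I would compute both sides of \eqref{eqn:dpiH} for $f_i:=f_{v_i,v_i'}$, $g_i:=f_{v_i^\sharp,v_i'^\sharp}$. By construction $\overline{T_w}=T_{w^{-1}}$ is implemented by $g\mapsto g^{-1}$ combined with the natural pairing on $\End(\boldsymbol{\tau}^\vee)$, so $h(f_1,f_2)=(f_1\overline{f_2})(1)$ unfolds as an integral of a product of matrix coefficients against the characteristic function of $G_{x,0}$. Using the Schur orthogonality relations for $\tau$ on the compact group $G_{x,0}$ modulo its pro-$p$ radical, and the standard normalization $\vol(G_{x,0})$, one obtains
\begin{equation*}
h(f_{v,w},\overline{f_{v',w'}})\;=\;\frac{\vol(G_{x,0})}{\dim\tau}\int_{G/A}(\pi(g)v,w)\overline{(\pi(g)v',w')}\,dg.
\end{equation*}
Substituting this identity into \eqref{eqn:dpiH} and then invoking the Plancherel formula \eqref{eqn:defn formal degree} on the right-hand side (applied twice, once for the pair $(v_1,v_1'),(v_2,v_2')$ and once for the ``stacked'' pair that produces $h(f_1\overline{g_1},f_2\overline{g_2})$), the matrix-coefficient double integrals cancel and one is left with the scalar identity
\begin{equation*}
\fdeg(\pi)^{-1}\cdot\frac{\vol(G_{x,0})}{\dim\tau}\;=\;d(\pi^\cH)^{-1},
\end{equation*}
which is \eqref{eqn:formal degree square integrable}.

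The step that requires the most care is the Hermitian comparison in the previous paragraph: one has to match the Iwahori–Matsumoto involution $T_w\mapsto T_{w^{-1}}$ on the Hecke side with complex conjugation of matrix coefficients on the group side, and the factor $\dim\tau/\vol(G_{x,0})$ appears precisely from the Schur orthogonality constants for $\boldsymbol{\tau}$ together with the chosen normalization of the inner product on $\tau$. The square-integrability hypothesis ensures that all integrals converge and that $\pi^\cH$ indeed embeds into $\mcH^2(G,\boldsymbol{\tau})$, so that \eqref{eqn:dpiH} applies to $M_{\pi^\cH}$; beyond this, the argument is a formal manipulation.
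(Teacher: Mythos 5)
The paper proves this result simply by citing Reeder's Proposition~9.1 in \cite{Reeder-HA}, and your proposal is a reconstruction of exactly the argument that Reeder uses there: transfer of matrix coefficients between the group and the Hecke algebra of the type $(G_{x,0},\boldsymbol{\tau})$, identification of the Hermitian forms, and comparison of the two ``formal degree'' normalizations. So the strategy is the right one and matches the cited source.

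There is, however, an arithmetic inversion in your final display. From your own intermediate identity
$h(f_{v,w},f_{v',w'})=\tfrac{\vol(G_{x,0})}{\dim\tau}\int_{G/A}(\pi(g)v,w)\overline{(\pi(g)v',w')}\,dg$
and the Plancherel relation \eqref{eqn:defn formal degree}, one gets
$h(f_{v,w},f_{v',w'})=c\cdot\fdeg(\pi)^{-1}(v,w)\overline{(v',w')}$ with
$c=\vol(G_{x,0})/\dim\tau$. Feeding this into \eqref{eqn:dpiH} gives
$c^2\fdeg(\pi)^{-2}=d(\pi^\cH)\cdot c\cdot\fdeg(\pi)^{-3}$, i.e.\ $d(\pi^\cH)=c\cdot\fdeg(\pi)$, which unwinds to
$\fdeg(\pi)=\tfrac{\dim\tau}{\vol(G_{x,0})}\,d(\pi^\cH)$ --- that is, the ``scalar identity'' you record should read
$\fdeg(\pi)^{-1}\cdot\tfrac{\dim\tau}{\vol(G_{x,0})}=d(\pi^\cH)^{-1}$, with the fraction the other way up; as literally written your displayed identity gives $\fdeg(\pi)=\tfrac{\vol(G_{x,0})}{\dim\tau}\,d(\pi^\cH)$, the reciprocal of \eqref{eqn:formal degree square integrable}. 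Separately, the one step you gloss over with ``the matrix-coefficient double integrals cancel'' is where the single extra factor of $\fdeg(\pi)^{-1}$ from the stacking convolution $f_1\overline{g_1}$ (via Schur orthogonality on $G$) must be extracted explicitly; this is the crux of the bookkeeping and deserves to be written out, since it is precisely what turns the count of $\fdeg(\pi)^{-1}$ powers from $2$ vs.\ $2$ into $2$ vs.\ $3$ and produces a nontrivial relation.
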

\begin{proof}
See \cite[Proposition 9.1]{Reeder-HA}.
\end{proof}

Generalizing his work with Kazhdan \cite{Kazhdan-Lusztig}, Lusztig proved in \cite{Lu-padicI} that, when $\bG$ is simple of adjoint type, the unipotent representations of $G$ are in bijective correspondence with $G^\vee$-conjugacy classes of triples $(t,u,\rho)$, where $s\in G^\vee$ is semisimple, $u$ is unipotent  such that $tut^{-1}=u^q$, and $\rho$ is the isomorphism class of an irreducible representation of the component group of the mutual centralizer in $G^\vee$ of $t$ and $u$, such that $\rho$ is trivial on the center of $G^\vee$.  This result was extended to an arbitrary group $G$ in \cite{FOS-Jussieu}.

One can replace $u$ by $n = {\mathrm{ln}}(u)$, a nilpotent element in the Lie algebra $\fg ^\vee$ of $G^\vee$ so that an indexing triple is $(t,n,\rho)$, where
$t$ is a semisimple element in $G^\vee$, $n$ a nilpotent element in $\fg ^\vee$ such that $\Ad(t)n = qn$,
and an irreducible representation $\rho$ of the component group $A(t,n):= \rZ_{G^\vee}(t, n)/\rZ_{G^\vee}(t,n)^\circ$, where $\rZ_{G^\vee}(s,n):= \rZ_{G^\vee}(t)\cap \rZ_{G^\vee}(n)$ and $\rZ_{G^\vee}(n)$ is taken with respect to the adjoint action of $G^\vee$ on $\fg^\vee$. We will use this form of the indexing triples in the examples in later sections.
\subsection{Supercuspidal representations} \label{subsec:supercuspidals}
In this section, we briefly recall the general construction of supercuspidal representations of $G=\bG(F)$, where $\bG$ is an arbitrary tamely ramified reductive group and the residual characteristic $p$ of $F$ does not divide the order of the Weyl group of $\bG$. We also describe different kinds of supercuspidal representations that will occur: regular (resp. non-singular) supercuspidal representations, depth-zero supercuspidal representations and  unipotent supercuspidal representations. 

\subsubsection{Deligne-Lusztig theory} \label{subsubsec:DL} 
Let $\bbbG$ be a connected reductive algebraic group defined over a finite field $\Fq$. We denote by $\bbbG^\vee$ a connected reductive algebraic group defined over $\Fq$ with root datum dual to that of $\bbbG$. 
Recall from classical Deligne-Lusztig theory \cite[\S10]{Deligne-Lusztig} and \cite[(8.4.4)]{Lusztig-characters-Princeton-book} that the set of equivalence classes of irreducible representations of $\bbbG(\Fq)$ decomposes into a disjoint union
\begin{equation} \label{eqn:Lusztig-series}
\Irr(\bbbG(\Fq))\xrightarrow{\sim}\coprod\limits_{(s)}\mathcal{E}(\bbbG(\Fq),s),
\end{equation}
where $(s)$ is the $\bbbG^{\vee}(\Fq)$-conjugacy class of a semisimple element $s$ of $\bbbG^{\vee}(\Fq)$, and $\mathcal{E}(\bbbG,s)$ is the Lusztig series defined as below in \eqref{defn-Deligne-Lusztig-series}. More precisely, the decomposition is obtained as follows: as proved in \cite[Corollary~7.7]{Deligne-Lusztig}, for any irreducible representation $\tau$ of $\bbbG(\Fq)$, there exists an $\Fq$-rational maximal torus $\bbbT$ of $\bbbG$ and a character $\theta$ of $\bbbT(\Fq)$ such that $\tau$ occurs in the Deligne-Lusztig (virtual) character $R_\bbbT^\bbbG(\theta)$, i.e.~such that
$\langle\tau, R_\bbbT^\bbbG(\theta)\rangle_{\bbbG(\Fq)}\ne 0$, where the character of $\tau$ is also denoted by $\tau$, and $\langle\;,\;\rangle_{\bbbG(\Fq)}$ is the usual scalar 
product on the space of class functions on $\bbbG(\Fq)$: 
\begin{equation} \label{eqn:scalar product}
\langle f_1,f_2\rangle_{\bbbG(\Fq)} = |\bbbG(\Fq)|^{-1}\sum_{g\in \bbbG(\Fq)}f_1(g)\,\overline{f_2(g)} .
\end{equation}
If $\theta=1$ (i.e.~the trivial character of $\bbbT(\Fq)$), then the representation $\tau$ is called \textit{unipotent}. 

If the pairs $(\bbbT,\theta)$ and $(\bbbT',\theta')$ are not $\bbbG(\Fq)$-conjugate, then $R_\bbbT^\bbbG(\theta)$ and $R_{\bbbT'}^\bbbG(\theta')$ are orthogonal to each other with respect to $\langle\,,\,\rangle_{\bbbG(\Fq)}$, but they may have a common constituent as they are virtual characters. This has motivated the introduction of the following weaker notion of conjugacy:
the pairs $(\bbbT,\theta)$ and $(\bbbT',\theta')$ are called \textit{geometrically conjugate} if there exists a $g\in\bbbG$, such that $\bbbT'={}^g\bbbT$ and such that for any non-negative integer $m$, we have 
$\theta'\circ\Nor_{\mathrm{Fr}^m|\mathrm{Fr}}=\theta\circ\Nor_{\mathrm{Fr}^m|\mathrm{Fr}}\circ\ad(g)$, 
where $\Fr\colon\bbbG\to\bbbG$ is the geometric Frobenius endomorphism associated to the $\Fq$-structure of $\bbbG$ (hence we have $\bbbG^{\Fr}=\bbbG(\Fq)$), and $\Nor_{\Fr^m|\Fr}\colon \bbbT^{\Fr^m}\to\bbbT^{\Fr}$ is the norm map given by $\Nor_{\Fr^m|\Fr}(t):=t\cdot\Fr(t)\cdot\Fr^2(t)\cdots\Fr^{m-1}(t)$.

The $\bbbG(\Fq)$-conjugacy classes of pairs $(\bbbT,\theta)$ as above are in one-to-one correspondence with the $\bbbG^\vee(\Fq)$-conjugacy classes of pairs $(\bbbT^\vee,s)$ where $s$ is a semisimple element of $\bbbG^\vee(\Fq)$, and $\bbbT^\vee$ is an $\Fq$-rational maximal torus of $\bbbG^\vee$ containing $s$.
Then the \textit{Lusztig series} $\mathcal{E}(\bbbG,s)$ is defined as
\begin{equation}\label{defn-Deligne-Lusztig-series}
  \{\tau\in\Irr(\bbG)\colon
  \text{$\tau$ occurs in $R_\bbbT^\bbbG(\theta)$ where $(\bbbT,\theta)_{\bbG}$ corresponds to $(\bbbT^\vee,s)_{\bbG^\vee}$}\}.
\end{equation}
By definition, $\mathcal{E}(\bbbG,1)$ consists only of unipotent representations. 

By the various works of Lusztig, there is a bijection
\begin{equation}\label{Lusztig-unipotent-decomposition}
    \begin{matrix}
    \mathcal{E}(\bbbG(\Fq),s)\xrightarrow{1--1} \mathcal{E}(\bbbG^\vee_s(\Fq),1)\cr
    \tau\mapsto\tau_\unip,
    \end{matrix}
\end{equation}
where $\bbbG^\vee_s:=\Cent_{\bbbG^\vee}(s)$ denotes the centralizer of $s$ in $\bbbG^\vee$, and hence plays the role of an ``endoscopic'' group of $\bbbG$:
\begin{enumerate}
\item[$\bullet$]
In the case where $\bbbG$ has connected center, the existence of the bijection~\eqref{Lusztig-unipotent-decomposition} was established in \cite[Theorem~4.23]{Lusztig-characters-Princeton-book}. 
\item[$\bullet$]
For an arbitrary $\bbbG$, the group $\bbbG^\vee_s$ may be disconnected, and one needs to first extend the notion of Deligne-Lusztig character to this case in the following way. Let $\bbbG_s^{\vee,\circ}$ be the identity component of $\bbbG_s^\vee$. For $\bbbT^\vee$ an $\Fq$-rational maximal torus of $\bbbG_s^{\vee,\circ}$ and $\theta^\vee$ a character of $\bbbT^\vee(\Fq)$, we define
\begin{equation}
R_{\bbbT^\vee}^{\bbbG^\vee_s}(\theta^\vee):=\Ind_{\bbbG^{\vee,\circ}_s(\Fq)}^{\bbG_s^\vee(\Fq)}(R_{\bbbT^{\vee}}^{\bbbG^{\vee,\circ}_s}(\theta^\vee)).
\end{equation}
Then $\mathcal{E}(\bbbG^\vee_s(\Fq),1)$ is defined as the set of irreducible coonstituents of $R_{\bbbT^\vee}^{\bbbG^\vee_s}(1)$ and the equivalence \eqref{Lusztig-unipotent-decomposition} was established in \cite[\S12]{Lusztig-disconnected-center}. 
\end{enumerate}
\begin{proposition} \label{property-Lusztig-DL-map}
The bijection \eqref{Lusztig-unipotent-decomposition} satisfies the following properties:
\begin{enumerate}
    \item[{\rm (1)}] It sends a Deligne-Lusztig character $R_\bbbT^\bbbG(\theta)$ in $\bbG$ (up to a sign) to a Deligne-Lusztig character $R_{\bbbT^{\vee}}^{\bbbG^\vee_s}(1)$, where $1$ denotes the trivial character of $\bbbT^{\vee}$ (see \cite[\S12]{Lusztig-disconnected-center}). 
    \item[{\rm (2)}] It preserves cuspidality in the following sense: \\
if $\tau\in\Irr(\bbbG(\Fq),s)$ is cuspidal, then
\begin{itemize}
    \item[{\rm (a)}]
if $s\in\bbbG^\vee(\Fq)$, then the largest $\Fq$-split torus in the center of $\bbbG^\vee_{s}$ coincides with the largest $\Fq$-split torus in the center of $\bbbG^\vee$ (see \cite[(8.4.5)]{Lusztig-characters-Princeton-book}),
    \item[{\rm (b)}] the unipotent representation $\tau_\unip$ is cuspidal.
 \end{itemize}
    \item[{\rm (3)}]
The dimension of $\tau$ is given by
\begin{equation} \label{eqn:dim tau}
\dim(\tau)=\frac{|\bbG_{x_0}|_{p'}}{|(\Cent_{\bbbG_{x_0}^\vee}(s))(\Fq)|_{p'}}\,\dim(\tau_\unip), 
\end{equation}
where $|\cdot|_{p'}$ is the largest prime-to-$p$ factor of $|\cdot|$ (see \cite[Remark~13.24]{Digne-Michel}).
\end{enumerate}
\end{proposition}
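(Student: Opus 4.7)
The plan is to assemble the three assertions from Lusztig's Jordan decomposition of characters, first in the connected center setting of \cite[Theorem~4.23]{Lusztig-characters-Princeton-book} and then in the disconnected center extension of \cite[\S12]{Lusztig-disconnected-center}. The main structural tool is that the bijection \eqref{Lusztig-unipotent-decomposition} is characterized, up to a sign $\varepsilon_{\bbbG}\varepsilon_{\bbbG^\vee_s}$, by the identity $\langle\tau, R_{\bbbT}^{\bbbG}(\theta)\rangle_{\bbbG(\Fq)} = \pm\langle\tau_\unip, R_{\bbbT^\vee}^{\bbbG^\vee_s}(1)\rangle_{\bbbG^\vee_s(\Fq)}$, where the pairs $(\bbbT,\theta)$ and $(\bbbT^\vee,s)$ correspond under duality. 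In the disconnected case one must first extend the Deligne-Lusztig characters via induction from $\bbbG^{\vee,\circ}_s(\Fq)$ to $\bbbG^\vee_s(\Fq)$, as recalled just before the proposition.

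For (1), when $\rZ_{\bbbG}$ is connected this is the content of Lusztig's original construction of the Jordan decomposition, where the bijection is defined precisely so as to match Deligne-Lusztig constituents on both sides. For a general $\bbbG$, one applies Lusztig's extension \cite[\S12]{Lusztig-disconnected-center}: either via a $z$-extension reducing to the connected-center case, or directly via the Harish-Chandra induction formula $R_{\bbbT^\vee}^{\bbbG^\vee_s}(1)=\Ind_{\bbbG^{\vee,\circ}_s(\Fq)}^{\bbbG^\vee_s(\Fq)} R_{\bbbT^\vee}^{\bbbG^{\vee,\circ}_s}(1)$, which commutes with the bijection on each Lusztig series.

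For (2)(a), I would argue by contradiction: if the $\Fq$-split central torus of $\bbbG^\vee_s$ strictly contained that of $\bbbG^\vee$, then $s$ would be conjugate into a proper $\Fq$-Levi $\bbbL^\vee\subsetneq\bbbG^\vee$, so $\mathcal{E}(\bbbG(\Fq),s)$ would be contained in the image of Harish-Chandra induction from the dual Levi $\bbbL$, contradicting the cuspidality of $\tau$. For (2)(b), one uses that cuspidality on each side is detected by vanishing of Harish-Chandra restriction, combined with the orthogonality identity above: $\tau$ fails to appear in any nontrivially induced representation exactly when $\tau_\unip$ does, by the compatibility in (1) applied to Levi subgroups of $\bbbG^\vee_s$ (see \cite[(8.4.5)]{Lusztig-characters-Princeton-book}).

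For (3), I would apply the dimension formula attached to Lusztig's Jordan decomposition, which asserts that $\dim\tau$ and $\dim\tau_\unip$ differ exactly by the index $|\bbbG(\Fq)|_{p'}/|\Cent_{\bbbG^\vee}(s)(\Fq)|_{p'}$; see \cite[Remark~13.24]{Digne-Michel}. The main technical obstacle throughout is the systematic passage from the connected-center statements to the general case, which relies on Lusztig's extension in \cite{Lusztig-disconnected-center}; once that framework is in place, the three properties follow by formal manipulation of the Deligne-Lusztig inner product identity.
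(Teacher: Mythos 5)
Your proposal is correct and follows essentially the same route as the paper, which offers no written proof for this proposition beyond the citations embedded in its statement: part (1) is the defining compatibility of Lusztig's Jordan decomposition with Deligne--Lusztig characters (extended to disconnected centralizers via \cite[\S12]{Lusztig-disconnected-center}), part (2) is the standard cuspidality criterion from \cite[(8.4.5)]{Lusztig-characters-Princeton-book} argued exactly as you do via Harish--Chandra induction from a proper Levi, and part (3) is the dimension formula of \cite[Remark~13.24]{Digne-Michel}. Your sketch is a faithful unpacking of those references, so nothing further is needed.
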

Let $W_\bbbG(\bbbT):=\Nor_\bbbG(\bbbT)/\bbbT$ and 
$\theta\in\Irr(\bbbT(\Fq))$. We define
\begin{equation}
W_\bbbG(\bbbT,\theta)^{\Fr}:=\{w\in W_\bbbG(\bbbT)^{\Fr}\,:\,{}^w\theta=\theta\}.
\end{equation}
By \cite[Theorem~6.8]{Deligne-Lusztig}, we have
\begin{equation} \label{eqn:DLproduct}
\langle R_\bbbT^\bbbG(\theta),R_\bbbT^\bbbG(\theta)\rangle_{\bbbG(\Fq)}=|W_\bbbG(\bbbT,\theta)^{\Fr}|,
\end{equation}
where $\langle \;,\;\rangle_{\bbbG(\Fq)}$ is as in \eqref{eqn:scalar product}. 
We recall that $\theta$ is said to be \textit{in general position} if $W_\bbbG(\bbbT,\theta)^{\Fr}=\{1\}$ (see \cite[Definition~5.15]{Deligne-Lusztig}).

\begin{lem}\label{torus-centralizer-lemma}
Let $\tau\in\cE(\bbbG(\Fq),s)$ be such that $\bbbG^\vee_s:=\Cent_{\bbbG^\vee}(s)$ is a torus. Then $\tau=\pm R_\bbbT^\bbbG(\theta)$, where $\theta$ is in general position. 
\end{lem}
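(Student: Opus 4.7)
The strategy is to use the Jordan decomposition bijection \eqref{Lusztig-unipotent-decomposition} and reduce to the elementary fact that for a torus the only unipotent representation is the trivial character. First I would observe that since $\bbbG^\vee_s$ is a torus, the set $\cE(\bbbG^\vee_s(\Fq),1)$ consists of a single element, namely the trivial character $\mathbf{1}$. By \eqref{Lusztig-unipotent-decomposition}, the Lusztig series $\cE(\bbbG(\Fq),s)$ therefore also consists of a single element, namely $\tau$.

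Next, let $(\bbbT,\theta)$ be a pair corresponding to $(\bbbT^\vee,s)$ under the geometric conjugacy bijection, where necessarily $\bbbT^\vee=\bbbG_s^\vee$ because $\bbbG_s^\vee$ is itself a torus containing $s$. By Property (1) of Proposition~\ref{property-Lusztig-DL-map}, the Jordan decomposition sends $\pm R_\bbbT^\bbbG(\theta)$ to the Deligne--Lusztig character $R_{\bbbT^\vee}^{\bbbG_s^\vee}(\mathbf{1})$; but for a torus this reduces to the trivial character $\mathbf{1}$ itself. Combined with the uniqueness from the previous paragraph, this forces $\tau=\pm R_\bbbT^\bbbG(\theta)$ (with the sign being the appropriate Lusztig sign so that the virtual character is a genuine character).

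Finally, since $\tau$ is an irreducible character, we have
\[
\langle R_\bbbT^\bbbG(\theta),R_\bbbT^\bbbG(\theta)\rangle_{\bbbG(\Fq)}=\langle\pm\tau,\pm\tau\rangle_{\bbbG(\Fq)}=1.
\]
Applying the inner product formula \eqref{eqn:DLproduct} of Deligne--Lusztig, we obtain $|W_\bbbG(\bbbT,\theta)^{\Fr}|=1$, which by definition means that $\theta$ is in general position. This completes the proof.

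I do not foresee a serious obstacle here: the lemma is essentially a packaging of the Jordan decomposition together with the Deligne--Lusztig inner product formula, and all required inputs are already cited in the excerpt. The only subtle point is to make sure that in case (b) of Proposition~\ref{property-Lusztig-DL-map} (when $\bbbG^\vee_s$ is possibly disconnected) the statement still applies; but under our hypothesis $\bbbG^\vee_s$ is a torus, hence connected, so the classical statement in case (a) suffices.
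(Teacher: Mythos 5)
Your proof is correct and takes essentially the same route as the paper: apply the Jordan decomposition $\tau\mapsto\tau_{\unip}$, note that $\tau_{\unip}=1=R_{\bbbT^\vee}^{\bbbG_s^\vee}(1)$ when $\bbbG_s^\vee$ is a torus, invoke Proposition~\ref{property-Lusztig-DL-map}(1) to deduce $\tau=\pm R_{\bbbT}^{\bbbG}(\theta)$, and then use the Deligne--Lusztig inner product formula \eqref{eqn:DLproduct} together with irreducibility of $\tau$ to conclude $|W_\bbbG(\bbbT,\theta)^{\Fr}|=1$. The extra observation that $\cE(\bbbG(\Fq),s)$ is a singleton is a harmless amplification of the same argument.
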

\begin{proof} When $\bbbG^\vee_s:=\Cent_{\bbbG^\vee}(s)$ is a torus, we have $\tau_{\unip}=1=R_{\bbbT^\vee}^{\Cent_{\bbbG^\vee}(s)}(1)$. 
By Proposition~\ref{property-Lusztig-DL-map}(1), we obtain $\tau=\pm R_\bbbT^\bbbG(\theta)$. Therefore
\begin{equation}
\langle R_\bbbT^\bbbG(\theta),R_\bbbT^\bbbG(\theta)\rangle_{\bbG}=\langle \tau,\tau\rangle_\bbG=1.
\end{equation}
Then it follows from \eqref{eqn:DLproduct} that
$|W_\bbbG(\bbbT,\theta)^{\Fr}|=1$, i.e.~$\theta$ is in general position.
\end{proof}

\subsubsection{Depth-zero supercuspidal representations} \label{subsec:depth-zero supercuspidal}
Let $\bG$ be a connected reductive algebraic group defined over $F$ and $\bS$ an $F$-torus of $\bG$. Then $\bS$ possesses an ft-N\'eron model (a smooth group scheme over $\fo_F$ of finite type), the neutral connected component of it is called the \textit{connected N\'eron model} of $\bS$ and denoted by $\mathfrak{S}^\circ$ (see \cite[\S10]{Bosch-Lutkebohmer-Raynaud}). Let $S:=\bS(F)$ and $S_0:=\mathfrak{S}^\circ(\fo_F)$.

We suppose that the torus $\bS$ is a \textit{maximally unramified elliptic maximal torus} (as defined in \cite[Definition~3.4.2]{Kal-reg}), i.e.~such that $\bS=\Cent_\bG(\bT)$, where $\bT$ is the maximal unramified subtorus of $\bS$. Since $\bT$ is a maximal split torus over $F_\nr$, we have the apartment 
$\cA_\red(\bS,F_\nr)\subset \cB_\red(\bG,F_\nr)$. Since $\bS$ is defined over $F$ and elliptic, this apartment is Frobenius stable, and contains a unique Frobenius-fixed point, which we denote by $x$. Then $x$ is a vertex of $\cB_\red(\bG,F_\nr)$ by \cite[Lemma 3.4.3]{Kal-reg}. Therefore $G_{x,0}$ is a maximal parahoric subgroup of $G$. We denote by  $\bbG_{x,0}$ its reductive quotient, which is the group of $\Fq$-points of a connected reductive algebraic group $\bbbG_{x,0}$ over $\overline\FF_q$.

The special fiber of the (automatically connected) ft-Neron model of $\bT$ embeds canonically as an elliptic (i.e.~anisotropic modulo center) maximal torus $\bbbT$ of the reductive group $\bbbG_x$.~More explicitly, $\bbbT(k_{F'})\subset \bbbG_x(k_{F'})$ is the image in $\bG(F')_{x,0+}$ of $\bS(F')\cap\bG(F')_x$, or equivalently of
 $\bT(F')\cap\bG(F')_x$, for every unramified extension $F'$ of $F$ \cite[Lemma~2.2.1(3)]{DeBacker-parameterizing-conjugacy-classes}. Every elliptic maximal torus of $\bbbG_{x,0}$ arises in this way by \cite[Lemma~2.3.1]{DeBacker-parameterizing-conjugacy-classes}.

In \cite[Definition~5.15]{Deligne-Lusztig}, Deligne and Lusztig defined two regularity conditions for a character $\theta$ of $\bbbT(\Fq)$, which we now recall:
\begin{itemize}
    \item 
$\theta$ is said to be \textit{in general position} if its stabilizer in $(\Nor_\bbbG(\bbbT)/\bbbT)(\Fq)$ is trivial. 
\item $\theta$ is said to be \textit{non-singular} if it not orthogonal to any coroot.
\end{itemize} 
If the centre of $\bbbG$ is connected, $\theta$ is non-singular if and only if it is in general position by \cite[Proposition~5.16]{Deligne-Lusztig}.~The character $\theta$ is said to be \textit{regular} if its stabilizer in $(\Nor_\bG(\bS)/\bS)(F)$ is trivial (see \cite[Definition 3.4.16]{Kal-reg}).
If $\theta$ is regular, then it is in general position (see \cite[Fact 3.4.18]{Kal-reg}). 

\begin{definition} \cite[Definition 3.4.16]{Kal-reg} Let $\theta_F$ be a depth-zero character of $S$. Then $\theta_F$ is said to be \textit{regular} if its restriction to $S_0$ equals the inflation of a regular character of $\bbbT(\Fq)$.
\end{definition}

Let $\pi$ be an irreducible depth-zero supercuspidal representation of $G$. Then there exists a vertex $x\in\cB_\red(G,F)$ and an irreducible cuspidal representation $\tau$ of $\bbbG_x(\Fq)$, such that the restriction of $\pi$ to $G_{x,0}$ contains the inflation of $\tau$ (see \cite[\S1-2]{Morris-ENS} or  \cite[Proposition~6.6]{Moy-PrasadII}). The normalizer $\rN_{G}(G_{x,0})$ of $G_{x,0}$ in $G$ is a  totally disconnected group that is compact mod center, which, by \cite[proof of (5.2.8)]{Bruhat-Tits-II}, coincides with the fixator $G_{[x]}$ of $[x]$ under the action of $G$ on the reduced building of $\bG$. The $\pi$ is compactly induced from a representation of $\rN_G(G_{x,0})$: 
\begin{equation} \label{eqn:depth zero supercuspidal}
\pi=\cInd_{G_{[x]}}^G(\boldsymbol{\tau}).
\end{equation}

\begin{definition}\label{defn:regular depth-zero}\cite[Definition 3.4.19]{Kal-reg}~The representation $\pi$ is said to be \textit{regular} if $\tau=\pm R_\bbbT^{\bbbG_{x,0}}(\theta)$ for some pair $(\bbbT,\theta)$, where $\bbT$ is an elliptic maximal torus of $\bbbG_{x,0}$ and $\theta$ is a regular character of $\bbbT(\Fq)$. 
\end{definition}

Let $\pi$ be an irreducible depth-zero supercuspidal representation of $G$. Let $x\in\cB_\red(G,F)$ and $\tau\in\Irr_\cusp(\bbbG_x(\Fq))$ such that the restriction of $\pi$ to $G_{x,0}$ contains $\boldsymbol{\tau}$.
\begin{proposition} \label{prop:torus-centralizer}
If $\bG$ is simply connected, then $\pi$ is regular if and only if $\tau=\pm R_\bbbT^{\bbbG_{x,0}}(\theta)$, where $\bbT$ is an elliptic maximal torus of $\bbbG_{x,0}$ and $\theta$ is a character of $\bbbT(\Fq)$ that is in general position. \\
In particular, if $\tau\in\cE(\bbbG_{x,0}(\Fq),s)$ and $\Cent_{\bbbG_{x,0}^\vee}(s)$ is a torus, then $\pi$ is regular. 
\end{proposition}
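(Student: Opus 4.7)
The forward direction (regular $\pi$ implies the claimed form with $\theta$ in general position) is immediate from Definition \ref{defn:regular depth-zero} combined with \cite[Fact 3.4.18]{Kal-reg}: regularity of $\pi$ means $\tau = \pm R_\bbbT^{\bbbG_{x,0}}(\theta)$ with $\theta$ a regular character of $\bbbT(\Fq)$, and Fact 3.4.18 says any regular character is automatically in general position. This direction does not use the simply-connected hypothesis.

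For the converse, I would argue by comparing two \emph{a priori} different Weyl groups acting on characters of $\bbbT(\Fq)$: the ``big'' group $W_{\mathrm{big}}:=(\Nor_\bG(\bS)/\bS)(F)$ used to define regularity, and the ``small'' group $W_{\mathrm{small}}:=(\Nor_\bbbG(\bbbT)/\bbbT)^{\Fr}$ used to define general position. Bruhat-Tits theory produces an embedding $W_{\mathrm{small}}\hookrightarrow W_{\mathrm{big}}$, which already accounts for the forward implication. The key additional fact needed is that \emph{when $\bG$ is simply connected, this embedding is an equality}. Granting this, ``in general position'' and ``regular'' become the same condition on $\theta$, so if $\tau = \pm R_\bbbT^{\bbbG_{x,0}}(\theta)$ with $\theta$ in general position, then $\theta$ is regular and $\pi$ is a regular depth-zero supercuspidal by Definition \ref{defn:regular depth-zero}.

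To establish $W_{\mathrm{small}} = W_{\mathrm{big}}$ in the simply-connected case, take a Galois-fixed Weyl element $w \in W_{\mathrm{big}}$ and lift it to an element $n \in \Nor_{\bG(F^\nr)}(\bS(F^\nr))$; the $1$-cocycle $\Fr \mapsto n^{-1}\Fr(n)$ takes values in $\bS(F^\nr)$. For simply connected $\bG$, Steinberg's theorem gives $\rH^1(F^\nr/F, \bS(F^\nr)) = 1$, so one adjusts $n$ by an element of $\bS(F^\nr)$ to produce an $F$-rational representative of $w$. Reducing at $x$ (where, in the simply-connected case, $G_{x,0} = G_x$ so that $\Nor_G(\bS)$ is contained in the parahoric) then exhibits $w$ inside $W_{\mathrm{small}}$, as required.

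The ``In particular'' statement follows immediately: if $\Cent_{\bbbG_{x,0}^\vee}(s)$ is a torus, Lemma \ref{torus-centralizer-lemma} provides $\tau = \pm R_\bbbT^{\bbbG_{x,0}}(\theta)$ with $\theta$ in general position, and the equivalence just proved yields that $\pi$ is regular. The main obstacle throughout is the careful justification of the Weyl-group equality in the simply-connected case — specifically the Bruhat-Tits ingredients identifying the reduction of $\Nor_G(\bS)$ inside $\bbbG_{x,0}$, together with the proper invocation of Steinberg's theorem for the tamely ramified (rather than merely unramified) elliptic maximal torus $\bS$.
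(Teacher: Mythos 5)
Your overall strategy — reduce to comparing the stabilizers of $\theta$ in the two Weyl groups, and show they coincide when $\bG$ is simply connected — matches the paper, and the forward implication and the ``in particular'' statement are handled just as the paper does. However, there is a genuine gap in the converse, precisely where you invoke Steinberg's theorem. Steinberg's theorem asserts $\rH^1(F,\bG)=1$ for semisimple simply connected $\bG$ over a nonarchimedean local field; it says nothing about the cohomology of the \emph{torus} $\bS$. The vanishing $\rH^1(\Gal(F^{\nr}/F),\bS(F^{\nr}))=0$ that you want is a Lang-type fact which holds for any torus with good integral model, completely independent of whether $\bG$ is simply connected. That is a red flag: if your cocycle argument worked as stated it would establish the Weyl-group equality for arbitrary $\bG$, contradicting the fact that the proposition requires the simply connected hypothesis. (Moreover, the obstruction to lifting $w\in(\Nor_\bG(\bS)/\bS)(F)$ to $\Nor_\bG(\bS)(F)$ lives in $\rH^1(F,\bS)$, not in the unramified cohomology, and $\rH^1(F,\bS)$ need not vanish.)

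The actual work in the paper is done by \cite[Lemma~3.4.10]{Kal-reg}, which produces the exact sequence
\[
1\to \Nor_{G_{x,0}}(\bS)/S_0\to \Nor_\bG(\bS)(F)/S\to G_x/(G_{x,0}\cdot S)\to 1
\]
together with a canonical bijection $\Nor_{G_{x,0}}(\bS)/S_0\to\Nor_{\bbbG_x}(\bbbT)(\Fq)/\bbbT(\Fq)$. The simply connected hypothesis enters only to kill the third term of the sequence, via $G_{x,0}=G_x$ (\cite[\S 5.2.9]{Bruhat-Tits-II}) — a point you do mention in passing but never develop into the argument. Once the third term vanishes, the first two are equal, which identifies the two relevant Weyl groups and hence the two stabilizer conditions. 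You should replace the Steinberg/Galois-cohomology step with a direct appeal to Kaletha's Lemma 3.4.10 and the observation that $G_{x,0}=G_x$ in the simply connected case; as written, that step does not hold up.
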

\begin{proof}
By \cite[Lemma~3.4.10]{Kal-reg}, we have an exact sequence
\begin{equation} \label{eqn:Kal-exact}
1\to \Nor_{G_{x,0}}(\bS)/S_0\to \Nor_\bG(\bS)(F)/S\to G_x/[G_{x,0}: S]\to 1,
\end{equation}
and the natural map 
\begin{equation} \label{eqn:Kal-bij}
     \Nor_{G_{x,0}}(\bS)/S_0\longrightarrow \Nor_{\bbbG_x}(\bbbT)(\Fq)/\bbbT(\Fq)
\end{equation}
is bijective. 
Since $\bG$ is simply connected, the parahoric subgroup $G_{x,0}$ coincides with the fixator $G_x$ in $G$ of $x$ (see \cite[\S 5.2.9]{Bruhat-Tits-II}).
Combining \eqref{eqn:Kal-exact} and \eqref{eqn:Kal-bij}, we have 
\begin{equation} \label{eqn:normalizers}
\Nor_{G_x}(\bS)/S_0\simeq \Nor_\bG(\bS)(F)/S\simeq \Nor_{\bbbG_x}(\bbbT)(\Fq)/\bbbT(\Fq).
\end{equation}
Thus by \eqref{eqn:normalizers}, $\theta$ is regular if and only if it is in general position. 
The last assertion follows from Lemma~\ref{torus-centralizer-lemma}.
\end{proof}
Suppose $\bG$ is simply connected, then $G_{[x]}=G_{x,0}$ for any $x\in\mcB(\bG,F)$. Let $\pi\in\Irr(G)$ be a depth-zero supercuspidal representation of $G$. By \eqref{eqn:depth zero supercuspidal}, there is $\tau\in\Irr(\bbG_{x,0})$ such that 
\begin{equation} \label{eqn:iGxG}
\pi=\cInd_{G_{x,0}}^G\boldsymbol{\tau}=:\ii_{\bbG_{x,0}}^G\tau.
\end{equation}

\begin{proposition} \label{formal-degree-formula}
The formal degree of $\pi$ is given by
\begin{equation} \label{formal-degree-depth0-sc-formula}
    \fdeg(\pi)=\frac{q^{\dim(\mathbb{G}_{x,0})/2}\dim(\tau_{\unip})}{|(\Cent_{\bbbG_{x,0}}(s))^{\vee}(\Fq)|_{p'}}.
\end{equation}
\end{proposition}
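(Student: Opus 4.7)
The plan is to derive the formula by combining the three pieces of information made available just before the statement: the volume formula \eqref{eqn:DR} for the parahoric, the dimension formula \eqref{eqn:dim tau} for the Deligne--Lusztig constituent $\tau\in\cE(\bbbG_{x,0}(\Fq),s)$, and the formal-degree formula \eqref{eqn:formal degree square integrable} via the Hecke algebra. The simply connected hypothesis is used precisely so that $G_{[x]}=G_{x,0}$, hence \eqref{eqn:iGxG} gives $\pi=\cInd_{G_{x,0}}^{G}\boldsymbol{\tau}$ with $\boldsymbol{\tau}$ the inflation of a cuspidal $\tau\in\Irr(\bbbG_{x,0}(\Fq))$.

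First I would reduce the formal-degree computation to Reeder's general formula. Because $\pi$ is compactly induced from the parahoric $G_{x,0}$ and $\tau$ is cuspidal on the reductive quotient, Frobenius reciprocity together with Mackey decomposition (exactly as in the proof of \cite[Proposition 9.1]{Reeder-HA}) shows that $\pi^\cH=\Hom_{G_{x,0}}(\boldsymbol{\tau},\pi)$ is one-dimensional and that the Hecke algebra $\mcH(G,\boldsymbol{\tau})$ is trivial on this module, so that $d(\pi^\cH)=1$. Substituting this into \eqref{eqn:formal degree square integrable} yields
\begin{equation*}
\fdeg(\pi)=\frac{\dim\tau}{\vol(G_{x,0})}.
\end{equation*}

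Next I would plug in the explicit expressions. By \eqref{eqn:DR}, $\vol(G_{x,0})=q^{-\dim(\bbbG_{x,0})/2}\,|\bbbG_{x,0}|_{p'}$, and by \eqref{eqn:dim tau},
\begin{equation*}
\dim\tau=\frac{|\bbbG_{x,0}|_{p'}}{|(\Cent_{\bbbG_{x,0}^\vee}(s))(\Fq)|_{p'}}\,\dim(\tau_{\unip}).
\end{equation*}
The factors $|\bbbG_{x,0}|_{p'}$ cancel, producing \eqref{formal-degree-depth0-sc-formula} directly.

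The main point to verify carefully is the justification of $d(\pi^\cH)=1$, since the statement of Proposition \ref{prop: formal degrees for non-supercuspidals} was phrased for general unipotent square-integrable $\pi$ where the Hecke algebra may be nontrivial (e.g.\ for non-supercuspidal unipotent discrete series). In the depth-zero supercuspidal case, one must invoke the cuspidality of $\tau$ on $\bbbG_{x,0}$ together with the maximality of the parahoric $G_{x,0}$ (guaranteed by the ellipticity of the maximally unramified torus underlying $\tau$) to conclude that $\mcH(G,\boldsymbol{\tau})$ is supported on $G_{x,0}$ itself modulo the center, hence reduces to a commutative algebra acting by a character on the one-dimensional space $\pi^\cH$, forcing $d(\pi^\cH)=1$ with respect to the normalization used in \eqref{eqn:dpiH}. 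Once this is granted the remainder is a one-line algebraic simplification.
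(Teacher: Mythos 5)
Your derivation reaches the same final manipulation (cancel $|\bbbG_{x,0}|_{p'}$ after substituting \eqref{eqn:DR} and \eqref{eqn:dim tau}), but you obtain the intermediate identity $\fdeg(\pi)=\dim\tau/\vol(G_{x,0})$ by a different route than the paper. The paper simply cites this as a standard fact about supercuspidals compactly induced from a maximal parahoric (Schwein, Lemma~18, or DeBacker--Reeder \S5.3), whereas you re-derive it through the Hecke-algebra formalism of Proposition~\ref{prop: formal degrees for non-supercuspidals}, by arguing that $\mcH(G,\boldsymbol{\tau})\cong\C$ and hence $d(\pi^\cH)=1$. Your route is more self-contained within the paper's earlier machinery, and the triviality of the Hecke algebra does follow, in the simply connected case, from $G_{[x]}=G_{x,0}$ together with cuspidality of $\tau$ (which kills intertwining outside $N_G(G_{x,0})$). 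The verification that $d(\pi^\cH)=1$ for the trivial algebra is indeed a one-line check from \eqref{eqn:dpiH}.

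Two caveats, one of which you partially flag. First, Proposition~\ref{prop: formal degrees for non-supercuspidals} as stated in the paper is formulated only for \emph{unipotent} $\tau$ and unipotent square-integrable $\pi$; the $\tau$ here lies in $\cE(\bbbG_{x,0},s)$ for possibly nontrivial $s$, so you must invoke the more general version of Reeder's Proposition~9.1 (or some other source) rather than the specialized statement as written. Second, and more minor: you phrase the triviality of $\mcH(G,\boldsymbol{\tau})$ in terms of "the maximality of the parahoric $G_{x,0}$ (guaranteed by the ellipticity of the maximally unramified torus underlying $\tau$)," but the hypothesis actually being used is that $\bG$ is simply connected so that the stabilizer $G_{[x]}$ coincides with $G_{x,0}$; for non-simply-connected groups $N_G(G_{x,0})$ is strictly larger and the Hecke algebra is a twisted group algebra of $N_G(G_{x,0})/G_{x,0}$, so the argument would genuinely fail. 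The paper's citation sidesteps both of these by appealing to the cleaner form of the compact-induction formal-degree formula.
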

\begin{proof}
We have (see for instance \cite[Lemma~18]{Schwein} or \cite[\S~5.3]{DeBacker-Reeder})
\begin{equation} \label{eqn:fdeg induced}
    \fdeg(\pi)=\frac{\dim\tau}{\mathrm{Vol}(G_{x,0})
}.
\end{equation}
Combining \eqref{eqn:dim tau} and \eqref{eqn:DR}, we have 
\begin{equation}
\fdeg(\pi)=\frac{|G_{x,0}|_{p'}\dim(\tau_{\unip})}{|(\Cent_{\bbbG_{x,0}^\vee}(s))(\Fq)|_{p'}\cdot q^{-\dim(\mathbb{G}_{x,0})/2}|G_{x,0}|_{p'}}=\frac{q^{\dim(\mathbb{G}_{x,0})/2}\dim(\tau_{\unip})}{|(\Cent_{\bbbG_{x,0}^\vee}(s))^{\vee}(\Fq)|_{p'}}.
\end{equation}
\end{proof}

\subsubsection{Positive-depth supercuspidal representations} \label{subsec:positive depth supercuspidal}
In this section, we assume that $\bG$ splits over a tamely ramified extension of $F$, and that $p$ does not divide the order of the Weyl group $W_{\bG}$ of $\bG$.
Let $E/F$ be a finite extension. By a twisted $E$-Levi subgroup of $\bG$, we mean an $F$-subgroup $\bG'$ of $\bG$ such that $\bG'\otimes_F E$ is a Levi subgroup of $\bG\otimes_F E$. If $E/F$ is tamely ramified, then $\bG'$ is called a tamely ramified twisted Levi subgroup of $\bG$.
A tamely ramified twisted Levi sequence in $\bG$ is a finite sequence $\vec \bG=(\bG^0,\bG^1,\cdots,\bG^d)$ of twisted $E$-Levi subgroups of $\bG$, with $E/F$ tamely ramified (see~\cite[p~586]{Yu}).~We recall that an anisotropic algebraic group over $F$ is a linear algebraic group that does not contain non-trivial $F$-split tori. 
\begin{definition} \label{defn:cuspidal datum}
A \textit{cuspidal $\bG$-datum} is a tuple $\cD:=(\vec \bG,y,\vec r, \pi^0,\vec\phi)$ consisting of 
\begin{enumerate}
\item a tamely ramified Levi sequence $\vec\bG=(\bG^0\subset\bG^1\subset\cdots\subset\bG^d=\bG)$ of twisted $E$-Levi subgroups of $\bG$, such that $\rZ_{\bG^0}/\rZ_{\bG}$ is anisotropic;
\item a point $y$ in $\cB(\bG^0,F)\cap\cA(\bT,E)$, whose projection to the reduced building of $G^0$ is a vertex, where $\bT$ is a maximal torus of $\bG^0$ (hence of $\bG^i$) that splits over $E$;
\item a sequence $\vec r=(r_0,r_1,\ldots,r_d)$ of real numbers such that $0< r_0<r_1<\cdots< r_{d-1}\le r_d$ if $d>0$, and $0\le r_0$ if $d=0$;
\item an irreducible depth-zero supercuspidal representation $\pi^0$ of $G^0$;
\item
 a sequence $\vec\phi=(\phi_0,\phi_1,\ldots,\phi_d)$ of characters, where $\phi_i$ is a character of $G^i$ which is trivial on $G^i_{y,r_i+}$ and nontrivial on $G^i_{y,r_i}$ for $0\le  i\le d-1$, such that
 \begin{itemize}
     \item $\phi_d$ is trivial on $G^d_{y,r_d+}$ and nontrivial on $G^d_{y,r_d}$ if $r_{d-1}< r_d$, and $\phi_d=1$ if $r_{d-1}=r_d$ (with $r_{-1}$ defined to be $0$). 
     \item Moreover, $\phi_i$ is $\bG^{i+1}$-generic of depth $r_i$ relative to $y$ in the sense of \cite[\S9]{Yu} for $0\le i\le d-1$.
 \end{itemize}
\end{enumerate}
\end{definition}

All supercuspidal representations arise from cuspidal $\bG$-data if $p$ does not divide the order of the Weyl group of $G$ (see \cite{Fintzen}). When $\bG=\rG_2$, this condition says that $p\neq 2,3$.

Let $\bS$ be a maximal torus in $\bG$ and  be a character of depth zero. 
As before, let $\bT$ denote the maximal unramified subtorus of $\bS$. Let $F'/F$ be an unramified extension splitting $\bT$ and $R_\res(\bG,\bT)$ denote the set of restrictions to $\bT$ of the absolute roots in $R(\bG,\bS)$. 

\begin{definition} \label{defn:F-ns} \cite[Definition~3.1.1]{Kaletha-nonsingular}
A depth-zero character $\theta_F\colon\bS(F)\to\C^\times$ is called \textit{$F$-non-singular} if for every $\alpha\in R_\res(\bG,\bT)$ the character
$\theta_F\circ\Nor_{F'|F}\circ\alpha^\vee\colon F'\to \C^\times$ has non-trivial restriction to $\fo_{F'}$.
\end{definition}

\begin{definition} \label{defn: Fnse} \cite[Definition~3.4.1]{Kaletha-nonsingular}
Let $\theta_F\colon S\to \C^\times$ be a character. The pair $(\bS,\theta_F)$ is said to be \textit{tame $F$-non-singular elliptic} in $\bG$ if
\begin{itemize}
\item $\bS$ is elliptic and its splitting extension $E/F$ is tame;
\item  Inside the connected reductive subgroup $\bG^0$ of $\bG$ with maximal torus $\bS$
and root system
\[R:= \{\alpha\in R(\bG,\bS)|\theta_F(\Nor(\alpha^\vee(E_{0+}^\times)))=1\},\]
the torus $\bS$ is maximally unramified.
\item The character $\theta_F$ is $F$-non-singular with respect to
$\bG^0$ in the sense of Definition~\ref{defn:F-ns}.
\end{itemize}
\end{definition}
In \cite{Kal-reg, Kaletha-nonsingular}, Kaletha describes how to construct  supercuspidal representations $\pi_{\bS,\theta_F}$ of $G$ from tame $F$-non-singular elliptic pairs $(\bS,\theta_F)$ in $\bG$. The representation $\pi_{\bS,\theta_F}$ is obtained in two steps. One starts by unfolding the pair $(\bS,\theta_F)$ into a cuspidal $\bG$-datum $\cD_{\bS,\theta_F}:=(\vec \bG,y,\vec r, \pi^0,\vec\phi)$ as in Definition~\ref{defn:cuspidal datum}. The properties of $\bS$ and $\theta_F$ provided by Definition~\ref{defn: Fnse} allow us to go to the reductive quotient and use the theory of Deligne-Lusztig cuspidal representations in order to construct $\pi^0$, the so-called depth-zero part of the datum $\cD_{\bS,\theta_F}$. The second step involves applying Yu's construction \cite{Yu} to the obtained $\bG$-datum. 

Since $p$ does not divide the order of the Weyl group of $\bG$, it does not divide the order of the fundamental group of $\bG_\der$.  

\begin{definition}\label{defn:Kaletha regular} \cite[Definition 3.7.9]{Kal-reg}
A supercuspidal representation of $G$ is said to be \textit{regular} if it arises via Yu's construction from a cuspidal $G$-datum $\cD$ such that the representation $\pi^0$ of $G^0$ is regular in the sense of Definition~\ref{defn:regular depth-zero}.
\end{definition}
Let $\bS_0\subset\bS$ be the connected component of the intersection of kernels of all elements of $R_{0+}$, and $R(\bG,\bS_0)$ be the image of $R(\bG,\bS)\backslash R_{0+}$ under the restriction map $X^*(\bS)\to X^*(\bS_0)$.

\begin{definition} \cite[Definition 4.1.4]{Kaletha-nonsingular} A \textit{torally wild supercuspidal $L$-packet datum} of $\bG$ is a tuple $(\bS,j^\vee,\chi_0,\theta_F)$, where
\begin{itemize}
    \item $\bS$ is a torus of dimension equal to the absolute rank of $\bG$, defined over $F$ and split over a tamely ramified extension of $F$;
\item $j^\vee\colon\bS^\vee\hookrightarrow\bG^\vee$ is an embedding of complex reductive groups whose $\bG^\vee$-conjugacy class is $\Gamma_F$-stable;
\item
$\chi_0=(\chi_{\alpha_0})_{\alpha_0}$ are tamely ramified $\chi$-data for $R(\bG,\bS_0)$;
\item $\theta_F\colon\bS(F)\to\C^\times$ is a character;
subject to the condition that $(\bS,\theta_F)$ is a tame $F$-non-singular elliptic pair. 
\end{itemize}
\end{definition}
Formal degrees of arbitrary-depth tame supercuspidal representations in the sense of \cite{Yu} can be computed as in \cite[Theorem A]{Schwein}. 
Let $\bG$ be a semisimple $F$-group, and let $\mathcal{D}$ be a cuspidal $\bG$-datum with associated supercuspidal representation $\pi$. Let $R_i$ denote the absolute root system of $\bG^i$, for the twisted Levi sequence $(\bG^i)_{0\leq i\leq d}$. Let $\exp_q(t):=q^t$.
\begin{proposition}
The formal degree of $\pi$ is given by
\begin{equation}
    \fdeg(\pi)=\frac{\dim\rho}{[G^0_{[y]}:G^0_{y,0+}]}\exp_q\left(\frac{1}{2}\dim G+\frac{1}{2}\dim \bbG^0_{y,0}+\frac{1}{2}\sum\limits_{i=0}^{d-1}r_i(|R_{i+1}|-|R_i|)\right).
\end{equation}
\end{proposition}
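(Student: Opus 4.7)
The plan is to derive the formula from Yu's explicit construction of $\pi$ together with the standard formal degree formula for compactly induced representations from a compact-mod-center open subgroup. Write $\pi=\cInd_{K^d}^G\rho$, where $K^d$ is the compact-mod-center open subgroup of $G$ produced by Yu's construction from the datum $\mathcal{D}=(\vec\bG,y,\vec r,\pi^0,\vec\phi)$, built as the product $K^d=G^0_{[y]}\cdot G^1_{y,s_0}\cdots G^d_{y,s_{d-1}}$ with $s_i=r_i/2$, and $\rho$ the associated representation. Since $\bG$ is semisimple, $K^d/Z_G$ is compact and the formal degree of a compactly induced irreducible supercuspidal representation is
\begin{equation}
\fdeg(\pi)=\frac{\dim\rho}{\vol(K^d/Z_G)},
\end{equation}
so the task reduces to computing $\vol(K^d)$ in terms of the data of $\mathcal{D}$.

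Next, I would decompose the volume step by step along the twisted Levi tower. At each stage, $K^i:=G^0_{[y]}\cdot G^1_{y,s_0}\cdots G^i_{y,s_{i-1}}$ sits inside $K^{i+1}$, and the filtration quotients $G^{i+1}_{y,s_i}/G^{i+1}_{y,s_i^+}$ are identified with the corresponding Moy--Prasad Lie algebra quotients $\mathfrak{g}^{i+1}_{y,s_i}/\mathfrak{g}^{i+1}_{y,s_i^+}$, which are finite-dimensional $k_F$-vector spaces whose dimensions are governed by the root counts $|R_{i+1}|-|R_i|$ together with appropriate torus contributions. Combining these filtration indices with the depth-zero volume formula~\eqref{eqn:DR} applied to $\bbG^0_{y,0}$ yields
\begin{equation}
\vol(K^d)=\vol(G^0_{[y]})\cdot\prod_{i=0}^{d-1}q^{-\tfrac{1}{2}r_i(|R_{i+1}|-|R_i|)}\vol(G^0_{y,0+})^{-1}\vol(G^0_{y,0+}),
\end{equation}
after reorganising,
\begin{equation}
\vol(K^d)=[G^0_{[y]}:G^0_{y,0+}]\cdot q^{-\tfrac{1}{2}\dim\bbG^0_{y,0}}\cdot\prod_{i=0}^{d-1}q^{-\tfrac{1}{2}r_i(|R_{i+1}|-|R_i|)}.
\end{equation}

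Inverting and multiplying by $\dim\rho$, together with the equality $\dim G=\dim\bbG^0_{y,0}+\sum_{i=0}^{d-1}(|R_{i+1}|-|R_i|)$ built into the regrouping of the exponents when normalising against $\dim G$ (this is the arithmetic identity that converts the raw exponent $\dim\bbG^0_{y,0}$ into the $\tfrac{1}{2}\dim G+\tfrac{1}{2}\dim\bbG^0_{y,0}$ appearing in the statement, using that the Heisenberg--Weil factors contributing to $\dim\rho$ each carry a half-dimension of a symplectic space of size $|R_{i+1}|-|R_i|$), produces the claimed expression for $\fdeg(\pi)$.

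The main obstacle is the careful bookkeeping of two interacting pieces: on the one hand, the exact dimension of $\rho$ as a tensor product of the depth-zero inflation of $\pi^0$ with the Heisenberg--Weil representations $\omega_i$ attached to each generic character $\phi_i$, whose dimensions are square roots of the symplectic quotients $[J^{i+1}:J^{i+1,+}]$ built from $\mathfrak{g}^{i+1}_{y,s_i}/\mathfrak{g}^i_{y,s_i}\mathfrak{g}^{i+1}_{y,s_i^+}$; and on the other hand, the volumes of the Moy--Prasad filtration quotients. Both contribute powers of $q$ indexed by $|R_{i+1}|-|R_i|$, and showing that the square roots from the Weil representations combine correctly with the volume exponents to yield exactly $\tfrac{1}{2}\dim G+\tfrac{1}{2}\dim\bbG^0_{y,0}+\tfrac{1}{2}\sum r_i(|R_{i+1}|-|R_i|)$ is where the bulk of the calculation lies; this is precisely the content of Schwein's argument in \cite[Theorem~A]{Schwein}, which I would invoke to conclude.
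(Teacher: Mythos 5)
The paper itself offers no proof of this proposition beyond the sentence preceding it, which says the formula ``can be computed as in \cite[Theorem A]{Schwein}''; so both you and the paper ultimately rest on a citation to Schwein, and in that sense your route is the same as the paper's. Your proposal sketches the volume bookkeeping that Schwein carries out (compact induction formula, filtration of $K^d$ along the twisted Levi tower, Moy--Prasad quotients) and then explicitly defers to \cite[Theorem~A]{Schwein} to nail the exponents, which is a reasonable reconstruction of the intended argument.

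However, two points in your sketch would derail anyone actually trying to fill in the details. First, your displayed expression for $\vol(K^d)$ contains the factor $\vol(G^0_{y,0+})^{-1}\vol(G^0_{y,0+})$, which cancels to $1$ and cannot be what you meant; whatever index you intended to introduce got garbled. Second, and more seriously, the parenthetical ``arithmetic identity'' $\dim G = \dim\bbG^0_{y,0} + \sum_{i=0}^{d-1}(|R_{i+1}|-|R_i|)$ is \emph{false} in general: since $\sum_{i}(|R_{i+1}|-|R_i|)=|R_d|-|R_0|$, the identity reduces to $|R(\bbG^0_{y,0})|=|R_0|$, which fails whenever the vertex $y$ is not hyperspecial for $\bG^0$ (e.g.\ for $\rG_2$, the reductive quotient $\SL_3$ or $\SO_4$ at a non-hyperspecial vertex has fewer roots). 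The term $\frac{1}{2}\dim G$ in the proposition does not arise from regrouping $\dim\bbG^0_{y,0}$ against Weil-representation dimensions; it comes from the specific Haar-measure normalization used in the Hiraga--Ichino--Ikeda setup (cf.\ the paper's \eqref{eqn:DR}), and one must track that normalization explicitly rather than appeal to a dimension identity. Since you ultimately invoke Schwein anyway the final conclusion stands, but the explanatory parenthetical as written is incorrect and would mislead a reader who tried to check it.
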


\begin{remark} \label{rem:formal degrees}
The Formal Degree Conjecture of \cite{Hiraga-Ichino-Ikeda}, which describes the formal degree $\fdeg(\pi)$ of any irreducible smooth representation $\pi$ of $G$ in terms of adjoint gamma factor, has been proved for regular supercuspidal representations in \cite[Theorem~B]{Schwein}, for non-singular supercuspidal representations in \cite[Theorem~9.2]{Ohara-fdegr}, and for unipotent supercuspidal representations in \cite[Theorem~3]{FOS}.  
\end{remark}

\section{ Preliminaries on \texorpdfstring{$\mathrm{G}_2$}{\mathrm{G}}} \label{sec:preliminariesG2}

Let $\bG=\rG_2$ be the exceptional group of type $\rG_2$ over $F$, and let $G=\rG_2(F)$ denote its group of $F$-rational points. Let $\bT$ be a maximal $F$-split torus in $\bG$ and 
$(\varepsilon_1,\varepsilon_2,\varepsilon_3)$ be the canonical
basis of $\RR^3$, equipped with the scalar product $(\;|\;)$ for
which this basis is orthonormal. Then
$\alpha:=\varepsilon_1-\varepsilon_2$ and
$\beta:=-2\varepsilon_1+\varepsilon_2+\varepsilon_3$ define a
basis of the set $R(\bG,\bT)$ of
roots of $\bG$ with respect to $\bT$, and
\begin{equation} \label{eqn:G2roots}
R(\bG,\bT)^+:=\left\{\alpha,\beta,\alpha+\beta,2\alpha+\beta,3\alpha+\beta,
3\alpha+2\beta\right\}
\end{equation}
is a subset of positive roots in $R(\bG,\bT)$
(see \cite[Planche IX]{Bou}). We have
\begin{equation} \label{products}
(\alpha|\alpha)=2,\;\;\; (\beta|\beta)=6,\;\;\; \text{ and
}\;\;(\alpha|\beta)=-3.
\end{equation}
The root $\alpha$ is a short root, and $\beta$ is a long root. 
\begin{figure}
\begin{center} 
\begin{tikzpicture}
   [ -{Straight Barb[bend,
       width=\the\dimexpr10\pgflinewidth\relax,
       length=\the\dimexpr12\pgflinewidth\relax]},]
    \foreach \i in {0, 1, ..., 5} {
      \draw[thick, black] (0, 0) -- (\i*60:2);
      \draw[thick, black] (0, 0) -- (30 + \i*60:3.5);
    }
    \draw[thin, red] (1.5, 0) arc[radius=1.5, start angle=0, end angle=5*30];
    \node[right] at (2, 0) {$\alpha$};
    \node[above left] at (45:2.4) {$2\alpha+\beta$};
    \node[above right] at (-48.5:-2.25) {$\alpha+\beta$};
    \node[above right] at (51:-3) {$-2\alpha-\beta$};
    \node[above right] at (-80:2.4) {$-\alpha-\beta$};
    \node[right] at (-2.8, 0) {$-\alpha$};
    \node[right] at (-0.9,3.9) {$3\alpha+2\beta$};
    \node[] at (-0.05, -3.6) {$-3\alpha-2\beta$};
    \node[above left, inner sep=.2em] at (5*30:3.5) {$\beta$};
    \node[above left, inner sep=.2em] at (5*30:-4.2) {$-\beta$};
    \node[above right, inner sep=.2em] at (-5*30:4.4) {$-3\alpha-\beta$};
    \node[above right, inner sep=.2em] at (-5*30:-3.5) {$3\alpha+\beta$};
    \node[right,red] at (15:1.5) {$5\pi/6$};
     \end{tikzpicture}
\caption{Diagram for $\rG_2(F)$}
\label{fig:G2-root-diagram}
\end{center}
\end{figure}
Let $\bB=\bT\bU$ be the corresponding Borel subgroup in $\bG$ and $\overline \bB=\bT \overline \bU$ for the opposite Borel subgroup. 
For any root $\gamma\in R(\bG,\bT)$, we denote by $\gamma^\vee$ the corresponding coroot and by $s_\gamma$ the fundamental reflection  in $W$ defined by $\gamma$:
\begin{equation}
    s_\gamma(\gamma'):=\gamma'-\langle\gamma',\gamma^\vee\rangle\gamma=\gamma'-\frac{2(\gamma'|\gamma)}{(\gamma|\gamma)}\gamma, \quad\text{for any $\gamma'\in R(\bG,\bT)$.}\end{equation}
We set $a:=s_\alpha$, $b:=s_\beta$ and $r:=ba$. 

The group $X^*(T)$ of rational characters of $T$ has the following description
\begin{equation} \label{XT}
X^*(T)=\ZZ(2\alpha+\beta)+\ZZ(\alpha+\beta).
\end{equation}
We identify $T$ with $F^{\times} \times F^{\times}$ by
\begin{equation} \label{etaa}
\begin{matrix}
  \eta_\alpha\colon& T \longrightarrow F^{\times} \times F^{\times}\hphantom{(2\alpha+\beta)(t),\alpha}\cr
&\bt \mapsto ( (2\alpha+\beta)(\bt), (\alpha+\beta)(\bt) ).
\end{matrix}
\end{equation}
Let $\eta'_\alpha\colon F^{\times} \times F^{\times}\to T$ be the inverse morphism of $\eta_\alpha$.  
We have
\begin{equation} \label{eqn:roots computation}
\begin{cases} \alpha (\eta'_\alpha(t_{1}, t_{2}))= t_{1}t_{2}^{-1},\ \ \beta (\eta'_\alpha
(t_{1}, t_{2}))=t_{1}^{-1}
t_{2}^{2} \\
a(\eta'_\alpha(t_{1}, t_{2}))=(t_{2}, t_{1}),\ \ b(\eta'_\alpha(t_{1}, t_{2})) =(t_{1},
t_{1}t_{2}^{-1})
\end{cases}.
\end{equation}
Any pair
$(\xi_1,\xi_2)$ of characters of $F^\times$ define a character of $T$ that we denote by $\xi_1\otimes\xi_2$.
For each root $\gamma\in R(\bG,\bT)$, we fix root group homomorphisms $x_\gamma\colon F\to G$ and
$\ZZ$-homomorphisms $\zeta_\gamma\colon\SL_2(F)\to G$ 
as in \cite[(6.1.3)~(b)]{Bruhat-Tits-I}.
We have
\begin{equation}x_\gamma(u)=\zeta_\gamma\left(\begin{matrix}
1&u\cr
0&1\end{matrix}\right),\;\;\;\;
x_{-\gamma}(u)=\zeta_{-\gamma}\left(\begin{matrix}
1&0\cr
u&1\end{matrix}\right)
\;\;\text{
and }\;\;
\gamma^\vee(t)=\zeta_\gamma\left(\begin{matrix}
t&0\cr
0&t^{-1}\end{matrix}\right).
\end{equation}
For $\gamma\in\{\alpha,\beta\}$, let $\bP_\gamma$ be the maximal standard parabolic subgroup of $\bG$ generated by $\gamma$, and
$\bM_\gamma$ be the centralizer of the image of $(\gamma')^\vee$ in $\bG$, where $\gamma'$ is
the unique positive root orthogonal to $\gamma$, i.e.
\begin{equation}
\gamma':=\begin{cases}
3\alpha+2\beta&\text{ if $\gamma=\alpha$,}\cr
3\alpha+\beta&\text{ if $\gamma=\beta$.}
\end{cases}
\end{equation}
Then $\bM_\gamma$ is a Levi factor for $\bP_\gamma$, and $\bM_\alpha$, $\bM_\beta$ are representatives for the two conjugacy classes of maximal Levi subgroups of $\bG$.

We extend $\zeta_\gamma\colon\SL_2(F)\to M_\gamma:=\bM_\gamma(F)$ to an isomorphism $\zeta_\gamma\colon\GL_2(F)\to M_\gamma$ via
\begin{equation}
    \zeta_\gamma\left(\left(\begin{matrix}t&0\cr 0&1\end{matrix}\right)
\right):=\zeta_{\gamma'}\left(\left(\begin{matrix}t&0\cr
0&t^{-1}\end{matrix}\right)\right),
\;\;\;\;\text{for $t\in\ F^\times$.}
\end{equation}
Then the restriction to $T$ of the inverse map of $\zeta_\gamma$ coincides with the isomorphism $
\eta_\gamma\colon T{\overset\sim\to} F^\times\times F^\times$,
where $\eta_\alpha$ is as in~(\ref{etaa}), while
\begin{equation} \label{eqn:eta_beta}
\begin{matrix}
  \eta_\beta\colon& T \longrightarrow F^{\times} \times F^{\times}\hphantom{(2\alpha+\beta)}\cr
&\bt \longmapsto ((\alpha+\beta)(\bt),\alpha(\bt)).
\end{matrix}
\end{equation}
It follows from \eqref{eqn:roots computation} that
\begin{equation} \label{eqn:change of realization of T}
    (\eta_\beta\circ \eta'_\alpha)(t_1,t_2)=(t_2,t_1t_2^{-1}),\quad \text{for $t_1,t_2\in F^\times$.}
\end{equation}
The long roots $\{\pm\beta,\pm(3\alpha+\beta),\pm(3\alpha+2\beta)\}$ form the root system of $\SL_3(F)$. Hence $\SL_3(F)$ embeds in $\rG_2(F)$ as the subgroup generated by the groups $U_\gamma$ with $\gamma$ long. 

The Weyl group $W$ of $\bG$ is generated by the reflections $s_\alpha$ and $s_\beta$ satisfying relations 
\begin{equation}
s_\alpha^2=s_\beta^2=1\quad\text{and}\quad (s_\alpha s_\beta)^6=1.
\end{equation}
Thus $W$ is the dihedral group of order $12$. Let $r(\vartheta)\in W$ denote the rotation through $\vartheta$ with
center at the origin. The elements of $W$ are described in Table~\ref{table:1}, along
with their actions on the character  $\xi_1\otimes\xi_2$ of $T
\cong F^{\times} \times F^{\times}$.

\begin{table}[ht]
\begin{tabular}{ |c|c|c| }
 \hline
$w$  & $w$ &  $w(\xi_1\otimes\xi_2)$ \\ [0.5ex] 
 \hline\hline
$1$&$1$&$\xi_1\otimes\xi_2$\\
$s_\alpha$&$a$&$\xi_2\otimes\xi_1$\\
$s_\beta$&$b$&$\xi_1\xi_2\otimes\xi^{-1}_2$ \\
$r(\pi/3)$&$ab$&$\xi^{-1}_2\otimes \xi_1\xi_2$\\
$r(5\pi/3)$&$ba$&$\xi_1\xi_2\otimes\xi^{-1}_1$\\
$s_{3\alpha+\beta}$&$aba$&$\xi^{-1}_1 \otimes \xi_1\xi_2$\\
$s_{\alpha+\beta}$&$bab$&$\xi_1\otimes \xi^{-1}_1\xi^{-1}_2$ \\
$r(2\pi/3)$&$abab$&$\xi^{-1}_1\xi^{-1}_2\otimes \xi_1$\\
$r(4\pi/3)$&$baba$&$\xi_2 \otimes \xi^{-1}_1\xi^{-1}_2$\\
$s_{2\alpha+\beta}$&$ababa$&$\xi^{-1}_1\xi^{-1}_2 \otimes\xi_2$ \\
$s_{3\alpha+2\beta}$&$babab$&$\xi^{-1}_2\otimes \xi^{-1}_1$ \\
$r(\pi)$&$bababa$&$\xi^{-1}_1\otimes \xi^{-1}_2$\\
 \hline
 \end{tabular}
   \vskip0.2cm
     \caption{ \label{table:1} $\qquad$ $\qquad$ $\quad$}
\end{table}

The group $\bG$ is simply connected. The nodes of its extended Dynkin diagram are $\delta:=-3\alpha-2\beta$ (the opposite of the highest root), $\alpha$ and $\beta$ (in particular $\delta$ is the extended node, and $\alpha$, $\beta$ are respectively short and long roots):
\begin{center}
    \includegraphics[scale=0.62]{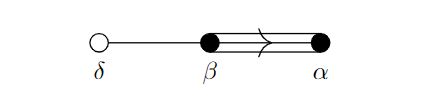}
\end{center}
The affine Weyl group of $G$ is generated by $s_\delta$, $s_\alpha$ and $s_\beta$.
There are three $G$-conjugacy classes of maximal parahoric subgroups of $G$, obtained by deleting one node from the extended Dynkin diagram. We denote by $G_{x_0}$ (deleting node $\delta$), $G_{x_1}$ (deleting node $\alpha$) and $G_{x_2}$ (deleting node $\beta$) a set of representatives of these classes. Their reductive quotients are  $\bbG_{x_0}\simeq \rG_2(\Fq)$, $\bbG_{x_1}\simeq\SL_3(\Fq)$  and $\bbG_{x_2}\simeq\SO_4(\Fq)\simeq \SL_2(\Fq)\times\SL_2(\Fq)/\{\pm 1\}\simeq\SO_4(\Fq)$. 
\begin{remark} \label{ex:Iwahori G2}
By \eqref{eqn:DR} the volume of the Iwahori subgroup $\mcI_{\rG_2}$ of $\rG_2$ is
\begin{equation} \label{eqn:volume Iwahori}
    \mathrm{Vol}(\mcI_{\rG_2})
    =\frac{(q-1)^2}{q},
\end{equation}
since the reductive quotient of $\mcI$ is a torus isomorphic to $\Fq^\times\times\Fq^\times$.
\end{remark} 

Let $G^\vee$ denote the Langlands dual group of $G$, i.e.~the complex Lie group with root datum dual to that of $G$. We identify $T^\vee$ with $\C^{\times} \times \C^{\times}$ by 
\begin{equation} \label{eta-betavee}
\begin{split}
  \eta_{\beta^\vee}\colon  T^\vee &\longrightarrow \C^{\times} \times \C^{\times}\hphantom{(2\alpha+\beta)(t),\alpha}\\
\bt &\longmapsto ( (2\beta^{\vee}+\alpha^\vee)(\bt), (\beta^\vee+\alpha^\vee)(\bt) ).
\end{split}
\end{equation}
The group $G^\vee$ is an exceptional group of type $\rG_2$, with positive roots the set
\begin{equation} \label{eqn:coroots}
R(G^\vee,T^\vee)^+=\left\{\alpha^\vee,\beta^\vee,\alpha^\vee+\beta^\vee,\alpha^\vee+2\beta^\vee,\alpha^\vee+3\beta^\vee,
2\alpha^\vee+3\beta^\vee\right\},
\end{equation}
in which $\alpha^\vee$ is a long root and $\beta^\vee$ a short root. Let $\fg^\vee$ and $\ft^\vee$ denote the Lie algebras of $G^\vee$ and $T^\vee$ respectively.
The adjoint action of $G^\vee$ on $\fg^\vee$ defines a Cartan decomposition 
\begin{equation} \label{eqn: Cartan decomposition}
\fg^\vee=\ft^\vee\oplus  \bigoplus_{\gamma^\vee\in R(G^\vee,T^\vee)}\fg^\vee_\gamma,\quad\text{with $\fg^\vee_\gamma=\C e_{\gamma^\vee}$}.
\end{equation}
For each $\gamma^\vee\in R(G^\vee,T^\vee)$,  let $U_{\gamma^\vee}^\vee$ be the associated root subgroup in $G^\vee$.~We fix root group homomorphisms $x_{\gamma^\vee}\colon \C \to U_{\gamma^\vee}^\vee$.

\begin{numberedparagraph} \label{unipotent orbits}
Let $\Unip(G^\vee)$ denote the unipotent variety of $G^\vee$. For two unipotent classes $\orb$, $\orb'$ in
$G^\vee$, we write $\orb'\le\orb$ if $\orb'$ is contained in the Zariski
closure of $\orb$. The relation $\le$ defines a partial ordering on $\Unip(G^\vee)$.
In the Bala-Carter classification, the unipotent classes in $G^\vee$ are
\begin{equation}
1\le \rA_1\le\widetilde \rA_1\le \rG_2(a_1)\le \rG_2.
\end{equation}
They are described in the following Table \ref{tab:Bala-Carter-G2} (see \cite[\S8.4]{Collingwood-McGovern}).
Amongst these five unipotent classes, three are special:~$1$, $\rG_2(a_1)$ and $\rG_2$. In \cite{Ram}, $1$, $\rA_1$,
$\widetilde \rA_1$, $\rG_2(a_1)$ and $\rG_2$ are referred to as the \emph{trivial}, \emph{minimal},
\emph{subminimal}, \emph{subregular} and \emph{regular} (also called \emph{principal}) unipotent orbit, respectively.

\begin{table}[ht]
\begin{tabular}{ |c|c|c| }
 \hline
 Label& Diagram& Dimension\cr
 \hline
$1$&$0\;\;0$&$0$\cr
\hline
$\rA_1$&$1\;\;0$&$6$\cr
\hline
$\widetilde\rA_1$&$0\;\;1$&$8$\cr
\hline
$\rG_2(a_1)$&$2\;\;0$&$10$\cr
\hline
$\rG_2$&$2\;\;2$&$12$\cr  
 \hline
\end{tabular}
\vskip0.2cm
    \caption{\label{tab:Bala-Carter-G2}:  {Bala-Carter classification for $\rG_2(\C)$}.$\qquad$ $\qquad$ $\qquad$}.
\end{table}
By \eqref{eqn: Cartan decomposition}, for any $\gamma\in R(G^\vee,T^\vee)^+$, we can choose $e_\gamma\in\fg_\gamma^\vee$, $f_\gamma\in\fg_{-\gamma}^\vee$ and $h_\gamma\in \ft^\vee$ such that 
\begin{equation}
[h_{\gamma'},e_\gamma]=\gamma(h_{\gamma'})e_\gamma,\quad [h_{\gamma'},f_\gamma]=-\gamma(h_{\gamma'})f_\gamma\quad \text{and}\quad [e_{\gamma},f_\gamma]=h_{\gamma}.
\end{equation}
The weighted Dynkin diagram of the minimal orbit $\rA_1$ shows that $\beta^\vee(h)=1$ and $\alpha^\vee(h)=0$, if $(e,f,h)$ is an $\SL_2$-triple where $e$ is a nilpotent element which corresponds to $\rA_1$. From the table above, we know that $h=h_{2\beta^\vee+\alpha^\vee}$ works. It follows that $e_{2\beta^\vee+\alpha^\vee}$ is a representative of the nilpotent orbit corresponding to $\rA_1$. The root $\beta^\vee+\alpha^\vee$ is short. Since all the short roots are conjugate under the Weyl group $W^\vee=\Nor_{G^\vee}(T^\vee)/T^\vee$ of $G^\vee$, the corresponding spaces $\fg^\vee_{\gamma^\vee}$ are also conjugate under $W^\vee$.~Since the orbits are closed under multiplication by a non-zero scalar and $\dim\fg_{\gamma^\vee}^\vee=1$, any non-zero vector in a space $\fg_{\gamma^\vee}^\vee$ with $\gamma^\vee$ a short root spans the minimal nilpotent orbit of $G^\vee$.

We recall the Springer correspondence for $\rG_2(\C)$ from \cite[p.427]{Carter-book} in Table~\ref{tab:table-unipotent-Gsigma=G2}: to each irreducible character, it attaches a pair $(u,\rho)$ consisting of (the conjugacy class of) a unipotent element $u$ in $G^\vee=\rG_2(\C)$ and an irreducible representation $\rho$ of the group $A_u:=\pi_0(\rZ_{G^\vee}(u))$. There is one $\rho$ missing, which is the case where the component group $A_u$ is the symmetric group $S_3$ and $\rho$ is the sign character of $S_3$. 
\begin{center} 
\begin{table}[ht]
    \begin{tabular}{|c|c|c|c|}
    \hline
         characters of $W$   & unipotent &$A_u$& enhancement \\
         \hline
        $\phi_{1,6}$ & triv&triv&triv\\
        \hline
        $\phi_{1,3}''$  &$A_1$  &triv&triv\\
        \hline
        $\phi_{2,2}$ &$\widetilde{A}_1$  &triv&triv\\
        \hline
        $\phi_{2,1}$ &$\rG_2(a_1)$ &$S_3$ &triv\\
        \hline 
        $\phi_{1,3}'$ &$\rG_2(a_1)$ &$S_3$ &$\rho_{2,1}$\\
        \hline
        $\phi_{1,0}$ &$\rG_2$ &triv&triv\\
        \hline
    \end{tabular}
    \vskip0.2cm
    \caption{\label{tab:table-unipotent-Gsigma=G2}:  {Springer correspondence for $\rG_2(\C)$}. $\qquad$ $\qquad$ $\quad$}
\end{table}
\end{center}

\end{numberedparagraph}

\smallskip

\begin{numberedparagraph}
\textbf{The group $\SO_4(\C)$:} 
We realize the group $\SO_4(\C)\simeq\SL_2^\lr\times\SL_2^\sr/{\{\pm 1\}}$ as the subgroup of $\rG_2(\C)$ generated by $T^\vee$ and the images of $x_{\alpha^\vee}$ and $x_{2\beta^\vee+\alpha^\vee}$. Then the Weyl group of $\SO_4(\C)$ with respect to $T^\vee$ is $\{1, s_{\alpha^\vee},s_{2\beta^\vee+\alpha^\vee}, s_{\alpha^\vee}\cdots s_{2\beta^\vee+\alpha^\vee}\}$. 
The group $\SO_4(\C)$ admits $4$ unipotent classes that are labelled as $(3,1)$, $(2,2)'$, $(2,2)''$,
$(1)$ (see for instance \cite[\S~11.3]{Lusztig-IC}). A representative of the nilpotent class corresponding to $(2,2)'$ is $e_{\alpha^\vee}$, and a representative of the nilpotent class corresponding to $(2,2)''$ is $e_{2\beta^\vee+\alpha^\vee}$.
The closure order on the unipotent classes of $\SO_4(\C)$ is given by the
following:
\begin{equation}
\vcenter{\xymatrix@R=5pt@C=2.5pt{
&(3,1) \ar@{-}[dl] \ar@{-}[dr] \\
(2,2)' \ar@{-}[dr] && (2,2)'' \ar@{-}[dl] \\
&(1)}}.
\end{equation}
The classes $(3,1)$ and $(1)$ are said to be non-degenerate, and the classes $(2,2)'$, $(2,2)''$ degenerate. 

\begin{table} [ht]
\begin{tabular}{|c|c|c|}
  \hline  
        unipotent class in $\SO_4(\C)$& unipotent class in $\rG_2(\C)$&$A_{\SO_4}(u)$   \\
        \hline
        $(3,1)$ & $\rG_2(a_1)$&$\mu_2$  \\
        $(2,2)'$&$A_1$ &$\{1\}$\\
        $(2,2)''$&$\widetilde{A}_1$&$\{1\}$\\
        $(1)$ & 1&$\{1\}$\cr\hline
    \end{tabular}
    \vskip0.2cm
    \centering
    \caption{Unipotent classes of $\SO_4(\C)$.$\qquad$ $\quad$ $\quad$} \label{table:SO4}
\end{table}
\end{numberedparagraph}

\begin{numberedparagraph}
\textbf{The group $\SL_3(\C)$:}
The long roots $\{\pm\alpha^\vee,\pm(3\beta^\vee+\alpha^\vee),\pm(3\beta^\vee+2\alpha^\vee)\}$ form the root system of $\SL_3(\C)$. Hence $\SL_3(\C)$ embeds in $\rG_2(\C)$ as the subgroup generated by $T^\vee$ and the images of $x_{\gamma^\vee}$ with $\gamma^\vee$ long. We can define an explicit such embedding $j_3\colon\SL_3(\C)\hookrightarrow \rG_2(\C)$ as
\begin{equation}
j_3\left(\begin{matrix} 1&u&\cr
&1&\cr
&&1\end{matrix}\right)=x_{\alpha^\vee}(u),\quad
j_3\left(\begin{matrix} 1&&\cr
&1&\cr
&u&1\end{matrix}\right)=x_{3\beta^\vee+\alpha^\vee}(u)
\end{equation}
\begin{equation}
j_3\left(\begin{matrix} 1&&\cr
u&1&\cr
&&1\end{matrix}\right)=x_{-\alpha^\vee}(u)\quad\text{and}\quad
j_3\left(\begin{matrix} t_1&&\cr
&t_2&\cr
&&t_1^{-1}t_2^{-1}
\end{matrix}\right)=\eta'_{\beta^\vee}(t_1,t_2),\end{equation}
where 
\begin{equation} \label{eqn:mu-beta-vee}
\eta'_{\beta^\vee}\colon \C^{\times} \times C^{\times}\to T^\vee
\end{equation} is the inverse morphism of $\eta_{\beta^\vee}$.

In general, the center of $\cG:=\SL_n(\C)$ is isomorphic to the group $\mu_n$ of $n$-th roots of unity of $n$. If $u\in\orb_{(n)}$ (the regular unipotent class), then the component group $A_u$ of the centralizer of $u$ in $\cG$ is isomorphic to $\mu_n$. In particular, the unipotent classes of $\SL_3(\C)$ can be summarized as in Table~\ref{tab:table-unipotent-Gsigma=SL3}.
\begin{center}
\begin{table} [ht]
\begin{tabular}{|c|c|c|}
  \hline  
         unipotent class in $\SL_3(\C)$& unipotent class in $\rG_2(\C)$& $A_{\SL_3}(u)$  \\
         \hline
         $3$& $\rG_2(a_1)$&$\mu_3$\\
         \hline
         $(2,1)$ &$\rA_1$&$\{1\}$\\
         \hline
         $(1,1,1)$ & $1$ &$\{1\}$\cr
         \hline
    \end{tabular}
    \vskip0.2cm
     \caption{\label{tab:table-unipotent-Gsigma=SL3}: Unipotent classes of $\SL_3(\C)$. $\qquad$ $\qquad$ $\quad$}
\end{table}
\end{center}
\end{numberedparagraph}

\begin{numberedparagraph} \textbf{The group $\GL_2(\C)$:} 
There are two different cases depending on whether $\GL_2(\C)$ corresponds to a Levi subgroup of $\rG_2(\C)$ attached to a short or a long root $\gamma^\vee$. We have
$W_G^{\fs}=\{1,s_{\gamma^\vee}\}$.
The unipotent classes in this case are summarized in Table \ref{tab:table-unipotent-Gsigma=GL2}. 
\begin{center}
\begin{table} [ht]
\resizebox{.97\hsize}{!}{\begin{tabular}{|c|c|c|c|}
  \hline  
         unipotent class in $\GL_2(\C)$& representative of nilpotent orbit&  unipotent class in $\rG_2(\C)$&$A_{\GL_2}(u)$  \\
         \hline
         regular& $e_{\gamma^\vee}$& $\begin{cases}\widetilde{\rA}_1&\text{if $\gamma^\vee$ is long}\cr
         \rA_1&\text{if $\gamma^\vee$ is short}\end{cases}$&$1$\\
         \hline
         trivial &0&$1$&$1$\\
         \hline
    \end{tabular}}
    \vskip0.2cm
     \caption{\label{tab:table-unipotent-Gsigma=GL2}: Unipotent classes of $\GL_2(\C)$. $\qquad$ $\qquad$ $\quad$}
\end{table}
\end{center}
\end{numberedparagraph}

\section{ The Galois side for \texorpdfstring{$\mathrm{G}_2$}{\mathrm{G2}}}\label{sec:Galois-G2}

In this section, we study $L$-parameters into $G^{\vee}=\rG_2(\C)$, and determine all non-supercuspidal members in their $L$-packets assuming the properties in \S\ref{subsec:expected properties}. We assume $p\neq 2,3$.

\begin{proposition}\label{singprin} The following are equivalent for $\varphi:W_F\times \SL_2(\C)\rightarrow G^{\vee}$:
\begin{enumerate}
    \item $\varphi(W_F)$ is abelian. 
    \item $\Cent_{G^{\vee}}(S)$ contains a maximal torus of $G^{\vee}$. 
\end{enumerate}
\end{proposition}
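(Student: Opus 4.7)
The plan is to deduce this equivalence from two standard structural facts about $G^{\vee}=\rG_2(\C)$: (a) every maximal torus $T^{\vee}$ of $G^{\vee}$ is self-centralizing, because $G^{\vee}$ is a connected semisimple complex group with trivial center; and (b) any pairwise-commuting set of semisimple elements of a complex connected reductive group lies in a common maximal torus, its Zariski closure being a commutative (hence diagonalizable) reductive subgroup. I read $S$ as $\varphi(W_F)$ throughout; the same reasoning goes through if one takes $S$ to be the full image $\varphi(W_F\times\SL_2(\C))$, since in that case $\varphi|_{\SL_2(\C)}$ must factor through $\Cent_{G^{\vee}}(T^{\vee})=T^{\vee}$ and hence be trivial (a connected simple group has no nontrivial homomorphism to a torus).

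For the implication (1)$\Rightarrow$(2), I would first recall that $\varphi(w)$ is semisimple for every $w\in W_F$ by the definition of an $L$-parameter. Granted $\varphi(W_F)$ abelian, this gives a commuting family of semisimple elements, which by (b) is contained in a common maximal torus $T^{\vee}\subset G^{\vee}$. Every element of $T^{\vee}$ then commutes with $S$, yielding $T^{\vee}\subseteq\Cent_{G^{\vee}}(S)$, which is exactly (2).

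For (2)$\Rightarrow$(1), I would argue as follows: if $\Cent_{G^{\vee}}(S)$ contains a maximal torus $T^{\vee}$, then every element of $S$ commutes with all of $T^{\vee}$, so $S\subseteq\Cent_{G^{\vee}}(T^{\vee})$; by (a) this centralizer equals $T^{\vee}$ itself. Hence $\varphi(W_F)\subseteq T^{\vee}$ is abelian.

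I do not anticipate a significant obstacle: both directions reduce almost formally to the two structural facts (a) and (b). The only step warranting a brief justification is (b); I would argue that the Zariski closure of a commuting set of semisimple elements in a reductive group is a closed commutative subgroup all of whose elements remain semisimple, hence a diagonalizable algebraic group, which is therefore contained in some maximal torus.
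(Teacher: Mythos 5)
Your (2)$\Rightarrow$(1) direction is fine: $T^{\vee}\subseteq\Cent_{G^{\vee}}(S)$ forces $S\subseteq\Cent_{G^{\vee}}(T^{\vee})=T^{\vee}$, which is abelian. The gap is in the direction (1)$\Rightarrow$(2), and it comes from your structural fact (b), which is false in the generality you state it. It is \emph{not} true that a pairwise-commuting family of semisimple elements of a connected reductive group over $\C$ always lies in a common maximal torus. The well-known counterexample is the Klein four-group in $\PGL_2(\C)$ generated by the projective images of $\mathrm{diag}(1,-1)$ and $\left(\begin{smallmatrix}0&1\\1&0\end{smallmatrix}\right)$: these commute modulo center and are semisimple of order $2$, but a maximal torus of $\PGL_2(\C)$ has only $\Z/2$ as its $2$-torsion, so it cannot contain a group of order $4$. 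Exactly the same phenomenon occurs in $\rG_2(\C)$ (even though $\rG_2$ is simply connected): a maximal torus of $\rG_2(\C)$ has $2$-torsion of order $4$, yet $\rG_2(\C)$ contains a subgroup isomorphic to $\mu_2^3$ consisting of commuting semisimple elements, and this is the unique obstruction the paper's proof must, and does, identify. Your final justification---``the Zariski closure is a diagonalizable algebraic group, which is therefore contained in some maximal torus''---is where the argument breaks down: a diagonalizable subgroup is always contained in a maximal torus when it is connected (a subtorus), but \emph{disconnected} diagonalizable subgroups of a reductive group need not be; simultaneous diagonalizability of commuting semisimple elements is a feature of $\GL_n$, not of reductive groups in general.

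The paper's actual argument is necessarily more hands-on: it first reduces to finitely generated $S$ by the descending chain condition on closed subgroups, then iterates centralizers using the explicit list (obtained via Proposition~\ref{loop}) of centralizers of nontrivial semisimple elements of $\rG_2(\C)$ --- namely $\SL_3$, $\SO_4$, $\GL_2$, or $\GL_1^2$ --- and checks in each case that $\Cent_{\Cent_{G^{\vee}}(s)}(S)$ still contains a maximal torus, \emph{except} in the $\SO_4$ branch where one can land on a subgroup $S\cong\mu_2^3$. That exception cannot occur as $\varphi(W_F)$ here because $p\neq 2,3$ is a standing assumption in the section: $W_F^{\mathrm{ab}}\cong F^{\times}$ has $F^{\times}/(F^{\times})^2\cong(\Z/2)^2$ when $p\neq 2$, so $W_F$ has no quotient isomorphic to $\mu_2^3$. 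You would need some version of this case analysis (or at least an explicit argument excluding the $\mu_2^3$-type obstruction) to make (1)$\Rightarrow$(2) go through; the purely formal argument does not.

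One smaller point: your aside that the statement is insensitive to whether $S=\varphi(W_F)$ or $S=\varphi(W_F\times\SL_2(\C))$ is not correct in the direction (1)$\Rightarrow$(2). If $\varphi(W_F)$ is abelian (and contained in a maximal torus $T^{\vee}$) but $\varphi|_{\SL_2(\C)}$ is nontrivial, then $\Cent_{G^{\vee}}(\varphi(W_F))$ contains $T^{\vee}$, while $\Cent_{G^{\vee}}(\varphi(W_F\times\SL_2(\C)))$ typically does not --- for that you would need $\varphi(\SL_2(\C))$ to centralize a maximal torus, which forces $\varphi|_{\SL_2(\C)}$ trivial. The correct reading is $S=\varphi(W_F)$.
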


\begin{proof} We claim that any abelian subgroup of $G^{\vee}=\rG_2(\C)$ is contained in a torus except for a single exception that is a unique conjugacy class of abelian subgroups isomorphic to $\mu_2^3$. The claim is equivalent to the statement that $\Cent_{G^{\vee}}(S)$ contains a maximal torus of $G^{\vee}$ for any $S\subset G^{\vee}$ abelian. Since closed subgroup of $G^{\vee}$ satisfies the descending chain condition, it suffices to prove this for finitely generated abelian group. Using Proposition \ref{loop}, for $s\in S$ non-trivial we have $\Cent_{G^{\vee}}(s)$ is isomorphic to one of $\SL_3(\C)$, $\SO_4(\C)$, $\GL_2(\C)$ or $\GL_1(\C)^2$. The whole centralizer $\Cent_{G^{\vee}}(S)=\Cent_{\Cent_{G^{\vee}}(s)}(S)$ contains a maximal torus of $\SL_3(\C)$, $\SO_4(\C)$, $\GL_2(\C)$ or $\GL_1(\C)^2$, except for the case of $\SO_4(\C)$ for which an exception happens when $S$ maps to the unique subgroup of $\SO_4(\C)/\mu_2\cong\left(\PGL_2(\C)\right)^2$ that is isomorphic to $\mu_2^2$. Together with $s$, this implies that $S$ is an order $8$ subgroup of $G^{\vee}$ isomorphic to $\mu_2^3$.
\end{proof}
Recall that we have chosen in \eqref{eqn:coroots} a set of positive roots for $G^{\vee}$ and its maximal torus $T^{\vee}$. We denote by $\lambda\colon(\C^{\times})^2\cong T^{\vee}$ the isomorphism satisfying $\alpha(\lambda(z_1,z_2))=z_1$ and $\beta(\lambda(z_1,z_2))=z_2$. 
The Weyl group $W_{G^{\vee}}(T^{\vee})$ is generated by two reflections, one sending $\lambda(z_1,z_2)\mapsto\lambda(z_1^{-1},z_1z_2)$ and the other sending $\lambda(z_1,z_2)\mapsto\lambda(z_1z_2^3,z_2^{-1})$.

\begin{example} $\lambda(1,\zeta_3)$ has centralizer isomorphic to $\SL_3(\C)$, and is conjugate to $\lambda(1,\zeta_3^{-1})$, making its orbit the unique one with this centralizer.
\end{example}
\begin{numberedparagraph}\label{two-strategies-Galois-side}
There are two obvious strategies to start with: 
\begin{enumerate}
    \item Classify $\varphi$ in terms of $\varphi|_{\SL_2(\C)}$;
    \item Classify $\varphi$ in terms of $\varphi|_{W_F}$.
\end{enumerate}
The two strategies are somewhat 
orthogonal. As in line with standard literature, we also use the second strategy in the cases where $\varphi(\SL_2(\C))=1$. 
One of the upshots in our approach is that we use a combination of both strategies. 
\end{numberedparagraph}

\subsection{Classification of $\varphi$ in terms of $\varphi|_{\SL_2(\C)}$}\label{subsec:restr-to-SL2}
    This is the first strategy.~For a general $L$-parameter $\varphi\colon W_F\times \SL_2(\C)\rightarrow G^{\vee}$, the possible shapes for $\varphi|_{\SL_2(\C)}$ are in bijection with unipotent orbits of $G^{\vee}$ (see $\mathsection$\ref{unipotent orbits} for the notations for unipotent orbits). We have the following cases.
    \begin{enumerate}
    \item $\varphi\left(\begin{smallmatrix}1&1\\0&1\end{smallmatrix}\right)$ is regular, i.e.~it lies in the $\rG_2$ unipotent class. In this case, we have $\Cent_{G^{\vee}}(\varphi(\SL_2(\C))=\{1\}$ and the $L$-parameter $\varphi$ is discrete, by Property~\ref{property:size of L-packets}, we obtain a singleton $L$-packet consisting of the Steinberg representation $\St_{\rG_2}$ of $G$.
    \item $\varphi\left(\begin{smallmatrix}1&1\\0&1\end{smallmatrix}\right)$ is subregular, i.e.~it lies in the subregular unipotent class $\rG_2(a_1)$. In this case, $\Cent_{G^{\vee}}(\varphi(\SL_2(\C))\cong S_3$. To see this: $\varphi|_{\SL_2(\C))}$ factors through $\SL_2(\C)\hookrightarrow \SL_3(\C)\hookrightarrow G^{\vee}$. Up to $G^{\vee}$-conjugation, we may assume that $\varphi\left(\matr{t&0\\0&t^{-1}}\right)=\lambda(t,1)$, such that $\Cent_{G^{\vee}}(\{\lambda(t,1)\;|\;t\in\C^{\times}\})\cong \GL_2(\C)$ is given by the short root $\beta$ of $G^{\vee}$. This $\GL_2(\C)$ acts on the sum of root spaces corresponding to $\{\alpha^{\vee},\alpha^{\vee}+\beta^{\vee},\alpha^{\vee}+2\beta^{\vee},\alpha^{\vee}+3\beta^{\vee}\}$, which is a $4$-dimensional representation of $\GL_2(\C)$ that is isomorphic to $\Sym^3(\mathrm{std})$. One can check that $\varphi\left(\matr{1&1\\0&1}\right)$ is an element in general position in $\Sym^3(\mathrm{std})$ and thus has centralizer $S_3$, i.e.~the permutation group of the $3$ roots of a separable polynomial of degree $3$. 
    
  In this case, $\varphi(W_F)$ is any subgroup of $S_3$, and $\varphi$ is always discrete. We remark that the order $2$ element in $S_3\cong \Cent_{G^{\vee}}(\varphi(\SL_2(\C))$ has centralizer isomorphic to $\SO_4(\C)$, while the order $3$ element has centralizer isomorphic to $\SL_3(\C)$. The subregular unipotent $\varphi\left(\matr{1&1\\0&1}\right)$ lies in the regular unipotent orbit in both $\SO_4(\C)$ and $\SL_3(\C)$ by Tables~\ref{table:SO4} and \ref{tab:table-unipotent-Gsigma=SL3}. 
    Since the group $\varphi(W_F)$ can have order $1$, $2$, $3$ or $6$, we have an $L$-packet of size $2+1$, $1+1$, $1+2$ or $1+0$ respectively (here we write the number of non-supercuspidal representations plus the number of supercuspidal representations). 
    \begin{itemize}
    \item The case where $|\varphi(W_F)|=1$ is the unipotent case, and will be addressed in Theorem \ref{constituents-part-one}\eqref{nd6} and Table \ref{table:unram-unip-ppal-series-case}. 
    More precisely, this $L$-packet has size $3$: it contains two unipotent representations in the principal series, the representations $\pi(1)$ and $\pi(1)'$ from Table \ref{table:unram-unip-ppal-series-case}, and one unipotent supercuspidal representation from $\mathsection$\ref{subsec:udL-packets}(1). 
        \item  The case where $|\varphi(W_F)|=2$ will be discussed in \S\ref{d0p}(\ref{toy}).
        \item The case where $|\varphi(W_F)|=3$ will be discussed in \S\ref{d0p}(\ref{chal}). 
        \item Lastly the case where $|\varphi(W_F)|=6$ will be discussed in \S\ref{d0p}(\ref{single}).
    \end{itemize}

    \item When $\varphi\left(\matr{1&1\\0&1}\right)$ is in the $\widetilde{A}_1$ (resp.~$A_1$) unipotent class, i.e.~$\varphi(\SL_2(\C))$ is a short (resp.~long) root $\SL_2(\C)$ in $G^{\vee}$,
    its centralizer $\Cent_{G^{\vee}}(\varphi(\SL_2(\C)))\cong \SL_2(\C)$ is a long (resp.~short) root $\SL_2(\C)$. We have two sub-cases:
    \begin{enumerate}
        \item $\varphi(W_F)\subset \Cent_{G^{\vee}}(\varphi(\SL_2(\C)))$ is not contained in a torus of $\Cent_{G^{\vee}}(\varphi(\SL_2(\C)))$, in which case $\varphi$ is discrete. This case is treated in \S\ref{SL2}.
        \item\label{reducible} $\varphi(W_F)$ is contained in a torus $T^{\vee}\subset \Cent_{G^{\vee}}(\varphi(\SL_2(\C)))$. In this case, either  $\Cent_{G^{\vee}}(\varphi)=T^{\vee}$, or $\Cent_{G^{\vee}}(\varphi)=\Cent_{G^{\vee}}(\varphi(\SL_2(\C)))$. We always have $S_{\varphi}=1$, and the $L$-packet consists of a single principal series representation by Proposition \ref{singprin}.
    \end{enumerate}
    \item $\varphi(\SL_2(\C))$ is trivial. We leave it to later case-by-case discussions in $\mathsection$\ref{d0p} and $\mathsection$\ref{d+p}.
    \end{enumerate}
As a consequence of the above discussions, we immediately obtain the following. 
    \begin{corollary}\label{lemma-nondisc} When $\varphi$ is non-discrete, either $\varphi(\SL_2(\C))$ is trivial, or $\varphi(\SL_2(\C))$ is a short or long $\SL_2(\C)$ in $G^{\vee}$. In the latter case(s), $S_{\varphi}=\{1\}$ and the $L$-packet consists of a single principal series representation.
    \end{corollary}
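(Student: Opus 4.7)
The plan is to traverse the case analysis already set up in \S\ref{subsec:restr-to-SL2}, which classifies $L$-parameters $\varphi\colon W_F\times\SL_2(\C)\to G^\vee$ according to the unipotent orbit in $G^\vee$ containing $\varphi\bigl(\begin{smallmatrix}1&1\\0&1\end{smallmatrix}\bigr)$. Since the five unipotent orbits in $G^\vee=\rG_2(\C)$ are $1$, $A_1$, $\widetilde A_1$, $\rG_2(a_1)$, and $\rG_2$ (see \S\ref{unipotent orbits}), it suffices to eliminate the orbits $\rG_2$ and $\rG_2(a_1)$ from the non-discrete picture, and to analyze what is left in the $A_1/\widetilde A_1$ case.

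For the regular orbit $\rG_2$, the restriction $\varphi|_{\SL_2(\C)}$ is already discrete as an $L$-parameter into $G^\vee$ since $\Cent_{G^\vee}(\varphi(\SL_2(\C)))=\{1\}$, hence $\varphi$ is discrete; this was recorded in case (1) of \S\ref{subsec:restr-to-SL2}. For the subregular orbit $\rG_2(a_1)$, case (2) of \S\ref{subsec:restr-to-SL2} shows that $\Cent_{G^\vee}(\varphi(\SL_2(\C)))\cong S_3$ is finite, so $\varphi(W_F')$ cannot be contained in any proper Levi subgroup of $G^\vee$; thus $\varphi$ is again discrete. This removes $\rG_2$ and $\rG_2(a_1)$, so for non-discrete $\varphi$ we are left with the orbits $A_1$, $\widetilde A_1$, or the trivial orbit.

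Suppose $\varphi(\SL_2(\C))$ realizes the short- or long-root $\SL_2(\C)$, so $\varphi\bigl(\begin{smallmatrix}1&1\\0&1\end{smallmatrix}\bigr)$ lies in $A_1$ or $\widetilde A_1$. Then $\Cent_{G^\vee}(\varphi(\SL_2(\C)))\cong\SL_2(\C)$ is (the other root) $\SL_2(\C)$. In case (3)(a) of \S\ref{subsec:restr-to-SL2}, $\varphi(W_F)$ is not contained in a maximal torus of this $\SL_2(\C)$, hence $\varphi$ is discrete; to be in the non-discrete regime, we therefore must be in case (3)(b), where $\varphi(W_F)$ sits inside a maximal torus $T^\vee$ of $\Cent_{G^\vee}(\varphi(\SL_2(\C)))$. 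In particular $\varphi(W_F)$ is abelian and is contained in a maximal torus of $G^\vee$, so by Proposition~\ref{singprin} the group $\Cent_{G^\vee}(\varphi(W_F))$ contains a maximal torus of $G^\vee$; combined with the fact that $\Cent_{G^\vee}(\varphi(\SL_2(\C)))$ is an $\SL_2(\C)$, this forces $\rZ_{G^\vee}(\varphi)=\Cent_{G^\vee}(\varphi(W_F))\cap\Cent_{G^\vee}(\varphi(\SL_2(\C)))$ to be either $T^\vee$ or the full $\SL_2(\C)$, both connected. Since $\rZ_{G^\vee}=\{1\}$ in $G^\vee=\rG_2(\C)$, we conclude $S_\varphi=\rZ_{G^\vee}(\varphi)/\rZ_{G^\vee}(\varphi)^\circ=\{1\}$.

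Finally, the fact that the corresponding $L$-packet consists of a single principal series representation is a direct consequence of Property~\ref{property:size of L-packets} (size of $L$-packets), together with Proposition~\ref{singprin}: the abelianity of $\varphi(W_F)$ implies that $\varphi$ factors through a torus, hence through a Borel subgroup, so its $L$-packet member is a constituent of a principal series, and $|\Irr(S_\varphi)|=1$ pins it down to a unique such constituent. The main obstacle, if any, is checking that in case (3)(b) one really cannot produce additional enhancements; this is taken care of by the computation $S_\varphi=\{1\}$ above, which relies crucially on the fact that $G^\vee$ has trivial center.
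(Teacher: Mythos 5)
Your proof is essentially correct and follows the same case analysis laid out in \S\ref{subsec:restr-to-SL2} that the paper itself appeals to (the corollary is stated there as an immediate consequence of that discussion): you correctly eliminate the orbits $\rG_2$ and $\rG_2(a_1)$ using $\Cent_{G^\vee}(\varphi(\SL_2(\C)))$ being trivial or $\cong S_3$ (finite), and in the $A_1/\widetilde A_1$ case you correctly land in sub-case (3)(b) and show $\Cent_{G^\vee}(\varphi)$ is connected, hence $S_\varphi=\{1\}$ since $\rZ_{G^\vee}$ is trivial.

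One imprecision in the final paragraph is worth flagging. You write that ``the abelianity of $\varphi(W_F)$ implies that $\varphi$ factors through a torus, hence through a Borel subgroup.'' This is not literally true: $\varphi$ itself does not factor through a torus, since $\varphi(\SL_2(\C))$ is a nontrivial root $\SL_2$. What is true is that $\varphi|_{W_F}$ has image in a maximal torus of $G^\vee$, and what you actually want is that the \emph{cuspidal support} of the enhanced parameter $(\varphi,1)$ lies in a torus. That follows from combining Proposition~\ref{singprin} (which gives that $\cG_\varphi=\Cent_{G^\vee}(\varphi(W_F))$ contains a maximal torus of $G^\vee$) with the cuspidal support construction of Property~\ref{property:AMS-conjecture-7.8}, exactly as the paper invokes later in the proof of Theorem~\ref{constituents-part-one}. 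The conclusion you draw is correct, but the chain ``$\varphi$ factors through a Borel $\Rightarrow$ packet member is principal series'' should be replaced by ``$\Sc(\varphi,1)$ is supported on the torus $T^\vee \Rightarrow$ supercuspidal support of the packet member is a character of $T$.''
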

Let $F_{2,2}$ denote the bi-quadratic extension over $F$ with $\Gal(F_{2,2}/F)\cong \mu_2^2$. We now study our LLC using Properties \ref{property:L-packets}, \ref{property:size of L-packets}, \ref{property:generic tempered L-packets} and \ref{property:AMS-conjecture-7.8}. 
\begin{theorem} \label{constituents-part-one} 
A parabolically induced representation of $G=\rG_2(F)$ has two or more non-isomorphic constituents exactly in the following cases:
\begin{enumerate}
            \item\label{length-two-intermediate-series-thm-constituents} The parabolically induced representation $\ii_P^G\sigma$ from a Levi subgroup $M\cong \GL_2(F)$ of a supercuspidal representation $\sigma$ of $M$ s.t.
            \begin{enumerate}
                \item\label{nd11} The Levi $M=M_\beta\cong\GL_2(F)$ is associated to the long root and $\sigma\cong\sigma_{S_3}\otimes\nu_F^{\pm1}$, where $\sigma_{S_3}$ is the unique supercuspidal representation of $\GL_2$ whose corresponding $L$-parameter under the LLC for $\GL_2$ has image $S_3$, and $\otimes\nu_F^{\pm1}$ denotes tensoring with $\nu_F^{\pm1}\circ\det:\GL_2(F)\ra\C^{\times}$. In this case, the parabolic induction $\ii_P^G(\sigma)$ has a discrete constituent $\pi(\sigma)$ and another non-tempered constituent $J(\sigma)$, where $\pi(\sigma)$ corresponds to the $L$-parameter in \ref{d0p}(\ref{single}) such that $\varphi|_{\SL_2(\C)}$ meets the $\rG_2(a_1)$ unipotent class, while $J(\sigma)$ has trivial $\varphi|_{\SL_2(\C)}$.
                \item\label{nd12} The central character $\omega_{\sigma}$ of $\sigma$ has order $2$ (and is non-trivial); moreover, if $\sigma\cong\sigma_{S_3}$, then $\GL_2(F)$ cannot be the one attached to the long root.\\
                In this case, $\ii_P^G\sigma$ has two non-isomorphic constituents $\pi$ and $\pi'$ that are both non-discrete and tempered, one of which is generic. Both $\pi$ and $\pi'$ correspond to the $L$-parameters $\varphi$ in \ref{d0p}(\ref{packetoftwo}) for the depth-zero case, or \ref{d+p}(\ref{packetoftwo+}) for the positive-depth case. In particular, we always have $\varphi|_{\SL_2(\C)}\equiv1$ in this case.
                \item\label{nd13} The central character $\omega_{\sigma}\equiv\nu_F^{\pm1}$. In this case, the parabolic induction has a discrete constituent $\pi(\sigma)$ which corresponds to the $L$-parameters in \S\ref{SL2}, and another non-tempered constituent $J(\sigma)$ which has trivial $\varphi|_{\SL_2(\C)}$. 
            \end{enumerate}
            \item\label{nd22} The parabolically induced representation from the order $2$ character $\chi$ of $(F^{\times})^2$ such that $\varphi_{\chi}\colon W_F\rightarrow(\C^{\times})^2$ satisfies $\ker(\phi)=W_{F_{2,2}}$ 
            for the bi-quadratic extension $F_{2,2}$ over $F$. \\
            In this case, the parabolically induced representation has two non-isomorphic constituents $\pi$ and $\pi'$ that are both non-discrete and tempered, one of which is generic. Both $\pi$ and $\pi'$ correspond to the $L$-parameter $\varphi$ given in \ref{d0p}(\ref{twoprin}). In particular, we have $\varphi|_{\SL_2(\C)}\equiv 1$.
        \end{enumerate}
        Representations in the above two cases \eqref{length-two-intermediate-series-thm-constituents} and \eqref{nd22} are the only non-discrete representations of $G$ whose $L$-packets are not singleton packets. In both cases, the $L$-packet has two members that are exactly the two irreducible constituents in the same parabolic induction.
 
        \begin{enumerate}
        \setcounter{enumi}{2}
            \item\label{constituents-part-two} The parabolically induced representation of 
            $\theta\colon T\rightarrow\C^{\times}$ with $\theta\circ\lambda^{\vee}=\nu_F$ for some coroot $\lambda\colon F^{\times}\rightarrow T$. We have the following cases:
            
\begin{enumerate}
    \item\label{nd1}
    There is a unique such $\lambda\in R^{\vee}(\bG,\bT)$. In this case, the 
    parabolically induced representation has two non-isomorphic constituents. They are both non-discrete. 
                \item\label{nd2} There are exactly two such $\lambda\in R^{\vee}(\bG,\bT)$ that are both short with $120^o$ angle, or both long with $60^o$ angle. In this case, the parabolic 
                induction 
                again has two non-isomorphic constituents, both non-discrete.
                \item\label{nd3} There are two such $\lambda\in R^{\vee}(\bG,\bT)$ and they are perpendicular to each other. In this case, the parabolic 
                induction has four constituents. One of them is discrete and corresponds to an $L$-parameter with $\Cent_{G^{\vee}}(\varphi(W_F))=\SO_4(\C)$ and $\varphi|_{\SL_2(\C)}$ subregular. 
                \item\label{nd4} There are exactly two such $\lambda\in R^{\vee}(\bG,\bT)$ that are both long with $120^o$ angle. In this case, the 
                parabolic induction has four constituents. One of them is discrete and corresponds to an $L$-parameter with $\Cent_{G^{\vee}}(\varphi(W_F))=\SL_3(\C)$ and $\varphi|_{\SL_2(\C)}$ subregular. 
                \item\label{nd5} There are two such $\lambda\in R^{\vee}(\bG,\bT)$ with $150^o$ angle in between. In this case, the 
                parabolic induction has the Steinberg representation as a constituent, and has three other non-discrete constituents. 
                \item\label{nd6} There are four such $\lambda\in R^{\vee}(\bG,\bT)$. In this case, the 
                parabolic induction has 5 constituents. Two of them are discrete and correspond to the two non-supercuspidal members in the $L$-packet (which is a unipotent $L$-packet of size $3$ containing also the supercuspidal representation $\pi[-1]$) for the $L$-parameter with $\varphi(W_F)=1$ and $\varphi|_{\SL_2(\C)}$ subregular. 
            \end{enumerate}
        \end{enumerate}
        In cases (\ref{nd2}), (\ref{nd5}) and (\ref{nd6}), the character $\theta\colon T\rightarrow\C^{\times}$ is unique 
        up to the action of the Weyl group $W_G$ 
        (one character for each of the two configurations in (\ref{nd2})). In case (\ref{nd3}), we have that $\theta$ 
        (up to $W_G$-action) is determined by an order-$2$ character $F^{\times}\rightarrow\C^{\times}$, i.e. by a quadratic extension. In case (\ref{nd4}), we have that $\theta$ 
        (up to $W_G$-action) is determined by an order-$3$ character up to inversion, i.e. 
        it is determined up to a Galois cubic extension.
\end{theorem}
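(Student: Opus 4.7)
The overall strategy is to separate the statement into its two natural parts: the intermediate series case (1), governed by a rank-one Hecke algebra attached to a $\GL_2$-Levi, and the principal series cases (2)--(3), governed by the root-theoretic geometry of the character $\theta$ relative to the coroot lattice. In each case I would first characterize \emph{when} $\ii_P^G(\sigma)$ is reducible via the Harish--Chandra $\mu$-function, then count the constituents through the associated $R$-group, and finally match them with the $L$-parameters already classified in \S\ref{subsec:restr-to-SL2} using Properties~\ref{property:L-packets}, \ref{property:size of L-packets} and \ref{property:AMS-conjecture-7.8}. The enumeration of the configurations in case (3) is a purely combinatorial analysis of the $\rG_2$ root system, which I carry out using the diagram in Figure~\ref{fig:G2-root-diagram}.

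For case (1), the plan is to apply \cite[Main Theorem]{aubert-xu-Hecke-algebra} to identify $\fR^{\fs}(G)$, for $\fs=[M_\gamma,\sigma]_G$, with modules over an extended affine Hecke algebra of rank one. The reducibility locus of $\ii_P^G(\sigma)$ is then cut out by the poles and zeros of the Harish--Chandra $\mu$-function, which Shahidi's theory expresses in terms of $L$-factors attached to the adjoint representation of $M^\vee$ on $\mathrm{Lie}(G)/\mathrm{Lie}(M)$. Since $M_\gamma \cong \GL_2$, this adjoint representation decomposes explicitly, yielding exactly the three reducibility points listed: the twist by $\nu_F^{\pm 1}$ of $\sigma_{S_3}$ when $\gamma=\beta$ (case (\ref{nd11})), an order-two central character (case (\ref{nd12})), and $\omega_\sigma \equiv \nu_F^{\pm 1}$ (case (\ref{nd13})). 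Each case produces two constituents, and the tempered/discrete dichotomy is read off from whether the reducibility happens on the unitary axis (giving a non-trivial $R$-group and two tempered summands) or off it (giving a Langlands quotient and a discrete complement). The matching with $\varphi|_{\SL_2(\C)}$ is then forced by the generalized Springer correspondence applied to the centralizer $\cG_\varphi$, which has to be consistent with the tables in \S\ref{unipotent orbits}.

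For cases (2) and (3), the plan is to work along the lines of \cite{Muic-G2}, but made completely explicit in terms of the coroot datum. The reducibility hyperplanes of $\ii_B^G(\theta)$ are exactly the loci $\theta \circ \lambda^\vee = \nu_F$ for $\lambda\in R^\vee(\bG,\bT)$, and the number of such $\lambda$ controlling $\theta$ governs both the length of the induction and the structure of the Weyl stabilizer $W_G^\theta$. I would enumerate up to $W_G$-conjugacy the possible configurations of reducibility coroots: one coroot (\ref{nd1}), two coroots at $60^\circ$ or $120^\circ$ (\ref{nd2}), two perpendicular coroots (\ref{nd3}), two long coroots at $120^\circ$ (\ref{nd4}), two coroots at $150^\circ$ (\ref{nd5}), and four coroots (\ref{nd6}); these exhaust the subsystems of $\rG_2^\vee$ definable by a linear condition on $\theta$. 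For (\ref{nd3}) and (\ref{nd4}), the configuration generates the root system of $\SO_4(\C)$ and of $\SL_3(\C)$ respectively, which identifies $\Cent_{G^\vee}(\varphi(W_F))$; combined with the subregular $\varphi|_{\SL_2(\C)}$ these give the discrete parameters of \S\ref{d0p}, and the component group $S_\varphi \simeq S_3/\Z/2$ or $S_3/\Z/3$ dictates that precisely one of the four constituents is discrete. The uniqueness of $\theta$ up to $W_G$-action in (\ref{nd2}), (\ref{nd5}), (\ref{nd6}) and its parametrization in (\ref{nd3}), (\ref{nd4}) follows from an orbit count on characters satisfying the prescribed linear constraints.

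The main obstacle is case (3)(\ref{nd6}), where $\theta$ is the trivial character and the induced representation $\ii_B^G(1)$ has five Iwahori-spherical constituents. Identifying the \emph{two} discrete constituents and matching them with the subregular unipotent $L$-parameter having $\varphi(W_F)=1$ requires invoking Property~\ref{property:AMS-conjecture-7.8} to locate the missing unipotent supercuspidal $\pi[-1]$ in the same $L$-packet, and then using Kazhdan--Lusztig theory for the Iwahori--Hecke algebra of $\rG_2$ to identify which irreducible modules correspond to the enhancements of the $\rG_2(a_1)$ orbit under the component group $S_3$. The sign, reflection and trivial representations of $S_3$ must be distributed between the supercuspidal $\pi[-1]$ and the two discrete principal-series constituents $\pi(1)$, $\pi(1)'$ of Table~\ref{table:unram-unip-ppal-series-case}, and the assignment is pinned down by matching formal degrees via Proposition~\ref{formal-degree-formula} against the dimensions predicted by the Springer correspondence of Table~\ref{tab:table-unipotent-Gsigma=G2}.
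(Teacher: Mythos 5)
Your plan is sound but travels in the opposite direction from the paper's proof. You propose to establish reducibility of $\ii_P^G(\sigma)$ directly on the group side---via the Harish--Chandra $\mu$-function, the Hecke algebra isomorphisms of \cite{aubert-xu-Hecke-algebra}, $R$-groups, and Muic's Jacquet-module computations---and only afterwards match the constituents to $L$-parameters. The paper's proof of Theorem~\ref{constituents-part-one} is instead a top-down Galois-side argument: it determines exactly which non-discrete $\varphi$ have $S_\varphi\neq 1$ (Corollary~\ref{lemma-nondisc} plus the case-by-case survey of \S\ref{d0p} and \S\ref{d+p}), uses Lemma~\ref{cusp} to show discrete parameters with trivial $\SL_2$-restriction are cuspidal, and then shows---via Property~\ref{property:AMS-conjecture-7.8} and the reversible translation $\lambda\circ\bar\varphi\equiv\|\cdot\|\iff\theta\circ\lambda=\nu_F$---that the remaining reducibility points are governed by the lattice $\Lambda=\langle\lambda\in R(G^\vee,T^\vee):\lambda\circ\bar\varphi\equiv\|\cdot\|\rangle$ and its index in $X^*(T^\vee)$. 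The detailed group-side verifications that you would place first are deferred in the paper to \S\ref{sec:interm_series}--\S\ref{sec:ppal-series}. Both routes are legitimate; the paper's produces the $L$-packet shape as a byproduct, while yours would produce the constituent count directly and then needs the Galois side to organize the members. You should be explicit, however, that your enumeration of coroot configurations (one, two at various angles, four) is exhaustive \emph{because} $\Lambda$ has index at most $3$ in $X^*(T^\vee)$; without the lattice argument ``these exhaust the subsystems definable by a linear condition'' is unjustified.

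Two local errors worth fixing. In your discussion of cases (\ref{nd3}) and (\ref{nd4}) you write ``$S_\varphi\simeq S_3/\mathbb Z/2$ or $S_3/\mathbb Z/3$''; $\mathbb Z/2$ is not normal in $S_3$, and in any event $S_\varphi$ is a \emph{subgroup} of the $S_3=\Cent_{G^\vee}(\varphi(\SL_2(\C)))$, not a quotient: it is $\Cent_{S_3}(\varphi(W_F))\cong\mu_2$ (resp.\ $\mu_3$), equal to the centre of $\cG_\varphi=\SO_4(\C)$ (resp.\ $\SL_3(\C)$); see Tables~\ref{table:SO4} and~\ref{tab:table-unipotent-Gsigma=SL3}. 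Second, what $S_\varphi$ ``dictates'' is the packet size ($2$ resp.\ $3$), with a single non-supercuspidal generic member accounting for the lone discrete constituent of the four; the other members are the singular supercuspidals of \S\ref{d0s}. Finally, in case (\ref{nd2}) your proposal does not record \emph{why} the induction has only two constituents: the point (explicit in the paper's proof) is that the two admissible $\lambda$ are $W_G$-conjugate by an element stabilizing $\theta$, so they produce a single non-discrete parameter with nontrivial $\varphi|_{\SL_2}$, joined by the Langlands quotient with trivial $\SL_2$-restriction---two in total.
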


    \begin{proof} For non-discrete $L$-parameters, a case-by-case study conducted in \S\ref{d0p} and \S\ref{d+p} shows that the only cases where $S_{\varphi}\neq 1$ are: \S\ref{d0p}(\ref{twoprin}),  \S\ref{d0p}(\ref{packetoftwo}) and \S\ref{d+p}(\ref{packetoftwo+}).~These cases belong to case (\ref{nd22}) in Theorem \ref{constituents-part-one}. For discrete $L$-parameters, Lemma \ref{cusp} below implies that $\varphi|_{\SL_2(\C)}\not\equiv1$ is necessary to obtain non-supercuspidal representations. 
    Hence one can refer to the classification of $\varphi$ in $\mathsection$\ref{subsec:restr-to-SL2}, and match them with the case-by-case study in sections \S\ref{d0p}, \S\ref{d+p} and \S\ref{SL2} to obtain a complete list of discrete non-supercuspidal representations of $G$. These results are listed in cases (\ref{nd11}), (\ref{nd13}), (\ref{nd3}), (\ref{nd4}), (\ref{nd5}) and (\ref{nd6}).

    Outside the above discussed cases, we have $S_{\varphi}=\{1\}$. Therefore, in order to have two different constituents, we need two non-supercuspidal $L$-parameters with the same supercuspidal support. 
We start with a $L$-parameter $\varphi\colon W_F\times \SL_2(\C)\rightarrow G^{\vee}$, and consider the {\it trivially} enhanced $L$-parameter $(\varphi,1)$. Then Property~\ref{property:AMS-conjecture-7.8} 
specifies the supercuspidal support as follows: We take a maximal torus $T^{\vee}\subset \cG_{\varphi}:=\Cent_{G^{\vee}}(\varphi(W_F))$. Let $L^{\vee}:=\Cent_{G^{\vee}}(T^{\vee})$. The cuspidal support $\Sc(\varphi,1)$ of $(\varphi,1)$ is the trivially enhanced $L$-parameter $(\bar{\varphi},1)$, where $\bar{\varphi}\colon W_F\rightarrow L^{\vee}$ is 
\begin{equation}\bar{\varphi}(w):=\varphi(w,\matr{|\!|w|\!|^{1/2}&0\\0&|\!|w|\!|^{-1/2}}).
\end{equation}
    
Suppose there exists more than one $\varphi$ with the same $\bar{\varphi}$. Denote by $\phi$ their common $\bar{\varphi}$. We have $\varphi|_{\SL_2(\C)}\not\equiv1$ for at least one $\varphi$. The case where $\varphi$ is discrete is considered in the case-by-case study in sections \S\ref{d0p}, \S\ref{d+p} and \S\ref{SL2}. We devote the rest of this proof to the case where $\varphi$ is non-discrete. 

By Corollary \ref{lemma-nondisc}, when $\varphi$ is non-discrete, we have that $H^{\vee}=T^{\vee}$ is a maximal torus. The group $L^{\vee}:=T^{\vee}\cdot\varphi(\SL_2(\C))$ is a (short root or long root) Levi subgroup isomorphic to $\GL_2(\C)$, for which we denote by $\lambda^{\vee}\in R^{\vee}(L^{\vee},T^{\vee})$ given by $\lambda^{\vee}(t)=\varphi\left(\matr{t&0\\0&t^{-1}}\right)$ with $\lambda$ as the corresponding root. The fact that $\varphi|_{W_F}\colon W_F\rightarrow T^{\vee}$ commutes with $\varphi(\SL_2(\C))$ is equivalent to the fact that $\lambda\circ\varphi|_{W_F}\equiv 1$, which is moreover equivalent to saying that
    \begin{equation}\label{reduciblepoint}
    \lambda\circ\phi\equiv |\!|\cdot|\!|\text{ as characters on }W_F.
    \end{equation}
By Local Langlands for tori, $\phi=\bar{\varphi}\colon W_F\rightarrow T^{\vee}$ corresponds to some $\theta\colon T\rightarrow\C^{\times}$, and \eqref{reduciblepoint} is equivalent to saying that
    $\theta\circ\lambda\equiv|\!|\cdot|\!|$. 
    Every step of the argument above is reversible, therefore, if we start with a $\phi\colon W_F\rightarrow T^{\vee}$ satisfying \eqref{reduciblepoint} for some root $\lambda\in R(G^{\vee},T^{\vee})$, we obtain a corresponding non-discrete $\varphi$ with $\varphi|_{\SL_2(\C)}\not\equiv1$ and $\bar{\varphi}=\phi$. 
It then remains to resolve the question of whether there can be more than one choice for such $\lambda$. 

If there is only one choice for such $\lambda$, then we obtain two $\varphi$'s satisfying $\bar{\varphi}=\phi$; one of them satisfies $\varphi|_{\SL_2(\C)}\equiv1$ and $\varphi|_{W_F}=\phi$. This is precisely case (\ref{nd1}). 

If, however, there are more than one such $\lambda$, then they generate a lattice \[\Lambda:=\langle\lambda\in R(G^{\vee},T^{\vee})\;|\;\lambda\circ\phi\equiv |\!|\cdot|\!|\rangle.\]
We have $\Lambda\subset X^*(T^{\vee})$ with index at most $3$. When $\Lambda=X^*(T^{\vee})$, we have that $\phi$ is uniquely determined by $\lambda$'s. There can be either two or four such $\lambda$'s of the configuration in case (\ref{nd2}), (\ref{nd5}) or (\ref{nd6}). In case (\ref{nd2}), the two choices of $\lambda$ are conjugate under the Weyl group action, and thus there is only one possible $\varphi$ satisfying $\bar{\varphi}=\phi$ and $\varphi|_{\SL_2(\C)}\not\equiv1$. 
    
    In case (\ref{nd5}), we have two different choices for $\lambda$, giving rise to two non-discrete $\varphi$'s satisfying $\bar{\varphi}=\phi$. Nevertheless, there is also a discrete $\varphi$ with $\bar{\varphi}=\phi$, which is the one satisfying $\varphi|_{W_F}\equiv 1$ and $\varphi\left(\matr{1&1\\0&1}\right)$ is the regular unipotent class.
    
    In case (\ref{nd6}), we have two different choices for $\lambda$, giving rise to two non-discrete $\varphi$'s satisfying $\bar{\varphi}=\phi$. Nevertheless, there is also a discrete $\varphi$ with $\bar{\varphi}=\phi$, which is the one satisfying $\varphi|_{W_F}\equiv 1$ and $\varphi\left(\matr{1&1\\0&1}\right)$ is the subregular unipotent class. Moreover, there is a non-trivial local system on the subregular unipotent orbit (which is the unique one of rank $2$) that lives in the principal series. This gives one another non-trivial enhancement of the same $\varphi$ giving our $\theta$ as the supercuspidal support.
    
    In case (\ref{nd3}), we have two different choices for $\lambda$, giving rise to two non-discrete $\varphi$'s satisfying $\bar{\varphi}=\phi$. We also have a non-discrete $L$-parameter satisfying $\varphi|_{W_F}=\phi$ and $\varphi|_{\SL_2(\C)}\equiv 1$. Suppose that the two choices for $\lambda$ are denoted by $\lambda_1$ and $\lambda_2$, where $\lambda_1$ is short. Write $\lambda'_1:=\frac{3}{2}\lambda_1-\frac{1}{2}\lambda_2$ and $\lambda'_2:=\frac{1}{2}\lambda_1+\frac{1}{2}\lambda_2$. Thus we also have $\lambda'_i\in R(G^{\vee},T^{\vee})$, but $\lambda'_i\circ\phi$ is different from $|\!|\cdot|\!|$ by an order-two character. This order-two character corresponds to $\eta:W_F\rightarrow T^{\vee}$ where $\Cent_{G^{\vee}}(\eta)\cong \SO_4(\C)$ such that $\lambda_1,\lambda_2\in R(\Cent_{G^{\vee}}(\eta),T^{\vee})$.
    We then consider the discrete $L$-parameter $\varphi\colon W_F\times \SL_2(\C)\rightarrow G^{\vee}$ such that $\varphi|_{W_F}=\eta$ and $\varphi(\SL_2(\C))\subset \Cent_{G^{\vee}}(\eta)\cong \SO_4(\C)$ meets the regular unipotent orbit in $\SO_4(\C)$. 
    The trivial enhancement of this $L$-parameter also has supercuspidal support $\phi$ (while the other enhancement is cuspidal).
    
    Case (\ref{nd4}) is completely analogous to (\ref{nd3}), except that $\eta$ now has order three and $\Cent_{G^{\vee}}(\eta)\cong \SL_3(\C)$. We note that there exists a Weyl group element that stabilizes everything else but sends $\eta$ to $\eta^{-1}$. This finishes the proof.
    \end{proof}
\begin{numberedparagraph}
    Before we proceed, let us summarize the results of Theorem~\ref{constituents-part-one} in the following Table \ref{tableprin} and Table \ref{tableint}. 
    
    For a character $\theta\colon T\rightarrow\C^{\times}$, we write $\theta=\xi_1\otimes\xi_2$, where $\xi_1=\theta\circ\beta^{\vee}$ and $\xi_2=\theta\circ(\alpha^{\vee}+\beta^{\vee})$ with $\alpha^{\vee}$, $\beta^{\vee}$ as in \eqref{eqn:coroots}.~We remark that this identification is compatible with (\ref{etaa}). Thanks to Theorem~\ref{constituents-part-one}, we have a table for parabolic inductions 
    $I(\xi_1\otimes\xi_2)$ that contain at least two non-isomorphic irreducible constituents that are in the principal series. 
    In the second to last column of Table \ref{tableprin}, each item begins with $\varphi\left(\matr{1&1\\0&1}\right)$ as its first tag, followed by tags among $\{\mathrm{d.},\mathrm{n.d.},\mathrm{t.},\mathrm{n.t.},\mathrm{g.},\mathrm{n.g.}\}$ for the adjectives ``discrete'', ``non-discrete'', ``tempered'', ``non-tempered'', ``generic'', and ``non-generic'', respectively. 
    On the other hand, all the principal series not appearing in Table \ref{tableprin} should have only one irreducible constituent up to isomorphism.

 \begin{equation*}
     \begin{array}{|c|c|c|c|c|}
    \hline \text{Label}&\text{Input}&\xi_1\otimes\xi_2&\text{Constituents of }\ii_P^G(\xi_1\otimes\xi_2) &\text{Table}\\
    \hline\begin{array}{c}(\ref{nd22})\\\S\ref{d0p}(\ref{twoprin})\end{array}&\begin{array}{c}\eta_2^2\equiv(\eta_2')^2\equiv 1,\\
    1\not\equiv\eta_2\not\equiv\eta_2'\not\equiv1
    \end{array}&\eta_2\otimes\eta_2'&\begin{array}{c}(1,\mathrm{n.d.},\mathrm{t.},\mathrm{g.})\\ (1,\mathrm{n.d.},\mathrm{t.},\mathrm{n.g.})\end{array} &\text{Table}~\ref{table:unitary-length-two}\\ 
    \hline\begin{array}{c}(\ref{nd1})\\(\ref{nd2})\end{array}&\begin{array}{c}\eta:F^{\times}\rightarrow\C^{\times}\text{ s.t.}\\\eta^2\not\equiv1\not\equiv(\eta\nu_F)^2,\\\eta\not\equiv\nu_F^2,\;\eta\not\equiv\nu_F^{-3}\end{array}&\eta\otimes\nu_F&\begin{array}{c}(\widetilde{A}_1,\mathrm{n.d.}),(1,\mathrm{n.t.}),\\\text{the first tempered} \\ \text{if and only if }|\!|\eta\nu_F^{1/2}|\!|\equiv1\end{array}&\begin{array}{c} {\text{Table}~\ref{table:nu-tensor-xi-shorttable-ppal-series-case2-ramified-neither}}\end{array}\\
    \hline\begin{array}{c}(\ref{nd1})\\(\ref{nd2})\end{array}&\begin{array}{c}\eta:F^{\times}\rightarrow\C^{\times}\text{ s.t.}\\\eta^3\not\equiv1,\nu_F^{-3}\end{array}&\eta\otimes\eta\nu_F&
    \arraystretch{1.5}\begin{array}{c}(A_1,\mathrm{n.d.}),(1,\mathrm{n.t.}),\\\text{the first tempered} \\ \text{if and only if }|\!|\eta\nu_F^{1/2}|\!|\equiv1\end{array}&\begin{array}{c}{\text{Table}~\ref{table:xi-tensor-xi-shorttable-ppal-series-case5}}\\
    \text{Table}~\ref{table:xi-tensor-xi-shorttable-ppal-series-case5-ramified}\\
    {\text{Table}~\ref{table:xi-tensor-one-shorttable-ppal-series-case2}}
    \end{array}\\
    \hline\begin{array}{c}(\ref{nd3})\\\S\ref{d0p}(\ref{toy})\end{array}&E/F\text{ quadratic}&\nu_F\otimes\eta_{E/F}&\begin{array}{c}(\rG_2(a_1),\mathrm{d.},\mathrm{g.}),\\(\widetilde{A}_1,\mathrm{n.t.}),\;(A_1,\mathrm{n.t.}),\;(1,\mathrm{n.t.})\end{array}&\begin{array}{c}
    \text{Table}~\ref{table:unip-ppal-series-case-3aquad}\\ 
    \text{Table}~\ref{table:unip-ppal-series-case-3aquad-ramified}
    \end{array}\\
    \hline\begin{array}{c}(\ref{nd4})\\\S\ref{d0p}(\ref{chal})\end{array}&E/F\text{ Galois cubic}&\eta_{E/F}\otimes\eta_{E/F}\nu_F&\begin{array}{c}(\rG_2(a_1),\mathrm{d.},\mathrm{g.}),\\(A_1,\mathrm{n.t.}),\;(A_1,\mathrm{n.t.}),\;(1,\mathrm{n.t.})\end{array}&
    \begin{array}{c}\text{Table}~\ref{table:unip-ppal-ser3c-unramified}\\
    \text{Table}~\ref{table:unip-ppal-series3c-ramified}
    \end{array}\\
    \hline(\ref{nd5})&\text{None}&\nu_F\otimes\nu_F^2&\begin{array}{c}(\rG_2,\mathrm{d.},\mathrm{g.}),\\(\widetilde{A}_1,\mathrm{n.t.}),\;(A_1,\mathrm{n.t.}),\;(1,\mathrm{n.t.})\end{array}&\text{Table}~\ref{table:unip-ppal-series-case-3bbis}\\
    \hline(\ref{nd6})&\text{None}&1\otimes\nu_F&\begin{array}{c}(\rG_2(a_1),\mathrm{d.},\mathrm{g.}),\;(\rG_2(a_1),\mathrm{d.},\mathrm{n.g.}),\\(\widetilde{A}_1,\mathrm{n.t.}),\;(A_1,\mathrm{n.t.}),\;(1,\mathrm{n.t.})\end{array}&\text{Table}~\ref{table:unram-unip-ppal-series-case}\\
    \hline
    \end{array}
    \end{equation*}
    \vskip-.4cm
  \begin{table}[h]
        \centering
        \caption{\label{tableprin}Principal series with more than one non-isomorphic constituents.
        $\;\;\;\;\;\;\;\;\;\;\;\;\;\;\;\;\;\;\;\;$}
    \end{table}
    \end{numberedparagraph}
    
  \begin{numberedparagraph}
On the other hand, for a supercuspidal representation $\sigma$ of $\GL_2(F)$, we also consider the parabolic induction $\ii_P^G(\sigma)$, whose irreducible constituents are referred to as \textit{intermediate series}. We denote by $\omega_{\sigma}\colon\GL_1(F)\rightarrow\C^{\times}$ the central character of the supercuspidal representation $\sigma$. 
We also denote by $\sigma\otimes\nu_F^s$ the representation $\sigma$ tensored with $\GL_2(F)\xrightarrow{\det}\GL_1(F)\xrightarrow{\nu_F^s}\C$, such that $\omega_{\sigma\otimes\nu_F^s}=\omega_{\sigma}\cdot\nu_F^{2s}$. 

We shall also consider a particular supercuspidal representation of $\GL_2(F)$ that is denoted as $\sigma_{S_3}$ and is characterized by the property that: its corresponding $L$-parameter $W_F\rightarrow \GL_2(\C)$ has image isomorphic to $S_3$; such a supercuspidal representation exists if and only if $q\equiv -1(3)$, in which case it is unique, at depth-zero and has central character $\omega_{\sigma}$ unramified of order two. We have the following Table \ref{tableint} for parabolic inductions that contain at least two non-isomorphic irreducible constituents that are intermediate series.  
    \begin{equation*}
    \begin{array}{|c|c|c|c|}
    \hline\text{Cases}&\text{Choice of }\GL_2(F)&\text{Condition on $\sigma$}&\text{Constituents of }\ii_P^G(\sigma)\\
    \hline\begin{array}{c}\ref{constituents-part-two}\eqref{nd13}\\\ref{SL2}\end{array}&\text{short}&\omega_{\sigma}\equiv\nu_F^{\pm1}&(\widetilde{A}_1,\mathrm{d.},\mathrm{g.}),(1,\mathrm{n.t.})\\
    \hline\begin{array}{c}\ref{constituents-part-two}\eqref{nd13}\\\ref{SL2}\end{array}&\text{long}&\omega_{\sigma}\equiv\nu_F^{\pm1}&
    \begin{array}{c}(A_1,\mathrm{d.},\mathrm{g.}),(1,\mathrm{n.t.})\end{array}\\
    \hline\begin{array}{c}\ref{constituents-part-two}\eqref{nd12}\\\ref{d0p}(\ref{packetoftwo})\\\ref{d+p}(\ref{packetoftwo+})\end{array}&\text{short}&\begin{array}{c}\omega_{\sigma}^2\equiv 1,\\\omega_{\sigma}\not\equiv1\end{array}&(1,\mathrm{n.d.},\mathrm{t.},\mathrm{g.}),(1,\mathrm{n.d.},\mathrm{t.},\mathrm{n.g.})\\
    \hline\begin{array}{c}\ref{constituents-part-two}\eqref{nd12}\\\ref{d0p}(\ref{packetoftwo})\\\ref{d+p}(\ref{packetoftwo+})\end{array}&\text{long}&\begin{array}{c}\omega_{\sigma}^2\equiv 1,\\\omega_{\sigma}\not\equiv1,\;\sigma\not\cong\sigma_{S_3}\end{array}&(1,\mathrm{n.d.},\mathrm{t.},\mathrm{g.}),(1,\mathrm{n.d.},\mathrm{t.},\mathrm{n.g.})\\
    \hline\begin{array}{c}\ref{constituents-part-two}\eqref{nd11}\\\ref{d0p}(\ref{single})\end{array}&\text{long}&\sigma\cong\sigma_{S_3}\otimes\nu_F^{\pm1}&(\rG_2(a_1),\mathrm{d.},\mathrm{g.}),(1,\mathrm{n.t.})\\
    \hline
    \end{array}
    \end{equation*}
    \vskip-.3cm
    \begin{table}[h]
        \centering
        \caption{\label{tableint}Intermediate series with more than $1$ non-isomorphic constituents.
        $\;\;\;\;\;\;\;\;\;\;\;\;\;\;\;\;\;\;\;\;$}
    \end{table}

  \end{numberedparagraph}

\subsection{$L$-parameters for $\rG_2$}     
\subsubsection{Depth-zero $L$-parameters}\label{d0p}

    In the rest of this chapter, we follow the second strategy outlined in $\mathsection$\ref{two-strategies-Galois-side} to finish the classification of $\varphi$, i.e.~we classify $\varphi$ in terms of $\varphi|_{W_F}$.~Let us denote by $G_0^{\vee}:=\Cent_{G^{\vee}}(\varphi(P_F))$ and $S^{\vee}:=\Cent_{G^{\vee}}(\varphi(I_F))=\Cent_{G_0^{\vee}}(\varphi(I_F))$.  Recall that $\cG_{\varphi}:=\Cent_{G^{\vee}}(\varphi(W_F))=\Cent_{S^{\vee}}(\varphi(\Fr))$, and we have $S_{\varphi}=\pi_0(\Cent_{\cG_{\varphi}}(\varphi(\SL_2(\C))))$.

For each configuration of $\varphi|_{W_F}$, we have that $\varphi$ is discrete if and only if $(\cG_{\varphi})^\circ$ is semisimple and $\varphi(1,\matr{1&1\\0&1})$ is a distinguished unipotent element $u=u_\varphi$ in $(\cG_{\varphi})^\circ$. In this case, $\rho\in\Irr(S_{\varphi})$ is cuspidal in the sense of $\mathsection$\ref{subsubsec:cuspidal-pairs}. 
In fact, the following holds (thanks to the small rank of $\rG_2$). 
\begin{lemma}\label{cusp} For discrete $L$-parameters, except in the case where $\varphi|_{W_F}$ is trivial and $S_{\varphi}\cong S_3$, we have that $u$ is regular unipotent in $\cG_{\varphi}$, and $\rho\in\Irr(S_{\varphi})$ is cuspidal if and only if either $\rho$ is non-trivial or $\cG_{\varphi}$ is finite.
\end{lemma}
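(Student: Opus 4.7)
The plan is to walk through the classification of discrete $\varphi$ from $\mathsection$\ref{subsec:restr-to-SL2} according to the unipotent orbit in which $u := \varphi\bigl(1,\bigl(\begin{smallmatrix}1&1\\0&1\end{smallmatrix}\bigr)\bigr)$ lies, in each case identifying $\cG_\varphi^\circ$ and the image of $u$ in it, and then invoking the classification of cuspidal unipotent pairs recalled in Examples \ref{ex: Gvee torus}, \ref{ex:SLn}, \ref{ex:SO4}.

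First I would handle the ``easy'' cases. If $\varphi\vert_{\SL_2(\C)}$ is trivial, discreteness of $\varphi$ forces $\cG_\varphi^\circ$ to be trivial (otherwise $\varphi(W_F)$ would lie in the centralizer of a non-trivial torus, hence in a proper Levi subgroup). Thus $\cG_\varphi$ is finite, $u=1$, and Example~\ref{ex: Gvee torus} gives that every $\rho$ is cuspidal -- this is the ``$\cG_\varphi$ finite'' branch of the statement. The regular case $u\in\rG_2$ is similar: $\cG_\varphi=\Cent_{G^\vee}(\varphi(\SL_2(\C)))=1$ is finite. For the short/long $\SL_2$ cases $u\in\widetilde{A}_1$ or $A_1$, item (3) of $\mathsection$\ref{subsec:restr-to-SL2} gives $\Cent_{G^\vee}(\varphi(\SL_2(\C)))\cong\SL_2(\C)$; discreteness excludes $\varphi(W_F)$ being toral in this $\SL_2(\C)$, so $\cG_\varphi^\circ=\varphi(\SL_2(\C))$ with $u$ regular, and by \cite[(10.3.2)]{Lusztig-IC} the cuspidal enhancements of the regular orbit in $\SL_2(\C)$ are exactly the non-trivial characters.

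The remaining case is the subregular one, $u\in\rG_2(a_1)$, where $\Cent_{G^\vee}(\varphi(\SL_2(\C)))\cong S_3$ and $\varphi(W_F)$ is a subgroup of $S_3$. When $\varphi(W_F)=1$ we have $\cG_\varphi=G^\vee$, $u$ remains subregular (not regular), and $S_\varphi\cong S_3$; this is precisely the excluded exceptional case. In the remaining three sub-cases, Proposition~\ref{loop} and double-centralization identify $\cG_\varphi^\circ$ with $\SO_4(\C)$, $\SL_3(\C)$, or (by reversal of roles) $\varphi(\SL_2(\C))\cong\SL_2(\C)$ when $\varphi(W_F)$ is $\mathbb{Z}/2$, $\mathbb{Z}/3$, or $S_3$ respectively. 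Using Tables \ref{table:SO4} and \ref{tab:table-unipotent-Gsigma=SL3}, the subregular class $\rG_2(a_1)$ meets these subgroups in $(3,1)$, $(3)$, and the regular orbit of $\SL_2$ -- each of which is regular in the relevant pseudo-Levi. By Examples \ref{ex:SLn} and \ref{ex:SO4}, the cuspidal enhancements of these pairs are exactly the non-trivial $\rho\in\Irr(S_\varphi)$, giving the claimed equivalence.

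The main obstacle is the careful case-by-case bookkeeping of the connected component $\cG_\varphi^\circ$ for each subgroup $\varphi(W_F)\subset S_3\subset G^\vee$ in the subregular case, together with the matching of subregular elements of $\rG_2(\C)$ against the regular unipotent orbits of the pseudo-Levi subgroups $\SO_4(\C)$ and $\SL_3(\C)$; once this matching is settled, the cuspidality dichotomy follows directly from the examples of cuspidal unipotent pairs already tabulated in $\mathsection$\ref{subsubsec:cuspidal-pairs}--$\mathsection$\ref{sec:preliminariesG2}.
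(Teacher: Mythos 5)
Your overall strategy coincides with the paper's: reduce to the observation that the connected components of $\cG_{\varphi}$ can only be of type $\rA_1$, $\rA_2$ or $\rG_2$, that discreteness forces $u_\varphi$ to be distinguished hence regular outside the excluded $\rG_2$-case, and that cuspidal local systems on regular orbits of $\SL_2(\C)$, $\SL_3(\C)$, $\SO_4(\C)$ are exactly the non-trivial ones. However, two of your identifications of $\cG_{\varphi}=\Cent_{G^\vee}(\varphi(W_F))$ are wrong, and the first one breaks the cuspidality dichotomy you are trying to prove.

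First, in the regular case $u\in\rG_2$ you write $\cG_{\varphi}=\Cent_{G^\vee}(\varphi(\SL_2(\C)))=1$, ``finite.'' But $\cG_{\varphi}$ is the centralizer of $\varphi(W_F)$ only, not of $\varphi(W_F')$; since $\varphi(W_F)\subset\Cent_{G^\vee}(\varphi(\SL_2(\C)))=\{1\}$, one has $\varphi(W_F)=1$ and $\cG_{\varphi}=G^\vee$, which is not finite. (Your version is internally inconsistent: $u_\varphi$ always lies in $\cG_{\varphi}$, so $\cG_{\varphi}$ cannot be trivial when $u_\varphi\neq 1$.) If $\cG_{\varphi}$ were finite here, the lemma would declare the trivial enhancement cuspidal, i.e.\ the Steinberg parameter would be cuspidal --- false. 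The correct reading is: $u$ is regular in $\cG_{\varphi}=G^\vee$, $S_\varphi=A_{G^\vee}(u_{\mathrm{reg}})=1$, the only $\rho$ is trivial and it is not cuspidal, consistent with the statement.

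Second, in the subregular case with $\varphi(W_F)\cong S_3$ you identify $\cG_{\varphi}^\circ$ with $\varphi(\SL_2(\C))\cong\SL_2(\C)$. The orbit $\rG_2(a_1)$ is even, so the associated homomorphism kills $-I$ and $\varphi(\SL_2(\C))\cong\PGL_2(\C)$; indeed $\Cent_{G^\vee}(S_3)\cong\SO_3(\C)$ (as recorded in \S\ref{d0p}(\ref{single}) and the summary table). Consequently $S_\varphi=\{1\}$, not $\mu_2$: there is no non-trivial enhancement and no cuspidal one, which is what matches the singleton non-supercuspidal $L$-packet there. With your $\SL_2$ the lemma's biconditional would still formally read correctly, but you would be predicting a spurious cuspidal enhancement and a phantom supercuspidal packet member. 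Fix both identifications (and note that the ``$\cG_{\varphi}$ finite'' branch genuinely occurs only when $\varphi(\SL_2(\C))$ is trivial), and the rest of your case analysis goes through as in the paper.
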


\begin{proof} The assertion is obvious when $\cG_{\varphi}$ is finite. We now assume that $\dim \cG_{\varphi}>0$. Each connected Dynkin component for $\cG_{\varphi}$ is of type $\rA_1$,  $\rA_2$ or $\rG_2$. The only non-regular but distinguished unipotent appears in the $\rG_2$ case for which $\varphi|_{W_F}$ is trivial and $S_{\varphi}\cong S_3$. For the remaining cases, $u$ is regular. Pulling back to the simply-connected cover of $\cG_{\varphi}$, any local system on the regular unipotent orbit of $\SL_2(\C)$, $\SL_3(\C)$ or $\rG_2(\C)$ is cuspidal if and only if it is non-trivial. 
\end{proof}
We remark that when $u$ is regular unipotent, we have $S_{\varphi}=\rZ_{\cG_{\varphi}}$. 

We are now ready to proceed with a detailed case-by-case discussion.~The centralizer $S^{\vee}:=\Cent_{G^{\vee}}(\varphi(I_F))$ is the fixed subgroup of a finite-order automorphism of $G^{\vee}$. Since $G^{\vee}$ is simply-connected, $S^{\vee}$ is connected.~It can be one of the following cases:
\begin{enumerate}
	\item\label{d0-unipotent} $S^{\vee}\cong G^{\vee}$, i.e.~$\varphi(I_F)$ is trivial. In this case, the $L$-parameter is unipotent and an explicit LLC has been constructed by \cite{Lu-padicI} (see also \cite[p.480]{Reeder-Iwahori-spherical-discrete-series} and \cite{Morris-ENS}). 
	\item $S^{\vee}\cong \SL_3(\C)$. In this case, $\varphi(I_F)$ is generated by a conjugate of $\lambda(1,\zeta_3)$. Since $\lambda(1,\zeta_3)$ is also conjugate to $\lambda(1,\zeta_3^{-1})$, this makes $\varphi|_{I_F}$ unique. By Proposition \ref{loop}, $\cG_{\varphi}:=\Cent_{S^{\vee}}(\varphi(\Fr))$  can be $\SL_3$, $\GL_2$, $\GL_1^2$, $\SL_2$, $\SO_3$ or $\GL_1$. In the case $\cG_{\varphi}=\GL_2$, $\GL_1^2$ or $\GL_1$, we have that $\cG_{\varphi}$ is not semisimple and thus $\varphi$ is not discrete. In these cases $\Cent_{G^{\vee}}(\varphi)$ is always a torus making $S_{\varphi}=\{1\}$. In the cases $\cG_{\varphi}=\GL_2$ or $\GL_1^2$, $\Fr$ acts on $\SL_3(\C)$ by inner automorphism, necessarily fixing $\rZ_{\SL_3(\C)}=\varphi(I_F)$. Hence $\varphi(W_F)$ is abelian. By Proposition \ref{singprin} the $L$-packet consists of a single principal series representation. When $\cG_{\varphi}\cong \GL_1$, $\varphi(W_F)$ is non-abelian and by Proposition \ref{singprin} we get an $L$-packet consists of a single non-discrete constituent of a parabolic induction from $\GL_2(F)$. We discuss the rest of the cases:
	\begin{enumerate}
		\item\label{chal} $\cG_{\varphi}=\SL_3$. This is to say that $\varphi(W_F)\subset \Cent_{G^{\vee}}(\SL_3(\C))=\varphi(I_F)$, i.e. $\varphi|_{W_F}$ factors through $\Gal(E/F)$ for some degree $3$  ramified Galois extension, which necessarily forces $q\equiv1(3)$. In such case there are $3$ choices of $E$. For each choice we have a unique discrete $L$-parameter with $S_{\varphi}=\mu_3$ by Lemma \ref{cusp}. Any such $L$-packet consists of one non-supercuspidal representation and two supercuspidal ones. When $\varphi$ is not discrete, the centralizer $\Cent_{\cG_{\varphi}}(\varphi(\SL_2(\C))$ is either $\SL_3(\C)$ or $\GL_1(\C)$ so that $S_{\varphi}=\{1\}$. By Proposition~\ref{singprin} we obtain an $L$-packet consisting of a single principal series representation.
		
		\item\label{2b} $\cG_{\varphi}=\SL_2(\C)$. In this case $\varphi(\Fr)$ acts on $S^{\vee}=\SL_3(\C)$ by an outer automorphism. The unique outer class for $\SL_3$ is represented by $g\mapsto (g^t)^{-1}$ which acts as inversion on $\rZ_{\SL_3(\C)}=\varphi(I_F)$. This implies $q\equiv -1(3)$. Moreover, such an outer automorphism with fixed subgroup $\SL_2(\C)$ is unique up to $\SL_3(\C)$-conjugacy because in Proposition~\ref{loop} there is a unique $\SL_2$ vertex of the loop group $\SU_3/_{\C((t))}$. Hence there exists a unique such $\varphi$ that is discrete, with $S_{\varphi}=\rZ_{\SL_2(\C)}=\mu_2$. The $L$-packet consists of one non-supercuspidal representation and one supercuspidal representation.  
		By Property \ref{property:AMS-conjecture-7.8}, the non-supercuspidal is the generic constituent of the parabolic induction from $M_\beta$ --the long root $\GL_2(F)$-- of the depth-zero supercuspidal representation of $\GL_2(F)$ corresponding to $W_F\xrightarrow{\varphi^{\tau}} (\Z/3)\rtimes(\Z/4)\xrightarrow{\tau} \GL_2(\C)$ with some unramified twist specified by Property \ref{property:AMS-conjecture-7.8}. 
		Here $\tau$ is the unique irreducible yet faithful representation of $(\Z/3)\rtimes(\Z/4)$, and $\varphi^{\tau}$ maps $I_F$ onto $\Z/3$ and some lift $\Fr$ to $(0,1)$. More precisely, this depth-zero supercuspidal representation of $\GL_2(F)$ corresponds to the tame $L$-parameter $W_F\rightarrow \GL_2(\C)$ that sends a generator of tame inertia subgroup to $\matr{\zeta_3&0\\0&\zeta_3^{-1}}$ and a lift of $\Fr$ to $\matr{0&iq^{1/2}\\iq^{1/2}&0}$. In fact, this $L$-parameter is also of long $\SL_2$-type as in \S\ref{SL2}, and the above description is generalized there.
		
		If otherwise $\varphi$ is non-discrete, then $\varphi(\SL_2(\C))=1$, $S_{\varphi}=\pi_0(\cG_{\varphi})=\{1\}$. By Property \ref{property:AMS-conjecture-7.8}, we have a singleton $L$-packet consisting of a tempered representation contained in the parabolic induction from the long root $\GL_2(F)$ given by the finite-order homomorphism $\tau\colon W_F\rightarrow \GL_2(\C)$ described in the previous paragraph.
		\item\label{single} $\cG_{\varphi}=\SO_3$. Again in this case $\varphi(\Fr)$ acts on $S^{\vee}=\SL_3(\C)$ by an outer automorphism and hence $q\equiv -1(3)$. There is again a unique discrete such $\varphi$. In this case $S_{\varphi}=\{1\}$ and the $L$-packet consists of a single non-supercuspidal generic discrete series. 
		Under~\ref{property:AMS-conjecture-7.8} it is given by the generic constituent of the parabolically induced representation from the long root $\GL_2(F)$ of the depth-zero supercuspidal representation of $\GL_2(F)$ corresponding to $W_F\twoheadrightarrow  S_3\hookrightarrow \GL_2(\C)$ with some unramified twist specified by Property \ref{property:AMS-conjecture-7.8}. 
		Specifically, it is the depth-zero supercuspidal representation corresponding to the tame $L$-parameter $W_F\rightarrow \GL_2(\C)$ that sends a generator of the tame inertia subgroup to $\matr{\zeta_3&0\\0&\zeta_3^{-1}}$ and a lift of $\Fr$ to $\matr{0&q^{1/2}\\q^{1/2}&0}$.		We remark that compared to the non-supercuspidal representation in (\ref{2b}), the two depth-zero supercuspidal representations of $\GL_2(F)$ differ by an order four unramified twist. 

		If otherwise $\varphi$ is non-discrete, then $\varphi(\SL_2(\C))=1$, $S_{\varphi}=\pi_0(\cG_{\varphi})=\{1\}$. By the \ref{property:AMS-conjecture-7.8}, we have a singleton $L$-packet consisting of a tempered generic representation contained in the parabolic induction from the long root $\GL_2(F)$ given by the finite-order homomorphism from $W_F$ to $\GL_2(\C)$ that sends a generator of tame inertia to $\matr{\zeta_3&0\\0&\zeta_3^{-1}}$ and some lift of $\Fr$ to $\matr{0&i\\i&0}$ so that $\varphi(W_F)\cong S_3$, somewhat similar to the previous paragraph.
	\end{enumerate}
	\item $S^{\vee}\cong \SO_4(\C)$. In this case, $\varphi(I_F)=\rZ_{\SO_4(\C)}$ is up to conjugate the unique order $2$ subgroup of $G^{\vee}$. This is because $\varphi(I_F)$ has to be generated by either $\lambda(1,-1)$, $\lambda(-1,1)$ or $\lambda(-1,-1)$, yet the above three elements are themselves conjugate.
	\begin{enumerate}
	    \item\label{toy} If $\varphi$ is discrete,  we need the centralizer $\cG_{\varphi}:=\Cent_{S^{\vee}}(\varphi(\Fr))$ to be semisimple, which is only possible when $\cG_{\varphi}=S^{\vee}$, i.e. $\varphi(\Fr)\in \Cent_{G^{\vee}}(S^{\vee})=\rZ_{S^{\vee}}=\mu_2$. In this case $\varphi|_{W_F}$ factors through $\Gal(E/F)$ for some quadratic ramified Galois extension. There are two such choices. For each choice we have a unique $L$-parameter with $S_{\varphi}=\rZ_{\SO_4(\C)}=\mu_2$. Each $L$-packet consists of one non-supercuspidal representation and one supercuspidal representation. 
	    \item Suppose $\varphi$ is non-discrete. Then $\varphi(\Fr)$ commutes with $\varphi(I_F)=\rZ_{S^{\vee}}\cong\mu_2$. The centralizer $\Cent_{G^{\vee}}(\varphi)\subset S^{\vee}\cong \SO_4(\C)$ is connected except for one case when $\cG_{\varphi}=\Cent_{G^{\vee}}(\varphi)=\rS(\rO_2(\C)\times \rO_2(\C))$, in which case $\varphi(\Fr)$ is mapped to a non-central order $2$ element in $\SO_4(\C)$ (and all of them live in a single orbit). Outside the exception case, by Proposition~\ref{singprin} we always get a singleton $L$-packet consisting of a representation in the principal series.
	    \item\label{twoprin} For the exception case, we have a unique such $L$-packet. By Property~\ref{property:AMS-conjecture-7.8}, it has two tempered members. Each of them is contained in a principal series coming from a biquadratic order $2$ character.
	\end{enumerate}
	
	\item\label{d0pSL2} $S^{\vee}\cong \GL_2(\C)$. In this case, $\cG_{\varphi}=\Cent_{S^{\vee}}(\varphi(\Fr))$ can be $\GL_2$, $\GL_1$, $\SL_2$ or $\rO_2$. 
	\begin{enumerate}
	    \item The only discrete $\varphi$ arises in the unique semisimple case $\cG_{\varphi}=\SL_2$. In this case $\varphi$ is of $\SL_2$-type in \S\ref{SL2}.
	    \item Otherwise $\varphi$ is non-discrete. We note that $\varphi(I_F)$ is contained in $\rZ_{S^{\vee}}\cong \rZ_{\GL_2(\C)}=\GL_1(\C)$. In the case when $\cG_{\varphi}\cong \GL_2(\C)$, $S_{\varphi}=\{1\}$ and $\varphi(\Fr)$ commutes with $\rZ_{S^{\vee}}$, and hence we fall into the situation of Proposition \ref{singprin}. When $\cG_{\varphi}\cong \SL_2$ or $\GL_1$, $\varphi(W_F)$ is non-abelian yet $S_{\varphi}=\{1\}$. By the \ref{property:AMS-conjecture-7.8}, we get an $L$-packet consisting of a single representation contained in the parabolic induction from $\GL_2(F)$ (short root $\GL_2(F)$ in $G$ if $S^{\vee}\subset G^{\vee}$ is short root $\GL_2(\C)$, and both long otherwise) given by $\varphi|_{W_F}$ which has non-abelian image and thus necessarily supercuspidal.
	    \item\label{packetoftwo} Lastly, when $\cG_{\varphi}=\rO_2(\C)$, we have $\varphi(\SL_2(\C))=1$, but $S_{\varphi}\cong\mu_2$. In this case, $S^{\vee}\cong \GL_2(\C)$ is contained in its normalizer which is isomorphic to $\SO_4(\C)\subset G^{\vee}$, while $\varphi(\Fr)$ normalizes $S^{\vee}$ in an outer manner. Up to inner conjugation, there is a unique automorphism of $\GL_2(\C)$ with fixed subgroup isomorphic to $\rO_2(\C)$. Hence within $\SO_4(\C)$, up to conjugation, 
	    \[
	    \hskip0.7cm\varphi(I_F)\subset\left\{\matr{s&&&\\&s&&\\&&s^{-1}&\\&&&s^{-1}}\;|\;s\in S^1\subset\C^{\times}\right\},\;\varphi(\Fr)=\matr{&&1&\\&&&1\\1&&&\\&1&&}.
	    \]
	    Also, recall $\varphi(\SL_2(\C))=1$. We need $\varphi(I_F)$ to be large enough such that
	    \[
	    \hskip1.1cm \Cent_{G^{\vee}}(\varphi)=\left\{\matr{z&&&\\&z^{-1}&&\\&&z&\\&&&z^{-1}}\;|\;z\in\C^{\times}\}\sqcup\{\matr{&z&&\\z^{-1}&&&\\&&&z\\&&z^{-1}&}\;|\;z\in\C^{\times}\right\}.
	    \]
	    By Property~\ref{property:AMS-conjecture-7.8}, the $L$-packet consists of two non-discrete tempered representations, both contained in a parabolic induction from $\GL_2(F)$ (short root $\GL_2(F)$ in $G$ if $S^{\vee}\subset G^{\vee}$ is short root $\GL_2(\C)$, and both long otherwise) given by $\varphi|_{W_F}$ that has non-abelian image and thus necessarily supercuspidal. 
	    
	    We now make a few remarks.~We have $\Cent_{\SO_4(\C)}(\rO_2(\C))\cong \rO_2(\C)$, while $\Cent_{\SO_4(\C)}(\SO_2(\C))\cong \GL_2(\C)$. In particular, the supercuspidal representation of $\GL_2(F)$ is characterized by the property that its $L$-parameter has image in $\rO_2(\C)$ where $\Fr$ is sent to the non-trivial component and $I_F$ is sent to $\SO_2(\C)$. In other words, the central character of the supercuspidal representation of $\GL_2(F)$ is the unramified order-two character. Conversely, any supercuspidal representation of $\GL_2(\C)$ whose central character has order-two has its corresponding $L$-parameter mapping into some $\rO_2(\C)$ (but not into $\SO_2(\C)$) by Lemma \ref{solv}. By Property \ref{property:AMS-conjecture-7.8}, its parabolic induction to $G$ has a non-discrete constituent living in an $L$-packet of $2$ members. By Property \ref{property:L-packets}, these $2$ members are both tempered. See also \S\ref{d+p}(\ref{packetoftwo+}) for the case where the supercuspidal representation of $\GL_2(F)$ has positive-depth, and its central character is again of order two but possibly ramified.
	\end{enumerate}

	\item $S^{\vee}\cong \GL_1(\C)^2$ is a maximal torus of $G^{\vee}$. We have the following cases:
	\begin{enumerate}
	    \item\label{regular-sc-depthzero} For discrete $\varphi$, $\varphi(\Fr)$ acts on $S^{\vee}$ through an elliptic element in $W_{\bG}$, such that $S_{\varphi}\cong(\mu_2)^2$, $\mu_3$ and $\mu_1$, respectively, when the Weyl group element has order $2$, $3$ or $6$. In this case, the $L$-parameter is regular supercuspidal, and the Local Langlands has been determined in \cite{Kal-reg}. 
	    The size of the $L$-packet is $4$, $3$ or $1$ respectively.
        \item For non-discrete $\varphi$, $\varphi(\Fr)$ acts on $S^{\vee}$ through some non-elliptic element $w$ in the Weyl group $W_{\bG}$, giving us a non-discrete $\varphi$. Thus we have that $w$ is either trivial or a reflection. When $w$ is trivial,  $S_{\varphi}=\pi_0(S^{\vee})$ is connected and Proposition \ref{singprin} applies. When $w$ is a reflection, we have $S_{\varphi}=\pi_0(\Cent_{S^{\vee}}(\Fr))\cong (X_*(S^{\vee})_{\Fr})_{\mathrm{tor}}=\{1\}$. Hence we have a singleton $L$-packet consisting of an irreducible constituent of the parabolic induction of a supercuspidal representation of $\GL_2(F)$.
	\end{enumerate}

\end{enumerate}

\subsubsection{Positive-depth $L$-parameters}\label{d+p}

In this case, $G_0^{\vee}$ is a proper Levi subgroup of $G^{\vee}$, which is either $\GL_2(\C)$ or $\GL_1(\C)^2$. We have
\begin{enumerate}
	\item\label{posdep-GL1} When $G_0^{\vee}\cong \GL_1(\C)^2$ is a maximal torus of $G^{\vee}$, the whole $W_F$ acts on $G_0^{\vee}$ through a subgroup of the Weyl group, giving $S_{\varphi}\cong (\mu_2)^2$, $\mu_3$, $\mu_2$ or trivial, and thus an $L$-packet with $1$, $2$, $3$ or $4$ members. The members are regular supercuspidal representations if $X_*(G_0^{\vee})^{W_F}$ is trivial. Otherwise, $W_F$ acts on $X_*(G_0^{\vee})$ via at most a single reflection. In this case we have $S_{\varphi}=\{1\}$ (as in \S\ref{d0p}(\ref{regular-sc-depthzero})). When $W_F$ acts trivially on $X_*(G_0^{\vee})$, Proposition \ref{singprin} applies. Otherwise, when $W_F$ acts by a reflection we have a singleton $L$-packet consisting of a constituent of the parabolic induction from a supercuspidal representation of $\GL_2(F)$.
	\item When $G_0^{\vee}\cong \GL_2(\C)$, there are several cases for which $S^{\vee}=\Cent_{G_0^{\vee}}(\varphi(I_F))$. We first assume $\varphi$ is discrete: 
	\begin{enumerate}
		\item\label{posdep-2a} $S^{\vee}\supset \SL_2(\C)$. In this case we necessarily have $\cG_{\varphi}=\SL_2(\C)$ for $\varphi$ to be discrete. This implies that $\varphi$ is of $\SL_2$-type, and we refer to \S\ref{SL2}.

		\item\label{posdep-2b} $S^{\vee}$ is a maximal torus, in which case $\cG_{\varphi}=S_{\varphi}=S^{\vee}[2]\cong(\mu_2)^2$ is a regular supercuspidal $L$-parameter with $4$ members in its $L$-packet.
		\item\label{posdep-2c} $S^{\vee}$ is a torus of rank $1$. In this case $\cG_{\varphi}=S_{\varphi}=S^{\vee}[2]\cong\mu_2$ is a regular supercuspidal $L$-parameter with $2$ members in its $L$-packet.
		\item\label{ns} $S^{\vee}\cong \rO_2(\C)$ is disconnected. In this case $\cG_{\varphi}=S_{\varphi}\cong(\mu_2)^2$ is a non-singular supercuspidal $L$-parameter with $4$ members in its $L$-packet.
	\end{enumerate}
	Lastly, we turn to the non-discrete case in which $\dim \cG_{\varphi}>0$. We claim that we always have $\cG_{\varphi}$ connected and thus $S_{\varphi}=\{1\}$ except when $\Cent_{G^{\vee}}(\varphi)=\cG_{\varphi}=S^{\vee}\cong \rO_2(\C)$. This is done by investigating the possibilities for $\cG_{\varphi}$ in cases (\ref{posdep-2a})(\ref{posdep-2b})(\ref{posdep-2c}) above. In the last case (\ref{ns}) when $S^{\vee}\cong \rO_2(\C)$, to have non-discrete $L$-parameter we have either $\cG_{\varphi}\cong \SO_2(\C)$ or $\cG_{\varphi}\cong \rO_2(\C)$. In the former case we have again $S_{\varphi}=\{1\}$. Yet in the latter case:
	\begin{enumerate}
	\setcounter{enumii}{4}
	    \item\label{packetoftwo+} When $\cG_{\varphi}\cong \rO_2(\C)$, the $L$-parameter has the same centralizer and double centralizer (a different $\rO_2(\C)$) as in \S\ref{d0p}(\ref{packetoftwo}). Hence the $L$-packet consists of two non-discrete tempered constituents of parabolic inductions of supercuspidal representations of $\GL_2(F)$ with the same properties as described in \S\ref{d0p}(\ref{packetoftwo}), except for one difference: the central character of the supercuspidal representation of $\GL_2(F)$ may be unramified or ramified. Since $\cG_{\varphi}\cong \rO_2(\C)$, we have $\Cent_{G^{\vee}}(\cG_{\varphi})\cong \rO_2(\C)$, therefore the central character is the order-two character specified by $\varphi\colon W_F\rightarrow\pi_0(\Cent_{G^{\vee}}(\cG_{\varphi}))$. 
	\end{enumerate}
\end{enumerate}

\subsubsection{$L$-parameters and representations of $\SL_2$-type}\label{SL2}

\begin{definition} \label{defn:SL2-type} We say that an $L$-parameter $\varphi:W_F\times \SL_2(\C)\to \rG^{\vee}$ is \textit{of short $\SL_2$-type}, if it is discrete and $\varphi(\SL_2(\C))$ is a short root $\SL_2$ in $\rG^{\vee}=\rG_2(\C)$. We say that an $L$-parameter $\varphi:W_F\times \SL_2(\C)\to \rG^{\vee}$ is \textit{of long $\SL_2$-type}, if it is discrete and $\varphi(\SL_2(\C))$ is a long root $\SL_2$ in $\rG^{\vee}=\rG_2(\C)$. We say that it is \textit{of $\SL_2$-type} if it is of either short or long $\SL_2$-type.
\end{definition}

When $\varphi(\SL_2(\C))$ is a short (resp.~long) root $\SL_2(\C)$, we have that $\Cent_{G^{\vee}}(\varphi(\SL_2(\C)))$ is a long (resp.~short) root $\SL_2(\C)$, such that the two $\SL_2(\C)$'s live in an $\SO_4(\C)\subset G^{\vee}=\rG_2(\C)$. In particular, we have the following.

\begin{lemma} A discrete $L$-parameter is of short (resp.~long) $\SL_2$-type if and only if $\varphi(W_F)$ lies in a long (reps.~short) $\SL_2(\C)\subset G^{\vee}$. An $L$-parameter for which $\varphi(W_F)$ is contained in such an $\SL_2(\C)$ is discrete if and only if $\varphi|_{W_F}$ is a discrete $L$-parameter into $\SL_2(\C)$. Moreover, two $L$-parameters of short (resp.~long) $\SL_2$-type are conjugate if and only if the corresponding discrete $L$-parameter into $\SL_2(\C)$ are conjugate.
\end{lemma}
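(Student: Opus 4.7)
The plan is to split the statement into its three biconditionals and argue each using centralizer computations inside $G^{\vee}=\rG_2(\C)$, together with the case-by-case analyses of discrete $L$-parameters into $G^{\vee}$ already carried out in \S\ref{d0p}(\ref{d0pSL2}) and \S\ref{d+p}(\ref{posdep-2a}). The key geometric fact I will exploit throughout is that a short root $\SL_2(\C)$ and a long root $\SL_2(\C)$ in $\rG_2(\C)$ are each other's connected centralizers, and together they fit into an $\SO_4(\C)\subset G^{\vee}$.

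The forward direction of the first biconditional is immediate from the definition: if $\varphi(\SL_2(\C))=H$ is a short (resp.~long) $\SL_2(\C)$, then $\varphi(W_F)$ commutes with $H$, hence lies in $\Cent_{G^{\vee}}(H)$, which inside $\rG_2(\C)$ is precisely the long (resp.~short) $\SL_2(\C)$ opposite to $H$ inside their common $\SO_4(\C)$. For the converse and for the second biconditional, assume $\varphi(W_F)\subset H:=H_{\mathrm{lg}}$ a long $\SL_2(\C)$, and consider $\varphi|_{W_F}$ as an $L$-parameter into $H\cong\SL_2(\C)$. If $\varphi|_{W_F}$ is not discrete into $\SL_2(\C)$, i.e.~its image lies in a maximal torus $T_H\subset H$, then $\varphi(W_F)\subset T_H\subset T^{\vee}$ is contained in the proper short root $\GL_2$ Levi $\Cent_{G^{\vee}}(T_H)$; checking against the list of Levi subgroups of $\rG_2(\C)$ (namely $T^{\vee}$, the two $\GL_2$'s, and $G^{\vee}$) forces $\varphi(W_F')$ into a proper Levi, so $\varphi$ cannot be discrete. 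Conversely, when $\varphi|_{W_F}$ is discrete into $H$, its image is non-abelian, so $\Cent_{G^{\vee}}(\varphi(W_F))^{\circ}$ collapses to $H':=\Cent_{G^{\vee}}(H)$, the opposite short $\SL_2(\C)$; for $\varphi$ to be discrete we then need $\varphi(\SL_2(\C))$ to be distinguished in $\cG_{\varphi}^{\circ}=H'\cong\SL_2(\C)$, which forces $\varphi(\SL_2(\C))=H'$ and gives both discreteness of $\varphi$ and short $\SL_2$-type. This yields simultaneously the converse of the first biconditional and the second biconditional.

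For the conjugacy statement, I will fix a short (resp.~long) root $\SL_2=H\subset G^{\vee}$, unique up to $G^{\vee}$-conjugacy. Any $L$-parameter of that $\SL_2$-type can be $G^{\vee}$-conjugated so that $\varphi(\SL_2(\C))=H$ and $\varphi(W_F)\subset H':=\Cent_{G^{\vee}}(H)$; any further $G^{\vee}$-conjugation preserving this normalization must lie in $\Cent_{G^{\vee}}(H)=H'$. Hence the $G^{\vee}$-conjugacy of $\varphi_1,\varphi_2$ reduces to $H'$-conjugacy of the parameters $\varphi_i|_{W_F}\colon W_F\to H'\cong\SL_2(\C)$, which is exactly conjugacy of the corresponding discrete $\SL_2(\C)$-valued parameters.

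The main obstacle I anticipate is verifying carefully, in the non-abelian case, that $\Cent_{G^{\vee}}(\varphi(W_F))^{\circ}=H'$ exactly, ruling out additional components coming from outside $\SO_4(\C)\subset G^{\vee}$. This will follow once one observes that any element of $G^{\vee}$ centralizing $\varphi(W_F)$ normalizes its Zariski closure --which under the non-abelian hypothesis is all of $H$-- and that $\Nor_{G^{\vee}}(H)$ inside $\rG_2(\C)$ is contained in the $\SO_4(\C)=H\cdot H'/\{\pm 1\}$, so no exotic conjugations can identify $\SL_2$-inequivalent parameters. A secondary subtlety is reading ``$\varphi(W_F)$ lies in $H$'' in the non-degenerate sense compatible with the second biconditional (i.e.~ruling out central configurations that could produce subregular discrete parameters instead of $\SL_2$-type ones); this is resolved exactly by the discreteness-into-$\SL_2(\C)$ condition in the second biconditional.
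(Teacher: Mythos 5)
Your argument for the third assertion (conjugacy) is sound and is essentially the paper's argument: the point is that after normalizing $\varphi(\SL_2(\C))=H$, any conjugating element lies in $\Nor_{G^{\vee}}(H)=\SO_4(\C)=H\cdot H'/\{\pm1\}$, whose action on $H'$ is inner; the $H$-factor affects only $\varphi|_{\SL_2(\C)}$ and the $H'$-factor only $\varphi|_{W_F}$, so conjugacy reduces to $H'$-conjugacy of $\varphi|_{W_F}$.

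However, for the first two assertions there is a genuine gap. You reduce everything to the claim that when $\varphi|_{W_F}\colon W_F\to H$ has non-abelian (irreducible) image, $\Cent_{G^{\vee}}(\varphi(W_F))^{\circ}=H'$. Your proposed justification is that the Zariski closure of $\varphi(W_F)$ ``is all of $H$'' --- but this is false in general. A discrete $L$-parameter into $\SL_2(\C)$ very often has finite image (e.g.\ a binary dihedral, tetrahedral, or octahedral subgroup arising from a dihedral or primitive supercuspidal of $\PGL_2$), and the Zariski closure of a finite subgroup is that subgroup, not $H$. So the step ``centralizing $\varphi(W_F)$ implies normalizing $H$'' does not follow as you wrote it, and the confinement to $\Nor_{G^{\vee}}(H)=\SO_4(\C)$ is not established by your argument.

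The desired centralizer claim is true, but it needs a different justification --- and the tools are already in the paper. Since $\varphi(W_F)$ is non-abelian, Proposition~\ref{singprin} says $\Cent_{G^{\vee}}(\varphi(W_F))$ contains no maximal torus of $G^{\vee}$, hence has rank $\le 1$. On the other hand it contains $H'\cong\SL_2(\C)$, so $\Cent_{G^{\vee}}(\varphi(W_F))^{\circ}$ is a connected reductive rank-one group containing $\SL_2(\C)$, which forces it to equal $H'$. (Alternatively one simply cites the case-by-case computation in \S\ref{d0p}(\ref{d0pSL2}), where $\cG_{\varphi}=\SL_2$ is exhibited as the only semisimple possibility; that is in effect what the paper does, as its proof of the lemma only addresses the conjugacy statement and treats the first two as established by that preceding discussion.)

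A secondary, smaller issue: your argument for ``\,$\varphi|_{W_F}$ not discrete $\Rightarrow\varphi$ not discrete'' asserts that $\varphi(W'_F)$ lands in $\Cent_{G^{\vee}}(T_H)\cong\GL_2$. But you only know $\varphi(\SL_2(\C))\subset\Cent_{G^{\vee}}(\varphi(W_F))$, which contains $\Cent_{G^{\vee}}(T_H)$ but may be strictly larger when $\varphi(W_F)$ is a proper subgroup of $T_H$; in the extreme case $\varphi(W_F)=\{1\}$ the Steinberg parameter is discrete with trivial $\varphi|_{W_F}$. So this implication, as literally stated, requires the implicit nondegeneracy that is understood in the surrounding discussion (that $\varphi$ is already known to be of $\SL_2$-type or at least that $\varphi(W_F)$ genuinely sits in $H$ and nowhere smaller). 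You should either make that hypothesis explicit or argue via $\Cent_{G^{\vee}}(\varphi(W_F))$ rather than $\Cent_{G^{\vee}}(T_H)$.
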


\begin{proof} It only remains to show the last statement. 
Note that the normalizer of a short or long $\SL_2(\C)$ in $G^{\vee}$ is the aforementioned $\SO_4(\C)$. The conjugacy actions of this $\SO_4(\C)$ on $\SL_2(\C)$ are all inner (factoring through the corresponding $\PGL_2(\C)$), hence the claim.
\end{proof}

Recall from \ref{defn:Lparameters}(3) the definition for a discrete $L$-parameter $\varphi\colon W_F\ra H^{\vee}$ (for some $H^{\vee}$) to be \textit{supercuspidal}--a terminology that makes sense since it is expected, and indeed implied by Property \ref{property:AMS-conjecture-7.8}, that the $L$-packet $\Pi_\varphi(G)$ consists of only supercuspidal representations. 
We will study the supercuspidal $L$-parameters $\varphi$ such that $\varphi(W_F)$ is contained in $\SL_2(\C)$. We have the following group-theoretic result on the LLC for $\GL_2$ (or $\PGL_2$):
\begin{lemma}\label{solv} Suppose $H\subset \GL_2(\C)$ is a solvable subgroup with $\rZ_{\SL_2(\C)}(H)$ finite. Then $H$ is contained in the normalizer of a unique maximal torus $T^{\vee}\subset \GL_2(\C)$, and $\rZ_{\SL_2(\C)}(H)=\rZ_{\SL_2(\C)}=\mu_2$.
\end{lemma}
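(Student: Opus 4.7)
The plan is to pass to the Zariski closure $\overline{H}$ of $H$ inside $\GL_2(\C)$, which remains solvable and has the same $\SL_2(\C)$-centralizer as $H$ (since centralizers are Zariski-closed). It will then suffice to show $\overline{H} \subset N(T)$ for a (unique) maximal torus $T \subset \GL_2(\C)$; the containment descends to $H$, and the centralizer assertion can be read off at the level of $N(T)$.

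The first step is to rule out the case that $H$ is contained in a Borel subgroup $B = T_0 \ltimes U \subset \GL_2(\C)$. Since the lemma is applied to images of $L$-parameters, one may assume $H$ consists of semisimple elements. In that case the commutator subgroup $[H,H] \subset H \cap [B,B] = H \cap U$ consists of elements that are both semisimple (as members of $H$) and unipotent (as members of $U$), hence trivial; so $H$ is abelian. A commutative subgroup of semisimple elements is simultaneously diagonalizable, so $H$ would lie in a maximal torus $T_0$. But then $\rZ_{\SL_2(\C)}(H) \supset T_0 \cap \SL_2(\C)$ is one-dimensional, contradicting the finiteness hypothesis. Therefore $H$ lies in no Borel.

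Projecting $H$ to $\PGL_2(\C) = \Aut(\Pp^1)$, the image is a solvable subgroup with no common fixed point on $\Pp^1$ (a common fixed point would correspond to containment in a Borel). By the structure theory of solvable subgroups of $\PGL_2(\C)$, such a subgroup must preserve an unordered pair of points $\{p,q\} \subset \Pp^1$ and therefore lies in the image of $N_{\GL_2(\C)}(T)$, where $T$ is the maximal torus whose eigenlines are $p$ and $q$. Lifting along $\GL_2(\C) \to \PGL_2(\C)$ yields $H \subset N(T)$. A direct matrix computation then gives $\rZ_{\SL_2(\C)}(N(T)) = \rZ_{\SL_2(\C)}(T) \cap \rZ_{\SL_2(\C)}(w) = \mu_2$ (with $w$ any Weyl representative); since $H \not\subset T$ (else $H$ would again have a one-dimensional centralizer), $H$ meets both cosets of $N(T)/T$, and therefore $\rZ_{\SL_2(\C)}(H) = \mu_2 = \rZ_{\SL_2(\C)}$.

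The main obstacle will be cleanly justifying the dichotomy for solvable subgroups of $\PGL_2(\C)$ without a common $\Pp^1$-fixed point: besides the normalizer-of-torus case, the finite tetrahedral group $A_4 \subset \PGL_2(\C)$ is an a priori solvable possibility that does not preserve any unordered pair of points, and must be excluded --- presumably by invoking the central-character condition that accompanies the intended application. Similarly, the uniqueness of $T$ is subtle, since binary-dihedral and quaternionic images of $W_F$ can simultaneously lie inside the normalizers of several pairwise transverse maximal tori; pinning down a single $T$ in those cases appears to require additional input from the supercuspidal support data attached to the $L$-parameter.
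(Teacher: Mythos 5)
The paper states this lemma without proof, so there is nothing to compare against; I can only assess your argument on its own terms. Your strategy is sound, and the two worries you raise at the end are not weaknesses of your approach --- they pinpoint genuine defects in the lemma as literally stated. The binary tetrahedral group $\SL_2(\F_3)\subset\SL_2(\C)$ (via a faithful irreducible two-dimensional representation) is solvable, has $\rZ_{\SL_2(\C)}(H)=\mu_2$ by Schur's lemma, yet its image in $\PGL_2(\C)$ is $A_4$, which preserves no unordered pair of points of $\Pp^1$; so this $H$ lies in the normalizer of no maximal torus and the existence claim fails. (The binary octahedral group, with solvable projective image $S_4$, is a second such exception; you should list both.) Uniqueness fails independently: $Q_8\subset\SL_2(\C)$ has Klein four image in $\PGL_2(\C)$, which preserves three pairwise transverse pairs, so $Q_8$ normalizes three distinct maximal tori. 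And your ``one may assume semisimple elements'' move is not a harmless convenience but a necessary additional hypothesis: the intersection of a Borel subgroup of $\GL_2(\C)$ with $\SL_2(\C)$ is solvable with $\SL_2(\C)$-centralizer exactly $\mu_2$, yet it contains regular unipotents and hence normalizes no maximal torus at all.

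None of this undermines the paper's use of the lemma. For supercuspidal $L$-parameters of $\GL_2(F)$ or $\PGL_2(F)$ with $p\ne 2,3$, the image $H=\varphi(W_F)$ consists of semisimple elements and the supercuspidal is dihedral, so the projective image is cyclic or dihedral, never tetrahedral or octahedral; and the uniqueness of $T^\vee$ is not used downstream --- only containment in some $\rO_2(\C)$ and the identity $\rZ_{\SL_2(\C)}(H)=\mu_2$ are invoked. One small repair to your closing computation: before intersecting $\rZ_{\SL_2(\C)}(w)$ (a one-dimensional torus) with the centralizer of the toral part, note that $H\cap T^\vee$ must contain a noncentral element of $T^\vee$; otherwise $H$ would be a central extension of $\Z/2$, hence abelian, hence (being semisimple) simultaneously diagonalizable, reviving the positive-dimensional-centralizer contradiction you already used for the Borel case. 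With that observation the two one-parameter groups meet only in $\mu_2$, giving the claimed equality.
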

\begin{numberedparagraph}\label{paragraph-after-SL2-type}
Hence any supercuspidal $L$-parameter into $\SL_2(\C)$ always corresponds to a \textit{compound $L$-packet} (i.e.~a Vogan $L$-packet) of two members, consisting of an irreducible supercuspidal representation of $\PGL_2(F)$ and another finite-dimensional irreducible representation of the compact inner form of $\PGL_2(F)$. We observe that for our original $L$-parameter $\varphi$, we have $\rZ_{G^{\vee}}(\varphi)=\rZ_{\SL_2(\C)}(\varphi(W_F))=\mu_2$ by Lemma \ref{solv}. Hence the $L$-packet $\Pi_\varphi(G)$ consists of a non-supercuspidal representation and a supercuspidal representation by Lemma~\ref{cusp}. Let $P$ be a parabolic subgroup of $G$ with Levi subgroup isomorphic to $\GL_2(F)$. Here the $\GL_2(F)$ corresponds to the short (resp.~long) root if $\varphi$ is of short (resp.~long) $\SL_2$-type. By Property~\ref{property:AMS-conjecture-7.8}, the non-supercuspidal member has the following supercuspidal support: $\varphi|_{W_F}\colon W_F\rightarrow \SL_2(\C)$, which gives us a supercuspidal $\tau$ of $\PGL_2(F)$. We consider $\tau_s:=\tau\otimes(\nu_F\circ\det)^s$ as a representation of $\GL_2(F)$. Then the non-supercuspidal of $G$ is the generic constituent in $\ii_{P}^{G}\tau_{\pm\frac{1}{2}}$ (both $s=\pm\frac{1}{2}$ work).
\end{numberedparagraph}

\subsubsection{Table for discrete $L$-parameters}
As a summary, we give a table for discrete $L$-parameters that are neither unipotent nor supercuspidal. In each row, any Galois-theoretic input as indicated gives a (unique) discrete $L$-parameter.

\[
\begin{array}{|c|c|c|c|c|c|}
	\hline
	\text{Label}&\text{Galois-theoretic input}&\varphi(W_F)&\cG_{\varphi}&S_{\varphi}&\varphi\left(\matr{1&1\\0&1}\right)\\\hline
	\begin{array}{c}\S\ref{d0p}(\ref{chal})\\\S\ref{d0s}(\ref{chals})\end{array}&\begin{array}{c}E/F\text{ ramified Galois cubic}\\\text{(only exist when }q\equiv 1(3))\end{array}&\Z/3&\SL_3&\mu_3&\text{subreg.}\\\hline
	\begin{array}{c}\S\ref{d0p}(\ref{2b})\\\S\ref{d0s}(\ref{u3})\end{array}&q\equiv -1(3)&(\Z/3)\rtimes(\Z/4)&\SL_2\text{ (long)}&\mu_2&\text{long}\\\hline
	\S\ref{d0p}(\ref{single})&q\equiv -1(3)&S_3&\SO_3\text{ (subreg.)}&1&\text{subreg.}\\\hline
	\begin{array}{c}\S\ref{d0p}(\ref{toy})\\\S\ref{d0s}(\ref{toys})\end{array}&E/F\text{ ramified quadratic}&\Z/2&\SO_4&\mu_2&\text{subreg.}\\\hline
	\S\ref{SL2}&\begin{array}{c}\text{supercuspidal $L$-parameter}\\\text{into }\SL_2(\C)\end{array}&\begin{array}{c}\text{non-split extension}\\\text{of }\{\pm1\}\text{ by }\Z/n,\\n\ge 4\text{ even }\end{array}&\SL_2\text{ (short)}&\mu_2&\text{short}\\\hline
	\S\ref{SL2}&\begin{array}{c}\text{supercuspidal $L$-parameter}\\\text{into }\SL_2(\C)\end{array}&\begin{array}{c}\text{non-split extension}\\\text{of }\{\pm1\}\text{ by }\Z/n,\\n\ge 4\text{ even }\end{array}&\SL_2\text{ (long)}&\mu_2&\text{long}\\\hline
	\end{array}
\]
Note that the second row is indeed a special case of the last row when the $L$-parameter is tame, $n=6$ and $|\varphi(I_F)|=3$.

\section{ Supercuspidal representations of \texorpdfstring{$\mathrm{G}_2$}{\mathrm{G2}}} \label{sec:supercuspidals-G2}

Our goal here is to attach, to every irreducible supercuspidal representation of $\rG_2(F)$, a cuspidal enhanced $L$-parameter into $G^{\vee}$. When the supercuspidal representation is unipotent, this has been done in \cite{Lu-padicI}, and when the supercuspidal representation is non-singular, this has been done in \cite{Kal-reg,Kaletha-nonsingular}. In this section, we shall focus on the cases \textit{not} covered in pre-existing literature. 
By \cite{Kim-exhaustion,Fintzen}, all supercuspidal representations of $\bG(F)$ are \textit{tame} supercuspidal representations in the sense of \cite{Yu} when $p\neq 2,3$. 

\subsubsection{Depth-zero supercuspidal representations}\label{d0s} As recalled in \S\ref{sec:preliminariesG2}, there are three maximal parahoric subgroups with reductive quotient $\rG_2$, $\SL_3$ and $\SO_4$ over $k_F=\Fq$.~By \eqref{eqn:depth zero supercuspidal}, depth-zero irreducible supercuspidal representations are in bijection with irreducible cuspidal representations on these reductive quotients $\bbG_x$.~By \eqref{Lusztig-unipotent-decomposition} and Proposition~\ref{property-Lusztig-DL-map}, any such cuspidal representation is labeled with 
\begin{itemize}
    \item[(i)] a semisimple conjugacy class $(s)$ with
    $s$ an \textit{isolated} element of $\bbG_x^\vee$, 
i.e. $\Cent_{\bbbG_x^\vee}(s)$ is not contained in a $\Fq$-rational Levi subgroup of any proper $\Fq$-rational parabolic subgroup of $\Cent_{\bbbG_x^{\vee}}(s)$;
    \item[(ii)] a unipotent cuspidal representation of the group of the $k_F$-rational points of the reductive algebraic group  $\bbbH:=\Cent_{\bbbG_x^{\vee}}(s)^\vee$, where $k_F=\Fq$. We denote $\bbH:=\bbbH(k_F)$.
\end{itemize}

\begin{enumerate}
	\item\label{Gx=G2}
	We begin with the case $\bbbG_x=\rG_2$, i.e. ~$x=x_0$ and $G_{x_0}$ is the hyperspecial maximal parahoric subgroup of $G$.~By~\cite[Prop.~4.9, Theorem~5.1 and Table I]{Bonnafe}, the group $\bbbH^{\vee}$ can be $\rG_2$, $\SL_3$, $\SU_3$, (split) $\SO_4$, $\rU_2$ or any elliptic maximal torus. 
By \cite{Chang-Ree}, when $\bbbH^{\vee}$ is of type $\rA_2$,
\begin{equation} \label{eqn:type centralizer}
\bbbH^\vee(\Fq)=(\Cent_{\bbbG_{x_0}^{\vee}}(s))(\Fq)=\begin{cases} 
 \SL_3(q)&\text{if $q\equiv 1\mod 3$,}\cr
 \SU_3(q)&\text{if $q\equiv -1\mod 3$.}
 \end{cases}
\end{equation}
By the classification of unipotent cuspidal representations in \cite{Lusztig-Madison},
\begin{itemize}
    \item 
neither $\SL_3(\Fq)$ nor $\SO_4(\Fq)$ has cuspidal unipotent representations. Indeed, for $m\ge 2$, the group $\SO_{2m}(\Fq)$ has a (unique) cuspidal unipotent representation only if $m$ is the square of an integer;
\item $\SU_n(\Fq)$, $n=k(k+1)/2$ are the only projective unitary groups that possess unipotent cuspidal representations, and each one of them has a unique unipotent cuspidal representation, which corresponds to the partition $(k, k-1,\ldots,1)$ of $n$.
\end{itemize}
Thus the only possibilities for $\bbbH(\Fq)$ equipped with a cuspidal unipotent representation are $\rG_2(\Fq)$, $\SU_3(\Fq)$, or torus. 

\begin{enumerate}
	\item\label{d0s-Gx=G2=unipotent} In the first case, i.e.~when $\bbbH=\rG_2$, the corresponding supercuspidal and cuspidal representations are unipotent. The group $\rG_2(\Fq)$ has four cuspidal unipotent  irreducible representations (see \cite[Theorem~3.28]{Lusztig-Madison}), denoted as $\rG_2[1]$, $\rG_2[-1]$, $\rG_2[\zeta_3]$ and $\rG_2[\zeta_3^2]$. Their dimensions are computed in \cite[p.~460]{Carter-book} and recorded in Table~\ref{table:G2FQ}. 
The corresponding irreducible supercuspidal unipotent representations are
\begin{equation} \label{eqn:usc}
\pi[\zeta_3]:=\ii_{\bbG_{x_0}}^G \rG_2[\zeta]\quad \text{for $\zeta\in \{1,-1,\zeta_3,\zeta_3^2\}$.}
\end{equation}
With Haar measure on $G$ normalized as in 
\eqref{eqn:DR}, by \eqref{formal-degree-depth0-sc-formula} the representation
$\pi[\zeta]$ has formal degree
\begin{equation} \label{eqn:formal degree unipotent supercuspidal}
\fdeg(\pi[\zeta])=\frac{q}{|\rG_2(\Fq)|_{p'}}\cdot \dim \rG_2[\zeta]=\frac{q}{(q^6-1)(q^2-1)}\cdot \dim \rG_2[\zeta].
\end{equation}
\begin{center} 
\begin{table} [ht]
\begin{tabular}{|c|c|c|c|}
\hline
Representation $\rG_2[\zeta]$ &Dimension of $\rG_2[\zeta]$ & $\fdeg(\pi[\zeta])$ \cr
\hline\hline
$\rG_2[1]$&  $\frac{q(q-1)^2(q^3+1)}{6(q+1)}$ & $\frac{q^2-q+1}{6(q+1)^2(q^4+q^2+1)}$\cr
\hline
 $\rG_2[-1]$&$\frac{q(q-1)(q^6-1)}{2(q^3+1)}$ &$\frac{q^2}{2(q^3+1)(q+1)}$ \cr
 \hline
 $\rG_2[\zeta_3]$,  $\rG_2[\zeta_3^2]$ & $\frac{q(q^2-1)^2}{3}$ &$\frac{1}{3(q^4+q^2+1)}$ \cr
  \hline
\end{tabular}
\vskip0.2cm
     \caption{\label{table:G2FQ} {Unipotent supercuspidal  irreducible representations of $\rG_2(F)$}. $\qquad$ $\qquad$ $\quad$}
\end{table}
\end{center}
\begin{remark}
Let $P(X):=\sum_{w\in W_G}X^{\ell(w)}$ be the Poincar\'e polynomial of $W_G$, 
with $\ell$ the length function on $W_\fini$.
We have
\begin{equation}
P(X)=\frac{(X+1)(X^6-1)}{X-1}.
\end{equation}
By \eqref{eqn:formal degree unipotent supercuspidal} and \eqref{eqn:volume Iwahori}, we have
\begin{equation} \label{eqn:fgusc}
\fdeg(\pi[\zeta])=\frac{q}{(q-1)^2\cdot P(q)}\cdot \dim \rG_2[\zeta]=\frac{\dim \rG_2[\zeta]}{\vol(\mcI_{\rG_2})\cdot P(q)}.
\end{equation}
\end{remark}
	\item\label{regular-d0s} In the case where $\bbbH^{\vee}$ is any elliptic maximal torus, the representation $\tau$ is $k_F$-non-singular. Since $G_{x_0}$ is a hyperspecial maximal parahoric subgroup of $G$, the notion of $k_F$-non-singularity and $F$-non-singularity agree in this case. Thus the supercuspidal representation $\pi$ is non-singular (equivalently it is also regular).
\item\label{u3} In the last remaining case where $\bbbH^\vee=\SU_3$. Note that $\rZ_{\SU_3}=\rZ_{\bbbH^{\vee}}$ needs to contain $s$ as a non-zero rational point, which implies that $q\equiv-1(3)$. Conversely, if $q\equiv -1(3)$, the Coxeter torus of $\bbG_x$ has two order-$3$ elements whose centralizers are $\SU_3$. In fact, all order-$3$ elements $s\in \rG_2(\bar{\F}_q)$ with $\rZ_{\rG_2}(s)\cong \SL_3(\bar{\F}_q)$ lie in a single geometric orbit. Since $\SU_3$ is connected ($\rG_2$ is simply connected), all such rational order-$3$ elements lie in a rational orbit. Hence there is one choice for $\bbbH^{\vee}\cong \SU_3$ when $q \equiv -1(3)$, and no choice when $q \equiv 1(3)$. When $q\equiv -1\mod 3$, the group $(\Cent_{\bbbG_{x_0}^\vee}(s))(\Fq)$ is the special unitary group $\SU3(\Fq)$, and $\tau_{\unip}$ is the unique irreducible cuspidal unipotent representation of $\SU3(\Fq)$, which is parametrized by the partition $(2,1)$ of $3$. 
It is clear that in this case, the representation $\tau$ is $k_F$-singular, in particular, the representation $\pi$ is singular by \cite[3.1.4]{Kaletha-nonsingular}. Consequently, there exists one singular non-unipotent cuspidal representation $\tau$ of $\bbbG_x(\Fq)$ when $q\equiv -1(3)$, and none otherwise. 
Therefore, when $q\equiv -1(3)$, there is a singular depth-zero supercuspidal representation of $G$, and we need to find the non-supercuspidal representation in its $L$-packet. We do so by computing the formal degree of $\pi$. 
By \cite[\S13.7]{Carter-book}, $\dim(\tau_\unip)=q(q-1)$. By \eqref{eqn:dim tau} 
\begin{equation} \label{eqn:dim tau x0}
\dim(\tau)=\frac{|\bbG_{x_0}|_{p'}}{|(\Cent_{\bbbG_{x_0}^\vee}(s))^{\vee}(\Fq)|_{p'}}\,\dim(\tau_\unip).
\end{equation}
We have 
\begin{equation}\label{eqn:order-PU3}
  |\SU_3(\Fq)|=q^6(q^{3/2}+1)(q-1)(q^{1/2}+1). 
\end{equation}
By \eqref{eqn:DR}, we have 
\begin{equation}\label{eqn-DR-applied-to-sc}
    \mathrm{Vol}(G_{x_0})=q^{-\mathrm{rk}(G)/2}|\mathbb{G}_{x_0}|_{p'}=q^{-1}|\mathbb{G}_{x_0}|_{p'}.
\end{equation}
Since $\pi=\ii_{\bbG_{x_0}}^G\tau$, by combining \eqref{eqn:dim tau}, \eqref{eqn:order-PU3} and \eqref{eqn-DR-applied-to-sc}, we have 
\begin{equation}\label{fdeg-singular-nonunipotent-sc}
\begin{split}
   \fdeg(\pi)=\frac{\dim\tau}{\mathrm{Vol}(G_{x_0})}=\frac{\dim(\tau_\unip)}{|(\Cent_{\bbbG_{x_0}^\vee}(s))^{\vee}(\Fq)|_{p'}\cdot q^{-1}}.
\end{split}
\end{equation}
	\end{enumerate}
\item\label{Gx=SL3} Next we look at the case where $\bbbG_x=\SL_3$, i.e.~$x=x_1$.
	The center of $\SL_3$, which consists of the scalar matrices $a\Id_3$ such that $a^3=1$, is finite, hence disconnected. By \cite[Proposition 5.2]{Bonnafe}, the possibilities for $\bbbH^{\vee}$ are $\PGL_3$ (i.e.~when $d=1$ and $s=\Id$ \textit{loc.cit.}) and
$\bbbT^\vee\rtimes\mu_3$ (i.e.~when $d=3$ and $s=\mathrm{diag}\left(1,\zeta_3,\zeta_3^2\right)\mod \F_q^{\times}$ for the primitive third root of unity $\zeta_3$, by \textit{loc.cit.}~in this case the Weyl group of $\bbbH^\circ=\Cent_{\bbG_{x_1}^{\vee}}(s)^\circ$ is trivial, thus $\bbbH^\circ=\bbbT^\vee$) or $\bbbT^\vee$ is an elliptic maximal torus of $\PGL_3$ (they are all rationally conjugate).  Let us discuss these cases:
	\begin{enumerate}
		\item\label{Gx=SL3-unipotent} $\bbbH=\SL_3$, 
The group $\SL_3$ does not admit cuspidal unipotent representations, so the case $\bbbH=\SL_3$ does not occur.
\item\label{Gx=SL3-regular} $\bbbH^{\vee}=\bbbT$. This happens if $s\in \bbbT(\Fq)$ is not of order $1$ or $3$. In this case, one checks that $\rZ_{G^{\vee}}(s)$ is a torus, i.e. we get only regular supercuspidal representations.
\item\label{chals} 
		$\bbbH^{\vee}\cong \bbbT\rtimes\mu_3$. This happens if $s$ is an order-$3$ element in $\bbbT(\Fq)\cong\F_{q^3}^{\times}/\Fq^{\times}$. Such an element exists if and only if $q\equiv 1(3)$. Note that while all the $T$'s are conjugate, the two order-$3$ elements are \textit{not} rationally conjugate in $\PGL_3(\Fq)$, because they map to the two different non-trivial classes in $\operatorname{coker}(\SL_3(\Fq)\rightarrow \PGL_3(\Fq))$. Hence when $q\equiv 1(3)$, we have two choices of such $s$'s. The Deligne-Lusztig induction for each $s$ is a direct sum of $3$ cuspidal representations of $\SL_3(\Fq)$ of the same dimension. Hence we get $2$ families, of $3$ singular supercuspidal representations all of the same formal degree. 
		
Using \eqref{formal-degree-depth0-sc-formula}, we compute the formal degree as follows (note that $p\neq 2,3$): we have $|\mu_3|_{p'}=3$ and $|\mathbb{T}|=\frac{q^3-1}{q-1}=q^2+q+1$, thus 
\begin{equation}
    \fdeg(\ii_{\bbG_{x_1}}^G\tau)=\frac{q^{2/2}\cdot 1}{3(q^2+q+1)}=\frac{q}{3(q^2+q+1)}.
\end{equation}
Hence we obtain
\begin{equation}\label{eqn:fdegx1}
    \fdeg(\ii_{\bbG_{x_1}}^G\tau)=\frac{q}{3(q^2+q+1)}.
\end{equation}
	\end{enumerate}
	\item\label{Gx=SO4} Lastly, we look at the case $\bbbG_x=\SO_4$. A centralizer subgroup in $\bbbG_x^{\vee}\cong \SO_4$ has unipotent cuspidal representations only if its identity component is a torus. Hence we begin with $\bbbT^{\vee}\subset\bbbG_x^{\vee}$ any representative of the unique rational conjugacy class of elliptic maximal torus. Abstractly, $\bbbT^{\vee}\cong \rU_1\times\rU_1$ over $\Fq$ so that $\bbbT^{\vee}(\Fq)\cong(\Z/(q+1))^2$. We discuss three scenarios:
	\begin{enumerate}
		\item\label{Gx=SO4-regular} $s\in \mathbb{T}^{\vee}(\Fq)$ is not perpendicular to any coroot of $G$. In this case we have non-singular supercuspidal representations, and in fact regular ones because $G^{\vee}$ is simply connected.
	\item\label{tameSL2} $s\in \bbbT^{\vee}(\Fq)$ is perpendicular to a unique pair of coroots $\pm\alpha^{\vee}$ of $G$. The cocharacter lattice is generated by $\alpha^{\vee}$ and another $\lambda$. If $\alpha^{\vee}$ is a short coroot, then $s$ is not perpendicular to any other coroots if and only if $(2\lambda)(s)=\lambda(s)^2\not=1$. If $\alpha^{\vee}$ is a long coroot, then $s$ is not perpendicular to any other coroots if and only if $\lambda(s)^2\not=1\not=\lambda(s)^3$. The rational conjugacy class of $s$ is determined by $\{\lambda(s)^{\pm1}\}$. Hence there are in general $\frac{q-1}{2}$ such $s$ unless $q\equiv -1(3)$ and $\alpha^{\vee}$ is a long coroot, in which case there are $\frac{q-3}{2}$ such $s$. Each $s$ gives a singular supercuspidal matched with those in $\mathsection$\ref{d0p}(\ref{d0pSL2}), or rather the tamely ramified $L$-parameters in $\mathsection$\ref{SL2}.
		\item\label{toys} 
		$s\in \mathbb{T}^{\vee}$ is perpendicular to two pairs of coroots of $G$, which correspond to $\SO_4(\C)\subset \rG_2(\C)$. This is the case when $s$ is the unique (up to rational conjugacy) order $2$ element in $\bbbT^{\vee}(\Fq)$ with $\Cent_{\bbbG_x^{\vee}}(s)^{\circ}=\bbbT^{\vee}$. In this case $\Cent_{\bbbG_x^{\vee}}(s)\cong\bbbT^{\vee}\rtimes\mu_2$. Indeed, \begin{equation} \label{eqn:centralizer}
(\Cent_{\bbbG_{x_2}^{\vee}}(s))(\Fq)=\left\{(g_1,g_2)\in\rO_2^-(\Fq)\times\rO_2^-(\Fq)\;:\;\det(g_1)=\det(g_2)\right\}
,\end{equation}
where $\rO_2^-$ denotes the non-split form of $\rO_2$.
The group $\SO_2^-(\Fq)$ has a unique cuspidal unipotent representation, the trivial representation.  Thus $\rO_2^-(\Fq)$ admits two cuspidal unipotent representations: the trivial one and the sign representation. So $(\Cent_{\bbbG_{x_2}^{\vee}}(s))^\vee(\Fq)$ has two cuspidal unipotent irreducible representations: $1\otimes 1$ and $\sign\otimes\sign$. Both have dimension one.
In this case, one can check that $\Cent_{G^{\vee}}(\widetilde{s})^{\circ}=\Cent_{G^{\vee}}(\widetilde{s})=\SO_4(\C)$, and thus $\Cent_{G^{\vee}}(\varphi|_{I_F})=\SO_4(\C)$. In particular, $\Cent_{G^{\vee}}(\varphi|_{W_F})$ is not finite, and the corresponding supercuspidal representations of $\rG_2(F)$ are singular.  The Deligne-Lusztig representation is the direct sum of two cuspidal representations of the same dimension. Hence we get two supercuspidal representations to be matched in the $L$-packets in \ref{d0p}(\ref{toy}).

We have
$|\rO_2^-(\Fq)|=2(q+1)$, which gives $|\rO_2^-(\Fq)\times\rO_2^-(\Fq)|=4(q+1)^2$ by \cite{Carter-book}, and hence
\begin{equation} \label{eqn:order}
|(\Cent_{\bbbG_{x_2}^{\vee}}(s))(\Fq)|=\frac{|\rO_2^-(\Fq)\times\rO_2^-(\Fq)|}{2}=2(q+1)^2.
\end{equation}
By \eqref{formal-degree-depth0-sc-formula}, the formal degree of $\ii_{\bbG_{x_2}}^G\tau$ is given as follows:
\begin{equation} \label{eqn:fdegx2}
\fdeg(\ii_{\bbG_{x_2}}^G\tau)=\frac{q^{4/2}}{2(q+1)^2}=\frac{q^2}{2(q+1)^2},
\end{equation}
which agrees with the formal degree for $\pi(\eta_2)$ from Table \ref{table:unip-ppal-series-case-3aquad-ramified}. 
\end{enumerate}
\end{enumerate}

\subsubsection{Positive-depth supercuspidal representations}\label{d+s} Since $p\ne 2,3$ any supercuspidal representation $\pi$ of $G$ is constructed via a Yu datum $\cD$. The latter includes a tamely ramified twisted Levi sequence in $\bG$ is a finite sequence $\vec\bG=(\bG^0\subset\bG^1\subset\cdots\subset\bG^d=\bG)$ of twisted Levi subgroups of $\bG$ that splits over a tamely ramified extension $E$ of $F$ (i.e.,  $\bG_E^i:=\bG^i\times_F E$ is a split Levi subgroup of $\bG_E^{i+1}$ for each $i$), such that $\rZ_{\bG^0}$ is anisotropic (since $\rZ_{\bG}=\{1\}$), a positive-depth character on $G^0:=\bG^0(F)$, and a depth-zero supercuspidal of $G^0$. If the depth of $\pi$ is positive then we have $d\ge 1$. The group $\bG$ being $F$-split, we have $\bG_E=\bG$. The only possibilities for $\bG_E^0$ are $\bG$, $\GL_2(E)$ or a maximal $E$-split torus $\bS$. Thus the only possibilities for $\bG^0$ are $\bG$, a torus or  a unitary group (a priori it could be either $\rU(1,1)$ or $\rU_2$, but since $\rZ_{\bG^0}$ is anisotropic, only the compact unitary group $\rU_b(2)$, for the quadratic extension $F(\sqrt b)$ is possible)).

The representation $\pi$ is regular (resp.~non-singular) if and only if the depth-zero supercuspidal of $G^0$ is. Hence the question is largely about classifying such $G^0$ and their depth-zero supercuspidal representations. We note that $\bG^0$ is determined by $\bZ_0:=\rZ_{\bG^0}$, an anisotropic subtorus of $\bG$ satisfying $\rZ_{\Cent_\bG(\bZ_0)}=\bZ_0$. Also since we need $G^0$ to have some positive-depth character, $1\le \dim \bZ_0\le\rank \bG=2$. We now discuss the possibilities for $\bZ_0$.

\begin{enumerate}
	\item\label{posdep-most-regular} $\dim \bZ_0=2$, then $\bZ_0$ is a maximal torus of $\bG$. In this case, we obtain only regular supercuspidal representations of positive depths.
	\item $\dim \bZ_0=1$, such that $\bG^0/\bZ_0\cong \PGL_2$ is split. In this case, all depth-zero supercuspidal representations of $G^0$ coming from cuspidal representations of reductive quotients of $G^0$ are non-singular. 
	\begin{enumerate}
	    \item\label{posdep-PGL2-reg} They are regular unless, 
	    \item\label{posdep-PGL2-nonsing} $\bZ_0$ is ramified such that $\bG^0\cong \rU_2$ is ramified for which the reductive quotient $\SL_2$ has a unique pair of non-singular (but not regular) supercuspidal representations. Since there are two ramified quadratic extensions, we obtain two possible groups $G^0$ and four such non-singular supercuspidal representations in the $L$-packet in $\mathsection$\ref{d+p}(\ref{ns}). 
	\end{enumerate}
	\item\label{wilddih} $\dim \bZ_0=1$ such that $\bG^0/\bZ_0$ is anisotropic, i.e.~a compact inner form of $\PGL_2$. In this case, 
	\begin{enumerate}
	    \item\label{inner-form-PGL2-reg} $G^0$ has regular depth-zero supercuspidal representations;
	\item\label{inner-form-PGL2-singular} $G^0$ also has singular depth-zero supercuspidal representations. Any such singular depth-zero supercuspidal  representation has the property that its restriction to the derived subgroup is the trivial representation. Since $\bG_{\der}^0:=(\bG^0)_{\der}$ is simply connected, such a singular supercuspidal representation is a character of $G^0/\bG_{\der}^0(F)$. All these representations arise from $L$-parameters \textit{of $\SL_2$-type} as in \S\ref{SL2} and live in a mixed $L$-packet together with a non-supercuspidal representation, as described in \S\ref{paragraph-after-SL2-type}.  
	\end{enumerate}
\end{enumerate}

\section{ Non-supercuspidal representations of \texorpdfstring{$\mathrm{G}_2$}{\mathrm{G2}}} \label{subsec:class-s}\
Let $\fs=[L,\sigma]_G$, where $L$ is a proper Levi subgroup $L$ of $G=\rG_2(F)$. Let $\sigma$ be an irreducible supercuspidal representation of $L$. In the case of $G=\rG_2(F)$, the Levi $L$ being isomorphic to either $\GL_2(F)$ or $F^\times\times F^\times$, all of its irreducible supercuspidal representations are regular. Thus $\sigma$ is a regular supercuspidal, and we can consider an $L$-parameter $\varphi_\sigma\colon W_F\to L^\vee$ for $\sigma$, as defined in \cite{Kal-reg} or \cite{Bushnell-Henniart}. Both constructions coincide as shown in \cite{Oi-Tokimoto}.

\begin{lemma} \label{lemma:cases}\ 

\begin{enumerate}
\item[{\rm(1)}]
If $M=T$, we have $W^\fs_G\ne\{1\}$ if and only if $\fs=[T,\xi\otimes\xi]_G$ or
$\fs=[T,\xi\otimes 1]_G$ with $\xi$ an irreducible character of
$F^\times$.
\item[{\rm(2)}]
If $M=M_\alpha$ or $M=M_\beta$, we have $W^\fs_G\ne\{1\}$ if and only if $\sigma$ is self-dual.
\end{enumerate}
\end{lemma}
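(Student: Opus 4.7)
The plan is to unpack $W^\fs_G = \Nor_G(\fs_L)/L$ concretely in each case and run a direct case analysis on elements of the relative Weyl group.

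For Part~(1), with $L = T$ the quotient $\Nor_G(T)/T$ equals the full Weyl group $W_G$ of type $\rG_2$, a dihedral group of order~$12$. Writing $\sigma = \xi_1 \otimes \xi_2$ via the identification \eqref{etaa}, an element $w \in W_G$ belongs to $W^\fs_G$ precisely when there exist unramified characters $\chi_1, \chi_2$ of $F^\times$ with $w(\xi_1 \otimes \xi_2) = (\chi_1 \xi_1) \otimes (\chi_2 \xi_2)$. I would enumerate the $11$ non-trivial $w \in W_G$ using Table~\ref{table:1}. The short-root reflections $s_\alpha, s_{\alpha+\beta}, s_{2\alpha+\beta}$ each force $\xi_1 \xi_2^{-1}$ to be unramified (possibly after a Weyl conjugation to a standard representative), so that $\fs = [T, \xi \otimes \xi]_G$. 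The long-root reflections $s_\beta, s_{3\alpha+\beta}, s_{3\alpha+2\beta}$ each force one of $\xi_1, \xi_2$ to be unramified, so that $\fs = [T, \xi \otimes 1]_G$. For the converse, $s_\alpha$ stabilizes $[T, \xi \otimes \xi]_G$ and $s_\beta$ stabilizes $[T, \xi \otimes 1]_G$ directly from Table~\ref{table:1}.

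The delicate case is that of the non-trivial rotations, and especially the longest element $r(\pi) = (ab)^3$ acting as $(\xi_1, \xi_2) \mapsto (\xi_1^{-1}, \xi_2^{-1})$: its stabilization forces both $\xi_i^2$ to be unramified, so both $\xi_i$ are quadratic characters modulo unramified twist. I would invoke the structural fact that, in odd residual characteristic, $F^\times/(F^\times)^2 \cong (\Z/2)^2$, so the three non-trivial quadratic characters of $F^\times$ consist of one unramified character and two ramified ones whose product is the unramified one. Consequently, for any pair $(\xi_1, \xi_2)$ of quadratic characters, either one of them is already unramified (yielding $\fs = [T, \xi \otimes 1]_G$) or $\xi_1 \xi_2^{-1}$ is unramified (yielding $\fs = [T, \xi \otimes \xi]_G$). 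The rotations $r(\pi/3), r(2\pi/3), r(4\pi/3), r(5\pi/3)$ admit analogous but shorter analyses, each reducing $\fs$ to one of the two listed forms after a Weyl conjugation. Since every non-trivial subgroup of $W_G \cong D_6$ contains either a reflection, the element $r(\pi)$, or the element $r(2\pi/3)$, these analyses are exhaustive.

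For Part~(2), let $L = M_\gamma$ with $\gamma \in \{\alpha, \beta\}$ and let $\gamma'$ denote the long root orthogonal to $\gamma$, namely $\gamma' = 3\alpha+2\beta$ if $\gamma = \alpha$ and $\gamma' = 3\alpha+\beta$ if $\gamma = \beta$. A direct Weyl-group computation shows that $\Nor_G(M_\gamma)/M_\gamma$ has order~$2$, with non-trivial element represented by the reflection $s_{\gamma'}$ (equivalently by $r(\pi)$). The key step is to identify the action of $s_{\gamma'}$ on $M_\gamma \cong \GL_2(F)$: since $s_{\gamma'}$ fixes $\pm\gamma$ pointwise, it acts as the identity on the derived subgroup $\zeta_\gamma(\SL_2(F))$, while it inverts the cocharacter $(\gamma')^\vee$, which via $\zeta_\gamma$ corresponds to the determinant cocharacter of $\GL_2(F)$. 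Hence $s_{\gamma'}$ acts on $M_\gamma \cong \GL_2(F)$ as the outer automorphism $g \mapsto (g^T)^{-1}$, up to an inner automorphism, and in particular sends any smooth irreducible representation $\sigma$ to its contragredient $\sigma^\vee$. Therefore $W^\fs_G \ne \{1\}$ if and only if $\sigma^\vee$ lies in the inertial class of $\sigma$, which is the stated self-duality condition (up to an unramified twist).

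The main obstacle lies in Part~(1), specifically in the rotation analysis: verifying that the classes $[T, \eta_1 \otimes \eta_2]_G$ with $\eta_1, \eta_2$ distinct non-trivial quadratic characters, which are stabilized by $r(\pi)$ but by no reflection a priori, still fall within one of the two listed shapes. This collapse relies on the structure of $F^\times/(F^\times)^2$ in odd residual characteristic, and uses crucially the standing hypothesis $p \ne 2, 3$.
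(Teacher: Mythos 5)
Your proof is correct and follows essentially the same route as the paper's, with some differences in organization worth noting.

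For Part~(1), the paper streamlines the analysis by observing that $W^\fs_G$ is exactly the stabilizer of $\sigma^\circ := \xi_1|_{\fo_F^\times}\otimes\xi_2|_{\fo_F^\times}$, and that $W^\fs_G=\{1\}$ is equivalent to the six constraints
$\xi_1^\circ\ne 1$, $\xi_2^\circ\ne 1$, $\xi_1^\circ\xi_2^\circ\ne 1$, $\xi_1^\circ\ne\xi_2^\circ$, $(\xi_1^\circ)^2\xi_2^\circ\ne 1$, $\xi_1^\circ(\xi_2^\circ)^2\ne 1$—one per reflection. Your proposal enumerates all eleven non-trivial elements directly. These are morally the same computation, but the paper's route hides a subtlety that you correctly surface: namely, that a subgroup of $W\cong D_6$ can in principle consist only of rotations, so one must check that whenever a rotation fixes $\sigma^\circ$, a reflection does too. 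You handle this by noting that $r(\pi)$-invariance forces both $\xi_i^\circ$ to be quadratic, and then—since $\fo_F^\times/(\fo_F^\times)^2\cong\Z/2$ for $p$ odd—necessarily $\xi_1^\circ=\xi_2^\circ$ or one is trivial, so $s_\alpha$ or a long reflection already lies in $W^\fs_G$. (The analogous point for $r(2\pi/3)$ is easier, since cubic stabilization directly forces $\xi_1^\circ=\xi_2^\circ$.) The paper asserts the six-constraint equivalence without spelling this out. Two minor imprecisions in your write-up: the statement that short-root reflections force $\xi_1\xi_2^{-1}$ unramified is literally true only for $s_\alpha$ (for $s_{\alpha+\beta}$ one gets $\xi_1^\circ(\xi_2^\circ)^2=1$, and for $s_{2\alpha+\beta}$ one gets $(\xi_1^\circ)^2\xi_2^\circ=1$); you do caveat this with Weyl conjugation but the phrasing is loose. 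Likewise, $s_{3\alpha+2\beta}$ forces $\xi_1^\circ\xi_2^\circ=1$, not that one of $\xi_1,\xi_2$ is individually unramified; again only after a further Weyl conjugation does one land in the $[T,\xi\otimes 1]_G$ shape.

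For Part~(2), the paper simply cites references, whereas you supply the underlying argument. The key structural observation—that the nontrivial coset of $\Nor_G(M_\gamma)/M_\gamma$ acts on $M_\gamma\cong\GL_2(F)$ by $\sigma\mapsto\sigma^\vee$—is right, and it is the content of the cited reference. Your derivation of this via "$s_{\gamma'}$ fixes $\SL_2$ pointwise and inverts $(\gamma')^\vee$, hence acts as $g\mapsto (g^T)^{-1}$ up to inner" is a bit compressed (the pinned automorphism one obtains is not literally the identity on $\SL_2$ without a careful choice of lift), but the conclusion, that the automorphism is outer and inverse-transpose up to conjugacy, is correct and sufficient. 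One could also note the standard caveat that "self-dual" here should be read as "self-dual up to unramified twist," which for supercuspidals of $\GL_2$ always admits a genuinely self-dual representative in the inertial class.
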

\begin{proof}
(1) Recall the definition from \ref{eqn:NsI}, we have
\begin{equation}
W^\fs_G=\left\{w\in W\;:\;w\cdot(\xi_1\otimes\xi_2)=\chi(\xi_1\otimes\xi_2)
\;\text{ for some $\chi\in \fX_{\un}(T)$}\right\}.
\end{equation}
Let $\sigma^\circ:=
\xi_1|_{\fo_F^\times}\otimes\xi_2|_{\fo_F^\times}$. Then we have 
\begin{equation}
W^\fs_G=\{w\in W\;:\;w\cdot\sigma^\circ=\sigma^\circ\}.
\end{equation}
Let $\xi_i^\circ:=\xi_i|_{\fo_F^\times}$.
By Table~\ref{table:1}, it follows that we have
$W^\fs_G=\{1\}$ if and only if
\[\xi_1^\circ\ne 1,\;\; \xi_2^\circ\ne 1,\;\;
\xi_1^\circ\xi_2^\circ\ne 1,\;\;\xi_1^\circ\ne\xi_2^\circ,\;\;
(\xi_1^\circ)^2\xi_2^\circ\ne 1,\;\;\xi_1^\circ(\xi_2^\circ)^2\ne 1.
\]
Hence we have $W^\fs_G\ne\{1\}$ if and only if we are in one of the
following cases:
\begin{enumerate}
\item[(i)] We have $\xi_1^\circ=\xi_2^\circ$.
We may and do assume that $\xi_1=\xi_2=\xi$.
\item[(ii)] We have $\xi_2^\circ=1$. We may and do assume that
$\xi_1=\xi$ and $\xi_2=1$.
\end{enumerate}
(2) This is clear. See for example \cite[4.1.9]{aubert-xu-Hecke-algebra} (or \cite{ShahidiThird}). 
\end{proof}
\begin{proposition} \label{principal-series-classification-s-WGs}
Suppose that $W^\fs_G\ne\{1\}$.  Then the possibilities for $\fs$ and $W^\fs_G$ are as follows:
\begin{itemize}
\item[(1)]
$\fs=[T,1]_G$. Here $W^\fs_G=W$. 
\item[(2)]
$\fs=[T,\xi\otimes 1]_G$ with $\xi$ ramified non-quadratic. Here $W^\fs_G\simeq\ZZ/2\ZZ$.
\item[(3)]
$\fs=[T,\xi\otimes\xi]_G$ with $\xi$ ramified, neither quadratic nor
cubic. Here $W^\fs_G\simeq\ZZ/2\ZZ$. 
\item[(4)]
$\fs=[T,\xi\otimes\xi]_G$ with $\xi$ ramified quadratic. Here $W^\fs_G\simeq\ZZ/2\ZZ \times \ZZ/2\ZZ$.
\item[(5)]
$\fs=[T,\xi\otimes\xi]_G$ with $\xi$ ramified cubic. Here $W^\fs_G\simeq S_3$.
\item[(6)]
$\fs=[M_\beta,\sigma]_G$ with $\sigma$ self-dual. Here $W^\fs_G\simeq\ZZ/2\ZZ$.
\item[(7)]
$\fs=[M_\alpha,\sigma]_G$ with $\sigma$ self-dual. Here $W^\fs_G\simeq\ZZ/2\ZZ$.
\end{itemize}
\end{proposition}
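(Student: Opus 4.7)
The strategy is to combine Lemma~\ref{lemma:cases} with an explicit orbit calculation using Table~\ref{table:1}. By Lemma~\ref{lemma:cases}, we may split the proof along two branches: the principal series case $L = T$ (subcases $\fs = [T,\xi\otimes 1]_G$ and $\fs = [T,\xi\otimes\xi]_G$) and the maximal Levi case $L \in \{M_\alpha, M_\beta\}$ with $\sigma$ self-dual. The latter branch gives items (6) and (7) at once: for a maximal Levi $M_\gamma$ the relative Weyl group $\Nor_G(M_\gamma)/M_\gamma$ has order~$2$, so $W_G^\fs$ is either trivial or $\Z/2\Z$, and self-duality of $\sigma$ forces the non-trivial element to fix $\sigma$ up to unramified twist.

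For the torus branch, the plan is to use the observation in the proof of Lemma~\ref{lemma:cases}(1) that $W_G^{\fs}$ coincides with the stabilizer in $W$ of the restriction $\sigma^\circ := \xi_1|_{\fo_F^\times}\otimes\xi_2|_{\fo_F^\times}$. Setting $\xi^\circ := \xi|_{\fo_F^\times}$ and reading the twelve rows of Table~\ref{table:1} one by one, the existence of $w \in W$ fixing $\sigma^\circ$ translates to one of the six conditions
\[
\xi^\circ=1,\ (\xi^\circ)^2=1,\ (\xi^\circ)^3=1,\ \xi_1^\circ=\xi_2^\circ,\ \xi_1^\circ(\xi_2^\circ)^2=1,\ (\xi_1^\circ)^2\xi_2^\circ=1.
\]
I would then tabulate, for each subcase of the proposition, exactly which $w \in W$ survive. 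Concretely:
\begin{itemize}
\item For $\fs=[T,1]_G$ the whole $W$ stabilises $\sigma^\circ$, yielding $W_G^\fs = W$ (case~(1)).
\item For $\fs=[T,\xi\otimes 1]_G$ with $\xi$ ramified non-quadratic, only $1$ and $s_\beta$ survive (cases where the conditions force $\xi^\circ = 1$ or $(\xi^\circ)^2 = 1$ are ruled out), so $W_G^\fs \simeq \Z/2\Z$ (case~(2)).
\item For $\fs=[T,\xi\otimes\xi]_G$ with $\xi$ ramified, the element $s_\alpha = a$ always stabilises $\sigma^\circ$; the remaining elements contribute only if $(\xi^\circ)^2=1$ or $(\xi^\circ)^3=1$. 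A direct check yields $W_G^\fs = \{1,a\}$ in case~(3); in case~(4) the extra stabilisers are $s_{3\alpha+2\beta}$ and $r(\pi)$, giving the Klein four group of case~(4); in case~(5) the extra stabilisers are the rotations $r(\pm 2\pi/3)$ together with the reflections $s_{\alpha+\beta}$ and $s_{2\alpha+\beta}$ in the short roots, producing the order-$6$ subgroup isomorphic to $S_3$.
\end{itemize}

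The remaining verification is to identify the abstract isomorphism class of each surviving subgroup. Here the only mildly delicate point is case~(4): one must check that $s_\alpha$ commutes with $r(\pi) = (s_\alpha s_\beta)^3$ (true because $r(\pi)$ is central in $W$) and that $s_{3\alpha+2\beta} = s_\alpha\cdot r(\pi)$, so the subgroup is $\Z/2\Z \times \Z/2\Z$ rather than a cyclic group. In case~(5) one notices that the four non-identity reflections fixing $\sigma^\circ$ are exactly the three reflections in short roots, while the stabilising rotations form the unique cyclic subgroup of order $3$ in $W$; these together generate the dihedral group of order $6$, i.e.\ $S_3$. No step of this argument is deep; the main obstacle is purely bookkeeping, ensuring that the twelve rows of Table~\ref{table:1} are correctly tested against each of the four arithmetic conditions on $\xi^\circ$.
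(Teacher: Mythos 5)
Your proof is correct and follows essentially the same route as the paper: invoke Lemma~\ref{lemma:cases} to reduce to the torus and maximal-Levi branches, then read the stabilizer of $\sigma^\circ$ off Table~\ref{table:1} row by row. The one slip is in your case (5): you write ``the four non-identity reflections fixing $\sigma^\circ$ are exactly the three reflections in short roots,'' which is internally inconsistent — there are three such reflections ($s_\alpha$, $s_{\alpha+\beta}$, $s_{2\alpha+\beta}$), and together with the two order-$3$ rotations and the identity they give the order-$6$ subgroup $\{e,a,bab,abab,baba,ababa\}\cong S_3$ as in the paper. This miscount does not affect the conclusion.
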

\begin{proof}
We first consider the case $M=T$ and describe the $W$-orbits. We have
\begin{equation} \label{firstorbit} 
W\cdot(\xi\otimes\xi)=\{\xi\otimes\xi, \ \,\xi^{-1}\otimes\xi^{-1},
\ \, \xi^2\otimes \xi^{-1}, \ \, \xi^{-1}\otimes\xi^2, \ \,
\xi\otimes\xi^{-2}, \ \, \xi^{-2}\otimes\xi\}.\end{equation}
It follows that
\begin{equation} \label{eqn:orbits}
W\cdot(\xi\otimes\xi)=
\begin{cases}\xi\otimes\xi,\ \,\xi\otimes 1,
\ \,1\otimes\xi&\text{if $\xi$ is quadratic,}\cr
\xi\otimes\xi,\ \,\xi\otimes\xi^{-1},\ \,\xi^{-1}\otimes \xi,\ \,
\xi^{-1}\otimes\xi^{-1}&\text{if $\xi$ is
cubic.}\end{cases}
\end{equation}
We have
\begin{equation}
|W\cdot(\xi\otimes\xi)|=\begin{cases}
1&\text{if $\xi$ is trivial,}\cr
3&\text{if $\xi$ is quadratic,}\cr
4&\text{if $\xi$ is cubic,}\cr
6&\text{otherwise.}
\end{cases}
\end{equation}
On the other hand, we have
\begin{equation} \label{secondorbit}
W\cdot(\xi\otimes 1)=\{\xi\otimes 1, \ \,1\otimes\xi,
\ \, \xi\otimes \xi^{-1}, \ \, \xi^{-1}\otimes 1, \ \,
1\otimes\xi^{-1}, \ \, \xi^{-1}\otimes\xi\}.\end{equation}
If $\xi$ is quadratic, then we have
\begin{equation} \label{eqn:orbit}
W\cdot(\xi\otimes 1)=\{\xi\otimes\xi,\ \,\xi\otimes 1, \ \,1\otimes\xi\}.
\end{equation}
Thus we have
\begin{equation}
|W\cdot(\xi\otimes 1)|=\begin{cases}
1&\text{if $\xi$ is trivial,}\cr
3&\text{if $\xi$ is quadratic,}\cr
6&\text{otherwise.}
\end{cases}
\end{equation}
This gives the following possibilities for $\fs$:

\noindent
(2) 
If $\xi_2=1$ and $\xi_1=\xi$ with $\xi$ a ramified non-quadratic character, by~(\ref{secondorbit}) we have
\begin{equation}
\begin{array}{ccccccc}\fs&=&[T,\xi\otimes 1]_G&=&[T,1\otimes\xi]_G&=&
[T,\xi\otimes \xi^{-1}]_G\cr
&=&[T,\xi^{-1}\otimes 1]_G&=&[T,1\otimes\xi^{-1}]_G&=&[T,\xi^{-1}\otimes\xi]_G.
\end{array}
\end{equation}
It follows from Table~\ref{table:1} that
\begin{equation}
W^\fs_G=\{e,b\}\cong \ZZ/2\ZZ.
\end{equation}

\noindent
(3) If $\xi_1=\xi_2=\xi$ with $\xi$ a
ramified character that is neither quadratic nor cubic, by~(\ref{firstorbit}) we have
\begin{equation}
\begin{array}{ccccccc}
\fs&=&[T,\xi\otimes\xi]_G&=&[T,\xi^{-1}\otimes\xi^{-1}]_G&=&
[T,\xi^2\otimes \xi^{-1}]_G\cr
&=&[T,\xi^{-1}\otimes\xi^2]_G&=&[T,\xi\otimes\xi^{-2}]_G
&=&[T,\xi^{-2}\otimes\xi]_G.
\end{array}
\end{equation}
It follows from Table~\ref{table:1} that
\begin{equation}
W^\fs_G=\{e,a\}\cong \ZZ/2\ZZ.\end{equation}

\noindent
(4) If $\xi_1=\xi_2=\xi$ with $\xi$ a ramified quadratic character, 
since $\fs=[T,\xi_1\otimes\xi_2]_G =[T,w\cdot(\xi_1\otimes\xi_2)]_G$, we have 
$\fs=[T,\xi\otimes\xi]_G=[T,\xi\otimes 1]_G=[T,1\otimes \xi]_G$. Then Table~\ref{table:1} gives
\begin{eqnarray}
W^{\fs}&=&\left\{e,a,babab,bababa\right\}
\,=\,\left\{e,a,r^3,ar^3\right\}\nonumber\\
&=&\langle s_\alpha, s_{3\alpha + 2\beta}\rangle
\,\cong\, \ZZ/2\ZZ \times \ZZ/2\ZZ.\nonumber\end{eqnarray}
\noindent
(5) If $\xi_1=\xi_2=\xi$, with $\xi$ a
ramified character of order $3$, we have
\begin{equation}
\fs=[T,\xi\otimes\xi]_G = [T,\xi^{-1}\otimes\xi^{-1}]_G=
[T,\xi\otimes\xi^{-1}]_G=[T,\xi^{-1}\otimes\xi]_G.
\end{equation}
It follows from Table~\ref{table:1} that
\begin{equation}
W^\fs_G=\{e,a,bab,abab,baba,ababa\}\cong S_3.
\end{equation}
(6)\&(7) We now consider the cases $M=M_\alpha$ and  $M=M_\beta$. We have $\Nor_G(M)/M\simeq\ZZ/2\ZZ$. Since $\sigma$ is self-dual, by \cite[4.1.9]{aubert-xu-Hecke-algebra} we have $W^\fs_G=\Nor_G(M)/M$, thus $W^{\fs}_G\simeq \Z/2\Z$. 
\end{proof}

\section{ The intermediate series} \label{sec:interm_series}\
Let $\fs=[M,\sigma]_G$, where $\sigma$ is a supercuspidal irreducible representation of $M\in\{M_\alpha,M_\beta\}$. Let $\omega_{\sigma}$ denote the central character of $\sigma$. Let $P$ be a parabolic subgroup of $G$ with Levi subgroup $M$, and let $\ii_P^G(\sigma)$ denote the (normalized) parabolic induction of $\sigma$. Since $M\simeq\GL_2(F)$, the representation $\sigma$ is a \textit{regular} supercuspidal in the sense of Definition~\ref{defn:Kaletha regular}. Therefore, the $L$-parameter $\varphi_{\sigma}$ has trivial restriction to $\SL_2(\C)$. Thus the corresponding unipotent class in $\GL_2(\C)$ is trivial, and any cuspidal local system on this class is trivial (see $\mathsection$\ref{subsubsec:cuspidal-pairs} for details).  

\subsubsection{The short root case}
When $M\simeq M_{\alpha}$, by \cite[Proposition 6.2]{ShahidiLanglandsconjecture}:
\begin{enumerate}
    \item When $\omega_\sigma\neq 1$, $\ii_P^G(\sigma)$ is reducible and there are no complementary series.
    \item When $\omega_\sigma=1$, $\ii_P^G(\sigma)$ is irreducible, and $\ii_P^G(\sigma\otimes\nu_F^s)$ is irreducible unless $s=\pm 1/2$. 
    \begin{itemize}
        \item $\ii_P^G(\sigma\otimes\nu_F^{1/2})$ has has length $2$: it has a unique generic discrete series subrepresentation $\pi(\sigma)$ and a unique irreducible pre-unitary non-tempered Langlands quotient, $J(\sigma)$.  
        \item All the representations $\ii_P^G(\sigma\otimes\nu_F^s)$ for $0<s<1/2$ are in the complementary series and $s=1/2$ is the edge of complementary series. 
    \end{itemize}
\end{enumerate}
Since $\pi(\sigma)$ is a discrete series, by \cite[Proposition~9.3]{Solleveld-endomorphism-algebra}, it corresponds to a discrete series of the Hecke algebra $\mathcal{H}^{\fs}(G)$. 
On the other hand, by \cite[Tables~4.5.6 and 4.5.8]{aubert-xu-Hecke-algebra}, the Hecke algebra $\mathcal{H}^{\fs}(G)$ is isomorphic to an extended affine Hecke algebra which is either of type $\widetilde{\rA}_1(q,q)$ or has trivial parameters. 
In the first case, we can apply \cite[Table 2.1]{Ram}. Since the representation $\pi(\sigma)$ is tempered, by \cite[Table 2.1]{Ram}, it has unipotent class $e_{\alpha_1}$, which corresponds to $\widetilde{\rA}_1$ by Table \ref{tab:table-unipotent-Gsigma=GL2}. When $\mathcal{H}^{\fs}(G)$ has trivial parameters, we obtain the enhanced $L$-parameters as in Table \ref{tableint} (using the explicit formula in Definition \ref{defn:phisu}). 
\begin{center} 
\begin{table} [ht]
\begin{tabular}{|c|c|c|c|c|}
\hline
Representation 
&indexing triple & enhanced $L$-parameter \cr
\hline\hline
$\pi(\sigma)$&$(t_a,e_{\alpha_1},1)$ &$(\varphi_{\sigma,\widetilde{\rA}_1},1)$  \cr
\hline
 $J(\sigma)$&$(t_a,0,1)$ &$(\varphi_{\sigma,1},1)$  \cr
  \hline
\end{tabular}
\vskip0.2cm
    \caption{\label{tab:LLC}: LLC for the irred.~constituents of $\ii_P^G(\sigma\otimes\nu_F^{1/2})$ attached to $\fs=[M_\alpha,\sigma]_G$ where $\omega_\sigma$ unramified.}
\end{table}
\end{center}

\subsubsection{The long root case}
When $M\simeq M_\alpha$, by \cite[Theorem 8.1, Proposition 8.3]{ShahidiAnnalsAproofof}:
\begin{enumerate}
\item If $\varphi_\sigma=\Ind_{W_K}^{W_F}(\chi)$, $[K:F]=2$, $\chi\in\widehat{K}^*$, then $\ii_P^G(\sigma)$ is irreducible if and only if either $\chi|_{F^\times}=\eta$, or $\chi|_{F^\times}=1$ and $\chi^3=1$. 
\item The representation $I(\widetilde{\alpha}/i,\sigma)$ is reducible with a unique $\chi$-generic subrepresentation $\pi(\sigma)$ which is in the discrete series. Its Langlands quotient $J(\sigma)$ is never generic. It is a pre-unitary non-tempered representation. 
\end{enumerate}

\begin{remark}
Since $p$ is odd, the case where $\sigma$ is extraordinary (so-called \textit{exceptional} in \cite{Bushnell-Henniart-GL2}) mentioned in \cite{ShahidiAnnalsAproofof} does not occur (see \cite[Theorem~34.1 and Theorem~44.1]{Bushnell-Henniart-GL2}).
\end{remark}

\begin{center} 
\begin{table} [ht]
\begin{tabular}{|c|c|c|c|}
\hline
Representation 
&indexing triple & enhanced $L$-parameter\cr
\hline\hline
$\pi(\sigma)$&$(t_a,e_{\alpha_1},1)$ &$(\varphi_{\sigma,A_1},1)$  \cr
\hline
 $J(\sigma)$&$(t_a,0,1)$ &$(\varphi_{\sigma,1},1)$  \cr
  \hline
\end{tabular}
\vskip0.2cm
    \caption{\label{tab:}: {LLC for the irred.~constituents of $\ii_P^G(\sigma)$ attached to $\mathfrak{s}=[M_\beta,\sigma]_G$.}}
\end{table}
\end{center}
Since the representation $\pi(\sigma)$ is a discrete series, by \cite[Proposition~9.3]{Solleveld-endomorphism-algebra}, it corresponds to a discrete series of the Hecke algebra $\mathcal{H}^{\fs}(G)$. 
On the other hand, by \cite[Tables~4.5.2 and 4.5.4]{aubert-xu-Hecke-algebra}, the Hecke algebra $\mathcal{H}^{\fs}(G)$ is isomorphic to an extended affine Hecke algebra which is of type $\widetilde{\rA}_1(q^3,q)$, $\widetilde{\rA}_1(q,q)$ or has trivial parameters. 
In the $\widetilde{\rA}_1(q,q)$ case, we can apply \cite[Table 2.1]{Ram}. Since $\pi(\sigma)$ is tempered, by \cite[Table 2.1]{Ram}, it has unipotent class $e_{\alpha_1}$, which corresponds to $\rA_1$ by Table \ref{tab:table-unipotent-Gsigma=GL2}.

When $\mathcal{H}^{\fs}(G)$ has trivial parameters, similarly, we obtain the enhanced $L$-parameters as in Table \ref{tableint} (using the explicit formula in Definition \ref{defn:phisu}). When $\mathcal{H}^{\fs}(G)$ is of type $\widetilde{\rA}_1(q^3,q)$, we use Property \ref{property:AMS-conjecture-7.8} to find the \textit{singular} supercuspidal representation that shares the same $L$-packet as this intermediate series representation, and then use \eqref{defn:phisu} again to explicitly construct the enhanced $L$-parameter. This is done in Theorem \ref{constituents-part-one}. 

\subsubsection{Formal degrees for depth-zero intermediate series} 
Consider a depth-zero representation $\pi\in\Irr_\fs(G)$ where $\fs=[M,\sigma]_G$ and $M\in\{M_\alpha,M_\beta\}$. Since the depth is preserved via parabolic induction, the supercuspidal representation $\sigma$ of $M\simeq\GL_2(F)$ has also depth-zero and we have $\sigma=\cInd_{F^\times M_y,0}^M(\boldsymbol{\tau})$, where $M_{y,0}\simeq \GL_2(\fo_F)$ and $\boldsymbol{\tau}$ denotes the extension to $F^\times M_{y,0}$ of the inflation to $M_{y,0}$ of a cuspidal irreducible representation of $\mathbb{M}_{y,0}\simeq\GL_2(\Fq)$.  
By \eqref{Lusztig-unipotent-decomposition} the representation $\tau$ corresponds to the unipotent representation $\tau_{\unip}\in \mathcal{E}(\Cent_{\mathbb{M}_{y,0}^{\vee}}(s),1)$. In this case, $\Cent_{\mathbb{M}_{y,0}^{\vee}}(s)$ is either $\GL_2(\F_q)$ or a torus. Since $\GL_2(\F_q)$ carries no cuspidal unipotent representations, $\Cent_{\mathbb{M}_{y,0}^{\vee}}(s)$ can only be a torus, and thus $\tau_{\unip}=1$. In particular, $\dim(\tau_{\unip})=1$. On the other hand, we have
\begin{equation}
    |\mathbb{M}_{y,0}|=|\GL_2(\F_q)|=q(q^2-1)(q-1). 
\end{equation}
Thus, since $|\bbT]=(q-1)^2$, we have
\begin{equation}
    \dim(\tau)=\frac{|\mathbb{M}_{y,0}|_{p'}}{|\mathbb{T}|_{p'}}\cdot\dim(\tau_{\unip})=\frac{(q^2-1)(q-1)}{(q-1)^2}=q+1.
\end{equation}
By \cite{Kim-Yu-types}, the pair $(G_{y,0},\boldsymbol{\tau})$ is an $\fs$-type in $G$, and a $G$-cover of $(M_{y,0},\boldsymbol{\tau})$
 (see  also \cite{Morris-Level-zero-types} or \cite{Moy-Prasad-types}).
Since $\mathbb{G}_{y,0}=\mathbb{M}_{y,0}$, by \eqref{eqn:DR} we have
\begin{equation}
    \mathrm{Vol}(G_y)=q^{-\mathrm{rk}(G)/2}\cdot |\mathbb{G}_{y,0}|_{p'}=q^{-1}(q^2-1)=\frac{q^2-1}{q}.
\end{equation}
Therefore by 
a generalized version of 
{Proposition~\ref{prop: formal degrees for non-supercuspidals}}, we have 
\begin{equation}
    \fdeg(\pi)=\frac{\dim\tau}{\mathrm{Vol}(G_y)}\cdot d(\pi^{\mathcal{H}})=\frac{(q+1)q}{q^2-1}d(\pi^{\mathcal{H}}).
\end{equation}

\begin{numberedparagraph}
In the following table, we compute the quantity $d(\pi_s^{\mcH})$ and $d(\pi_0^{\mcH})$ as in \cite[$\mathsection$9]{Reeder-HA}. Recall that if $q_{\alpha}\neq q_0$, we must have $|R(\Oo)|=1$, and the formal degrees of square-integral modules for $\mathcal{H}(q_{\alpha},q_0)$ are given by 
\begin{equation}
    d(\pi_s^{\mcH})=\frac{q_{\alpha}q_0-1}{(q_{\alpha}+1)(q_0+1)}\qquad d(\pi_0^{\mcH})=\frac{|q_{\alpha}-q_0|}{(q_{\alpha}+1)(q_0+1)}.
\end{equation}
On the other hand, if the Hecke algebra $\mathcal{H}(q_{\alpha},q_0)$ is of rank one, and $q_{\alpha}=q_0$, there are $|R(\Oo)|$ square-integrable $\mathcal{H}(q_{\alpha},q_0)$-modules $\eta\otimes\pi_s$, one for each character $\eta$ of $R(\Oo)$. They all have the same formal degree 
\begin{equation}
    d(\eta\otimes\pi_s^{\mcH})=|R(\Oo)|^{-1}\frac{q_{\alpha}-1}{q_{\alpha}+1}.
\end{equation}
We compute these quantities $d(\pi)$ in the following table \ref{num: M long 0}, following \cite[Table 4.5.2]{aubert-xu-Hecke-algebra}. To distinguish between simple modules of Hecke algebras and irreducible representations, we use the notation $d(\pi^{\mathcal{H}})$ to denote formal degrees for simple modules for Hecke algebras $\mathcal{H}(q_{\alpha},q_0)$. 
\end{numberedparagraph}

\begin{numberedparagraph}\label{num: M long 0}
\textit{Table for long root essentially depth zero cases.}\\
We refer the reader to \cite{aubert-xu-Hecke-algebra} for the notations. \\

\adjustbox{scale=0.7,center}{
\begin{tabular}{|c|c|c|c|c|c|c|c|c|}
\hline
$r$ & $\mcD$ &  $\omega_{\sigma}$ &$\chi^2\chi'^{-1}$  & $R(\mathcal{O})$  
 & $W_{\Oo}$  & $\begin{array}{c}\mathcal{H}(G,\rho)\\ q_F^{\lambda(\alpha)},q_F^{\lambda^*(\alpha)}\end{array}$ &$d(\pi^{\mathcal{H}})$&$\fdeg(\pi)$ \\ \hline\hline
\multirow{4}{*}{$r=0$}& \multirow{4}{*}{$((G,M),(y,\iota),(M_{y,0},\rho_M))$} & $=1$ &$\begin{array}{c} \text{unramified} \\ \chi \text{ cubic}\end{array}$
   & $=1$ 
& $\neq 1$  & $\begin{array}{c}\text{non-comm,} \\q_F^3, q_F\end{array}$  & $\begin{array}{c}\frac{q^4-1}{(q^3+1)(q+1)};\\ \frac{(q^3-q)}{(q^3+1)(q+1)}\end{array}$ & $\begin{array}{c}
       \frac{(q^2+1)q}{q^3+1}\\
       \frac{q^2}{q^3+1}
\end{array}$
\\ \cline{3-9}
&  & $\ne 1$ &$\begin{array}{c} \text{unramified}\\ \chi \text{ cubic}\end{array}$  & $=1$     
& $\neq 1$  & $\begin{array}{c}\text{non-comm,}\\ q_F^2, q_F^2\end{array}$ & $\frac{q^2-1}{q^2+1}$ &$\frac{q(q+1)}{q^2+1}$
\\ \cline{3-9}
&  &$=1$ & $\begin{array}{c}\text{ramified}\\ \chi \text{ cubic}\end{array}$ & $=1$  
  &  $\neq 1$
  & $\begin{array}{c}\text{non-comm,}\\ q_F,q_F\end{array}$ &$\frac{q-1}{q+1}$ & $\frac{q}{q+1}$
\\ \cline{3-9}
& &$\neq 1$ &$\begin{array}{c}\text{ ramified}\\ \chi \text{ cubic}\end{array}$ &$*$   
&$=1$  &  $\C[R(\mathcal{O})]\ltimes\C[\Oo]$ & $0$ & $0$\\
\cline{3-9}
& &$=1$ &\multirow{2}{*}{$\begin{array}{c}\chi\text{ not cubic}\\ \text{N/A}\end{array}$} &$=1$  
&$\neq 1$  & $\begin{array}{c}\text{non-comm,}\\ q_F,q_F\end{array}$ &$\frac{q-1}{q+1}$ & $\frac{q}{q+1}$\\
\cline{3-3}\cline{5-9}
&&$\neq 1$ &&$*$  
&$=1$ &$\C[R(\mathcal{O})]\ltimes\C[\Oo]$ &$0$ &$0$ \\
\hline
\multirow{2}{*}{$r\neq 0$}
& \multirow{2}{*}{$(((M,G),M),(y,\iota),(r,0),(\phi,1),(M_{y,0},\rho_M))$}
& \multirow{2}{*}{$\ne 1$}
  &\multirow{2}{*}{N/A}   & \multirow{2}{*}{$=1$}
 & \multirow{2}{*}{$=1$}  &\multirow{2}{*}{$\C[\Oo]$}&\multirow{2}{*}{$0$} &\multirow{2}{*}{$0$}
\\ 
& 

&
&
& 
&  
&
& &
\\ \hline
\end{tabular}}
\smallbreak
\begin{center} {\sc Table} \ref{num: M long 0}.
\end{center}
\end{numberedparagraph}
\begin{numberedparagraph}\label{long-root-intermediate-series-mixed-packet}
As can be seen from the table, the first row has equal formal degree as in the singular non-unipotent supercuspidal representation from \eqref{fdeg-singular-nonunipotent-sc}, i.e.~they both have formal degree $\frac{q^2}{q^3+1}$. In particular, we have obtained an $L$-packet mixing a singular non-unipotent supercuspidal with an intermediate series representation with supercuspidal support being a supercuspidal representation associated to the long root $M\simeq M_\beta$. 
\end{numberedparagraph}
\begin{remark}
Recall that the singular supercuspidal representation from \eqref{fdeg-singular-nonunipotent-sc} can only occur when $q\equiv -1\mod 3$, which is also precisely the condition for this intermediate series representation in the first row of Table \ref{num: M long 0} to occur. Recall from \cite[(4.2.6)]{aubert-xu-Hecke-algebra} that
\begin{equation}
    \omega_\sigma(\varpi_F)+\chi^2\chi'^{-1}(\varpi_L)=0,
\end{equation}
which can only occur when $q\equiv -1\mod 3$. 
\end{remark}

\section{ The principal series} \label{sec:ppal-series} \

\subsection{Notation and background} \label{subsec:ppal-series-backgroud}
Let $\bT$ be an $F$-split maximal torus in $\bG$, and let $T^\vee$ denote its Langlands dual torus, which is a maximal torus in $G^\vee$. The principal series consists of all $G$-representations that \textit{occur in} parabolic inductions of characters of $T:=\bT(F)$ to $\rG_2(F)$. 
Let $T_0$ denote the maximal compact subgroup of $T$. Since $\bT$ is $F$-split, 
\begin{equation}\label{eq:cT0}
T \simeq F^\times \otimes_{\Z} X_* (T)\simeq(\fo_F^\times \times \Z)\otimes_{\Z} X_*(T)=T_0\times X_* (T).
\end{equation}
Since $W=W_\bG$ (the Weyl group of $\bG$ with respect to $\bT$) acts trivially on $F^\times$, these isomorphisms are $W$-equivariant if
we endow the right hand side with the diagonal $W$-action.
Thus \eqref{eq:cT0} determines a $W$-equivariant isomorphism of character groups 
\begin{equation}\label{split}
\Irr(T)\cong\Irr(T_0)\times\Irr(X_*(T))=\Irr(T_0)\times X_{\nr}(T).
\end{equation}
How $\Irr(T_0)$ is embedded depends on the choice of $\varpi_F$. However, the
isomorphisms 
\begin{equation}
\Irr(T_0)\cong\Hom(\fo_F^\times,T^\vee) \quad\text{and}\quad
X_{\nr}(T)\cong\Hom(\Z,T^\vee)=T^\vee. 
\end{equation}
are canonical.
Let $\varphi_\sigma$ be the image of $\sigma\in\Irr(T)$ under the canonical map 
\begin{equation} \label{paragraph:starting-LLC}
\Irr(T)=\Hom(F^\times\otimes_{\Z}X_*(T),\C^\times)
\simeq \Hom(F^\times,\C^\times\otimes_\Z X^*(T))\simeq \Hom(F^\times,T^\vee).
\end{equation} 
We define
\begin{equation} \label{eqn:cs map}
    c^\fs:=\varphi_\sigma|_{\fo_F^\times}\colon \fo_F^\times\to T^\vee.
\end{equation}
Note that unramified twists do not modify $\varphi_\sigma|_{\fo_F^\times}$, and conversely, \eqref{paragraph:starting-LLC}
determines 
$c^\fs$ up to unramified twists. On the other hand, two elements of $\Irr(T)$ are $G$-conjugate if and only if they are $W$-conjugate, so the $W$-orbit of  $c^\fs$ contains the same amount of information as $\fs$.
We define
\begin{equation} \label{eqn:H group}
 \mcJ^\fs:=\rZ_{G^\vee}(\Image(c_\fs)).  
\end{equation}
By \cite[pp.~394–395]{Roche-principal-series}, when $\bG$ has connected center, the group $\mcJ^\fs$ is connected and its Weyl group is isomorphic to the group $W^\fs$ (see \eqref{eqn:NsI}). Let $J^\fs$ denote the group of $F$-rational points of the $F$-split reductive algebraic $F$-group whose Langlands dual is the group $\mcJ^\fs$.

By \cite[Theorem~8.2]{Roche-principal-series},~the algebra $\cH(G,\tau^\fs)$ and $\cH(J^\fs,1_{\mathcal{I}})$ are isomorphic via a family of $*$-preserving,~support-preserving (and hence inner-product preserving) isomorphisms.~By \cite[proof of Theorem~10.7]{Roche-principal-series}, these isomorphisms preserve square-integrability and, for corresponding $\pi\in\Irr^2(G)$ and $\pi^1\in\Irr^2(J^\fs)$, with the appropriate normalization of volume factors, we have 
\begin{equation}\fdeg(\pi)=d^{\mathcal{H}}(\pi^1).
\end{equation}

\subsection{Principal series for \texorpdfstring{$\rG_2$}{\rG_2}}
For $\gamma\in\{\alpha,\beta\}$, we set
\begin{equation} \label{eqn:IgammaGL}
I^\gamma(\xi_1\otimes\xi_2):=i_{T(U\cap M_\gamma)}^{M_\gamma}(\xi_1\otimes\xi_2).
\end{equation}
By using \eqref{eqn:change of realization of T} we obtain
\begin{equation} \label{eqn:twist}
I^\alpha(\xi_1\otimes\xi_2)\otimes (\nu_F^s\circ\det)=
I^\alpha(\xi_1\nu_F^s\otimes\xi_2\nu_F^s)\quad\text{and}\quad
I^\beta(\xi_1\otimes\xi_2)\otimes (\nu_F^s\circ\det)=
I^\beta(\xi_2\nu_F^s\otimes\xi_1\xi_2^{-1}).
\end{equation}
As recalled in \cite[Proposition~1.1]{Muic-G2}, the principal series $I^\gamma(\xi_1\otimes\xi_2)$ reduces if and only if $(\xi_1\otimes\xi_2)\circ\gamma^\vee=\nu_F^{\pm 1}$. Let $\delta(\xi)$ denote the unique irreducible subrepresentation of $\ii_B^{\GL_2(F)}(\nu_F^{1/2}\xi\otimes\nu_F^{-1/2}\xi)$, it is also the unique irreducible quotient of $\ii_B^{\GL_2(F)}(\nu_F^{-1/2}\xi\otimes\nu_F^{1/2}\xi)$.  Similarly, $\xi\circ\det$ is the unique irreducible quotient (resp. subrepresentation) of $\ii_B^{\GL_2(F)}(\nu_F^{1/2}\xi\otimes\nu_F^{-1/2}\xi)$ (resp. $\ii_B^{\GL_2(F)}(\nu_F^{-1/2}\xi\otimes\nu_F^{1/2}\xi)$). Thus, in the Grothendieck groups $R(M_\alpha)$, $R(M_\beta)$ we have 
\begin{equation} \label{eqn:inM-case-1}
I^\alpha(\nu_F^{1/2}\xi\otimes\nu_F^{-1/2}\xi)=\delta(\xi)+\xi\circ\det\quad\text{and}\quad
I^\beta(\nu_F^{-1/2}\xi\otimes\nu_F)=\delta(\xi)+\xi\circ\det,
\end{equation}
\begin{equation} \label{eqn:inM-case-2}
I^\alpha(\nu_F^{-1/2}\xi\otimes\nu_F^{1/2}\xi)=\delta(\xi)+\xi\circ\det\quad\text{and}\quad
I^\beta(\nu_F^{1/2}\xi\otimes\nu_F^{-1})=\delta(\xi)+\xi\circ\det.
\end{equation}
Let $s\in\C$. For an admissible representation $\pi$ of $\GL_2(F)$, we write 
\begin{eqnarray} \label{eqn:IgammaG2}
I_\gamma(s,\pi):=\ii_{P_\gamma}^G(\pi\otimes(\nu_F^s\circ\det)),\quad\text{and}\quad I_\gamma(\pi):=I_\gamma(0,\pi).
\end{eqnarray}
When $\pi$ a tempered irreducible representation and $s>0$, 
the representation $I_\gamma(s,\pi)$ has a unique irreducible quotient denoted by $J_\gamma(s,\pi)$.

We denote by $I(\xi_1\otimes\xi_2)$ the parabolically induced representation of $\sigma=\xi_1\otimes\xi_2$.
We will describe the irreducible components of the parabolically induced representation $I(\xi_1\otimes\xi_2)$ following the methods developed in \cite[\S3 and \S4]{Muic-G2}.

\begin{remark} \label{rem:reduction}
{\rm Recall (see \cite[Theorem 2.9]{Bernstein-Zelevinsky}) that, for any $w\in W$, the Jordan-H\"older series of $I(\xi_1\otimes\xi_2)$ and $I({}^w(\xi_1\otimes\xi_2))$ have the same irreducible quotients.} 
\end{remark}

If $\omega_1$, $\omega_2$ are unitary characters of $F^\times$ and $s_1>s_2>0$, then the representation $I(\nu_F^{s_1}\omega_1\otimes\nu_F^{s_1}\omega_2)$ has a unique irreducible quotient denoted by $J(s_1,s_2,\omega_1,\omega_2)$.

\begin{lem} \label{lem:generic} {\rm \cite[Theorem~7]{Rodier-Whittaker-models}}
We suppose that $G$ is $F$-split. Let $P$ be a parabolic subgroup of $G$ with Levi subgroup $L$, and $\sigma$ an irreducible smooth representation of $L$.  
\begin{itemize}
    \item[(a)]
If $\sigma$ is generic, then $\ii_P^G(\sigma)$ contains a unique generic irreducible component.
    \item[(b)]
If $\sigma$ is not generic, then no irreducible component of $\ii_P^G(\sigma)$ is generic.
\end{itemize}
\end{lem}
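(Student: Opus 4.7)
The plan is to prove the dimensional identity
\[
\dim\Hom_U(\ii_P^G(\sigma),\psi)=\dim\Hom_{U_L}(\sigma,\psi_L),
\]
where $U$ is the unipotent radical of a fixed Borel $B=TU\subset P$, $V$ is the unipotent radical of $P$, $U_L:=U\cap L$ is the unipotent radical of the Borel $B\cap L$ of $L$, $\psi$ is the generic character attached to the Whittaker datum ${\mathfrak w}$, and $\psi_L$ is its restriction to $U_L$. Once this identity is established, parts (a) and (b) follow immediately: by Frobenius reciprocity the left-hand side equals $\dim\Hom_G(\ii_P^G(\sigma),\Ind_U^G\psi)$, and the right-hand side is $0$ or $1$ according to whether $\sigma$ is non-generic or generic (using the uniqueness of Whittaker models for $L$, and noting that $\psi_L$ is a generic character of $U_L$ because $\psi$ is nontrivial on each simple root subgroup and the simple roots of $L$ are a subset of those of $G$).

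First I would verify that $\psi_L$ is a generic character of $U_L$, which reduces to the statement that the simple $B\cap L$-positive roots in $L$ are exactly the simple $B$-positive roots of $G$ that lie in $L$; this is standard. Next I would set up the Bruhat-style filtration. Using the decomposition $G=\bigsqcup_{w\in W^L}PwU$, where $W^L$ is a set of minimal-length representatives for $W_L\backslash W$, one obtains a $U$-stable filtration of $\ii_P^G(\sigma)$ whose successive quotients are realized as compactly supported sections on the individual double cosets $PwU$. For each $w$, the corresponding quotient is isomorphic, as a $U$-module, to the compactly induced representation
\[
\mathrm{c\text{-}Ind}_{w^{-1}Pw\cap U}^{U}\bigl({}^{w^{-1}}\sigma\bigr),
\]
twisted appropriately by the modulus character.

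The heart of the argument is to show that only a single double coset — the \emph{open} one, corresponding to the longest element $w_0^L\in W^L$ — contributes to $\Hom_U(-,\psi)$, and that the contribution of this cell equals $\Hom_{U_L}(\sigma,\psi_L)$. For any non-open $w\in W^L$, the intersection $w^{-1}Pw\cap U$ contains a nontrivial root subgroup $U_\alpha$ corresponding to a simple root $\alpha$ on which $\psi$ is nontrivial; since $\sigma$ is a representation of $L$ (so the $V$-component of $w^{-1}Pw\cap U$ acts trivially via an unramified Levi projection), the Jacquet functor $(-)_{U,\psi}$ forces this cell's contribution to vanish. For the open cell, the intersection $w_0^{-L}Pw_0^L\cap U$ equals $U_L$ and the restriction of $\psi$ matches $\psi_L$, producing $\Hom_{U_L}(\sigma,\psi_L)$ by Frobenius reciprocity. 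I expect the main technical obstacle to be the cell-by-cell vanishing for non-open cells: one has to check carefully that the root subgroups picked up by $w^{-1}Pw\cap U$ always contain at least one simple positive root subgroup on which $\psi$ is nontrivial, which relies on a positivity/length argument in the Weyl group together with the fact that $\psi$ is generic in the precise sense of Definition~\ref{defn:generic}. Once this is in place the two statements (a) and (b) of the lemma follow at once from the dimensional identity, the uniqueness of Whittaker models for irreducible representations of $L$, and the additivity of $\dim\Hom_U(-,\psi)$ on short exact sequences.
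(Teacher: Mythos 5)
This Lemma is stated in the paper with no proof at all---it is cited directly as Rodier's Theorem~7 in \cite{Rodier-Whittaker-models}. Your sketch is a correct reconstruction of the standard Rodier/Casselman--Shalika argument for that theorem: the Bruhat-cell filtration of $\ii_P^G(\sigma)$ as a $U$-module, the vanishing of the twisted Jacquet module $(-)_{U,\psi}$ on all non-open cells, and the identification of the open-cell contribution with $\Hom_{U_L}(\sigma,\psi_L)$, followed by Frobenius reciprocity and uniqueness of Whittaker models for $L$. So you are proving the cited result rather than doing something the paper does, and you are using the same route as the cited source.

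One point where the sketch as literally written has a gap worth closing. In the vanishing step you argue: for a non-open $w$, the subgroup $w^{-1}Pw\cap U$ contains a simple root group $U_\alpha$ with $\psi|_{U_\alpha}\ne 1$, and ``since $\sigma$ is a representation of $L$ (so the $V$-component of $w^{-1}Pw\cap U$ acts trivially)'' the cell vanishes. But the mere presence of some simple $U_\alpha$ in $w^{-1}Pw\cap U$ is not enough: if this $U_\alpha$ maps into the Levi part $w^{-1}Lw$ rather than into $w^{-1}Vw$, then ${}^{w^{-1}}\sigma$ need not act trivially on it and there is no forced vanishing. What one must actually prove is that for every non-open $w\in W^L$ there is a simple root $\alpha$ with $w\alpha\in\Phi(V)$ (so $U_\alpha\subset U\cap w^{-1}Vw$), and this is a nontrivial combinatorial fact about the longest element of $W^L$: it is the unique $w\in W^L$ with $w\cdot\Delta\subset\Phi^-\cup\Phi_L^+$. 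You do flag this in your final sentence as the ``main technical obstacle,'' but the preceding sentence is phrased as if the vanishing were already established, so be careful to make the logical order explicit when writing the full argument.
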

By transitivity of the parabolic induction we have 
\begin{equation} 
I(\xi_1\otimes\xi_2)=\ii_{P_\gamma}^G\left(I^\gamma(\xi_1\otimes\xi_2)\right)=I_\gamma\left(s,I^\gamma(\xi_1\otimes \xi_2)\otimes \nu_F^{-s}\circ\det\right)).
\end{equation}
By using \eqref{eqn:twist}, we get
\begin{equation} \label{eqn:transitivity}
I(\xi_1\otimes\xi_2)=
I_\alpha(s,I^\alpha(\xi_1\nu_F^{-s}\otimes \xi_2\nu_F^{-s}))=
I_\beta(s,I^\beta(\xi_2\nu_F^{-s}\otimes \xi_1\xi_2^{-1})).
\end{equation}

\begin{lem} \label{lem:main-induced}\ 

\begin{enumerate}
\item
If $\xi_1=\nu_F
\xi_2$, then we have
\begin{equation} \label{decomposing-Inus1-tensor-Inus2-case-1}
\begin{split}
I(\xi_1\otimes\xi_2)&=
I_\alpha(s,\delta(\nu_F^{1/2-s}\xi_2))+I_\alpha(s,\nu_F^{1/2-s}\xi_2\circ\det)\cr
&=I_\beta(s,\delta(\nu_F^{1/2-s}\xi_2))+I_\beta(s,\nu_F^{1/2-s}\xi_2\circ\det).
\end{split}
\end{equation}
\item If $\xi_1=\nu_F^{-1}\xi_2$, then we have 
\begin{equation} \label{decomposing-Inus1-tensor-Inus2-case-2}
\begin{split}
I(\xi_1\otimes\xi_2)&=
I_\alpha(s,\delta(\nu_F^{-1/2-s}\xi_2))+I_\alpha(s,\nu_F^{-1/2-s}\xi_2\circ\det)\cr
&=I_\beta(s,\delta(\nu_F^{-1/2-s}\xi_2))+I_\beta(s,\nu_F^{-1/2-s}\xi_2\circ\det).
\end{split}
\end{equation}
\end{enumerate}
\end{lem}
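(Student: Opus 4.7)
The plan is to use transitivity of parabolic induction (equation~\eqref{eqn:transitivity}) together with the standard length-two reductions recorded in \eqref{eqn:inM-case-1} and \eqref{eqn:inM-case-2}. The whole argument is a bookkeeping exercise once we identify which $\GL_2$-principal series the intermediate character gives rise to.

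For case (1), where $\xi_1=\nu_F\xi_2$, I would start from the $\alpha$-side of \eqref{eqn:transitivity}, namely
\[
I(\xi_1\otimes\xi_2) = I_\alpha\bigl(s,\,I^\alpha(\xi_1\nu_F^{-s}\otimes\xi_2\nu_F^{-s})\bigr),
\]
and rewrite the inner argument as $I^\alpha(\nu_F^{1/2}\xi\otimes\nu_F^{-1/2}\xi)$ where $\xi:=\nu_F^{1/2-s}\xi_2$, which works because $\nu_F^{1-s}\xi_2 = \nu_F^{1/2}\xi$ and $\nu_F^{-s}\xi_2=\nu_F^{-1/2}\xi$. Applying the first identity in \eqref{eqn:inM-case-1} in the Grothendieck group $R(M_\alpha)$, together with the exactness of the parabolic induction functor $\ii_{P_\alpha}^G$, yields the first equality. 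For the $\beta$-side, the inner argument becomes $I^\beta(\nu_F^{-s}\xi_2\otimes\nu_F)$; the substitution $\xi:=\nu_F^{1/2-s}\xi_2$ puts this in the form $I^\beta(\nu_F^{-1/2}\xi\otimes\nu_F)$, and the second identity in \eqref{eqn:inM-case-1} then gives the second equality after inducing.

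Case (2) is entirely parallel: with $\xi_1=\nu_F^{-1}\xi_2$ and the substitution $\xi:=\nu_F^{-1/2-s}\xi_2$, the $\alpha$-side inner argument becomes $I^\alpha(\nu_F^{-1/2}\xi\otimes\nu_F^{1/2}\xi)$, to which the first identity in \eqref{eqn:inM-case-2} applies; and the $\beta$-side inner argument becomes $I^\beta(\nu_F^{1/2}\xi\otimes\nu_F^{-1})$, to which the second identity in \eqref{eqn:inM-case-2} applies. Inducing gives both equalities in \eqref{decomposing-Inus1-tensor-Inus2-case-2}.

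There is no real obstacle here beyond tracking the twists correctly; the main point is simply that \eqref{eqn:inM-case-1} and \eqref{eqn:inM-case-2} are identities in the Grothendieck group of $M_\gamma$, and parabolic induction is exact, so the decompositions descend to identities in the Grothendieck group of $G$. It is worth noting in the write-up that the equalities are equalities of semisimplifications (i.e.\ in $R(G)$), not of actual representations, since the extension data at the reducible point may be non-trivial; but this is all that is needed for the subsequent component-by-component analysis.
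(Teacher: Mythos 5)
Your proof is correct and is exactly the argument the paper has in mind: it cites the combination of \eqref{eqn:transitivity} with \eqref{eqn:inM-case-1} and \eqref{eqn:inM-case-2}, and your substitution $\xi=\nu_F^{\pm 1/2-s}\xi_2$ is precisely the bookkeeping required to apply those identities. Your closing remark that the equalities hold in the Grothendieck group $R(G)$ (by exactness of $\ii_{P_\gamma}^G$) correctly reflects how \eqref{eqn:inM-case-1} and \eqref{eqn:inM-case-2} are stated in the paper.
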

\begin{proof}
The result follows from the combination of  \eqref{eqn:transitivity} with \eqref{eqn:inM-case-1} and \eqref{eqn:inM-case-2}. 
\end{proof}
For $\gamma\in\{\alpha,\beta\}$, denote by $\rrr_\gamma\colon R(G)\to R(M_\gamma)$ the normalized Jacquet restriction functor $\rrr^G_{M_\gamma}$. For $\gamma,\gamma'\in\{\alpha,\beta\}$, let
$W^{M_{\gamma'},M_\gamma}$ be the subset of $W$ defined in \cite[\S2.11]{Bernstein-Zelevinsky}. By \cite[Lemmas~5.4-5.6]{Digne-Michel}, it consists in the elements $w\in W$ satisfying
\begin{equation} \label{eqn:reduced elements}
\ell(s_\gamma w)=\ell(s_\gamma)+\ell(w)\quad\text{and}\quad\ell(w s_{\gamma'})=\ell(w) +\ell(s_{\gamma'}).
\end{equation}
Then, from Table~\ref{table:1}, we obtain 
\begin{equation} \label{eqn:WMaMa}
    W^{M_\alpha,M_{\alpha}}=\left\{1,s_\beta,s_{\alpha+\beta},s_{3\alpha+\beta}\right\}, \quad 
    W^{M_{\beta},M_\alpha}=\left\{1,ba,baba\right\}
    \end{equation}
    \begin{equation}\label{eqn:WMbMb}
    W^{M_{\alpha},M_\beta}=\left\{1,ab,abab\right\},\quad 
    W^{M_\beta,M_{\beta}}=\left\{1,s_\alpha,s_{2\alpha+\beta},s_{3\alpha+2\beta}\right\}
    \end{equation}
    \begin{equation}\label{eqn:WMaT}
    W^{M_{\alpha},T}=\left\{1,s_\beta,ab,s_{\alpha+\beta},abab, babab\right\},\quad 
    W^{T,M_{\alpha}}=\left\{1,s_\beta,ba,s_{\alpha+\beta},baba, babab\right\}.
\end{equation}
\begin{lem}
Let $\xi\in\Irr(F^\times)$. We have
\begin{equation} \label{eqn:Jacquet-aa}
\rrr_\alpha\left(I_\alpha(s,\delta(\xi))\right)=\nu_F^{s}\delta(\xi)+\nu_F^{-s}\delta(\xi^{-1})
+I^\alpha(\nu_F^{2s}\xi^{2}\otimes\nu_F^{1/2-s}\xi^{-1})
+I^\alpha(\nu_F^{s+1/2}\xi\otimes\nu_F^{-2s}\xi^{-2}).
\end{equation}
\begin{equation} \label{eqn:Jacquet-ab}
\rrr_\alpha\left(I_\beta(s,\delta(\xi))\right)=\delta(\nu_F^s\xi)+\nu_F^s\xi\circ\det+
I^\alpha(\nu_F^{2s}\xi^2\otimes\nu_F^{-s-1/2}\xi^{-1})+
I^\alpha(\nu_F^{s-1/2}\xi\otimes \nu_F^{-2s}\xi^{-2}).
\end{equation}
\end{lem}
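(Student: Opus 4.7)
The strategy is a direct application of the Bernstein--Zelevinsky geometric lemma \cite[Lemma~2.12]{Bernstein-Zelevinsky}, which for any Levi subgroups $M,M'$ of $G$ expresses $\rrr^G_{M'}\circ\ii_M^G$ as a filtration whose successive quotients are indexed by the double coset representatives in $W^{M',M}$ given in \eqref{eqn:WMaMa}--\eqref{eqn:WMaT}. I plan to work in the Grothendieck group $R(M_\alpha)$ throughout, where the filtration becomes an equality.

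For \eqref{eqn:Jacquet-aa}, I will use $W^{M_\alpha,M_\alpha}=\{1,s_\beta,s_{\alpha+\beta},s_{3\alpha+\beta}\}$ from \eqref{eqn:WMaMa}. The term $w=1$ contributes the original representation $\nu_F^s\delta(\xi)$ since $M_\alpha\cap M_\alpha=M_\alpha$. For the remaining three reflections, the intersection $w^{-1}M_\alpha w\cap M_\alpha$ equals $T$, so each of these terms is of the form $\ii_T^{M_\alpha}\circ w\circ \rrr^{M_\alpha}_T$ applied to $\nu_F^s\delta(\xi)$. The Jacquet module of the Steinberg $\delta(\xi)$ of $M_\alpha\simeq \GL_2(F)$ along the Borel is the single character $\nu_F^{-1/2}\xi\otimes\nu_F^{1/2}\xi$ (viewed via $\eta_\alpha$), so $\rrr^{M_\alpha}_T(\nu_F^s\delta(\xi))=\nu_F^{s-1/2}\xi\otimes\nu_F^{s+1/2}\xi$. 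I will then twist by each of $s_\beta,s_{\alpha+\beta},s_{3\alpha+\beta}$ using Table~\ref{table:1}, and use Remark~\ref{rem:reduction} to reorganize the characters as needed. The reflection $w=s_\beta$ contributes a term that combines with $w=1$ to give the Steinberg piece $\nu_F^{-s}\delta(\xi^{-1})$ (or can be separately identified using that $\ii_T^{M_\alpha}$ of the conjugate character has a discrete subquotient), while the remaining two reflections yield the two $I^\alpha$ terms of the claimed formula.

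For \eqref{eqn:Jacquet-ab}, I will repeat the procedure using $W^{M_\alpha,M_\beta}=\{1,ab,abab\}$ from \eqref{eqn:WMbMb}. The key point is that each double coset representative $w$ here satisfies $w^{-1}M_\alpha w\cap M_\beta=T$, so every term is of the shape $\ii_T^{M_\alpha}\circ w\circ\rrr^{M_\beta}_T$ applied to $\delta(\nu_F^s\xi)$. The Jacquet module of $\delta(\nu_F^s\xi)$ along the Borel of $M_\beta$ is the character $\nu_F^{s-1/2}\xi\otimes\nu_F^{s+1/2}\xi$ in the realization $\eta_\beta$ of \eqref{eqn:eta_beta}. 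The contribution of $w=1$ then becomes $I^\alpha(\nu_F^{s+1/2}\xi\otimes\nu_F^{s-1/2}\xi)$, which by \eqref{eqn:inM-case-1} decomposes in $R(M_\alpha)$ as $\delta(\nu_F^s\xi)+\nu_F^s\xi\circ\det$, accounting for the first two summands on the right-hand side. The contributions of $w=ab$ and $w=abab$, again via Table~\ref{table:1}, are irreducible principal series and yield the two $I^\alpha$ terms in the stated formula.

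The main obstacle is bookkeeping: one has to keep straight the two realizations $\eta_\alpha$ and $\eta_\beta$ of $T\simeq F^\times\times F^\times$, linked by \eqref{eqn:change of realization of T}, and compute the Weyl group action on characters of $T$ using Table~\ref{table:1} carefully. Beyond this, the argument is a mechanical unwinding of the geometric lemma, and no new ingredient beyond the data collected in Section~\ref{sec:preliminariesG2} is needed.
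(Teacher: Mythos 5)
The high-level route you propose -- unwind the Bernstein--Zelevinsky geometric lemma for $\rrr_\alpha\circ\ii_{M_\alpha}$ and $\rrr_\alpha\circ\ii_{M_\beta}$ using the sets $W^{M_\alpha,M_\alpha}$, $W^{M_\alpha,M_\beta}$ -- is exactly what the paper does, so the strategy is right. But several concrete steps of your proposal are wrong, and as written they would not reproduce the stated formulas.

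First, for \eqref{eqn:Jacquet-aa} you assert that for \emph{all} three nontrivial coset representatives the intersection $w^{-1}M_\alpha w\cap M_\alpha$ is $T$, so that each contributes an $\ii_T^{M_\alpha}\circ w\circ\rrr_T^{M_\alpha}$ term. This cannot be: the term attached to the one representative that lies in $\Nor_G(M_\alpha)$ (namely $s_{3\alpha+2\beta}$ -- the paper has a typo writing $s_{3\alpha+\beta}$ here, but only $s_{3\alpha+2\beta}$ fixes $\alpha$) has intersection all of $M_\alpha$, and contributes the \emph{conjugate} $s_{3\alpha+2\beta}(\nu_F^s\delta(\xi))=\nu_F^{-s}\delta(\xi^{-1})$ directly. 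A quick length count confirms this: $\rrr_T$ of the left-hand side has length $|W^{T,M_\alpha}|\cdot 1=6$, and the right-hand side of \eqref{eqn:Jacquet-aa} has $T$-Jacquet length $1+1+2+2=6$, whereas your decomposition $1+2+2+2=7$ overshoots by one. You also say that $w=s_\beta$ "combines with $w=1$ to give the Steinberg piece $\nu_F^{-s}\delta(\xi^{-1})$," which misattributes where the second Steinberg comes from; $s_\beta$ and $s_{\alpha+\beta}$ contribute the two full $I^\alpha$ pieces.

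Second, you take the Jacquet module of $\nu_F^s\delta(\xi)$ (in the $\eta_\alpha$ realization) to be $\nu_F^{s-1/2}\xi\otimes\nu_F^{s+1/2}\xi$, but the correct value is $\nu_F^{s+1/2}\xi\otimes\nu_F^{s-1/2}\xi$: since $\delta(\xi)$ is by definition the unique irreducible subrepresentation of $\ii_B^{\GL_2}(\nu_F^{1/2}\xi\otimes\nu_F^{-1/2}\xi)$, Frobenius reciprocity gives a nonzero $T$-map $\rrr_B\delta(\xi)\to\nu_F^{1/2}\xi\otimes\nu_F^{-1/2}\xi$, and the left side is one-dimensional, so they are equal. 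The swap matters: when you subsequently push the character through $s_\beta$ or $s_{\alpha+\beta}$ via Table~\ref{table:1}, the two orderings land on characters that are not $W_{M_\alpha}$-conjugate to each other, so they index genuinely different $I^\alpha$ terms.

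Third, for \eqref{eqn:Jacquet-ab} the paper's derivation applies $ba$ and $baba$ to the Jacquet-module character, whereas you apply $ab$ and $abab$. These are not interchangeable: $ba(\nu_F^{s+1/2}\xi\otimes\nu_F^{s-1/2}\xi)=\nu_F^{2s}\xi^2\otimes\nu_F^{-s-1/2}\xi^{-1}$, which is what appears in \eqref{eqn:Jacquet-ab}, while $ab$ produces $\nu_F^{1/2-s}\xi^{-1}\otimes\nu_F^{2s}\xi^2$, which even after an $s_\alpha$-swap gives the exponent $1/2-s$ rather than $-s-1/2$. So the computation as you set it up would not end at the stated formula; one has to take the precise form of the geometric-lemma functor (and in particular which of $w$, $w^{-1}$ appears) seriously here.
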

\begin{proof}
(1) Since $I_\alpha(s,\delta(\xi))=I_\alpha(\nu_F^{s}\delta(\xi))$ and $\Nor_G(M_\alpha)/M_{\alpha}=\{1,s_{3\alpha+\beta}\}$, by \cite[Geometrical Lemma in \S2.12]{Bernstein-Zelevinsky}, we get 
\[\rrr_\alpha\left(I_\alpha(s,\delta(\xi))\right)=\nu_F^{s}\delta(\xi)+
s_{3\alpha+\beta}(\nu_F^{s}\delta(\xi))+I^\alpha\circ s_{\alpha+\beta}\circ\rrr_T^{M_\alpha}(\nu_F^{s}\delta(\xi))+I^\alpha\circ s_{\beta}\circ\rrr_T^{M_\alpha}(\nu_F^{s}\delta(\xi)).\]
By \cite[Proposition~1.11.(b)]{Zelevinsky}, we know that $\nu_F^{s}\delta(\xi)$ is the unique submodule of the length two representation $I^\alpha(\nu_F^{s+1/2}\xi\otimes\nu_F^{s-1/2}\xi)$, and $\rrr_T^{M_\alpha}(\nu_F^{s}\delta(\xi))=\nu_F^{s+1/2}\xi\otimes\nu_F^{s-1/2}\xi$. 
On the other hand, we have 
$s_{3\alpha+\beta}(\nu_F^{s}\delta(\xi))=\nu_F^{-s}\delta(\xi^{-1})$. Hence, we get
\[\rrr_\alpha\left(I_\alpha(s,\delta(\xi))\right)=\nu_F^{s}\delta(\xi)+
\nu_F^{-s}\delta(\xi^{-1})+I^\alpha\circ s_{\beta}(\nu_F^{s+1/2}\xi\otimes\nu_F^{s-1/2}\xi)+I^\alpha\circ s_{\alpha+\beta}(\nu_F^{s+1/2}\xi\otimes\nu_F^{s-1/2}\xi).\]
By Table~\ref{table:1}, we have
\[s_{\beta}(\nu_F^{s+1/2}\xi\otimes\nu_F^{s-1/2}\xi)=\nu_F^{2s}\xi^{2}\otimes\nu_F^{1/2-s}\xi^{-1} 
\quad\text{and}\quad
s_{\alpha+\beta}(\nu_F^{s+1/2}\xi\otimes\nu_F^{s-1/2}\xi)=\nu_F^{s+1/2}\xi\otimes\nu_F^{-2s}\xi^{-2}.
\]
Finally, we obtain \eqref{eqn:Jacquet-aa}.

(2) By \eqref{eqn:WMbMb} and \cite[Geometric Lemma]{Bernstein-Zelevinsky}, we have in $R(M_\alpha)$: 
\begin{equation}\label{r-alpha-I-beta}
\rrr_\alpha\circ I_\beta=I^\alpha\circ\rrr_{T}^{M_\beta}+I^\alpha\circ ba\circ\rrr_{T}^{M_\beta}+I^\alpha\circ baba\circ\rrr_{T}^{M_\beta}.
\end{equation}
Again by \cite[1.11]{Zelevinsky}, we have 
\[\rrr_T^{M_\beta}(\nu_F^{s}\delta(\xi))=\nu_F^{s+1/2}\xi\otimes\nu_F^{s-1/2}\xi.\]
By Table~\ref{table:1}, we have 
$ba(\nu_F^{s+1/2}\xi\otimes\nu_F^{s-1/2}\xi)=\nu_F^{2s}\xi^2\otimes \nu_F^{-s-1/2}\xi^{-1}$ and 
$baba(\nu_F^{s+1/2}\xi\otimes\nu_F^{s-1/2}\xi)=\nu_F^{s-1/2}\xi\otimes \nu_F^{-2s}\xi^{-2}$. 
Hence, we get 
\begin{align*}(ba\circ\rrr_{T}^{M_\beta})(\nu_F^{s}\delta(\xi))=\nu_F^{2s}\xi^2\otimes\nu_F^{2s+1/2}\xi^{-1}\\
(baba\circ\rrr_{T}^{M_\beta})(\nu_F^{s}\delta(\xi))=\nu_F^{s-1/2}\xi\otimes \nu_F^{-2s}\xi^{-2}
\end{align*}
Then \eqref{r-alpha-I-beta} gives
\[\rrr_\alpha\left(I_\beta(s,\delta(\xi))\right)=I^\alpha(\nu_F^{s+1/2}\xi\otimes\nu_F^{s-1/2}\xi)+I^\alpha(\nu_F^{2s}\xi^2\otimes\nu_F^{-s-1/2}\xi^{-1})+
I^\alpha(\nu_F^{s-1/2}\xi\otimes \nu_F^{-2s}\xi^{-2})
.\]
Since 
$I^\alpha(\nu_F^{s+1/2}\xi\otimes\nu_F^{s-1/2}\xi)=\delta(\nu_F^s\xi)+\nu_F^s\xi\circ\det$ 
by \eqref{eqn:inM-case-1}, we have 
\[\rrr_\alpha\left(I_\beta(s,\delta(\xi))\right)=\delta(\nu_F^s\xi)+\nu_F^s\xi\circ\det+
I^\alpha(\nu_F^{2s}\xi^2\otimes\nu_F^{-s-1/2}\xi^{-1})+
I^\alpha(\nu_F^{s-1/2}\xi\otimes \nu_F^{-2s}\xi^{-2})
.\]
\end{proof}

\begin{lem}\label{Jacquet-restriction-alpha-nu-tensor-xi2} Notations as above. We have 
\[\rrr_\alpha (I(\nu_F^{\mp 1}\otimes\xi_2))=\begin{split}
I^{\alpha}(\nu_F^{\mp 1}\otimes\xi_2)+I^{\alpha}(\nu_F^{\pm 1}\otimes\nu_F^{\mp 1}\xi_2)+I^{\alpha}(\nu_F^{\mp 1}\xi_2\otimes\xi_2^{-1}) 
+I^{\alpha}(\nu_F^{\mp 1}\xi_2\otimes\xi_2^{-1})\\+I^{\alpha}(\nu^{\mp 1}_F\xi_2\otimes\nu^{\pm 1}_F)+I^{\alpha}(\nu^{\mp 1}_F\otimes\xi_2)+I^{\alpha}(\xi_2\otimes\nu^{\pm 1}_F\xi_2^{-1}). 
\end{split}
\]
\end{lem}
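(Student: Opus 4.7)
The plan is to apply the Geometric Lemma of Bernstein--Zelevinsky (\S2.12 of \cite{Bernstein-Zelevinsky}) to the composition $\rrr_\alpha \circ \ii_B^G$, exactly as was done to obtain formulas \eqref{eqn:Jacquet-aa} and \eqref{eqn:Jacquet-ab} above. Since $\ii_B^G$ is parabolic induction from the torus $T$, the double sum in the Geometric Lemma degenerates (there is no nontrivial Jacquet step on the torus side), leaving
\[
\rrr_\alpha\bigl(I(\xi_1\otimes\xi_2)\bigr) \;=\; \sum_{w \in W^{M_\alpha,T}} I^{\alpha}\bigl(w\cdot(\xi_1\otimes\xi_2)\bigr)
\]
in the Grothendieck group $R(M_\alpha)$. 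The proof is then reduced to an explicit tabulation.

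Concretely, I would proceed in three steps. First, invoke the explicit list of $W^{M_\alpha,T}$ from \eqref{eqn:WMaT}, namely the six elements $\{1,\,s_\beta,\,ab,\,s_{\alpha+\beta},\,abab,\,babab\}$, which are the minimal right-coset representatives of $W_{M_\alpha}=\{1,s_\alpha\}$ in $W$. Second, substitute $(\xi_1,\xi_2)=(\nu_F^{\mp 1},\xi_2)$ into Table~\ref{table:1} and read off the six transformed characters, which come out to
$(\nu_F^{\mp 1}\otimes\xi_2)$, $(\nu_F^{\mp 1}\xi_2\otimes\xi_2^{-1})$, $(\xi_2^{-1}\otimes\nu_F^{\mp 1}\xi_2)$, $(\nu_F^{\mp 1}\otimes\nu_F^{\pm 1}\xi_2^{-1})$, $(\nu_F^{\pm 1}\xi_2^{-1}\otimes\nu_F^{\mp 1})$, $(\xi_2^{-1}\otimes\nu_F^{\pm 1})$, respectively. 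Third, apply $I^\alpha$ to each and sum. The presentation in the statement is then obtained by using the $s_\alpha$-symmetry
\[
I^{\alpha}(\chi_1\otimes\chi_2) \;=\; I^{\alpha}(\chi_2\otimes\chi_1)
\]
in $R(M_\alpha)$, coming from the nontrivial element of $W_{M_\alpha}$, which lets one rewrite each summand in whichever of the two equivalent forms is most convenient for the Langlands classification carried out in subsequent subsections.

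No substantive obstacle arises: the computation is entirely a mechanical application of the Geometric Lemma together with Table~\ref{table:1}. The only care needed is in two conventions. First, $W^{M_\alpha,T}$ must be taken as \emph{right}-coset minimal representatives (not left) -- this is the convention implicit in \eqref{eqn:WMaT} and used already in the proofs of \eqref{eqn:Jacquet-aa} and \eqref{eqn:Jacquet-ab}. Second, the normalizations of $\rrr_\alpha$ and $I^\alpha$ must be matched; since both are the normalized functors used throughout this section, no explicit half-sum-of-roots twist appears in the final identity.
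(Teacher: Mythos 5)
Applying the Geometric Lemma in one step to $\rrr_\alpha\circ\ii_B^G$, rather than going through the paper's two-step route $\rrr_\alpha\circ I_\alpha\circ I^\alpha$ via \eqref{eqn:transitivity}, is a legitimate alternative organization. However, you have used the wrong set of Weyl representatives, and the error is consequential.

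For $\rrr_{M_\alpha}\circ\ii_T^G$ the Geometric Lemma runs over representatives of $W_{M_\alpha}\backslash W/W_T=W_{M_\alpha}\backslash W$, i.e.\ elements that are shortest in their \emph{left} cosets $W_{M_\alpha}w$ (those with $\ell(s_\alpha w)=1+\ell(w)$). In the paper's convention the superscript order in $W^{A,B}$ is (induction side, restriction side): the displayed formula for $\rrr_\alpha\circ I_\beta$ in the proof of \eqref{eqn:Jacquet-ab} runs over $W^{M_\beta,M_\alpha}$. So the correct set here is $W^{T,M_\alpha}=\{1,\,s_\beta,\,ba,\,s_{\alpha+\beta},\,baba,\,s_{3\alpha+2\beta}\}$ from \eqref{eqn:WMaT}, not $W^{M_\alpha,T}=\{1,\,s_\beta,\,ab,\,s_{\alpha+\beta},\,abab,\,s_{3\alpha+2\beta}\}$, which consists of minimal representatives of the \emph{right} cosets $W/W_{M_\alpha}$ and is the set adapted to the opposite composition $\rrr_T\circ\ii_{M_\alpha}$.

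This is not a harmless change of convention. As you note, $I^\alpha(\chi_1\otimes\chi_2)=I^\alpha(\chi_2\otimes\chi_1)$ in $R(M_\alpha)$, so $I^\alpha(w\cdot\sigma)$ depends only on the left coset $W_{M_\alpha}w$. Your set $\{1,b,ab,bab,abab,babab\}$ hits $W_{M_\alpha}b=\{b,ab\}$ twice, hits $W_{M_\alpha}bab=\{bab,abab\}$ twice, and misses $W_{M_\alpha}ba=\{ba,aba\}$ and $W_{M_\alpha}baba=\{baba,ababa\}$ entirely. Concretely, after your own $s_\alpha$-symmetrization your six summands collapse to only four distinct $I^\alpha$'s with multiplicities $1,2,2,1$. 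The two missing terms are $I^\alpha(\nu_F^{\mp1}\xi_2\otimes\nu_F^{\pm1})$ (from $w=ba$) and $I^\alpha(\xi_2\otimes\nu_F^{\pm1}\xi_2^{-1})$ (from $w=baba$). You can see the failure directly by applying $\rrr_T^{M_\alpha}$ to your proposed formula and comparing with Lemma~\ref{Jacquet-restriction-r-empty}: your sum produces four of the twelve $W$-orbit characters with multiplicity two and omits four of them, instead of producing each exactly once. Replacing $W^{M_\alpha,T}$ by $W^{T,M_\alpha}$ in your computation fixes this and yields the intended six-term sum $I^\alpha(\nu_F^{\mp1}\otimes\xi_2)+I^\alpha(\nu_F^{\mp1}\xi_2\otimes\xi_2^{-1})+I^\alpha(\nu_F^{\mp1}\xi_2\otimes\nu_F^{\pm1})+I^\alpha(\nu_F^{\mp1}\otimes\nu_F^{\pm1}\xi_2^{-1})+I^\alpha(\xi_2\otimes\nu_F^{\pm1}\xi_2^{-1})+I^\alpha(\xi_2^{-1}\otimes\nu_F^{\pm1})$, against which the statement of Lemma~\ref{Jacquet-restriction-alpha-nu-tensor-xi2} as printed should be compared modulo the $s_\alpha$-symmetry (its current display appears to contain a copy-paste duplication).
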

\begin{proof} By \eqref{eqn:transitivity}, we have
\begin{equation}
I(\nu_F^{\mp 1}\otimes\xi_2)=
I_\alpha(s,I^\alpha(\nu_F^{\mp 1-s}\otimes \xi_2\nu_F^{-s})).
\end{equation}
First note that $I_{\alpha}(s,I^{\alpha}(\nu_F^{\mp 1-s}\otimes\xi_2\nu_F^{-s}))=I_{\alpha}(\nu_F^s\otimes I^{\alpha}(\nu_F^{\mp 1-s}\otimes\xi_2\nu_F^{-s}))=I_{\alpha}(I^{\alpha}(\nu_F^{\mp 1}\otimes\xi_2))$. 
By \cite[Geometrical Lemma]{Bernstein-Zelevinsky}, we have in $R(M_{\alpha})$:
\begin{align}\label{raIa-Ia-nu-xi2}
    \rrr_\alpha\left(I_{\alpha}(s,I^{\alpha}(\nu_F^{\mp 1-s}\otimes\xi_2\nu_F^{-s})\right)&=I^{\alpha}(\nu_F^{\mp 1}\otimes\xi_2)+s_{3\alpha+\beta}\circ I^{\alpha}(\nu_F^{\mp 1}\otimes\xi_2)+\ii_T^{M_{\alpha}}\circ s_{\beta}\circ\rrr_T^{M_{\alpha}}(I^{\alpha}(\nu_F^{\mp 1}\otimes\xi_2))\\
    &+\ii_T^{M_{\alpha}}\circ s_{\alpha+\beta}\circ\rrr_T^{M_{\alpha}}(I^{\alpha}(\nu_F^{\mp 1}\otimes\xi_2)).\label{last-term-raIa-Ia-nu-xi2}
\end{align}
The second term in \eqref{raIa-Ia-nu-xi2} gives $s_{3\alpha+\beta}\circ I^{\alpha}(\nu_F^{\mp 1}\otimes\xi_2)=I^{\alpha}(\nu_F^{\pm 1}\otimes\nu_F^{\mp 1}\xi_2)$. By \cite[Geometrical Lemma]{Bernstein-Zelevinsky}, we have in $R(T)$
\begin{equation} \label{eqn:BZ}
\rrr_T^{M_{\alpha}}\left(I^{\alpha}(\nu_F^{\mp 1}\otimes\xi_2)\right)=\nu_F^{\mp 1}\otimes\xi_2+ s_\alpha(\nu_F^{\mp 1}\otimes\xi_2)=\nu_F^{\mp 1}\otimes\xi_2+\xi_2\otimes \nu_F^{\mp 1}.\end{equation}
Since $s_{\beta}(\nu_F^{\mp 1}\otimes\xi_2)=\nu_F^{\mp 1}\xi_2\otimes\xi_2^{-1}$ and $s_{\beta}(\xi_2\otimes \nu_F^{\mp 1})=\nu_F^{\mp 1}\xi_2\otimes \nu_F^{\pm 1}$, we get
\begin{equation}
    \ii_T^{M_{\alpha}}\circ s_{\beta}\circ\rrr_T^{M_{\alpha}}(I^{\alpha}(\nu_F^{\mp 1}\otimes\xi_2))=I^{\alpha}(\nu_F^{\mp 1}\xi_2\otimes\xi_2^{-1})+I^{\alpha}(\nu^{\mp 1}_F\xi_2\otimes\nu^{\pm 1}_F). 
\end{equation}
For the last term in \eqref{last-term-raIa-Ia-nu-xi2}, since $s_{\alpha+\beta}(\nu^{\mp 1}_F\otimes\xi_2)=\nu_F^{\mp 1}\otimes \nu_F^{\pm 1}\xi_2^{-1}$ and $s_{\alpha+\beta}(\xi_2\otimes \nu^{\mp 1}_F)=\xi_2\otimes\nu^{\pm 1}_F\xi_2^{-1}$, we obtain 
\begin{equation}
    \ii_T^{M_{\alpha}}\circ s_{\alpha+\beta}\circ\rrr_T^{M_{\alpha}}I^{\alpha}(\nu_F^{\mp 1}\otimes\xi_2)=I^{\alpha}(\nu^{\mp 1}_F\otimes\xi_2)+I^{\alpha}(\xi_2\otimes\nu^{\pm 1}_F\xi_2^{-1}).
\end{equation}
Thus the result follows.
\end{proof}

\begin{remark}{\rm 
We observe that \eqref{eqn:Jacquet-aa} coincides with the one computed in \cite[Proof of Proposition~4.1]{Muic-G2} in the particular case where $s=1/2$ and $\xi$ is quadratic. We have also noticed the following typos in \cite[\S2]{Muic-G2}: in the computation of $\rrr_\alpha\circ I_\alpha(\pi)$ the element $w_{2\alpha+\beta}$ must be replaced by $w_{\alpha+\beta}$, and moreover $w_{3\alpha+2\beta}$ must be replaced by $w_{3\alpha+\beta}$ in the computation of $\rrr_\alpha\circ I_\beta(\pi)$, it should be $I^\alpha$ instead of $I^\beta$. }
\end{remark}

\begin{lem}\label{Jacquet-restriction-r-empty}
Notations as above. We have 
\begin{align*}
    \rrr_{\varnothing}I(\nu_F^{\mp 1}\otimes\xi_2)=&\nu_F^{\mp 1}\otimes\xi_2+\xi_2\otimes\nu_F^{\mp1}+\nu_F^{\mp1}\xi_2\otimes\xi_2^{-1}+\xi_2^{-1}\otimes\nu_F^{\mp1}\xi_2+\nu_F^{\mp1}\xi_2\otimes\nu_F^{\pm1}+\nu_F^{\pm1}\otimes\nu_F^{\mp1}\xi_2\\
    &+\nu_F^{\mp1}\otimes\nu_F^{\pm1}\xi_2^{-1}+\nu_F^{\pm1}\xi_2^{-1}\otimes\nu_F^{\mp1}+\xi_2\otimes\nu_F^{\pm1}\xi_2^{-1}+\nu_F^{\pm1}\xi_2^{-1}\otimes\xi_2+\xi_2^{-1}\otimes\nu_F^{\pm1}+\nu_F^{\pm1}\otimes\xi_2^{-1}.
\end{align*}
\end{lem}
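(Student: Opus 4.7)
The plan is to apply the Bernstein–Zelevinsky Geometric Lemma directly to the composition $\rrr_{\varnothing}\circ \ii_T^G$ of normalized parabolic induction from $T$ (via the Borel $B = TU$) followed by normalized Jacquet restriction back to $T$. Since $T$ is the minimal Levi subgroup, the ``double coset'' set $W^{T,T}$ coincides with the full Weyl group $W = W_G$, and the Geometric Lemma (cf.\ \cite[Lemma~2.12]{Bernstein-Zelevinsky}) collapses to
\begin{equation}
\rrr_{\varnothing}\circ I(\chi) \;=\; \sum_{w \in W} w \cdot \chi \qquad\text{in } R(T),
\end{equation}
for any character $\chi$ of $T$, since every element of $W$ satisfies the reducedness condition \eqref{eqn:reduced elements} trivially (there is no Levi on either side to consider).

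The proof then reduces to a finite computation. Setting $\chi = \nu_F^{\mp 1}\otimes\xi_2$, I would enumerate the $12$ Weyl group elements listed in Table~\ref{table:1} and apply the action rule in the third column of that table to $(\xi_1,\xi_2) = (\nu_F^{\mp 1},\xi_2)$. Each of the $12$ resulting characters will match one of the $12$ summands on the right-hand side of the claimed formula, in the order $1,\, a,\, b,\, ab,\, ba,\, aba,\, bab,\, abab,\, baba,\, ababa,\, babab,\, bababa$. For example, $b$ sends $\nu_F^{\mp 1}\otimes\xi_2$ to $\nu_F^{\mp 1}\xi_2\otimes\xi_2^{-1}$, and $ba$ sends it to $\nu_F^{\mp 1}\xi_2\otimes (\nu_F^{\mp 1})^{-1} = \nu_F^{\mp 1}\xi_2\otimes\nu_F^{\pm 1}$, and so forth. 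No cancellation or simplification is needed since we work in the Grothendieck group of smooth $T$-representations and each summand is listed separately.

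An alternative route, which serves as a cross-check, is to use transitivity of Jacquet restriction: $\rrr_{\varnothing} = \rrr_T^{M_\alpha}\circ\rrr_\alpha$, together with the computation of $\rrr_\alpha\,I(\nu_F^{\mp 1}\otimes\xi_2)$ already carried out in Lemma~\ref{Jacquet-restriction-alpha-nu-tensor-xi2}, and then apply $\rrr_T^{M_\alpha}\circ I^\alpha(\eta_1\otimes\eta_2) = \eta_1\otimes\eta_2 + s_\alpha(\eta_1\otimes\eta_2)$ to each of the seven $I^\alpha$-summands. This yields $14$ terms; two pairs of them coincide (corresponding to the multiplicity $2$ occurrence of $I^\alpha(\nu_F^{\mp 1}\xi_2\otimes\xi_2^{-1})$ in Lemma~\ref{Jacquet-restriction-alpha-nu-tensor-xi2}), so this is consistent with the $12$ terms obtained from the direct approach.

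There is no real obstacle; the only point requiring care is bookkeeping. One must verify that the double-counting in the transitivity approach is resolved correctly, and that the signs/orderings of the Weyl group action in Table~\ref{table:1} are applied consistently (in particular that $\xi_1$ in the table corresponds to the first factor $\nu_F^{\mp 1}$ and $\xi_2$ to the second factor, and that $\nu_F^{\pm 1} = (\nu_F^{\mp 1})^{-1}$). Since the formula is symmetric under $w \leftrightarrow w^{-1}$ only up to the involution $\chi \mapsto \chi^{-1}$, no orbit collapses, and all $12$ terms are genuinely distinct as formal symbols; they appear exactly as claimed.
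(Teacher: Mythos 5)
Your primary argument is exactly the paper's proof: the Geometric Lemma for $\rrr_\varnothing \circ \ii_T^G$ collapses to the sum over all of $W$ (since $W^{T,T}=W$ when the Levi on both sides is the minimal torus), and the $12$ terms are obtained by reading off the Weyl action from Table~\ref{table:1} applied to $\nu_F^{\mp1}\otimes\xi_2$. The cross-check via $\rrr_T^{M_\alpha}\circ\rrr_\alpha$ is a reasonable sanity check, though its bookkeeping leans on the exact multiplicities appearing in the displayed formula of Lemma~\ref{Jacquet-restriction-alpha-nu-tensor-xi2}; the correct count there is six distinct $I^\alpha$-terms (one per element of $W^{M_\alpha,T}$), each contributing two characters after $\rrr_T^{M_\alpha}$, recovering the $12$ terms without any coincidences.
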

\begin{proof}
By \cite[Geometric Lemma]{Bernstein-Zelevinsky} the Jacquet restriction is simply the sum of Weyl group orbits of $\nu_F^{\pm1}\otimes\xi_2$.
\end{proof}

In the following, we will classify the principal series cases according to the possibilities for $\fs$.
We will use the dual figure of Figure \ref{fig:G2-root-diagram} in which the role of $\alpha$ and $\beta$ is played by $\beta^\vee$ and $\alpha^\vee$, respectively to help us analyze the roots. 
Note that $\alpha^\vee$ (resp.~$\beta^{\vee}$), which is long (resp.~short), corresponds to the long (resp.~short) root $\alpha_1$ (resp.~$\alpha_2$) in the notations of \cite[$\mathsection$~6]{Ram}, as shown in \cite[Fig.6.1]{Ram}. 
Since the root $\alpha^\vee$ (resp.~$\beta^{\vee}$) is long (resp.~short), $e_{\alpha^\vee}=e_{\alpha_1}$ (resp.~$e_{\beta^{\vee}}=e_{\alpha_2}$) is a representative of $\rA_1$ (resp.~$\widetilde{\rA}_1$). 
Since $\alpha^\vee+\beta^\vee$ is short, $e_{\alpha^\vee+\beta^\vee}$ is a representative of $\widetilde\rA_1$ (see \cite[Table~6.3]{Ram}.

\subsection{Case \texorpdfstring{$\fs=[T,\xi\otimes\xi]_G$}{$\fs=[T,\xi\otimes\xi]_G$}}
\label{principal-series-first-case}
We may write
$\sigma=\chi_1\xi\otimes\chi_2\xi$, with $\xi$ a \textbf{ramified} character of $F^\times$, and $\chi_1$, $\chi_2$ unramified characters of $F^\times$.
We write $\chi_1=\nu_F^{s_1}$ and $\chi_2=\nu_F^{s_2}$ for $s_1,s_2\in\C$.

--If both $\nu_F^{s_1}\xi$ and $\nu_F^{s_2}\xi$ are unitary, by \cite[Theorem~$\rG_2$]{Keys}, $I(\nu_F^{s_1}\xi\otimes\nu_F^{s_2}\xi)$ is reducible if and only if $\nu_F^{s_1}\xi$ and $\nu_F^{s_2}\xi$ are two distinct quadratic characters, in which case it is of multiplicity $1$ and length $2$. In this case, the two irreducible constituents are in the same $L$-packet and they are both tempered. One is generic and the other one is not. See Theorem~\ref{constituents-part-two}. 

--Otherwise (i.e., at least one of $\nu_F^{s_1}\xi$, $\nu_F^{s_2}\xi$ is non-unitary), by \cite[Proposition 3.1]{Muic-G2},  $I(\nu_F^{s_1}\xi\otimes\nu_F^{s_2}\xi)$ is irreducible unless we are in one of the following situations:
\begin{enumerate}
    \item $s_1=\pm 1$, $\xi=1$, $s_2$ arbitrary (resp.~$s_1$ arbitrary, $s_2=\pm 1$, $\xi=1$);
    \item $\nu_F^{s_1+s_2}\xi^2=\nu_F^{\pm 1}$;
    \item $s_1-s_2=\pm 1$;
    \item $\nu_F^{2s_1+s_2}\xi^3=\nu_F^{\pm 1}$;
    \item $\nu_F^{s_1+2s_2}\xi^3=\nu_F^{\pm 1}$.
\end{enumerate}
We now explain how cases (1) and (2) are equivalent, using Remark~\ref{rem:reduction} and Table~\ref{table:1}: in case (1), $\xi_1\otimes\xi_2=\nu_F^{\pm 1}\otimes\nu_F^{s_2}={}^{s_{\beta}}(\nu_F^{s_2\pm 1}\otimes \nu_F^{-s_2})$ by Table~\ref{table:1}, which then falls under case (2) by Remark~\ref{rem:reduction}. Similarly, case (5) is equivalent to case (4) by using the action of $s_\alpha$.

We will treat case (2), case (3) and case (4) separately.

We recall first that the character $\xi$ admits a polar decomposition 
\begin{equation}
    \xi=\nu_F^{t}\omega, \quad\text{where $t\in \R$ and $\omega$ is unitary,}
\end{equation}
(see for instance \cite[(2.2.1)]{Tate-Corvallis}). Writing
$s:=s_2+t\pm 1/2$, we obtain 
\begin{equation} \label{eqn:omega}
    \omega:=\nu_F^{s_2-s\pm 1/2}\xi.
\end{equation}

\subsubsection{{\bf Case (2) within Case $\fs=[T,\xi\otimes\xi]_G$}} \label{subsubsec:case 2-1} 
In case (2), we have 
\begin{equation} \label{eqn:xi-1}
\nu_F^{s_1}\xi=\nu_F^{-s_2\pm 1}\xi^{-1}.
\end{equation}
Thus we have
\begin{equation} \label{eqn:change-case}
    I(\xi_1\otimes\xi_2)=I(\nu_F^{-s_2\pm 1}\xi^{-1}\otimes\nu_F^{s_2}\xi)\overset{ababa}{=}I(\nu_F^{\mp 1}\otimes\xi_2)
\end{equation}

(a) Case $\nu_F^{\mp 1}\otimes\xi_2\notin\left\{\nu_F\otimes 1,\nu_F\otimes\nu_F^2,\nu_F^{-1}\otimes 1,\nu_F^{-1}\otimes\nu_F^{-2}\right\}$: In this case, $I^\alpha(\nu_F^{\mp 1-s}\otimes \xi_2\nu_F^{-s}))$ is irreducible, but $I(\nu_F^{\mp 1}\otimes\xi_2)$ is reducible. We compute the irreducible constituents in the following.

\begin{enumerate}
    \item[(i)] When $\xi_2$ is quadratic, this is covered in Tables~\ref{table:unip-ppal-series-case-3aquad} and \ref{table:unip-ppal-series-case-3aquad-ramified} (depending on whether $\xi_2$ is unramified or not). In this case, $I(\nu_F^{\mp 1}\otimes\xi_2)$ has length $4$.
    \item[(ii)] When $\xi_2$ is 
    non-quadratic, since $I(\nu_F^{\mp 1}\otimes\xi_2)\overset{bababa}{=}I(\nu_F^{\pm 1}\otimes\xi_2^{-1})$. By Lemma~\ref{Jacquet-restriction-alpha-nu-tensor-xi2}, we have 
    \begin{equation}\label{ralpha-I-nu-tensor-xi-inverse}
        \rrr_\alpha I(\nu_F^{\pm 1}\otimes\xi_2^{-1})=I^{\alpha}(\nu_F^{\pm 1}\otimes\xi_2^{-1})+I^{\alpha}(\nu_F^{\mp 1}\otimes\nu_F^{\pm 1}\xi_2^{-1})+I^{\alpha}(\nu_F^{\pm 1}\xi_2^{-1}\otimes\xi_2)+I^{\alpha}(\nu_F^{\pm 1}\otimes\nu_F^{\mp 1}\xi_2).
    \end{equation}
    Let $\pi(\nu_F^{\mp 1}\otimes\xi_2)\cong \pi(\nu_F^{\pm 1}\otimes\xi_2^{-1})$ be an irreducible subrepresentation of $I(\nu_F^{\mp 1}\otimes\xi_2)\cong I(\nu_F^{\pm 1}\otimes\xi_2^{-1})$. Comparing \eqref{ralpha-I-nu-tensor-xi-inverse} with Lemma~\ref{Jacquet-restriction-alpha-nu-tensor-xi2}, we obtain that $\rrr_\alpha I(\nu_F^{\mp 1}\otimes\xi_2)$ and $\rrr_\alpha I(\nu_F^{\pm 1}\otimes\xi_2^{-1})$ share the terms $I^{\alpha}(\nu_F^{\mp 1}\otimes\nu_F^{\pm 1}\xi_2^{-1})$ and $I^{\alpha}(\nu_F^{\pm 1}\otimes\nu_F^{\mp 1}\xi_2)$. Thus we have 
    \begin{equation}
        \rrr_\alpha\pi(\nu_F^{\mp 1}\otimes\xi_2)=I^{\alpha}(\nu_F^{\mp 1}\otimes\nu_F^{\pm 1}\xi_2^{-1})+I^{\alpha}(\nu_F^{\pm 1}\otimes\nu_F^{\mp 1}\xi_2)=\rrr_\alpha\pi(\nu_F^{\pm 1}\otimes\xi_2^{-1}).
    \end{equation}
By \cite[Corollary,p.419]{Rodier-principal-series}, $S=\{s_{3\alpha+\beta}\}$ and $I(\nu_F^{\mp 1}\otimes\xi_2)$ has length $2^{|S|}=2$. Let $J(\nu_F^{\mp 1}\otimes\xi_2)$ denote the irreducible quotient of $I(\nu_F^{\mp 1}\otimes\xi_2)$, and we have 
    \begin{equation}
        I(\nu_F^{\mp 1}\otimes\xi_2)=\pi(\nu_F^{\mp 1}\otimes\xi_2)+J(\nu_F^{\mp 1}\otimes\xi_2).
    \end{equation}

In this case, we have the following two Tables depending on what $\xi_2$ is like. 
    \begin{enumerate}
        \item When $\xi_2$ is ramified cubic, $\mathcal{J}^{\fs}=\SL_3(\C)$. By \cite[p.~145]{ABP-G2} this case corresponds to case $t_b$ in \cite[Table 4.1]{Ram}. By \cite[Theorem~9.4]{Roche-principal-series} the representation $\pi(\nu_F^{\mp 1}\otimes\xi_2)$ corresponds to an Iwahori-spherical, non-square-integrable, tempered representation of $\PGL_3(F)$ and hence is tempered. It is indexed in \cite[Table~4.2]{Ram} by the triple $(t_b,e_{\alpha_2},1)$ with $t_b=s_2s_1t$. The list of irreducible constituents is summarized in 
        Table~\ref{table:nu-tensor-xi-shorttable-ppal-series-case2-cubic-ramified}.

\begin{center}
\begin{table} [ht]
\begin{tabular}{|c|c|c|}
\hline
Representation & enhanced $L$-parameter\cr
\hline\hline
$\pi(\nu_F^{\mp 1}\otimes\xi_2)$&$(\varphi_{\sigma,\rA_1},1)$  \cr
 $J(\nu_F^{\mp 1}\otimes\xi_2)$&$(\varphi_{\sigma,1},1)$ \cr
  \hline
\end{tabular}
\vskip0.2cm
\caption{\label{table:nu-tensor-xi-shorttable-ppal-series-case2-cubic-ramified}: LLC for irred. constituents of $I(\nu_F^{\mp 1}\otimes\xi_2)$ where $\xi_2$ is ramified cubic.}
\end{table}
\end{center}
\item When $\xi_2$ is ramified neither quadratic nor cubic, $\mathcal{J}^{\fs}=\GL_2(\C)$. The list of irreducible constituents is summarized in Table~\ref{table:nu-tensor-xi-shorttable-ppal-series-case2-ramified-neither}. \\
More precisely, this corresponds to case $t_a$ in \cite[Table 2.1]{Ram}, where the tempered representation corresponds to the regular unipotent class $e_{\alpha_1}$ in $\GL_2(\C)$. By Proposition~\ref{principal-series-classification-s-WGs}, we have $W_G^{\fs}=\{1,s_{\beta}\}$.  Thus by Table~\ref{tab:table-unipotent-Gsigma=GL2}, this corresponds to the unipotent class $\widetilde{A}_1$ in $\rG_2(\C)$. 
\begin{center}
\begin{table} [ht]
\begin{tabular}{|c|c|c|c|}
\hline
Representation & enhanced $L$-parameter\cr
\hline\hline
$\pi(\nu_F^{\mp 1}\otimes\xi_2)$&$(\varphi_{\sigma,\widetilde{A}_1},1)$ \cr
 $J(\nu_F^{\mp 1}\otimes\xi_2)$&$(\varphi_{\sigma,1},1)$  \cr
  \hline
\end{tabular}
\vskip0.2cm
\caption{\label{table:nu-tensor-xi-shorttable-ppal-series-case2-ramified-neither}: LLC for irred. constituents of $I(\nu_F^{\mp 1}\otimes\xi_2)$ where $\xi_2$ is ramified neither cubic nor quadratic.}
\end{table}
\end{center}    

\item When $\xi_2$ is unramified non-quadratic, $\mathcal{J}^{\fs}=\rG_2(\C)$. The list of irreducible constituents is summarized in Table~\ref{table:nu-tensor-xi-shorttable-ppal-series-case2-unramified}. We observe that
\begin{equation}
I(\nu^{1-s_2}\xi^{-1}\otimes\nu^{s_2}\xi)=
I_\beta(1/2-s_2,\xi^{-1}\otimes\St_{\GL_2})\oplus I_\beta(1/2-s_2,\xi^{-1}\circ\det).
\end{equation}
It shows that $\pi(\nu_F^{\mp 1}\otimes\xi_2):=I_\beta(1/2-s_2,\xi^{-1}\otimes\St_{\GL_2})$ is tempered. Thus this case corresponds to the case $t_g$ in \cite[Table~6.1]{Ram}.
    
\begin{center}
\begin{table} [ht]
\begin{tabular}{|c|c|c|}
\hline
Representation & enhanced $L$-parameter\cr
\hline\hline
$\pi(\nu_F^{\mp 1}\otimes\xi_2)$&$(\varphi_{\sigma,\rA_1},1)$   \cr
 $J(\nu_F^{\mp 1}\otimes\xi_2)$&$(\varphi_{\sigma,1},1)$ \cr
  \hline
\end{tabular}
\vskip0.2cm
\caption{\label{table:nu-tensor-xi-shorttable-ppal-series-case2-unramified}: LLC for irred. constituents of $I(\nu_F^{\mp 1}\otimes\xi_2)$ where $\xi_2$ is unramified non-quadratic.}
\end{table}
\end{center}
\end{enumerate}
\end{enumerate}

\smallskip
\noindent
(b) Case $\nu_F^{\mp 1}\otimes\xi_2\in\left\{\nu_F\otimes 1,\nu_F\otimes\nu_F^2,\nu_F^{-1}\otimes 1,\nu_F^{-1}\otimes\nu_F^{-2}\right\}$: 
Since, by Table~\ref{table:1}, we have $I(\nu_F\otimes 1)\overset{bababa}{=}I(\nu_F^{-1}\otimes 1)$ and $I(\nu_F\otimes \nu_F^2)\overset{bababa}{=}I(\nu_F^{-1}\otimes \nu_F^{-2})$, we are reduced to consider 
$\nu_F^{\mp 1}\otimes\xi_2\in\left\{\nu_F\otimes 1,\nu_F\otimes\nu_F^2\right\}$.
The LLC for the irreducible components of $I(\nu_F\otimes\nu_F^2)$ will be computed in Table~\ref{table:unip-ppal-series-case-3bbis}.

By \cite[Proposition~4.3]{Muic-G2}, the representation $I(\nu_F\otimes 1)$ contains exactly two irreducible subrepresentations $\pi(1)$ and $\pi(1)'$, which are square-integrable, and we have in $R(G)$
\begin{align} \label{eqn:list4.3}
    I_\alpha(1/2,\delta(1))=\pi(1)'+J_\alpha(1/2,\delta(1))+J_\beta(1/2,\delta(1))\\
    I_\beta(1/2,\delta(1))=\pi(1)+\pi(1)'+J_\beta(1/2,\delta(1))\\
    I_\alpha(1/2,1_{\GL_2})=\pi(1)+J_\beta(1/2,\delta(1))+J_\beta(1/2,\pi(1,1))\\
    I_\beta(1/2,1_{\GL_2})=J_\beta(1,\pi(1,1))+J_\beta(1/2,\delta(1))+J_\alpha(1/2,\delta(1)).
\end{align}
Hence $I(\nu_F\otimes 1)$  has length $6$: its other irreducible constituents  are  $J_\alpha(1/2,\delta(1))$, $J_\beta(1/2,\delta(1))$, $J_\beta(1,\pi(1,1))$, the latter occurring with multiplicity $2$.

\begin{lem} \label{pi-1-prime-generic}
The representation $\pi(1)'$ is generic.
\end{lem}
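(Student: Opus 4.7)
The plan is to exploit Rodier's theorem (Lemma \ref{lem:generic}) together with the four standard-module decompositions recorded just above the lemma, in order to rule out every constituent of $I(\nu_F\otimes 1)$ other than $\pi(1)'$ as being non-generic.

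First, I would note that the trivial representation $1_{\GL_2}$ of $\GL_2(F)$ is \emph{not} generic (a character of $\GL_2$ never has a nontrivial Whittaker model on the unipotent radical of its Borel), whereas both $\delta(1)=\St_{\GL_2}$ and any character $\xi_1\otimes\xi_2$ of the split torus $T$ are generic; the latter is automatic because the ``Whittaker subgroup'' of $T$ inside itself is trivial, so every smooth character of $T$ admits a (trivially) nondegenerate Whittaker datum. I would then apply Lemma \ref{lem:generic}(b) to the last two decompositions in \eqref{eqn:list4.3}: from
\begin{equation*}
I_\alpha(1/2,1_{\GL_2})=\pi(1)+J_\beta(1/2,\delta(1))+J_\beta(1/2,\pi(1,1))
\end{equation*}
it follows that $\pi(1)$, $J_\beta(1/2,\delta(1))$ and $J_\beta(1/2,\pi(1,1))$ are all non-generic, and from
\begin{equation*}
I_\beta(1/2,1_{\GL_2})=J_\beta(1,\pi(1,1))+J_\beta(1/2,\delta(1))+J_\alpha(1/2,\delta(1))
\end{equation*}
that $J_\beta(1,\pi(1,1))$ and $J_\alpha(1/2,\delta(1))$ are likewise non-generic.

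Next, applying Lemma \ref{lem:generic}(a) to $I(\nu_F\otimes 1)=\ii_B^G(\nu_F\otimes 1)$, whose inducing character $\nu_F\otimes 1$ of $T$ is generic, the induced representation contains a \emph{unique} generic irreducible constituent. By the proof of \cite[Proposition~4.3]{Muic-G2} recalled just before the lemma, the list of distinct irreducible constituents of $I(\nu_F\otimes 1)$ is exactly
\begin{equation*}
\{\pi(1),\;\pi(1)',\;J_\alpha(1/2,\delta(1)),\;J_\beta(1/2,\delta(1)),\;J_\beta(1,\pi(1,1))\},
\end{equation*}
and by the preceding step every member of this list other than $\pi(1)'$ has already been shown to be non-generic. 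Consequently $\pi(1)'$ must be the unique generic constituent, proving the lemma.

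There is essentially no obstacle here once the four decompositions in \eqref{eqn:list4.3} are available: the argument is a short and purely formal application of Rodier's theorem in both of its directions, using non-generic standard modules to exclude constituents and using the generic principal series to isolate the remaining one. The only point requiring minimal care is to confirm that $\pi(1)'$ does not occur in either $I_\alpha(1/2,1_{\GL_2})$ or $I_\beta(1/2,1_{\GL_2})$, which is immediate from the explicit lists written out in \eqref{eqn:list4.3}.
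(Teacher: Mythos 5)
Your proof is correct and takes essentially the same route as the paper: both arguments rest on Rodier's theorem (Lemma \ref{lem:generic}) applied in both directions to the four decompositions in \eqref{eqn:list4.3}, eliminating every constituent of $I(\nu_F\otimes 1)$ other than $\pi(1)'$ via the non-genericity of $1_{\GL_2}$. The only cosmetic difference is that you invoke part (a) of Rodier's theorem at the level of the Borel (uniqueness of the generic constituent of the principal series $I(\nu_F\otimes 1)$), whereas the paper's proof invokes it at the level of the maximal Levi by noting that $\delta(1)=\St_{\GL_2}$ is generic and hence $I_\alpha(1/2,\delta(1))$ and $I_\beta(1/2,\delta(1))$ each contain a generic constituent; the elimination step is identical in both.
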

\begin{proof}
By \cite[Theorem~8]{Rodier-Whittaker-models}, the representation $\delta(1)=\St_{\GL_2}$ is generic. Hence by \eqref{eqn:list4.3}, both $I_\alpha(1/2,\delta(1))$ and $I_\beta(1/2,\delta(1))$ contain a generic irreducible component, and that none of the representations $\pi(1)$, $J_\beta(1/2,\delta(1))$, $J_\beta(1,\pi(1,1))$ and $J_\alpha(1/2,\delta(1))$ is generic. Thus, $\pi(1)'$ is generic. 
\end{proof}
 
Using \cite[Table 6.1]{Ram} Case $t_e$, we obtain the enhanced unipotent conjugacy classes, which give the enhanced $L$-parameters, and the dimensions of the involved irreducible representations, as in Table~\ref{table:unram-unip-ppal-series-case}. More precisely, $J_{\alpha}(1/2,\delta(1))$ has dimension $1$ and corresponds to $e_{\alpha^\vee}$, which corresponds to the minimal unipotent class $\rA_1$. On the other hand, $J_{\beta}(1/2,\delta(1))$ corresponds to trivial unipotent class. The two square-integrable representations $\pi(1)$ and $\pi(1)'$ correspond to $e_{\alpha^\vee}+e_{\alpha^\vee+2\beta^\vee}$, which, as mentioned in \cite[Table~6.3]{Ram}, is a representative of the nilpotent class corresponding to  $\rG_2(a_1)$. For any representative $u$ of  $\rG_2(a_1)$, we have $A_{\rG_2(\C)}(u)\simeq
S_3$, the symmetric group on three elements, which has irreducible representations indexed by the partitions $(3)$, $(21)$, $(13)$ of $3$, where $(3)$ is the trivial one. 
The generic representation $\pi(1)'$ corresponds to  $(t_e,\rG_2(a_1),(3))$, while $\pi(1)$ corresponds to  $(t_e,\rG_2(a_1),(2,1))$. 
There are five indexing triples, as in Table~\ref{table:unram-unip-ppal-series-case}. Note that this table is consistent with the list for $\pi_{\sqrt{q}}^{-1}(t_e)$ in \cite[p.143]{ABP-G2}. 
\begin{center} 
\begin{table} [ht]
\begin{tabular}{|c|c|c|}
\hline
Indexing triple &unipotent orbit & representation\\
\hline
$(t_e,0,1)$& 1 &$J_{\beta}(1,\pi(1,1))$ \\
$(t_e, e_{\alpha^{\vee}},1)$&$\rA_1$ &$J_{\alpha}(1/2,\delta(1))$\\
$(t_e,e_{\beta^{\vee}+\alpha^{\vee}},1)$ &$\widetilde\rA_1$ & $J_{\beta}(1/2,\delta(1))$\\
$(t_e,e_{\alpha^\vee}+e_{2\beta^\vee+\alpha^\vee},(21))$& $\rG_2(a_1)$ &$\pi(1)$\\
$(t_e,e_{\alpha^\vee}+e_{2\beta^\vee+\alpha^\vee},(3))$&$\rG_2(a_1)$&$\pi(1)'$\\
\hline
\end{tabular}
\end{table}
\end{center}

\begin{center}
\begin{table} [ht]
\begin{tabular}{|c|c|c|}
\hline
Representation
& enhanced $L$-parameter&dimension
\cr
\hline\hline
$\pi(1)'$&$(\varphi_{\sigma,\rG_2(a_1)},(3))$&3
\cr
\cline{1-3}
$\pi(1)$&$(\varphi_{\sigma,\rG_2(a_1)},(21))$&1
\cr
\cline{1-3}
$J_\beta(1,\pi(1,1))$&$(\varphi_{\sigma,1},1)$&$3$
\cr
\cline{1-3}
$J_\beta(1/2,\delta(1))$&$(\varphi_{\sigma,\widetilde\rA_1},1)$&$2$
\cr
 \cline{1-3}
$J_\alpha(1/2,\delta(1))$&$(\varphi_{\sigma,\rA_1},1)$&$1$
\cr
 \hline
\end{tabular}
\vskip0.2cm
\caption{\label{table:unram-unip-ppal-series-case}: LLC for the irreducible components of $I(\nu_F\otimes 1)$. $\qquad$ $\qquad$}
\end{table}
\end{center}

\subsubsection{{\bf Case (3) within Case $\fs=[T,\xi\otimes\xi]_G$}} \label{subsubsec:Case 3-1} \ 

In case (3), $\xi_1=\nu_F^{s_1}\xi=\nu_F^{s_1-s_2}\xi_2=\nu_F^{\pm 1}\xi_2$. 
By \eqref{decomposing-Inus1-tensor-Inus2-case-1} we have 
\begin{equation}
\begin{split}
I(\xi_1\otimes\xi_2)&=I_{\alpha}(s,\delta(\nu_F^{\pm 1/2-s}\xi_2))+I_{\alpha}(s,\nu_F^{\pm 1/2-s}\xi_2\circ\det)\\
&=I_{\alpha}(s,\delta(\nu_F^{s_2-s \pm 1/2}\xi))+I_{\alpha}(s,\nu_F^{s_2-s\pm 1/2}\xi\circ\det).
\end{split}
\end{equation}
By \cite[Theorem 3.1 (i)]{Muic-G2}, if the character $\omega$ defined in \eqref{eqn:omega} is unitary, then  $I_{\alpha}(s,\delta(\nu_F^{s_2-s\pm 1/2}\xi))$ and $I_{\alpha}(s,\nu_F^{s_2-s\pm 1/2}\xi\circ\det)$ are irreducible unless 
\[
\text{$s=\pm 1/2$, $\nu_F^{s_2-s\pm 1/2}\xi$ is quadratic,}\quad \text{$s=\pm 3/2$, $\nu_F^{s_2-s\pm 1/2}\xi=1$}\quad \text{or $s=\pm 1/2$, $\nu_F^{s_2-s\pm 1/2}\xi$ is cubic.}
\]
When they are irreducible, we compute the $L$-parameters for irreducible constituents and record them in Table~\ref{table:xi-tensor-xi-shorttable-ppal-series-case5} (when $\xi$ is unramified) and  Table~\ref{table:xi-tensor-xi-shorttable-ppal-series-case5-ramified} (when $\xi$ is ramified). \\

\hrule

\smallskip

When they are reducible, we have in $R(G)$:

(a) Case ($s=\pm 1/2$, $\nu_F^{s_2-s\pm 1/2}\xi=\eta_2$ quadratic):
\[I(\xi_1\otimes\xi_2)=I(\nu_F^{s\pm 1/2}\eta_2\otimes\nu_F^{s\mp 1/2}\eta_2)=I(\nu_F^{\pm 1}\nu_F^{s\mp 1/2}\eta_2\otimes \nu_F^{s\mp 1/2}\eta_2)
,\]
and thus Lemma~\ref{lem:main-induced} applies to this case, and we have
\begin{equation}\label{decompose-I-xi1-tensor-xi2-eta2}
    I(\xi_1\otimes\xi_2)=I_{\alpha}(s, \delta(\eta_2))+I_{\alpha}(s,\eta_2\circ\det).
\end{equation}
We now plug in $s=\pm 1/2$ and obtain
\[I(\xi_1\otimes\xi_2)=I(\nu_F^{\pm 1}\eta_2\otimes\eta_2)
=I_\alpha(\pm 1/2,\delta(\eta_2))+I_\alpha(\pm 1/2,\eta_2\circ\det).\]
Note that by \cite[Lemma 5.4(iii)]{BDK}, we have
\begin{equation}
I_{\alpha}(-1/2,\delta(\eta_2))\simeq I_{\alpha}(1/2,\delta(\eta_2))\quad\text{and}\quad
I_{\alpha}(-1/2,\eta_2\circ\det)\simeq I_{\alpha}(1/2,\eta_2\circ\det).
\end{equation}
Therefore in the above we treat $\pm 1/2$ cases together. 

Recall from \eqref{eqn:omega}, we have that $\eta_2=\omega$ is unitary. Thus \cite[Proposition 4.1 (ii)]{Muic-G2} applies, and 
\begin{align}\label{pi-eta2-occuring-in-Ialpha}
    I_{\alpha}(s,\delta(\eta_2))&=\pi(\eta_2)+J_{\alpha}(s,\delta(\eta_2))\\
    I_{\alpha}(s,\eta_2\circ\det)&=J_{\beta}(1,\pi(1,\eta_2))+J_{\beta}(s,\delta(\eta_2)).
\end{align}
Combined with \eqref{decompose-I-xi1-tensor-xi2-eta2}, we have 
\begin{equation}
I(\xi_1\otimes\xi_2)
=\pi(\eta_2)+J_\beta(1,\pi(1,\eta_2))+J_\beta(1/2,\delta(\eta_2))+J_{\alpha}(1/2,\delta(\eta_2)).
\end{equation}
We observe that $I(\nu_F^{-2}\otimes\nu_F^{1})\overset{babbaba}{=}I(\nu_F^2\otimes \nu_F)$ in $R(G)$. 

$\bullet$ When $\eta_2$ is \textit{unramified} quadratic, $\mathcal{J}^{\mathfrak{s}}=\rG_2(\C)$ as in \cite[$\mathsection$ 4]{ABP-G2}, and the $L$-parameters for irreducible constituents are recorded in Table~\ref{table:unip-ppal-series-case-3aquad}.   More precisely, we are in case $t_d$ in \cite[Table 6.1]{Ram} (see also \cite[p.17]{ABP-G2}), there are four indexing triples
\begin{center} 
\begin{table} [ht]
\begin{tabular}{|c|c|c|}
\hline
Indexing triple &unipotent orbit & representation\\
\hline
$(t_d,0,1)$& 1 &$J_{\beta}(1,\pi(1,\eta_2))$\\
$(t_d, e_{\alpha^\vee}, 1)$&$\rA_1$ &$J_{\alpha}(1/2,\delta(\eta_2))$\\
$(t_d,e_{2\beta^\vee+\alpha^\vee},1)$ & $\widetilde{\rA}_1$ & $J_{\beta}(1/2,\delta(\eta_2))$\\
$(t_d,e_{\alpha^\vee}+e_{\alpha^\vee+2\beta^\vee},1)$& $\rG_2(a_1)$ &$\pi(\eta_2)$\\
\hline
\end{tabular}
\end{table}
\end{center}
Note that $(t_d,0,1)$ corresponds to the trivial unipotent in $\mathcal{J}^{\fs}$. In the Langlands classification (see for instance \cite{Silberger-Zink}) $(s_1t_d,\{1\})$ corresponds to the unique $J_{\alpha}$ term; since $(s_1t_d,\{1\})$ corresponds to $e_{\alpha^\vee}$, which is a representative of the minimal nilpotent orbit in $\mathcal{J}^{\fs}$. 
Likewise, the parameters $(t_d,\{2\})$ and $(s_2s_1s_2t_d,\{2\})$ correspond to the two $J_{\beta}$ terms. The element $e_{\alpha^\vee}+e_{\alpha^\vee+2\beta^\vee}$ corresponds to the subregular unipotent orbit $\rG_2(a_1)$ in $\mathcal{J}^{\fs}$ (see \cite[Table~6.3]{Ram}). 
\begin{center} 
\begin{table} [ht]
\begin{tabular}{|c|c|}
\hline
Representation
& enhanced $L$-parameter
\cr
\hline\hline
$\pi(\eta_2)$&$(\varphi_{\sigma,\rG_2(a_1)},1)$\cr
$J_\beta(1,\pi(1,\eta_2))$&$(\varphi_{\sigma,1},1)$ 
\cr
$J_\beta(1/2,\delta(\eta_2))$&$(\varphi_{\sigma,\widetilde{\rA}_1},1)$ \cr
$J_{\alpha}(1/2,\delta(\eta_2))$&$(\varphi_{\sigma,\rA_1},1)$ \cr
 \hline
\end{tabular}
\vskip0.2cm
\caption{\label{table:unip-ppal-series-case-3aquad} LLC for the irreducible components of $I(\nu_F\otimes \eta_2)$ for $\eta_2$ unramified quadratic. $\qquad$ $\qquad$}
\end{table}
\end{center}

$\bullet$ When $\eta_2$ is \textit{ramified} quadratic, by Proposition~\ref{principal-series-classification-s-WGs}, we have $\mathcal{J}^{\mathfrak{s}}=\SO_4(\C)$ as in \cite[\S~8]{ABP-G2}. By \cite[Theorem~4.7]{ABPS-KTheory}, we are reduced to compute the Kazhdan-Lusztig triples for the $F$-split group $\SO_4(F)$. 
Since $\SO_4(\C)\simeq\SL_2(\C)\times\SL_2(\C)/\{\pm 1\}$, we have $\cH(\SO_4(\C),\mcI)\simeq (\SL_2(\C)\times\SL_2(\C))^{\Z/2\Z}$ by \cite[\S1.5]{Reeder-isogeny}.  By \cite[proof of Lemma~8.3]{ABP-G2}, both $I(\nu_F\otimes \eta_2)$ and $I(\nu_F\eta_2\otimes \eta_2)$ have length $4$.  The $L$-parameters for irreducible constituents are recorded in Table~\ref{table:unip-ppal-series-case-3aquad-ramified}. More precisely: 

\begin{lem}
$\pi(\eta_2)$ is generic.
\end{lem}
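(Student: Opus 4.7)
The plan is to mirror the elimination argument used in the proof of Lemma~\ref{pi-1-prime-generic}. The key input is Rodier's theorem (Lemma~\ref{lem:generic}): the parabolic induction of a generic representation contains a unique generic irreducible constituent, while the parabolic induction of a non-generic representation contains none. Two observations will drive the elimination: the representation $\delta(\eta_2)$ is generic, being a twist of the Steinberg representation of $\GL_2(F)$, by \cite[Theorem~8]{Rodier-Whittaker-models}; and the representation $\eta_2\circ\det$ is one-dimensional, hence non-generic.

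First, I would apply Rodier's theorem to $I_\alpha(1/2,\eta_2\circ\det)$: since the inducing datum is non-generic, no irreducible constituent can be generic. From the decomposition $I_\alpha(s,\eta_2\circ\det)=J_\beta(1,\pi(1,\eta_2))+J_\beta(s,\delta(\eta_2))$ already recorded in \eqref{pi-eta2-occuring-in-Ialpha}, we conclude that both $J_\beta(1,\pi(1,\eta_2))$ and $J_\beta(1/2,\delta(\eta_2))$ are non-generic. Next, I would establish the analogous $\beta$-side decompositions, of the shape
\[
I_\beta(s,\delta(\eta_2))=\pi(\eta_2)+J_\beta(s,\delta(\eta_2)),\qquad I_\beta(s,\eta_2\circ\det)=J_\alpha(1,\pi(1,\eta_2))+J_\alpha(s,\delta(\eta_2)),
\]
by the same type of Geometric Lemma/Jacquet restriction computations as in \eqref{eqn:Jacquet-aa} and \eqref{eqn:Jacquet-ab} (cf.\ the $\alpha\leftrightarrow\beta$ analogue of \cite[Proposition~4.1]{Muic-G2}). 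Applying Rodier's theorem to $I_\beta(1/2,\eta_2\circ\det)$ then forces $J_\alpha(1/2,\delta(\eta_2))$ to be non-generic as well.

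Combining the two steps, three of the four irreducible constituents in the decomposition $I(\xi_1\otimes\xi_2)=\pi(\eta_2)+J_\beta(1,\pi(1,\eta_2))+J_\beta(1/2,\delta(\eta_2))+J_\alpha(1/2,\delta(\eta_2))$ have been shown to be non-generic. On the other hand, $I_\alpha(1/2,\delta(\eta_2))$ is the parabolic induction of a generic tempered representation, so by Rodier it must contribute a generic irreducible constituent. Since the only remaining candidate is $\pi(\eta_2)$, we conclude that $\pi(\eta_2)$ is generic.

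The main obstacle is to verify the $I_\beta$-side decompositions cleanly enough to exclude $J_\alpha(1/2,\delta(\eta_2))$; the $\alpha\leftrightarrow\beta$ symmetry is not perfect in $\rG_2$, so the Jacquet restriction computation via $W^{M_\beta,T}$ must be done honestly. If this route turns out to be awkward, an alternative is to invoke the Casselman--Shahidi standard module conjecture: the Langlands quotient $J_\alpha(1/2,\delta(\eta_2))$ of the generic but reducible standard module $I_\alpha(1/2,\delta(\eta_2))$ cannot be generic, which immediately isolates $\pi(\eta_2)$ as the unique generic constituent.
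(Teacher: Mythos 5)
Your proposal takes essentially the same route as the paper: apply Rodier's theorem (Lemma~\ref{lem:generic}) to $I_\alpha(1/2,\delta(\eta_2))$ to produce a generic constituent, then exclude $J_\alpha(1/2,\delta(\eta_2))$ by observing that it occurs in the induction from the non-generic $\eta_2\circ\det$ on the $\beta$-side. The paper invokes \cite[Proposition~4.1(ii)]{Muic-G2} precisely to supply the $\beta$-side decomposition you propose to verify by hand; that reference already contains both $\alpha$- and $\beta$-sides, so no extra Jacquet-module computation is needed. One small slip in your proposed decomposition: it should be $I_\beta(1/2,\eta_2\circ\det)=J_\beta(1,\pi(1,\eta_2))+J_\alpha(1/2,\delta(\eta_2))$, with $J_\beta(1,\pi(1,\eta_2))$ rather than $J_\alpha(1,\pi(1,\eta_2))$ as the Langlands-quotient label (compare the $\xi=1$ pattern in~\eqref{eqn:list4.3}); this does not affect your argument, since the only term you need is $J_\alpha(1/2,\delta(\eta_2))$. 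Your fallback via the Casselman--Shahidi standard module conjecture is a legitimate alternative that would shortcut the $\beta$-side bookkeeping, but the paper does not rely on it.
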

\begin{proof}
The proof is very similar to that of Lemma~\ref{pi-1-prime-generic}. Recall from \eqref{pi-eta2-occuring-in-Ialpha} that $I_{\alpha}(s,\delta(\eta_2))=\pi(\eta_2)+J_{\alpha}(s,\delta(\eta_2))$. Since $\delta(\eta_2)$ is generic, $I_{\alpha}(1/2,\delta(\eta_2))$ must contain a generic representation as an irreducible constituent. On the other hand, since $\eta_2\circ\det$ is not generic, by Lemma~\ref{lem:generic}, all of the irreducible constituents occuring in $I_{\beta}(1/2,\eta_2\circ\det)$ are non-generic. In particular, by \cite[4.1(ii)]{Muic-G2}, $J_{\alpha}(1/2,\delta(\eta_2))$ is not generic. Therefore $\pi(\eta_2)$ must be generic.  
\end{proof}
Since $\pi(\eta_2)$ is generic, it has trivial enhancement as in the first row of Table~\ref{table:unip-ppal-series-case-3aquad-ramified}. By \cite{Roche-principal-series}, we have 
\begin{equation}\label{Roche-isomorphism}
\Irr(\mathcal{H}(G,\tau^\fs))\xrightarrow{1--1} \Irr(\mathcal{H}(J^{\fs},1)).
\end{equation} 
Since $\pi(\eta_2)$ is a discrete series, its image under \eqref{Roche-isomorphism} is also a discrete series, which by \cite[\S8.0.3]{ABP-G2} corresponds to a Kazhdan-Lusztig triple with unipotent class in $\SO_4(\C)$ the regular unipotent class, i.e. by Table~\ref{table:SO4} the subregular unipotent $\rG_2(a_1)$ in $\rG_2$. 
On the other hand, consistent with the tables in \cite{Ram} and our previous tables, $J_{\beta}(1,\pi(1,\eta_2))$ corresponds to the trivial unipotent class. 
It now remains to distribute $J_{\beta}(1/2,\delta(\eta_2))$ and $J_{\alpha}(1/2,\delta(\eta_2))$ among the two unipotent classes $(2,2)'$ and $(2,2)''$ in $\SO_4(\C)$. 
By the indexing triples in \cite[\S2]{Ram}, $J_{\beta}(1/2,\delta(\eta_2))$ corresponds to $e_{\alpha_2}$, which corresponds to $e_{\beta^{\vee}}$, which then corresponds to $\widetilde{\rA}_1$. 

We now compute the formal degree of $\pi(\eta_2)$: by \cite[Theorem~10.7]{Roche-principal-series}, up to normalization factors of volumes, we have
\begin{equation}
    \fdeg(\pi(\eta_2))=d(\St_{\SO_4}^{\mathcal{H}}),
\end{equation}
where $\St_{\SO_4}$ is the Steinberg representation of $\SO_4(F)$. 
Since $|Z(G^{\vee})|=2$ and $|A(s,u)|=1$, by~\cite[Theorem 4.1]{Ciubotaru-Kato-Kato}, we have
\begin{equation}    
d(\St_{\SO_4}^{\mathcal{H}})=\frac{1}{2}\cdot\frac{q-1}{q^2-1}\cdot \frac{q-1}{q^2-1}\cdot q^2
    =\frac{q^2}{2(q+1)^2}.
\end{equation}
Thus we have
\begin{equation}\label{fdeg-pi-eta2}
    \fdeg(\pi(\eta_2))=\frac{q^2}{2(q+1)^2},
\end{equation}
which agrees with the formal degree for the singular supercuspidal computed in \eqref{eqn:fdegx2}.

\begin{center} 
\begin{table} [ht]
\begin{tabular}{|c|c|}
\hline
Representation
& enhanced $L$-parameter
\cr
\hline\hline
$\pi(\eta_2)$&$(\varphi_{\sigma,\rG_2(a_1)},1)$ \cr
$J_\beta(1,\pi(1,\eta_2))$&$(\varphi_{\sigma,1},1)$ \cr
$J_\beta(1/2,\delta(\eta_2))$&$(\varphi_{\sigma,\widetilde{A}_1},1)$ \cr
$J_{\alpha}(1/2,\delta(\eta_2))$&$(\varphi_{\sigma,A_1},1)$ \cr
 \hline
\end{tabular}
\vskip0.2cm
\caption{\label{table:unip-ppal-series-case-3aquad-ramified} LLC for the irreducible components of $I(\nu_F\otimes \eta_2)$ for $\eta_2$ ramified quadratic. $\qquad$ $\qquad$}
\end{table}
\end{center}
(b) Case ($s=\pm 3/2$, $\nu_F^{s_2-s\pm 1/2}\xi=1$):  $\xi_2=\nu_F^{s_2}\xi=\nu_F^{s\mp 1/2}$, and $\xi_1=\nu_F^{\pm}\xi_2=\nu_F^{\pm 1/2+s}$. 
\[I(\xi_1\otimes \xi_2)=I(\nu_F^{\pm 1/2+s}\otimes \nu_F^{s\mp 1/2})=I(\nu_F^{\pm 1}\nu_F^{s\mp 1/2}\otimes\nu_F^{s\mp 1/2}),\]
and thus Lemma~\ref{lem:main-induced} applies to this case, and we have 
\begin{equation}
    I(\xi_1\otimes \xi_2)=I_{\alpha}(s,\delta(\nu_F^{\pm 1/2-s}\xi_2))+I_{\alpha}(s,\nu_F^{\pm 1/2-s}\xi_2\circ\det).
\end{equation}
We now plug in $s=\pm 3/2$ and the explicit values for $\xi_2$, and obtain
\[I(\xi_1\otimes\xi_2)=I(\nu_F^{\pm 2}\otimes\nu_F^{\pm 1})
=I_{\alpha}(\pm 3/2,\delta(1))+I_{\alpha}(\pm 3/2,1_{\GL_2}).\]
By \cite[Lemma 5.4(iii)]{BDK}, we have
\begin{equation}
I_{\alpha}(-3/2,\delta(1))\simeq I_{\alpha}(3/2,\delta(1))\quad\text{and}\quad
I_{\alpha}(-3/2,1_{\GL_2})\simeq I_{\alpha}(3/2,1_{\GL_2}).
\end{equation}
Therefore it suffices to treat the $\pm 3/2$ cases together. When $s=3/2$, by \cite[Proposition~4.4]{Muic-G2}, we have the following decomposition into irreducible constituents:
\begin{equation}
I_{\alpha}(3/2,\delta(1))=\St_{\rG_2}+J_\alpha(3/2,\delta(1))\quad\text{and}\quad
I_{\alpha}(3/2,1_{\GL_2})=1_{\rG_2}+J_\beta(5/2,\delta(1)).
\end{equation}
We thus obtain
\begin{equation}
I(\xi_1\otimes\xi_2)=I(\nu_F^{\pm 2}\otimes\nu_F^{\pm 1})=\St_{\rG_2}+J_\alpha(3/2,\delta(1))+1_{\rG_2}+J_\beta(5/2,\delta(1)).
\end{equation}

In the notation of \cite[Table~6.1]{Ram} the central character of the irreducible components of $I(\nu_F^2\otimes \nu_F)$ is denoted $t_a$. Thus two representations have dimension $1$ (the trivial representation $1_{\rG_2}$ and the Steinberg representation $\St_{\rG_2}$) and the two have dimension $5$. These four irreducible representations are also listed in \cite[p.143]{ABP-G2} as the elements of $\pi^{-1}_{\sqrt{q}}(t_a)$. For each root $\gamma^\vee$ in $R(G^\vee,T^\vee)$ let denote by $e_{\gamma^\vee}$ an element of the root space $\fg^\vee_{\gamma^\vee}$. The representation $\St_{\rG_2}$ is square-integrable, hence in particular it is tempered. From \cite[Table~6.1]{Ram}, it has indexing triple $(t_a,e_{\alpha^\vee}+e_{\beta^\vee},1)$. Then the first row of \cite[Table~6.3]{Ram} shows that it is attached to the regular unipotent class $\rG_2$. By \cite[Table~6.1]{Ram}, the indexing triple of $1_{\rG_2}$ is $(t_a,0,1)$, and those of $J_\alpha(3/2,\delta(1))$ and $J_\beta(5/2,\delta(1))$ are $(t_a,e_{\alpha^\vee},1)$ and $(t_a,e_{\beta^\vee},1)$ or $(t_a,e_{\beta^\vee},1)$ and $(t_a,e_{\alpha^\vee},1)$. Also, by \cite[Table~6.3]{Ram}, since $\beta^\vee$ is short, we have that $e_{\beta^\vee}$  corresponds to the minimal unipotent class $A_1$, while $e_{\alpha^\vee}$ corresponds to the subminimal one $\widetilde \rA_1$. 
In this case, $\mathcal{J}^{\fs}=\rG_2(\C)$ as in \cite[\S4]{ABP-G2}, and the $L$-parameters for irreducible constituents are recorded in Table~\ref{table:unip-ppal-series-case-3bbis}.
More precisely, we are in case $t_a$ in \cite[Table 6.1]{Ram} and there are four indexing triples as in Table~\ref{table:unip-ppal-series-case-3bbis} (note that this table is consistent with the case $\pi_{\sqrt{q}}^{-1}(t_a)$ in \cite[p.143]{ABP-G2}). 
Table~\ref{table:unip-ppal-series-case-3bbis} shows that there are four $L$-packets. Each $L$-packet is a singleton, hence the attached group $\mcG_{\sigma,u}$ is connected.
\begin{center} 
\begin{table} [ht]
\begin{tabular}{|c|c|c|}
\hline
Indexing triple &unipotent orbit & representation\\
\hline
$(t_a,0,1)$& 1 &$1_{G_2}$ \\
$(t_a, e_{\beta^\vee}, 1)$&$\widetilde{\rA}_1$ &$J_{\beta}(5/2,\delta(1))$\\
$(t_a,e_{\alpha^\vee},1)$ &$A_1$ & $J_{\alpha}(3/2,\delta(1))$\\
$(t_a,e_{\alpha^\vee}+e_{\beta^\vee},1)$& $\rG_2$ &$\St_{\rG_2}$\\
\hline
\end{tabular}
\end{table}
\end{center}
\begin{center}
\begin{table} [ht]
\begin{tabular}{|c|c|}
\hline
Representation
& enhanced $L$-parameter 
\cr
\hline\hline
$\St_{\rG_2}$&$(\varphi_{\sigma,\rG_2},1)$
\cr
$J_\alpha(3/2,\delta(1))$&$(\varphi_{\sigma,A_1},1)$
\cr
$J_\beta(5/2,\delta(1))$&$(\varphi_{\sigma, \widetilde{A}_1},1)$
\cr
$1_{\rG_2}$&$(\varphi_{\sigma,1},1)$
\cr
 \hline
\end{tabular}
\vskip0.2cm
\caption{\label{table:unip-ppal-series-case-3bbis}: LLC for the irreducible components of $I(\nu_F^{2}\otimes\nu_F$).}
\end{table}
\end{center}
(c) Case ($s=\pm 1/2$, $\nu_F^{s_2-s\pm 1/2}\xi=\eta_3$ cubic): 
\[
I(\xi_1\otimes\xi_2)=I(\nu_F^{s\pm 1/2}\eta_3\otimes\nu_F^{s\mp 1/2}\eta_3)=I(\nu_F^{\pm 1}\nu_F^{s\mp 1/2}\eta_3\otimes\nu_F^{s\mp 1/2}\eta_3),
\]
and thus Lemma~\ref{lem:main-induced} applies to this case, and we have the decomposition into irreducible constituents:
\begin{equation}
    I(\xi_1\otimes\xi_2)=I_{\alpha}(s,\delta(\eta_3))+I_{\alpha}(s,\eta_3\circ\det).
\end{equation}
We now plug in $s=\pm 1/2$ and obtain
\[I(\xi_1\otimes\xi_2)=I(\nu_F^{\pm 1}\eta_3\otimes\eta_3)=I_{\alpha}(\pm 1/2,\delta(\eta_3))+I_{\alpha}(\pm 1/2,\eta_3\circ\det).\]
By \cite[Lemma~5.5(iii)]{BDK}, we have
\[I_{\alpha}(-1/2,\delta(\eta_3))=I_{\alpha}( 1/2,\delta(\eta_3^{-1}))\quad\text{and}\quad I_{\alpha}(- 1/2,\eta_3\circ\det)=I_{\alpha}(1/2,\eta_3^{-1}\circ\det).\]
The following essentially follows from \cite[Proposition~4.2(ii)]{Muic-G2}, for the reader's convenience, we include a more detailed proof than \textit{loc.cit.}. When $\xi_2$ is cubic, consider the following two cases. 
    \begin{enumerate}
        \item If $3s+1/2=\pm 1$, i.e. $s=1/6$ or $-1/2$, then $I^\alpha(\nu_F^{2s}\xi^2\otimes\nu_F^{-s-1/2}\xi^{-1})$ in  \eqref{eqn:Jacquet-ab} becomes
        \[I^\alpha(\nu_F^{2s}\xi^2\otimes\nu_F^{-s-1/2}\xi^{-1})=\delta(\nu_F^{2s\mp 1/2}\xi^2)+\nu_F^{2s\mp 1/2}\xi^2\circ\det,\]
        and the term $I^\alpha(\nu_F^{s+1/2}\xi\otimes\nu_F^{-2s}\xi^{-2})$ in \eqref{eqn:Jacquet-aa} becomes
        \[I^\alpha(\nu_F^{s+1/2}\xi\otimes\nu_F^{-2s}\xi^{-2})=\delta(\nu_F^{-2s\pm 1/2}\xi)+\nu_F^{-2s\pm 1/2}\xi\circ\det.\]
        By \cite[Theorem 3.1]{Muic-G2}, we only need to consider the case where $s=-1/2$. In this case, we have
        \begin{equation}    \rrr_\alpha(I_{\alpha}(s,\delta(\xi)))=\nu_F^s\delta(\xi)+\nu_F^{-s}\delta(\xi^{-1})+\nu_F^{s+1}\delta(\xi)+\nu_F^{s+1}\xi\circ\det+I^{\alpha}(\nu_F^{-1}\xi^{-1}\otimes\nu_F\xi^{-1}).
        \end{equation}
        Thus plugging $s=-1/2$ in the above and comparing with the analogous formula for $\rrr_\alpha(I_{\alpha}(s),\delta(\xi^{-1}))$, we obtain that $\rrr_\alpha(I_{\alpha}(-1/2,\delta(\xi)))$ and $\rrr_\alpha(I_{\alpha}(-1/2,\delta(\xi^{-1})))$ share the common terms $\nu_F^{1/2}\delta(\xi)+\nu_F^{1/2}\delta(\xi^{-1})$ (note that this is only true for very specially chosen values of $s$), thus 
        \begin{equation}
            \rrr_\alpha(\pi(\xi))=\nu_F^{1/2}\delta(\xi)+\nu_F^{1/2}\delta(\xi^{-1}),
        \end{equation}
        and collecting the differing terms we obtain
        \begin{equation}
            \rrr_\alpha(J_{\alpha}(-1/2,\delta(\xi)))=\delta(\nu_F^{-1/2}\xi)+\nu_F^{1/2}\xi\circ\det+I^{\alpha}(\nu_F^{-1}\xi^{-1}\otimes\nu_F\xi^{-1}).
        \end{equation}
        In summary, we have 
        \begin{equation}
            I_{\alpha}(-1/2,\delta(\xi))=\pi(\xi)+J_{\alpha}(-1/2,\delta(\xi)).
        \end{equation}
        
        \item If $3s-1/2=\pm 1$, i.e. $s=1/2$ or $-1/6$; similarly, by \cite[Theorem 3.1]{Muic-G2}, we only need to consider the case where $s=1/2$. In this case, we have 
        \begin{equation}
            \rrr_\alpha(I_{\alpha}(s,\delta(\xi)))=\nu_F^s\delta(\xi)+\nu_F^{-s}\delta(\xi^{-1})+\nu_F^{1-s}\delta(\xi^{-1})+\nu_F^{1-s}\xi^{-1}\circ\det+I^{\alpha}(\nu^{s+1/2}\xi\otimes\nu_F^{-2s}\xi).
        \end{equation}
        Thus plugging $s=1/2$ in the above and comparing with the analogous formula for $\rrr_\alpha(I_{\alpha}(s,\delta(\xi^{-1})))$, we obtain that $\rrr_\alpha(I_{\alpha}(1/2,\delta(\xi)))$ and $\rrr_\alpha(I_{\alpha}(1/2,\delta(\xi^{-1})))$ share the common terms $\nu_F^{1/2}\delta(\xi)+\nu_F^{1/2}\delta(\xi^{-1})$, thus 
        \begin{equation}
            \rrr_\alpha(\pi(\xi))=\nu_F^{1/2}\delta(\xi)+\nu_F^{1/2}\delta(\xi^{-1}),
        \end{equation}
        and collecting the differing terms we obtain 
        \begin{equation}
            \rrr_\alpha(J_{\alpha}(1/2,\delta(\xi)))=\nu_F^{-1/2}\delta(\xi^{-1})+\nu_F^{1/2}\xi^{-1}\circ\det+I^{\alpha}(\nu_F\xi\otimes\nu_F^{-1}\xi).
        \end{equation}
        In summary, we still have 
        \begin{equation}
            I_{\alpha}(1/2,\delta(\xi))=\pi(\xi)+J_{\alpha}(1/2,\delta(\xi)).
        \end{equation}
    \end{enumerate}
The Jacquet restrictions for $J_{\alpha}(s,\xi\circ\det)$ can be computed similarly. 

\begin{numberedparagraph}
Returning to the specific setting involving $\eta_3$, we obtain:
\[I_{\alpha}(1/2,\delta(\eta_3^{\pm 1}))=\pi(\eta_3^{\pm 1})+J_\alpha(1/2,\delta(\eta_3^{\pm 1}))\quad\text{and}\]
\[
I_{\alpha}(1/2,\eta_3^{\pm 1}\circ\det)=J_\beta(1,\pi(\delta_3^{\pm 1},\delta_3^{\mp 1})) + J_\alpha(1/2,\delta(\eta_3^{\mp 1})).\]
In summary, we obtain the following decomposition into irreducible constituents:
\begin{equation}
I(\xi_1\otimes\xi_2)= I(\nu_F^{\pm 1}\eta_3\otimes\eta_3)=\pi(\eta_3^{\pm 1})+J_\alpha(1/2,\delta(\eta_3^{\pm 1}))+J_\beta(1,\pi(\delta_3^{\pm 1},\delta_3^{\mp})) + J_\alpha(1/2,\delta(\eta_3^{\mp})).   
\end{equation}

When $\eta_3$ is unramified, it follows from \cite[p.142]{ABP-G2} that we are in the case of infinitesimal character $t_c$ of \cite[Table~6.1]{Ram}.
The representation $\pi(\eta_3^{\pm 1})$ is a $2$-dimensional square-integrable representation, with non-real central character (see \cite[\S6]{Ram}). The three other irreducible representations are non-tempered. 
It has indexing triple $(t_c,e_{\alpha^\vee}+e_{3\beta^\vee+\alpha^\vee},1)$.  As observed in \cite[p.~25]{Ram}, it follows from \cite[Satz~1']{Stuhler}, that  $e_{\alpha^\vee}+e_{3\beta^\vee+\alpha^\vee}$ is a representative of the subregular unipotent orbit $\rG_2(a_1)$. The representation $J_{\beta}(1,\pi(\delta_3^{\pm 1},\delta_3^{\mp}))$ has dimension $2$ and its indexing triple is $(t_c,0,1)$. The representations $J_\alpha(1/2,\delta(\eta_3^{\pm 1}))$ and $J_\alpha(1/2,\delta(\eta_3^{\mp}))$ are both of  dimension $4$, with indexing triples $(t_c,e_\alpha,1)$ and $(t_c,e_{\alpha+3\beta},1)$. 

\begin{center} 
\begin{table} [ht]
\begin{tabular}{|c|c|c|}
\hline
Indexing triple &unipotent orbit & representation\\
\hline
$(t_c,e_{\alpha^\vee}+e_{3\beta^\vee+\alpha^\vee},1)$& $\rG_2(a_1)$ &$\pi(\eta_3^{\pm})$
 \\
$(t_c, e_{\alpha^\vee}, 1)$&$\rA_1$ &$J_{\alpha}(1/2,\delta(\eta_3^{\pm}))$\\
$(t_c,e_{3\beta^\vee+\alpha^\vee},1)$ & $\rA_1$ & $J_{\alpha}(1/2,\delta(\eta_3^{\mp}))$ \\
$(t_c,0,1)$& 1 &$J_{\beta}(1,\pi(\delta_3^{\pm 1},\delta_3^{\pm}))$\\
\hline
\end{tabular}
\end{table}
\end{center}

In this case, $\mathcal{J}^{\fs}=\rG_2(\C)$ as in \cite[$\mathsection$ 4]{ABP-G2}, and the enhanced $L$-parameters are as in Table~\ref{table:unip-ppal-ser3c-unramified}: 
\begin{center} 
\begin{table} [ht]
\begin{tabular}{|c|c|}
\hline
Representation & enhanced $L$-parameter\cr
\hline\hline
$\pi(\eta_3^{\pm 1})$&$(\varphi_{\sigma,\rG_2(a_1)},1)$ 
\cr
$J_\alpha(1/2,\delta(\eta_3^{\pm 1}))$&$(\varphi'_{\sigma,\rA_1},1)$\cr
$J_\alpha(1/2,\delta(\eta_3^{\mp}))$&$(\varphi''_{\sigma,\rA_1},1)$\cr
$J_{\beta}(1,\pi(\delta_3^{\pm 1},\delta_3^{\mp}))$&$(\varphi_{\sigma,1},1)$\cr
 \hline
\end{tabular}
\vskip0.2cm
\caption{\label{table:unip-ppal-ser3c-unramified}: LLC for the irreducible components of $I(\nu_F^{\pm 1}\eta_3\otimes\eta_3)$ for $\eta_3$ cubic unramified.$\qquad$}
\end{table}
\end{center}
\end{numberedparagraph}

\begin{numberedparagraph}
When $\eta_3$ is ramified, the group $\mcJ^\fs$ is isomorphic to $\SL_3(\C)$ with simple roots $\alpha^\vee$ and $3\beta^\vee+2\alpha^\vee$. We compute the $L$-parameters by combining the case $t_a$ in \cite[Table 4.1]{Ram} with Table~\ref{tab:table-unipotent-Gsigma=SL3}. 
There are four indexing triples as in the following table.
\begin{center} 
\begin{table} [ht]
\begin{tabular}{|c|c|c|}
\hline
Indexing triple &unipotent orbit & representation\\
\hline
$(t_a,0,1)$& 1 &$J_{\beta}(1,\pi(\delta_3^{\pm 1},\delta_3^{\mp}))$\\
$(t_a, e_{\alpha^\vee}, 1)$& $\rA_1$ &$J_\alpha(1/2,\delta(\eta_3^{\pm 1}))$\\
$(t_a,e_{3\beta^\vee+2\alpha^\vee},1)$ &$\rA_1$ &  $J_\alpha(1/2,\delta(\eta_3^{\mp}))$\\
$(t_a,e_{\alpha^\vee}+e_{3\beta^\vee+2\alpha^\vee},1)$& $\rG_2(a_1)$ &$\pi(\eta_3^{\pm 1})$\\
\hline
\end{tabular}
\end{table}
\end{center}
By \cite[Table 4.2]{Ram}, $e_{\alpha^\vee}+e_{3\beta^\vee+2\alpha^\vee}$ corresponds to the regular nilpotent orbit in $\SL_3(\C)$, which gets sent to the subregular nilpotent orbit in $\rG_2(\C)$ by Table~\ref{tab:table-unipotent-Gsigma=SL3}. By \cite[Table 4.2]{Ram} again, $e_{3\beta^\vee+2\alpha^\vee}$ corresponds to the subregular nilpotent orbit in $\SL_3(\C)$, which gets sent to the subminimal nilpotent orbit in $\rG_2$ by Table~\ref{tab:table-unipotent-Gsigma=SL3} again. Here $u'$ and $u''$ are unipotent such that their conjugacy class $[u']=[u'']=\rA_1$ in $\rG_2$, but the restriction $\varphi|_{W_F}$ depends on $'u$ and $u''$ itself, rather than their conjugacy class; therefore we are able to arrive at different $L$-parameters $\varphi'_{\sigma,\rA_1}$ and $\varphi''_{\sigma,\rA_1}$ for the two irreducible constituents $J_{\alpha}(1/2,\delta(\eta_3^{\pm 1}))$ and $J_{\alpha}(1/2,\delta(\eta_3^{\mp }))$.

\begin{center} 
\begin{table} [ht]
\begin{tabular}{|c|c|}
\hline
Representation
& enhanced $L$-parameter
\cr
\hline\hline
$\pi(\eta_3^{\pm 1})$&$(\varphi_{\sigma,\rG_2(a_1)},1)$ 
\cr
\cline{1-2}
$J_\alpha(1/2,\delta(\eta_3^{\pm 1}))$&$(\varphi'_{\sigma,\rA_1},1)$ 
\cr
\cline{1-2}
$J_\alpha(1/2,\delta(\eta_3^{\mp}))$&$(\varphi''_{\sigma,\rA_1},1)$ 
\cr
\cline{1-2}
$J_{\beta}(1,\pi(\delta_3^{\pm 1},\delta_3^{\mp}))$&$(\varphi_{\sigma,1},1)$ 
\cr
 \hline
\end{tabular}
\vskip0.2cm
\caption{\label{table:unip-ppal-series3c-ramified}
LLC for the irreducible components of $I(\nu_F^{\pm 1}\eta_3\otimes\eta_3)$ for $\eta_3$ cubic ramified.
}
\end{table}
\end{center}

We now compute the formal degree of $\pi(\eta_3)$: by \cite[Theorem~10.7]{Roche-principal-series}, up to normalization factors of volumes, we have
\begin{equation}
    \fdeg(\pi(\eta_3))=d(\St_{\PGL_3}^{\mathcal{H}}),
\end{equation}
where $\St_{\PGL_3}$ is the Steinberg representation of $\PGL_3(F)$. 
By~\cite[Theorem~4.11]{Opdam-Selecta},
\begin{equation}    
d(\St_{\PGL_3}^{\mathcal{H}})= 3^{-1}[3]_q^{-1}=3^{-1}\cdot \frac{q^{1/2}-q^{-1/2}}{q^{3/2}-q^{-3/2}}
    =\frac{q}{3(q^2+q+1)}.
\end{equation}
Thus we have 
\begin{equation}\label{fdeg-pi-eta3}
    \fdeg(\pi(\eta_3))=\frac{q}{3(q^2+q+1)},
\end{equation}
which agrees with the formal degree for the singular supercuspidal in \eqref{eqn:fdegx1}.

\end{numberedparagraph}

\subsubsection{{\bf Case (5) within Case $\fs=[T,\xi\otimes\xi]_G$}} \label{subsubsec:case 5-1} \ 

In case (5), $\xi_1=\nu_F^{s_1}\xi=\nu_F^{-2s_2}\xi^{-2}\nu_F^{\pm 1}=\xi_2^{-2}\nu_F^{\pm 1}$. 
Thus  
\begin{equation}
    I(\xi_1\otimes\xi_2)=I(\nu_F^{\pm 1}\xi_2^{-2}\otimes\xi_2)\overset{ababa}{=}I(\nu_F^{\pm 1}\xi_2\otimes\xi_2).
\end{equation}
Thus Lemma~\ref{lem:main-induced} applies to this case, and we have 
\begin{equation}
    I(\xi_1\otimes\xi_2)=I(\nu_F^{\pm 1}\xi_2\otimes\xi_2)=I_{\alpha}(s,\delta(\nu_F^{\pm 1/2-s}\xi_2))+I_{\alpha}(s,\nu_F^{\pm 1/2-s}\xi_2\circ\det)
\end{equation}
Recall from \eqref{eqn:omega} that $\omega=\xi\nu_F^{-t}=\xi\nu_F^{s_2-s\pm 1/2}=\nu_F^{\pm 1/2-s}\xi_2$ is unitary. 
Therefore \cite[Theorem 3.1 (i)]{Muic-G2} applies to this case, and $I_{\alpha}(s,\delta(\nu_F^{\pm 1/2-s}\xi_2))$ and $I_{\alpha}(s,\nu_F^{\pm 1/2-s}\xi_2\circ\det)$ are irreducible unless 
\[
\text{$s=\pm 1/2$, $\nu_F^{\pm 1/2-s}\xi_2=\xi_2$ is quadratic,}\quad \text{$s=\pm 3/2$, $\nu_F^{\pm 1/2-s}\xi_2=1$}\quad \text{or $s=\pm 1/2$, $\nu_F^{\pm 1/2-s}\xi_2$ is cubic.}
\]
When they are irreducible, we compute the $L$-parameters for irreducible constituents and record them in Table~\ref{table:xi-tensor-xi-shorttable-ppal-series-case5} (when $\xi_2$ is unramified) and Table~\ref{table:xi-tensor-xi-shorttable-ppal-series-case5-ramified} (when $\xi_2$ is ramified).

\smallskip
\hrule
\smallskip

$\bullet$ When $\xi_2$ is unramified, we are in case $t_g$ of \cite[Table 6.1]{Ram} (see also \cite[$\mathsection$4]{ABP-G2}) and $\mathcal{J}^{\fs}=G_{2,\C}$, and we compute the $L$-parameters in Table~\ref{table:xi-tensor-xi-shorttable-ppal-series-case5}. Note that since $\delta(\nu_F^{\pm 1/2-s}\xi_2)$ is discrete series, the induced representation $I_{\alpha}(s,\delta(\nu_F^{\pm 1/2-s}\xi_2))$ is tempered, which thus corresponds to the $e_{\alpha_1}$ unipotent class, i.e. the minimal unipotent class $\rA_1$ in $\rG_2$. 
\begin{center}
\begin{table} [ht]
\begin{tabular}{|c|c|c|c|}
\hline
Representation &indexing triple & enhanced $L$-parameter
\cr
\hline\hline
$I_{\alpha}(s,\delta(\nu_F^{\pm 1/2-s}\xi_2))$&$(t_g,e_{\alpha_1},1)$ &$(\varphi_{\sigma,A_1},1)$ 
\cr
\cline{1-3}
 $I_{\alpha}(s,\nu_F^{\pm 1/2-s}\xi_2\circ\det)$&$(t_g,0,1)$ &$(\varphi_{\sigma,1},1)$ 
 \cr
  \hline
\end{tabular}
\vskip0.2cm
\caption{\label{table:xi-tensor-xi-shorttable-ppal-series-case5}: LLC for irred. constituents of case (5) within $I(\xi\otimes\xi)$ where $\xi$ is unramified.}
\end{table}
\end{center}
$\bullet$ When $\xi_2$ is ramified, 
we compute the $L$-parameters in different cases. 
\begin{enumerate}
    \item[(i)] When $\xi$ is neither quadratic nor cubic, we are in case (3) of Proposition~\ref{principal-series-classification-s-WGs}, where $W_G^{\fs}=\Z/2\Z=\{1,s_{\alpha}\}$ and $\mathcal{J}^{\mathfrak{s}}=\GL_2(\C)$. 
We are in case $t_a$ of \cite[Table 2.1]{Ram}.
Again since $\delta(\nu_F^{\pm 1/2-s}\xi_2)$ is discrete series, the induced representation $I_{\alpha}(s,\delta(\nu_F^{\pm 1/2-s}\xi_2))$ is tempered, which thus corresponds to the $e_{\alpha_1}$ unipotent class, which corresponds to $\rA_1$ in $\rG_2$ by Table~\ref{tab:table-unipotent-Gsigma=GL2}. Thus we obtain Table~\ref{table:xi-tensor-xi-shorttable-ppal-series-case5-ramified}.
    \begin{center} 
\begin{table} [ht]
\begin{tabular}{|c|c|c|c|}
\hline
Representation 
&indexing triple& enhanced $L$-parameter
 \cr
\hline\hline
$I_{\alpha}(s,\delta(\nu_F^{\pm 1/2-s}\xi_2))$&$(t_a,e_{\alpha_1},1)$ &$(\varphi_{\sigma,A_1},1)$ \cr
 $I_{\alpha}(s,\nu_F^{\pm 1/2-s}\xi_2\circ\det)$&$(t_a,0,1)$ &$(\varphi_{\sigma,1},1)$  \cr
  \hline
\end{tabular}
\vskip0.2cm
\caption{\label{table:xi-tensor-xi-shorttable-ppal-series-case5-ramified}: LLC for irred. constituents of $I(\xi_1\otimes\xi_2)$ attached to Case (5) of $\mathfrak{s}=[T,\xi\otimes\xi]_G$ where $\xi$ is ramified.}
\end{table}
\end{center}
\item[(ii)] When $\xi$ is quadratic, we are in case (4) of Proposition~\ref{principal-series-classification-s-WGs}, where $W_G^{\fs}=\Z/2\Z\times\Z/2\Z$ and $\mathcal{J}^{\mathfrak{s}}=\SO_4(\C)$. However, this case cannot happen here (although it'll occur elsewhere in our other subsections), because in this case $\xi_2$ is quadratic and $I_{\alpha}(s,\delta(\nu_F^{\pm 1/2-s}\xi_2))$ and $I_{\alpha}(s,\nu_F^{\pm 1/2-s}\xi_2\circ\det)$ are reducible, contradicting our assumption on irreducibility in this current subsection. (In fact, we indeed expect to have a length $4$ table rather than length $2$.)
\item[(iii)] When $\xi$ is cubic, we are in case (5) of Proposition~\ref{principal-series-classification-s-WGs}, where $W_G^{\fs}=S_3$ and $\mathcal{J}^{\mathfrak{s}}=\SL_{3,\C}$. Likewise, this case cannot happen here (although it'll occur elsewhere in our other subsections), because in this case $\xi_2$ is cubic and $I_{\alpha}(s,\delta(\nu_F^{\pm 1/2-s}\xi_2))$ and $I_{\alpha}(s,\nu_F^{\pm 1/2-s}\xi_2\circ\det)$ are reducible, contradicting our assumption on irreducibility in this current subsection. (In fact, we indeed expect to have a length $4$ table rather than length $2$.)
\end{enumerate}
\smallskip

\hrule

\smallskip
When they are reducible, we have in $R(G)$:

(a) Case ($s=\pm 1/2$, $\nu_F^{\pm 1/2-s}\xi_2=\eta_2$ quadratic):
\[I(\xi_1\otimes\xi_2)=I(\nu_F^{s\pm 1/2}\eta_2\otimes\nu_F^{s\mp 1/2}\eta_2)=I(\nu_F^{\pm 1}\nu_F^{s\mp 1/2}\eta_2\otimes \nu_F^{s\mp 1/2}\eta_2)
,\]
and thus Lemma~\ref{lem:main-induced} applies to this case, and we have
\begin{equation}
    I(\xi_1\otimes\xi_2)=I_{\alpha}(s, \delta(\eta_2))+I_{\alpha}(s,\eta_2\circ\det).
\end{equation}
We now plug in $s=\pm 1/2$ and obtain
\[I(\xi_1\otimes\xi_2)=I(\nu_F^{\pm 1}\eta_2\otimes\eta_2)
=
I_\alpha(\pm 1/2,\delta(\eta_2))+I_\alpha(\pm 1/2,\eta_2\circ\det).\]
Note that by \cite[Lemma 5.4(iii)]{BDK}, we have
\begin{equation}
I_{\alpha}(-1/2,\delta(\eta_2))\simeq I_{\alpha}(1/2,\delta(\eta_2))\quad\text{and}\quad
I_{\alpha}(-1/2,\eta_2\circ\det)\simeq I_{\alpha}(1/2,\eta_2\circ\det).
\end{equation}
Therefore in the above we treat $\pm 1/2$ cases together. 
Recall from \eqref{eqn:omega}, we have that $\eta_2=\omega$ is unitary, thus we can apply \cite[Proposition 4.1 (ii)]{Muic-G2} and obtain
\begin{equation}
I(\xi_1\otimes\xi_2)=\pi(\eta_2)+J_\beta(1,\pi(1,\eta_2))+J_\beta(1/2,\delta(\eta_2))+J_{\alpha}(1/2,\delta(\eta_2)).
\end{equation}
The LLC for these irreducible constituents was already computed in Table~\ref{table:unip-ppal-series-case-3aquad} (when $\eta_2$ is unramified) and Table~\ref{table:unip-ppal-series-case-3aquad-ramified} (when $\eta_2$ is ramified).

(b) Case ($s=\pm 3/2$, $\nu_F^{s_2-s\pm 1/2}\xi=1$): we have $\xi_2=\nu_F^{s_2}\xi=\nu_F^{s\mp 1/2}$, and $\xi_1=\nu_F^{\pm}\xi_2=\nu_F^{\pm 1/2+s}$.\footnote{Note that here $\xi_1$ should be thought of as the ``new'' $\xi_1$ after applying the Weyl group actions.} Thus 
\[I(\xi_1\otimes \xi_2)=I(\nu_F^{\pm 1/2+s}\otimes \nu_F^{s\mp 1/2})=I(\nu_F^{\pm 1}\nu_F^{s\mp 1/2}\otimes\nu_F^{s\mp 1/2}),\]
and thus Lemma~\ref{lem:main-induced} applies to this case, and we have 
\begin{equation}
    I(\xi_1\otimes \xi_2)=I_{\alpha}(s,\delta(\nu_F^{\pm 1/2-s}\xi_2))+I_{\alpha}(s,\nu_F^{\pm 1/2-s}\xi_2\circ\det).
\end{equation}
We now plug in $s=\pm 3/2$ and the explicit values for $\xi_2$, and obtain
\[I(\xi_1\otimes\xi_2)=I(\nu_F^{\pm 2}\otimes\nu_F^{\pm 1})
=I_{\alpha}(\pm 3/2,\delta(1))+I_{\alpha}(\pm 3/2,1_{\GL_2}).\]
By \cite[Lemma 5.4(iii)]{BDK}, we have
\begin{equation}
I_{\alpha}(-3/2,\delta(1))\simeq I_{\alpha}(3/2,\delta(1))\quad\text{and}\quad
I_{\alpha}(-3/2,1_{\GL_2})\simeq I_{\alpha}(3/2,1_{\GL_2}).
\end{equation}
Therefore it suffices to treat the $\pm 3/2$ cases together. When $s=3/2$, by \cite[Proposition~4.4]{Muic-G2}, we have the following decomposition into irreducible constituents:
\begin{equation}
I_{\alpha}(3/2,\delta(1))=\St_{\rG_2}+J_\alpha(3/2,\delta(1))\quad\text{and}\quad
I_{\alpha}(3/2,1_{\GL_2})=1_{\rG_2}+J_\beta(5/2,\delta(1)).
\end{equation}
We thus obtain
\begin{equation}
I(\xi_1\otimes\xi_2)=I(\nu_F^{\pm 2}\otimes\nu_F^{\pm 1})=\St_{\rG_2}+J_\alpha(3/2,\delta(1))+1_{\rG_2}+J_\beta(5/2,\delta(1)).
\end{equation}
The LLC for these irreducible constituents was already computed in Table~\ref{table:unip-ppal-series-case-3bbis}.

(c) Case ($s=\pm 1/2$, $\nu_F^{s_2-s\pm 1/2}\xi=\eta_3$ cubic): 
\[
    I(\xi_1\otimes\xi_2)=I(\nu_F^{s\pm 1/2}\eta_3\otimes\nu_F^{s\mp 1/2}\eta_3)=I(\nu_F^{\pm 1}\nu_F^{s\mp 1/2}\eta_3\otimes\nu_F^{s\mp 1/2}\eta_3),
\]
and thus Lemma~\ref{lem:main-induced} applies to this case, and we have the decomposition into irreducible constituents:
\begin{equation}
    I(\xi_1\otimes\xi_2)=I_{\alpha}(s,\delta(\eta_3))+I_{\alpha}(s,\eta_3\circ\det).
\end{equation}
We now plug in $s=\pm 1/2$ and obtain
\[I(\xi_1\otimes\xi_2)=I(\nu_F^{\pm 1}\eta_3\otimes\eta_3)=I_{\alpha}(\pm 1/2,\delta(\eta_3))+I_{\alpha}(\pm 1/2,\eta_3\circ\det).\]
By \cite[Lemma~5.5(iii)]{BDK}, we have
\[I_{\alpha}(-1/2,\delta(\eta_3))=I_{\alpha}( 1/2,\delta(\eta_3^{-1}))\quad\text{and}\quad I_{\alpha}(- 1/2,\eta_3\circ\det)=I_{\alpha}(1/2,\eta_3^{-1}\circ\det).\]
By \cite[Proposition~4.2(ii)]{Muic-G2}, we have
\[I_{\alpha}(1/2,\delta(\eta_3^{\pm 1}))=\pi(\eta_3^{\pm 1})+J_\alpha(1/2,\delta(\eta_3^{\pm 1}))\quad\text{and}\]
\[
I_{\alpha}(1/2,\eta_3^{\pm 1}\circ\det)=J_\beta(1,\pi(\delta_3^{\pm 1},\delta_3^{\mp 1})) + J_\alpha(1/2,\delta(\eta_3^{\mp 1})).\]
In summary, we obtain the following decomposition into irreducible constituents:
\begin{equation}
I(\xi_1\otimes\xi_2)= I(\nu_F^{\pm 1}\eta_3\otimes\eta_3)=\pi(\eta_3^{\pm 1})+J_\alpha(1/2,\delta(\eta_3^{\pm 1}))+J_\beta(1,\pi(\delta_3^{\pm 1},\delta_3^{\mp})) + J_\alpha(1/2,\delta(\eta_3^{\mp})).   
\end{equation}
The LLC for these irreducible representations was already computed in Tables~\ref{table:unip-ppal-ser3c-unramified} and \ref{table:unip-ppal-series3c-ramified}.
\subsection{Case \texorpdfstring{$\fs=[T,\xi\otimes 1]_G$ with $\xi$ ramified}{$\fs=[T,\xi\otimes 1]_G$ with $\xi$ ramified}} \label{principal-series-xi-tensor-1-ramified-nonquad} 
Consider the induced representation $I(\nu_F^{s_1}\xi\otimes\nu_F^{s_2})$. 

--If $I(\nu_F^{s_1}\xi\otimes\nu_F^{s_2})$ is unitary, by \cite[Theorem $\rG_2$]{Keys}, it is reducible if and only if $\nu_F^{s_2}$ and $\nu_F^{s_1}\xi$ are distinct quadratic and unitary. 
When reducible, 
it is of length 2, and we obtain Table~\ref{table:unitary-length-two} (notations as in \cite{ABP-G2}). 
    \begin{center}
\begin{table} [ht]
\begin{tabular}{|c|c|c|}
\hline
Representation 
& Kazhdan-Lusztig triple
\cr
\hline\hline
$\pi(\nu_F^{s_1}\xi\otimes\nu_F^{s_2})$& 
$([s_i,s_i],[1,1],1)$ 
\cr
\hline
 $\pi'(\nu_F^{s_1}\xi\otimes\nu_F^{s_2})$& 
 $([s_i,s_i],[1,1],\sign)$
 \cr
  \hline
\end{tabular}
\vskip0.2cm
\caption{\label{table:unitary-length-two}: LLC for the irreducible constituents of unitary $I(\nu_F^{s_1}\xi\otimes\nu_F^{s_2})$ attached to $\mathfrak{s}=[T,\xi\otimes 1]_G$ with $\xi$ ramified.} 
\end{table}
\end{center}

--If $I(\nu_F^{s_1}\xi\otimes\nu_F^{s_2})$ is non-unitary, by \cite[Proposition 3.1]{Muic-G2}, $I(\nu_F^{s_1}\xi\otimes\nu_F^{s_2})$ is irreducible unless we are in one of the following two cases:
\begin{enumerate}
    \item $\nu_F^{s_2}=\nu_F^{\pm 1}$, i.e. $s_2=\pm 1$ and $s_1$ is arbitrary
    \item $\nu_F^{2s_1+s_2}\xi^2=\nu_F^{\pm 1}$, i.e. $\xi$ is ramified quadratic and $2s_1+s_2=\pm 1$. 
\end{enumerate}

\begin{remark}
The case $\mathfrak{s}=[T,\xi\otimes\xi']$ for $\xi\neq\xi'$ both ramified either reduces to the case for $[T,\xi\otimes 1]$ or the case for $[T,\xi\otimes\xi]$, so there is no need to discuss this case separately. 
\end{remark}

\subsubsection{{\bf Case (1) within Case $\fs=[T,\xi\otimes 1]_G$ with $\xi$ ramified}} \label{subsubsec:case 1-2} \ 

In this case, 
\[I(\xi_1\otimes\xi_2)=I(\nu_F^{s_1}\xi\otimes\nu_F^{\pm 1})=I(\xi_1\otimes\nu_F^{\pm}).\]
\begin{enumerate}
    \item When $\xi_1$ is quadratic, this is similar to the cases covered in Tables \ref{table:unip-ppal-series-case-3aquad} and \ref{table:unip-ppal-series-case-3aquad-ramified}. 
    \item When $\xi_1$ is non-quadratic (already assumed to be ramified), this is similar to the cases covered in Table \ref{table:nu-tensor-xi-shorttable-ppal-series-case2-cubic-ramified} and Table  \ref{table:nu-tensor-xi-shorttable-ppal-series-case2-ramified-neither}.
\end{enumerate}

\subsubsection{{\bf Case (2) within Case $\fs=[T,\xi\otimes 1]_G$ with $\xi$ ramified quadratic}} \label{subsubsec:case 2-2}\ 
When $\xi$ is ramified quadratic, we are in case (4) of Proposition~\ref{principal-series-classification-s-WGs} via the action of $W$, where $W_G^{\fs}=\Z/2\Z\times\Z/2\Z$ and 
\begin{equation} \label{eqn:Js SO4 length 2 unitary}
\mathcal{J}^{\mathfrak{s}}=\SO_4(\C)\simeq\SL_2^{\mathrm {lr}}(\C)\times\SL_2^{\mathrm {sr}}(\C)/\{\pm 1\}. 
\end{equation}

\hrule

In this case, let $\chi:=\nu_F^{s_1}\xi$ where $\xi$ is ramified quadratic, and we have
\begin{equation}
I(\xi_1\otimes\xi_2)=I(\nu_F^{s_1}\xi\otimes\nu_F^{\pm 1-2s_1})=I(\chi\otimes\chi^{-2}\nu_F^{\pm 1})\overset{babab}{=}I(\chi^2\nu_F^{\mp 1}\otimes\chi^{-1})\overset{b}{=}I(\chi\nu_F^{\mp 1}\otimes\chi).
\end{equation}
Thus Lemma~\ref{lem:main-induced} applies to this case, and we have 
\begin{equation}
    I(\xi_1\otimes\xi_2)=I(\nu_F^{s_1\mp 1}\xi\otimes\nu_F^{s_1}\xi)=I_{\alpha}(s,\delta(\nu_F^{\mp 1/2-s}\chi))+I_{\alpha}(s,\nu_F^{\mp 1/2-s}\chi\circ\det).
\end{equation}
Recall from \eqref{eqn:omega} that $\omega=\xi\nu_F^{-t}=\xi\nu_F^{s_2-s\pm 1/2}$ is unitary, thus
\begin{equation}\label{omega-inverse-case2-xi-tensor-one}
\nu_F^{\mp 1/2-s}\chi=\nu_F^{-s+s_2\pm 1/2}\xi=\omega
\end{equation}
is unitary. Therefore \cite[Theorem 3.1]{Muic-G2} applies to this case and $I_{\alpha}(s,\delta(\nu_F^{\mp 1/2-s}\chi))$ and $I_{\alpha}(s,\nu_F^{\mp 1/2-s}\chi\circ\det)$ are irreducible unless 
\[
\text{$s=\mp 1/2$, $\nu_F^{\mp 1/2-s}\chi=\chi$ is quadratic,}\quad \text{$s=\mp 3/2$, $\nu_F^{\mp 1/2-s}\chi=1$}\quad \text{or $s=\mp 1/2$, $\nu_F^{\mp 1/2-s}\chi$ is cubic.}
\]
Clearly the second and third cases cannot happen (since $\xi$ is ramified quadratic), thus we are left with only the first possibility, i.e. $I_{\alpha}(s,\delta(\nu_F^{\mp 1/2-s}\chi))$ and $I_{\alpha}(s,\nu_F^{\mp 1/2-s}\chi\circ\det)$ are irreducible unless \[\text{$s=\mp 1/2$, $\omega=\nu_F^{\mp 1/2-s}\chi=\chi$ is quadratic}.\]
Now, since both $\chi=\nu_F^{s_1}\xi$ and $\xi$ are quadratic, we have $1=\nu_F^{2s_1}$, and thus 
$s_1=0$. Thus we have $\xi=\chi=\omega$ is unitary and quadratic. 

\begin{enumerate}
    \item When they are irreducible i.e. when $s\neq \pm 1/2$, we compute the $L$-parameters for irreducible constituents and record them in Table~\ref{table:xi-tensor-one-shorttable-ppal-series-case2}. 
By Lemma~\ref{lem:generic}~(a), since the representation $\delta(\xi)$ of $\GL_2(F)$ is generic, $I_{\alpha}(s,\nu_F^{\mp 1/2-s}\delta(\xi))=\ii_{P_\alpha}^{G}(\delta(\xi))$ is also generic. By Lemma~\ref{lem:generic}~(b), since the representation $\xi\circ\det$ of $\GL_2(F)$ is not generic, $I_{\alpha}(s,\nu_F^{\mp 1/2-s}\xi\circ\det))=\ii_{P_\alpha}^{G}(\xi\circ\det)$ is also non-generic.

\begin{center}
\begin{table} [ht]
\begin{tabular}{|c|c|c|}
\hline
Representation 
& enhanced $L$-parameter
\cr
\hline\hline
$\ii_{M_\alpha}^{G}(\delta(\xi)))$& $(\varphi_{\sigma,\rA_1},1)$ 
\cr
\hline
 $\ii_{M_\alpha}^{G}(\xi\circ\det)$& $(\varphi_{\sigma,1},1)$ 
 \cr
  \hline
\end{tabular}
\vskip0.2cm
\caption{\label{table:xi-tensor-one-shorttable-ppal-series-case2}: LLC for the irreducible constituents of $I(\xi\nu_F^{\mp 1}\otimes\xi)$ attached to Case (2) of $\mathfrak{s}=[T,\xi\otimes 1]_G$ with $\xi$ ramified quadratic.} 
\end{table}
\end{center}
\smallskip

\hrule

\smallskip

\item When they are reducible 
i.e. $s=\mp 1/2$ and $\omega=\nu_F^{\mp 1/2-s}\chi=\chi$ is quadratic, we have in $R(G)$:

(a) Case ($s=\pm 1/2$, $\nu_F^{\mp 1/2-s}\chi=\eta_2$ quadratic)\footnote{Note that although we have used the same notation $\eta_2$ as in the previous sections, this $\eta_2$ is not necessarily the same character as the $\eta_2$ from the previous sections; here we are simply abusing notations.}:
\[I(\xi_1\otimes\xi_2)=I(\nu_F^{s\pm 1/2}\eta_2\otimes\nu_F^{s\mp 1/2}\eta_2)=I(\nu_F^{\pm 1}\nu_F^{s\mp 1/2}\eta_2\otimes \nu_F^{s\mp 1/2}\eta_2)
,\]
and thus Lemma~\ref{lem:main-induced} applies to this case, and we have
\begin{equation}
    I(\xi_1\otimes\xi_2)=I_{\alpha}(s, \delta(\eta_2))+I_{\alpha}(s,\eta_2\circ\det).
\end{equation}
We now plug in $s=\pm 1/2$ and obtain
\[I(\xi_1\otimes\xi_2)=I(\nu_F^{\pm 1}\eta_2\otimes\eta_2)
=
I_\alpha(\pm 1/2,\delta(\eta_2))+I_\alpha(\pm 1/2,\eta_2\circ\det).\]
Note that by \cite[Lemma 5.4(iii)]{BDK}, we have
\begin{equation}
I_{\alpha}(-1/2,\delta(\eta_2))\simeq I_{\alpha}(1/2,\delta(\eta_2))\quad\text{and}\quad
I_{\alpha}(-1/2,\eta_2\circ\det)\simeq I_{\alpha}(1/2,\eta_2\circ\det).
\end{equation}
Therefore in the above we treat $\pm 1/2$ cases together. 
As in \eqref{omega-inverse-case2-xi-tensor-one}, $\eta_2=\omega$ is unitary, by \cite[Proposition 4.1 (ii)]{Muic-G2} we have
\begin{equation}
I(\xi_1\otimes\xi_2)=\pi(\eta_2)+J_\beta(1,\pi(1,\eta_2))+J_\beta(1/2,\delta(\eta_2))+J_{\alpha}(1/2,\delta(\eta_2)).
\end{equation}
The computation of LLC for these irreducible constituents was already done in Table~\ref{table:unip-ppal-series-case-3aquad-ramified}.

(b) Case ($s=\pm 3/2$, $\nu_F^{\mp 1/2-s}\chi=1$): 
recall that $\chi=\nu_F^{s_1}\xi$. Plugging it in we get
\begin{equation}\label{Case2-xi-tensor-one-b}
\nu_F^{\mp 1/2-s}\nu_F^{s_1}\xi=\nu_F^{s_1\pm 1}\xi=1.
\end{equation}
Since $\xi$ is ramified quadratic in this case, the above equation \eqref{Case2-xi-tensor-one-b} cannot happen.

(c) Case ($s=\pm 1/2$, $\nu_F^{\mp 1/2-s}\chi=\eta_3$ cubic): This case also cannot happen since $\xi$ is ramified quadratic.
\end{enumerate}

\section{Main results}
\subsection{List of properties of the LLC} \label{subsec:expected properties}
In this subsection, we state several properties that are expected to be satisfied by the local Langlands correspondence. 
Recall from Definition~\ref{eqn:L-packet} that the $L$-packet of irreducible representations of $G$ attached to the $L$-parameter $\varphi$ is denoted by $\Pi_\varphi(G)$.

\begin{property} \label{property:L-packets} {\rm \cite[\S10.3]{Borel-Corvallis}}
Let $\varphi$ be an $L$-parameter for $G$.
\begin{enumerate}
\item $\varphi$ is bounded if and only if one element (equivalently any element) of $\Pi_\varphi(G)$ is tempered;
\item $\varphi$ is discrete  if and only if one element (equivalently any element) of $\Pi_\varphi(G)$ is square-integrable modulo center;
\item $\varphi$ is supercuspidal if and only if all the elements of $\Pi_\varphi(G)$ are supercuspidal.
\end{enumerate}
\end{property}

\begin{property} \label{property:size of L-packets} {\rm (\cite[\S2]{Arthur-Note}, and \cite[Conjecture~B]{Kaletha-LLC})} 
The elements of $\Pi_\varphi(G)$ are in bijection with $\Irr(S_\varphi)$. 
\end{property}

\begin{property} \label{property:Langlands quotient} {\rm \cite[\S7.2]{Silberger-Zink}} 
Let $(P,\pi,\nu)$ be a standard triple for $G$. We have 
\begin{equation} \label{eqn:PhiJ}
\varphi_{J(P,\pi,\nu)}=\iota_{L^\vee}\circ\varphi_{\pi\otimes\chi_\nu},
\end{equation}
where is $J(P,\pi,\nu)$ is the Langlands quotient defined in \eqref{eqn:Langlands quotient} and $\iota_{L^\vee}\colon L^\vee\hookrightarrow G^\vee$ is the canonical embedding.
\end{property}

In general, the bijection mentioned in Property~\ref{property:size of L-packets} depends on the choice of a Whittaker datum ${\mathfrak w}:=(U,\chi)$--up to conjugation by $G$--(see for instance the $\SL_2(F)$-example in \cite[pp.484-485]{Gross-Reeder}). We will denote this bijection as 
\begin{equation} \label{eqn:Whittaker}
 \iota_{\mathfrak w}\colon \Pi_\varphi(G)\to \Irr(S_\varphi).   
\end{equation}
An $L$-packet is called \textit{${\mathfrak w}$-generic} if it contains an element which is ${\mathfrak w}$-generic in the sense of Definition~\ref{defn:generic}.
\begin{property}\label{property:generic tempered L-packets}{\rm \cite[Conjecture~9.4]{ShahidiAnnalsAproofof}}
If $\varphi$ is bounded, then the $L$-packet $\Pi_\varphi(G)$ is ${\mathfrak w}$-generic for some Whittaker datum $\mathfrak w$. Moreover, the conjectural bijection $\iota_{\mathfrak w}\colon \Pi_\varphi(G)\to \Irr(S_\varphi)$ maps the $\mathfrak w$-generic representation to the trivial representation of $S_\varphi$.
\end{property}

\begin{lem}\label{sc-packet-generic-lemma}
Supercuspidal $L$-packets satisfy Property~\ref{property:generic tempered L-packets}. 
\end{lem}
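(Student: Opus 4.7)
The plan is to reduce to existing genericity results by a case analysis on the type of supercuspidal $L$-packet. As a preliminary step I verify the hypothesis of Property~\ref{property:generic tempered L-packets}: if $\varphi$ is a supercuspidal $L$-parameter, then every $\pi\in\Pi_\varphi(G)$ is supercuspidal by Property~\ref{property:L-packets}(3), hence compact modulo center, hence tempered; Property~\ref{property:L-packets}(1) then forces $\varphi$ to be bounded, so Property~\ref{property:generic tempered L-packets} indeed applies to every supercuspidal $L$-packet.

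Next I exploit the classification of supercuspidal representations of $G=\rG_2(F)$ carried out in Section~\ref{sec:supercuspidals-G2}. The crucial observation, extracted from Sections~\ref{sec:Galois-G2} and~\ref{sec:supercuspidals-G2}, is that every \emph{singular} supercuspidal representation of $G$---namely those arising in \S\ref{d0s}\eqref{u3}, \S\ref{d0s}\eqref{chals}, \S\ref{d0s}\eqref{tameSL2}, \S\ref{d0s}\eqref{toys} and \S\ref{d+s}\eqref{inner-form-PGL2-singular}---lies in a \emph{mixed} $L$-packet containing at least one non-supercuspidal member. By Property~\ref{property:L-packets}(3), the associated $L$-parameter is then not supercuspidal, so these cases fall outside the scope of the lemma. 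Consequently any supercuspidal $L$-packet of $G$ consists exclusively of representations of one of the following three types: (i) the unipotent supercuspidals $\pi[\zeta]$ of \S\ref{d0s}\eqref{d0s-Gx=G2=unipotent}; (ii) regular supercuspidals in the sense of Definition~\ref{defn:Kaletha regular}; (iii) non-regular non-singular supercuspidals in the sense of Definition~\ref{defn: Fnse}.

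For each of these three types I invoke an existing theorem. For (i), genericity of a distinguished member of each unipotent discrete series $L$-packet, together with its identification as the trivial character of $S_\varphi$, is the content of \cite{Reeder-Iwahori-spherical-discrete-series, DeBacker-Reeder}, building on Lusztig's parametrization \cite{Lu-padicI}; compatibility with the Hiraga-Ichino-Ikeda formal degree conjecture is recorded in \cite[Theorem~3]{FOS}. For (ii), Property~\ref{property:generic tempered L-packets} is proved in \cite{Kal-reg}. For (iii), it is proved in \cite{Kaletha-nonsingular}.

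The main obstacle is one of compatibility rather than construction: one must verify that the Whittaker-normalized bijection $\iota_{\mathfrak{w}}\colon \Pi_\varphi(G)\to\Irr(S_\varphi)$ produced by our explicit LLC agrees with the one produced by the works cited above. This can be achieved by comparing the cuspidal support map of Definition~\ref{defn:cuspidal support} on both sides and, as a cross-check, by matching the explicit formal degrees computed in Section~\ref{sec:supercuspidals-G2} with those predicted by the formal degree conjecture (Remark~\ref{rem:formal degrees}), which pins down the pairing with $\Irr(S_\varphi)$ uniquely.
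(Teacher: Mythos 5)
Your overall reduction is the same as the paper's---every supercuspidal $L$-packet of $\rG_2$ consists of non-singular supercuspidals, and for those one invokes \cite{Kal-reg} (regular case) and \cite{Kaletha-nonsingular} (non-regular non-singular case); the paper's proof is precisely a recap of \cite[Lemma~4.2.1]{Kaletha-nonsingular} for depth zero and a pointer to the local character expansion of Spice for positive depth. However, there is a conceptual slip in your trichotomy: you list the unipotent supercuspidals $\pi[\zeta]$ as a possible constituent type (i) of a supercuspidal $L$-packet, but these should have been discarded by exactly the same argument you use for the other singular cases. Unipotent cuspidal data arise from $\theta = 1$, which is orthogonal to every coroot, so $\pi[\zeta]$ is singular in the paper's sense (i.e.\ not non-singular in the sense of \cite{Kaletha-nonsingular}); correspondingly, every unipotent supercuspidal of $\rG_2$ sits in a mixed packet $\Pi_{\varphi_2}, \Pi_{\varphi_3}, \Pi_{\varphi_4}$ (see \S\ref{subsec:udL-packets}) whose $L$-parameter has $u_\varphi \in \rG_2(a_1)$, hence nontrivial $\SL_2(\C)$-restriction, hence is \emph{not} supercuspidal. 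So case (i) is vacuous, and the citations to \cite{Reeder-Iwahori-spherical-discrete-series, DeBacker-Reeder, FOS} are doing no work; worse, in those unipotent packets the generic member is actually the non-supercuspidal one (e.g.\ $\pi(1)'$, not $\pi[1]$), so the argument would not even deliver what you ascribe to it.

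Your closing paragraph raising a compatibility question between "our $\iota_{\mathfrak w}$" and Kaletha's $\iota_{\mathfrak w}$ is also moot in this specific context: the paper's LLC for non-singular supercuspidal representations is \emph{defined} to be Kaletha's assignment $(\varphi_\pi,\rho_\pi)$, so there is no second bijection to compare against, and the cuspidal-support/formal-degree cross-check you propose is unnecessary. Strike case (i), drop the compatibility step, and what remains is precisely the paper's argument.
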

\begin{proof}
The depth-zero case follows from \cite[Lemma 4.2.1]{Kaletha-nonsingular}. For the reader's convenience, we recall the argument here. 
By \cite[Lemma H.1]{Kaletha-nonsingular}, $\mathfrak{w}$ determines, uniquely up to $G$-conjugacy, an absolutely special vertex $x\in\mathcal{B}(G,F)$ such that $\chi$ has depth-zero at $x$ for $(U,\chi)=\mathfrak{w}$. By $\mathfrak{w}$-genericity, the supercuspidal $\pi\in\Pi_{\varphi}$ is induced from an irreducible representation of $G_x$ containing a $\chi_x$-generic representation of $G_{x,0}$. By \cite[Lemma 3.4.12]{Kal-reg}, up to $G$-conjugacy, there exists exactly one admissible embedding $j\colon\bS\to\bG$ such that the vertex $x$ corresponds to the maximal torus $j(\bS)\subset \bG$. A $\mathfrak{w}$-generic member of the $L$-packet $\Pi_{\varphi}$ necessarily arises from the non-singular Deligne-Lusztig packet $[\pi_{(j(\bS),\theta_j)}]$ (as defined in \cite[p.35]{Kaletha-nonsingular}). By \cite[Proposition~3.10]{Digne-Lehrer-Michel}, there exists a unique $\chi_x$-generic irreducible component of the Deligne-Lusztig character $\kappa_{(j(\bS),\theta_j^{\circ})}$ (as defined in \cite[Definition 2.6.8 and p.34]{Kaletha-nonsingular}), and hence a unique irreducible representation of $G_x$ containing it after taking the restriction. Its compact induction to $G(F)$ gives the unique $\mathfrak{w}$-generic element of the Deligne-Lusztig packet $[\pi_{(j(S),\theta_j)}]$.   
As remarked in \textit{loc.cit.}, the positive-depth case can be deduced from the depth-zero case via the local character expansions of \cite{Spice-LCE}. 
\end{proof}

We now state a compatibility property of the LLC with supercuspidal supports. 
First we recall the following conjecture from \cite[Conjecture~7.18]{Vogan-LLC-1993}, or equivalently \cite[Conjecture~5.2.2]{Haines-Bst}.
\begin{conj}\label{infinitesimal-param-conj}
Let $P$ be a parabolic subgroup of $G$ with Levi subgroup $L$ and $\sigma$ a smooth irreducible supercuspidal representation of $L$. For any irreducible constituent $\pi$ of $\ii_{P}^G\sigma$, the infinitesimal $L$-parameters $\lambda_{\iota_{L^\vee}\circ\varphi_\sigma}$ and $\lambda_{\varphi_\pi}$ are $G^{\vee}$-conjugate.
\end{conj}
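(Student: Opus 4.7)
The plan is to derive Conjecture~\ref{infinitesimal-param-conj} for $\rG_2$ directly from two ingredients that are already in place: (i) the Bernstein-type compatibility \eqref{Bernstein-block-isom-intro} asserted in Theorem~\ref{main-thm-summary} and verified case by case through Sections~\ref{sec:Galois-G2}--\ref{sec:ppal-series}, and (ii) the elementary observation recorded in Remark~\ref{remark:infinitesimal parameter} that the Galois-side cuspidal support map $\Sc$ of Definition~\ref{defn:cuspidal support} preserves the infinitesimal parameter. First I would reduce to the non-trivial case: if $P=G$ then $\pi=\sigma$ and the statement is tautological; if $P$ is proper, then by the Bernstein decomposition \eqref{eqn:Bernstein decomposition} a supercuspidal cannot appear as a subquotient of $\ii_P^G\sigma$, so $\pi$ is necessarily non-supercuspidal and lies in $\Irr^{\fs}(G)$ for $\fs=[L,\sigma]_G$, with $L\in\{T,M_\alpha,M_\beta\}$. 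In all such cases the supercuspidal $\sigma$ is regular (Lemma~\ref{lemma:cases} and its aftermath), and its $L$-parameter $\varphi_\sigma\colon W_F\to L^\vee$ has trivial restriction to $\SL_2(\C)$, so that $\lambda_{\iota_{L^\vee}\circ\varphi_\sigma}=\iota_{L^\vee}\circ\varphi_\sigma$.

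Next I would invoke Remark~\ref{remark:infinitesimal parameter}: for any enhanced parameter $(\varphi_\pi,\rho_\pi)$ with cuspidal support $(\cL^{\varphi_\pi},(\varphi_{v^{\varphi_\pi}},\epsilon^{\varphi_\pi}))$, the chain of identities
\begin{equation}
\lambda_{\varphi_{v^{\varphi_\pi}}}(w)=\varphi_\pi(w,1)\cdot \chi_{\varphi_\pi,v^{\varphi_\pi}}(|\!|w|\!|^{1/2})\cdot j_{v^{\varphi_\pi}}(x_w)=\varphi_\pi(w,x_w)=\lambda_{\varphi_\pi}(w)
\end{equation}
shows that the infinitesimal parameter is invariant under $\Sc$. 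On the other hand, the compatibility \eqref{Bernstein-block-isom-intro} guaranteed by Theorem~\ref{main-thm-summary} says that $\Sc(\varphi_\pi,\rho_\pi)$ lies in the Bernstein series $\fs^\vee=[L^\vee,(\varphi_\sigma,\rho_\sigma)]_{G^\vee}$, so there exists $g\in G^\vee$ with $g\cdot(\cL^{\varphi_\pi},\varphi_{v^{\varphi_\pi}})=(L^\vee,\varphi_\sigma\otimes\chi^\vee)$ for some $\chi^\vee\in\fX_\nr(L^\vee)$. Since $\pi$ is a genuine subquotient of $\ii_P^G\sigma$ (not of an unramified twist), the supercuspidal-support identification on the $p$-adic side forces the class of $\chi^\vee$ to be trivial in the relevant quotient, and hence $\lambda_{\varphi_\pi}$ is $G^\vee$-conjugate to $\iota_{L^\vee}\circ\varphi_\sigma=\lambda_{\iota_{L^\vee}\circ\varphi_\sigma}$.

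The main obstacle is the bookkeeping in the last step, namely verifying that in every explicit case the constructed $\varphi_\pi$ really does land in the Bernstein series attached to $\varphi_\sigma$ itself rather than to an unramified twist; equivalently, that the construction of $\varphi_\pi$ via Definition~\ref{defn:phisu} (for intermediate series, via the Hecke-algebra isomorphism of \cite{aubert-xu-Hecke-algebra}) and via Roche's equivalence together with Kazhdan--Lusztig triples for $\mcJ^\fs$ (for principal series) is cuspidal-support-preserving on the nose. For the intermediate series this is built into Definition~\ref{defn:phisu}: one reads off directly from $\varphi_{\sigma,u}(w,1)=\varphi_\sigma(w)\cdot j_u(x_{|\!|w|\!|})^{-1}$ that $\lambda_{\varphi_{\sigma,u}}=\iota_{L^\vee}\circ\varphi_\sigma$ without any conjugation at all. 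For the principal series, the verification is by inspection of the tables in Sections~\ref{principal-series-first-case}--\ref{subsubsec:case 2-2}: the central character $t_a,t_b,\ldots,t_g$ attached to each Kazhdan--Lusztig triple is manifestly the image of $\varphi_\sigma|_{W_F}$ (up to the correct twist by $|\!|\cdot|\!|^{1/2}$) under the chosen cocharacter, which is exactly the infinitesimal parameter. Since the list of Bernstein components for $\rG_2$ is finite and all have been treated explicitly, combining these checks with the abstract argument above completes the proof.
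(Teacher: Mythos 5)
Your argument is correct and recovers the paper's intended justification, with one structural difference worth flagging. The paper does not present a standalone proof of Conjecture~\ref{infinitesimal-param-conj}: it is recalled from the literature, and the paper immediately observes (in the sentence following the statement) that Property~\ref{property:AMS-conjecture-7.8} generalizes it via Remark~\ref{remark:infinitesimal parameter}—the same remark you correctly identify as the computational pivot. The difference is in how the unramified twist is controlled. You work with the Bernstein-block bijection~\eqref{Bernstein-block-isom-intro}, which by design only fixes the inertial class $[L^\vee,(\varphi_\sigma,\rho_\sigma)]_{G^\vee}$, so you are forced into the case-by-case bookkeeping you rightly flag as the main obstacle: checking in each explicit construction that the cuspidal support of $\varphi_\pi$ is $\varphi_\sigma$ itself and not some $\varphi_\sigma\otimes\chi^\vee$. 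The paper's packaging avoids this entirely, because Property~\ref{property:AMS-conjecture-7.8} is a statement about the actual cuspidal support map $\Sc$, not the inertial class. Once Theorem~\ref{main-thm} establishes that the constructed LLC commutes with $\Sc$, the conjecture follows from Remark~\ref{remark:infinitesimal parameter} with no residual twist to track. Your cross-checks—reading $\lambda_{\varphi_{\sigma,u}}=\iota_{L^\vee}\circ\varphi_\sigma$ off Definition~\ref{defn:phisu} and matching the central characters $t_a,\ldots,t_g$ in the principal-series tables against $\varphi_\sigma|_{W_F}$—are a valid substitute for this and are in fact what verifies Property~\ref{property:AMS-conjecture-7.8} in the first place, so the two routes are logically equivalent; but citing Property~\ref{property:AMS-conjecture-7.8} directly cleanly separates the abstract implication from the case-by-case work.
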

The following Property~\ref{property:AMS-conjecture-7.8} generalizes Conjecture~\ref{infinitesimal-param-conj} (see Remark~\ref{remark:infinitesimal parameter}).
Let $\cL(G)$ be a set of representatives for the conjugacy classes of Levi subgroups of $G$. 
By \cite[Proposition~3.1]{ABPS-CM}, for any $L\in\cL(G)$ there is a canonical isomorphism between $W_G(L)$ and $W_{G^\vee}(L^\vee)$.
\begin{property} \label{property:AMS-conjecture-7.8} {\rm \cite[Conjecture~7.8]{AMS18}}
The following diagram is commutative
\begin{equation} \label{eqn:conjecture-7.8}
\begin{tikzcd}
\Irr(G)\arrow[]{d}[swap]{\Sc}\arrow[]{r}{\LLC}[swap]{1\text{-}1} &\Phi_\enh(G)\arrow[]{d}{\Sc}\\
\bigsqcup_{L\in\cL(G)}\Irr_{\superc}(L)/W_G(L)\arrow[]{r}{\LLC}[swap]{1\text{-}1} & \bigsqcup_{L\in\cL(G)}\Phi_{\enh,\cusp}(L)/W_G(L).
\end{tikzcd}.
\end{equation}
\end{property}
\begin{remark} \label{remark:conj-cuspidality}
{\rm When $L=G$, the diagram~\eqref{eqn:conjecture-7.8} collapses to the bottom horizontal line, and Property~\ref{property:AMS-conjecture-7.8} states that the (isomorphism classes of) irreducible supercuspidal representions of $G$ correspond under the local Langlands correspondence to the ($G^\vee$-conjugacy classes of) cuspidal enhanced $L$-parameters (see also \cite[Conjecture~6.10]{AMS18} and \cite[Conjecture~5.2]{Aubert-Pune})}:
\begin{equation}
    \LLC\colon\Irr_\superc(G)\overset{1-1}{\longrightarrow}\Phi_{\enh,\cusp}(G).
\end{equation}
\end{remark}

Note that Property~\ref{property:AMS-conjecture-7.8} is known to hold for unipotent representations by \cite[Theorem~2]{FOS}, for all representations of general linear groups and split classical $p$-adic groups by \cite{Moussaoui-Bernstein-center}, and all representations of special linear groups by \cite{AMS18}.

\begin{conj} \label{conj:matching} {\rm \cite[Conjecture~2]{AMS18}} For any $\fs=[L,\sigma]_G\in\fB(G)$, the LLC for $L$ given by $\sigma\mapsto(\varphi_\sigma,\rho_\sigma)$ induces a bijection
\begin{equation} \label{eqn:matching}
    \Irr^{\fs}(G)\xrightarrow{1--1} \Phi_\enh^{\fs^{\vee}}(G),
\end{equation}
where $\fs^\vee=[L^\vee,(\varphi_\sigma,\rho_\sigma)]_{G^\vee}$.
\end{conj}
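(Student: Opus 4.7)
The plan is to exploit the Bernstein decompositions \eqref{eqn:Bernstein decomposition} and \eqref{eqn:decPhi_e} and verify the bijection block-by-block. First I would check that the assignment $\fs=[L,\sigma]_G \mapsto \fs^\vee=[L^\vee,(\varphi_\sigma,\rho_\sigma)]_{G^\vee}$ is a well-defined bijection $\fB(G)\to\fB(G^\vee)$, which for $G=\rG_2$ reduces to the (already known) LLC for tori and for $\GL_2$-type Levis, together with the compatibility of unramified twists via the canonical identification $\fX_\nr(L)\simeq\fX_\nr(L^\vee)$.

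The next step is to split by the Levi $L$. When $L=G$, the bijection is the supercuspidal LLC itself, constructed in Section~\ref{sec:supercuspidals-G2} by combining \cite{Lu-padicI}, \cite{Kaletha-nonsingular}, and our treatment of singular supercuspidals through mixed $L$-packets; its existence is encoded in Property~\ref{property:AMS-conjecture-7.8}, which holds by construction. When $L\in\{M_\alpha,M_\beta\}$ (intermediate series), I would invoke the main theorem of \cite{aubert-xu-Hecke-algebra}, which provides an explicit Hecke-algebra isomorphism $\cH^\fs(G)\simeq\cH^{\fs^\vee}(G^\vee)$ realizing the bijection \eqref{Bernstein-block-isom-intro}, and check compatibility with the constructions of Section~\ref{sec:interm_series} by comparing Langlands quotients, indexing triples, and formal degrees. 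Finally, when $L=T$ (principal series), I would use Roche's type theory \cite{Roche-principal-series} to identify $\Irr^\fs(G)$ with simple modules of the Iwahori-Hecke algebra of $J^\fs$, whose Langlands dual is $\mcJ^\fs=\rZ_{G^\vee}(\Image(c^\fs))$; these modules are parametrized by Kazhdan--Lusztig triples via \cite{Reeder-isogeny,ABPS-KTheory,Ram}, and the bijection with $\Phi_\enh^{\fs^\vee}(G)$ is realized through Definition~\ref{defn:phisu}, as carried out in the tables of Section~\ref{sec:ppal-series}.

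The principal obstacle will be the compatibility check in the principal series case: namely, that the Kazhdan--Lusztig parametrization of Iwahori-spherical modules for $J^\fs$ aligns with the generalized Springer correspondence for $\mcJ^\fs$ used in Definition~\ref{defn:cuspidal support}. This amounts to a careful matching of unipotent orbits, component groups, and equivariant local systems. The most delicate point arises in the mixed $L$-packets (as in \S\ref{d0p}(\ref{toy}) and \S\ref{d0p}(\ref{chal})), where singular supercuspidals share an $L$-packet with non-supercuspidal constituents of principal series; here the matching is pinned down by formal-degree comparisons, such as those leading to \eqref{fdeg-pi-eta2} and \eqref{fdeg-pi-eta3}, combined with Property~\ref{property:AMS-conjecture-7.8}.
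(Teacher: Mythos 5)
Your proposal matches the paper's approach: the paper establishes the bijection~\eqref{eqn:matching} for $\rG_2$ by exactly the Levi-by-Levi case split you describe, invoking \cite{aubert-xu-Hecke-algebra} for intermediate series ($L\in\{M_\alpha,M_\beta\}$), Roche's type theory together with \cite{Reeder-isogeny,ABPS-KTheory} and Kazhdan--Lusztig theory for principal series ($L=T$), and the supercuspidal LLC of \S\ref{sec:supercuspidals-G2} when $L=G$, with the target $\Phi_\enh^{\fs^\vee}(G)$ identified through the cuspidal support map. The compatibility you flag as the principal obstacle --- that the Kazhdan--Lusztig parametrization of Iwahori-spherical modules for $J^\fs$ aligns with the generalized Springer correspondence for $\mcJ^\fs$, tested in particular on the mixed $L$-packets via formal degrees --- is indeed the substance of the verification, carried out case by case in the tables of \S\ref{sec:ppal-series} and \S\ref{sec:interm_series}.
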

Conjecture~\ref{conj:matching} is proved  for split classical groups  
\cite[\S5.3]{Moussaoui-Bernstein-center}, for $\GL_n(F)$ and $\SL_n(F)$  \cite[Theorems 5.3 and 5.6]{ABPS-SL}, for principal series representations of split groups \cite[\S16]{ABPS-LMS}. 
For the group $\rG_2$, a bijection between $\Irr^{\fs}(G)$ and $\Phi_\enh^{\fs^{\vee}}(G)$ has been constructed in \cite[Theorem 3.1.19]{aubert-xu-Hecke-algebra}. 

Recall from \cite{aubert-xu-Hecke-algebra}, we have an isomorphism 
\begin{equation}\label{Bernstein-block-isom}
    \mathrm{Irr}^{\fs}(G)\xrightarrow{1--1}\Phi_\enh^{\fs^{\vee}}(G)
\end{equation}
for each Bernstein series $\mathrm{Irr}^{\fs}(G)$ of \textit{intermediate series}. On the other hand, the bijection \eqref{Bernstein-block-isom} holds for \textit{principal series} blocks thanks to \cite{Roche-principal-series,Reeder-isogeny,ABPS-KTheory,AMS18}. 

Moreover, we verify the following two properties for our LLC. 
\begin{property}\label{property: Shahidi-fdeg}\cite{ShahidiAnnalsAproofof} The quantity $\frac{\fdeg(\pi)}{\dim(\rho)}$ is constant in an $L$-packet. 
\end{property}
By Harish-Chandra (see \cite[Proposition III.4.1]{Wal}), any tempered non-discrete series irreducible representation $\pi$ is a subrepresentation of $\ii_P^G(\delta)$, where $P$ is a parabolic subgroup of $G$ with Levi factor $L$ and $\delta$ is a discrete series representation of $L$, and the $G$-conjugacy class of pair $(L,\delta)$ is uniquely determined. 

The following is a standard expected property of LLC. 
\begin{property}\label{property:Levi-inclusion}
The $L$-parameter of $\pi$ is the composition of the $L$-parameter of $\delta$ with the natural inclusion $L^{\vee}\to G^{\vee}$ and $\rho_\pi=\rho_\delta$.
\end{property}

\begin{property}[DeBacker, Kaletha]\label{property:atomic-stability}
Let $\varphi$ be a discrete $L$-parameter. There exists a non-zero $\C$-linear combination  
\begin{equation}\label{stable-distribution-Lpacket}
S\Theta_{\varphi}:=\sum\limits_{\pi\in\Pi_{\varphi}}\dim(\rho_\pi)\Theta_{\pi},\quad\text{for }z_{\pi}\in\C,
\end{equation}
which is stable. In fact, one can take $z_\pi=\dim(\rho_\pi)$ where $\rho_\pi$ is the enhancement of the $L$-parameter. Moreover, no proper subset of $\Pi_{\varphi}$ has this property. 
\end{property}

\subsection{Main result}
Construction of the Local Langlands Correspondence
\begin{equation} \label{eqn:LLC}
\begin{split}
    \LLC\colon\mathrm{Irr}(G)&\xrightarrow{1\text{-}1}\Phi_\enh(G)\\
    \pi &\mapsto (\varphi_{\pi},\rho_{\pi}).
\end{split}
\end{equation}
Recall from \eqref{eqn:Bernstein decomposition} and \eqref{eqn:decPhi_e} that we have 
\[\Irr^{\mathfrak{s}}(G)=\bigsqcup\limits_{\fs\in \mathcal{B}(G)}\Irr^{\fs}(G)\;\;\text{and}\;\;
\Phi_\enh(G)=\bigsqcup\limits_{\mathfrak{s}^{\vee}\in \mathcal{B}^{\vee}(G)}\Phi_\enh^{\fs^{\vee}}(G).\]
When $\pi\in\Irr(G)$ is not supercuspidal, we have $\fs=[L,\sigma]_G$ where $L$ is a proper Levi subgroup of $G$. Hence, $L$ is isomorphic to either $F^\times\times F^\times$ or $\GL_2(F)$. Let $\varphi_\sigma\colon W'_F\to L^\vee$ be the $L$-parameter attached to $\sigma$ by the Local Langlands Correspondence for $L$ (see \cite{Bushnell-Henniart-GL2}). The $L$-packet $\Pi_{\varphi_\sigma}(L)$ is always a singleton (in particular, the enhancement $\rho_\sigma$ is trivial). The $L^\vee$-conjugacy class of $\varphi_\sigma$ is uniquely determined by $\sigma$, and we have $\varphi_{(\chi\circ\det) \otimes\sigma}=\varphi_\sigma\otimes \varphi_\chi$, i.e.~\cite[Property~3.12(1)]{aubert-xu-Hecke-algebra} holds. This allows us to define 
\begin{equation}
\fs^\vee:=[L^\vee,(\varphi_\sigma,1)]_{G^\vee}.
\end{equation}
Let $\pi\mapsto (\varphi_\pi,\rho_\pi$) be the bijection 
\begin{equation}
\Irr^{\fs}(G)\xrightarrow{1--1}\Phi_\enh^{\fs^{\vee}}(G),
\end{equation}
established in \cite[Main Theorem]{aubert-xu-Hecke-algebra} (for intermediate series) and in \cite{ABPS-KTheory} (for principal series). We have given explicit Kazhdan-Lusztig triples and $L$-packets in $\mathsection$\ref{sec:ppal-series} and $\mathsection$\ref{sec:interm_series} (see also Tables \ref{tableprin} and \ref{tableint}). 

We consider now the case where $\pi$ is supercuspidal. Hence we have $\fs=[G,\pi]_G$ for $\pi$ an irreducible supercuspidal representation of $G$. 

\begin{enumerate}
\item[(a)]
When $\pi$ is non-singular supercuspidal, we define $(\varphi_\pi,\rho_\pi)$
to be the enhanced $L$-parameter constructed in \cite{Kaletha-nonsingular}. In particular, when $\pi$ is regular (in the sense of Definition~\ref{defn:Kaletha regular}),  the enhanced $L$-parameter $(\varphi_\pi,\rho_\pi)$ coincides with the one constructed in \cite{Kal-reg}. 

When $F$ has characteristic zero and $p\ge 7(e(F|\Qp)+2)$, where $e(F|\Qp)$ is the ramification index of $F/\Qp$ (see the proof of \cite[Proposition 4.3.2]{Fintzen-Kaletha-Spice}), the $G^\vee$-conjugacy class of $(\varphi_\pi,\rho_\pi)$ is uniquely determined by $\pi$ \cite[Theorem~4.4.4]{Fintzen-Kaletha-Spice}, since we have $S_{\varphi_\pi}^+=S_{\varphi_\pi}$ as $\rG_2$ is adjoint.

\item[(b)]
When $\pi$ is a unipotent supercuspidal representation of $G$, we define $(\varphi_\pi,\rho_\pi)$ to be the enhanced $L$-parameter constructed in \cite{Lu-padicI} and \cite[\S~5.6]{Morris-ENS}. 

\item[(c)]\label{summary-section-d0-mixed-packets}
Let $\pi$ be a non-unipotent depth-zero \textit{singular} supercuspidal representation of $G$. As recalled in~\eqref{eqn:iGxG}, we have
$\pi=\ii_{\bbG_{x}}^G\tau$, where $x$ is a vertex of the Bruhat-Tits building of $G$ and $\tau\in\cE(\bbG_x,s)$ with $s\ne 1$. We have three cases:

$\bullet$ $x=x_1$: From~\S\ref{d0s}\eqref{Gx=G2}(c), when $q\equiv -1 (3)$, we have one rational Lusztig series $\cE(\bbG_{x_1},s_1)$ giving a singular depth-zero non-unipotent supercuspidal representation $\pi_1$, which by Property \ref{property:AMS-conjecture-7.8} lives in the same $L$-packet as an intermediate series representation $\pi(\sigma)$ given precisely in Table~\ref{num: M long 0}, as explained in \ref{d0p}(\ref{2b}). The $L$-packet in this case is given by
\begin{equation}
    \Pi_{\varphi}(G)=\{\pi_1,\pi(\sigma)\}.
\end{equation}
Property~\ref{property: Shahidi-fdeg} is verified in this case by comparing \eqref{fdeg-singular-nonunipotent-sc} and the quantities in Table~\ref{num: M long 0}, as explained in Paragraph~\ref{long-root-intermediate-series-mixed-packet}. 

$\bullet$ $x=x_2$: From~\S\ref{d0s}\eqref{Gx=SL3}, when $q\equiv 1 (3)$, we have two possible rational Lusztig series $\cE(\bbG_{x_2},s_2[\zeta_3])$ and $\cE(\bbG_{x_2},s_2[\zeta_3^2])$ for the primitive third root of unity $\zeta_3$, where  $s_2[\zeta]=\mathrm{diag}\left(1,\zeta,\zeta^2\right)\mod \F_q^{\times}$ for $\zeta\in\{\zeta_3,\zeta_3^2\}$. Each Lusztig series $\cE(\bbG_{x_2},s[\zeta])$ contains three cuspidal representations: $\tau_2^1[\zeta]$, $\tau_2^2[\zeta]$, $\tau_2^3[\zeta]$. Let $\zeta\in\{\zeta_3,\zeta_3^2\}$. Let $i\in\{1,2,3\}$, and let $\pi_2^i(\zeta)$ denote the representation $\ii_{\bbG_{x_2}}^G(\tau_2^i[\zeta])$. 

There are six (depth-zero) ramified cubic characters $\eta_3^{(1)\pm}$, $\eta_3^{(2)\pm}$ $\eta_3^{(3)\pm}$ of $F^\times$, corresponding to three ramified cubic extensions $E_3^1$, $E_3^2$, $E_3^3$ over $F$. For each $i\in\{1,2,3\}$, let $\sigma^i_3:=\nu_F^{\pm 1}\eta_3^i\otimes\eta_3^i$ (a character of $T\simeq F^\times\times F^\times$), and let $\varphi(\eta_3^i)\colon W_F\times\SL_2(\C)\to G^\vee$ be the $L$-parameter for $G$ defined by 
\begin{equation} \label{eqn:Lpar_2_ii}
    \varphi(\eta_3^i):=\varphi_{\sigma_3^i,u},
\end{equation} 
using the formula \eqref{defn:phisu}, where the $G^\vee$-conjugacy class of $u$ is $\rG_2(a_1)$ (the subregular unipotent class in $\rG_2(\C)$). The restriction of $\varphi(\eta_3^i)$ to $W_F$ factors through $\Gal(E_3^i/F)$.
We define 
\begin{equation}
\varphi_{\pi_2^i(\zeta_3)}=\varphi_{\pi_2^i(\zeta_3^2)}:=\varphi(\eta_3^i).
\end{equation}
We have $\cG_{\varphi}\simeq\SL_3(\C)$, the unipotent element $u$ is regular in $\cG_{\varphi}$ (see Table~\ref{tab:table-unipotent-Gsigma=SL3}), and $S_\varphi\simeq\mu_3$. From Table~\ref{table:unip-ppal-series3c-ramified}, we have $\varphi(\eta_3^i)=\varphi_{\pi(\eta_3^i)}$, where $\pi(\eta_3^i)$ is a discrete series representation in $\Irr^\fs(G)$ for $\fs=[T,\sigma^i_3]_G$.
    
Thus we obtain three $L$-packets of size $3$, for each $i=1,2,3$,
\begin{equation}
    \Pi_{\varphi_2^i}(G):=\{\pi_2^i(\zeta_3),\pi_2^i(\zeta_3^2),\pi(\eta_3^i)\}.
\end{equation}
Each $L$-packet $\Pi_{\varphi(\eta_3^i)}(G)$ contains the two depth-zero \textit{singular} supercuspidal representations $\pi_2^i(\zeta)$ for $\zeta\in\{\zeta_3,\zeta_3^2\}$, and a depth-zero discrete series $\pi(\eta_3^i)$ in the principal series such that $\mcJ^\fs=\SL_3(\C)$. Here $\pi(\eta_3^i)$ is given precisely in Table~\ref{table:unip-ppal-series3c-ramified} with $\eta_3^i$ ramified of depth-zero. Property~\ref{property: Shahidi-fdeg} is verified in this case by comparing \eqref{fdeg-pi-eta3} and \eqref{eqn:fdegx1}. 

$\bullet$ $x=x_3$: From~\S\ref{d0s}\eqref{Gx=SO4}, we have one rational Lusztig series $\cE(\bbG_{x_3},s_3[\zeta_2])$ for the primitive square root of unity $\zeta_2$, where  $s_3[\zeta_2]=\mathrm{diag}\left(1,\zeta_2,\zeta_2,1\right)\mod \F_q^{\times}$. The Lusztig series $\cE(\bbG_{x_3},s_3[\zeta_2])$ contains two cuspidal representations: $\tau_3^1[\zeta_2]$ and $\tau_3^2[\zeta_2]$. Let $i\in\{1,2\}$, and let $\pi_3^i(\zeta_2)$ denote the representation $\ii_{\bbG_{x_3}}^G(\tau_3^i[\zeta_2])$. 

There are two (depth-zero) ramified quadratic characters $\eta_2^1$, $\eta_2^2$ of $F^\times$, corresponding to two ramified cubic extensions $E^1_2$, $E^2_2$ over $F$. For each $i\in\{1,2\}$, let $\sigma^i_2:=\nu_F\otimes \eta_2^i$ (a character of the torus $T\simeq F^\times\times F^\times$), and let $\varphi(\eta_2^i)\colon W_F\times\SL_2(\C)\to G^\vee$ be the $L$-parameter for $G$ defined by 
\begin{equation} \label{eqn:Lpar_2_i}
    \varphi(\eta_2^i):=\varphi_{\sigma_2^i,u},
\end{equation} 
using the formula \eqref{defn:phisu}, where the $G^\vee$-conjugacy class of $u$ is $\rG_2(a_1)$ (the subregular unipotent class in $\rG_2(\C)$). The restriction of $\varphi(\eta_2^i)$ to $W_F$ factors through $\Gal(E_2^i/F)$.
We define 
\begin{equation}
\varphi_{\pi_2^i(\zeta_2)}:=\varphi(\eta_2^i).
\end{equation}
We have $\cG_{\varphi}\simeq\SO_4(\C)$, the unipotent element $u$ is regular in $\cG_{\varphi}$ (see Table~\ref{table:SO4}), and $S_\varphi\simeq\mu_2$. From Table~\ref{table:unip-ppal-series-case-3aquad-ramified}, we have $\varphi(\eta_2^i)=\varphi_{\pi(\eta_2^i)}$, where $\pi(\eta_2^i)$ is a discrete series representation in $\Irr^\fs(G)$ for $\fs=[T,\sigma^i_2]_G$.

Thus we obtain two $L$-packets of size $2$, for each $i=1,2$,
\begin{equation}
    \Pi_{\varphi(\eta_2^i)}(G):=\{\pi_3^i(\zeta_2),\pi(\eta_2^i)\}.
\end{equation}
Each $L$-packet $\Pi_{\varphi(\eta_2^i)}(G)$ contains one depth-zero \textit{singular} supercuspidal representations $\pi_3^i(\zeta_2)$ and a depth-zero discrete series $\pi(\eta_2^i)$ in the principal series such that $\mcJ^\fs=\SO_4(\C)$. Here $\pi(\eta_2^i)$ is given precisely in Table~\ref{table:unip-ppal-series-case-3aquad-ramified} with $\eta_2^{i}$ ramified of depth-zero. Property~\ref{property: Shahidi-fdeg} is verified in this case by comparing \eqref{fdeg-pi-eta2} and \eqref{eqn:fdegx2}.

\item[(d)] Let $\pi$ be a positive-depth singular supercuspidal representation of $G$. As in \S\ref{d+s}(3b), such a singular supercuspidal representation necessarily arises from an $L$-parameter of $\SL_2$-type as in Definition~\ref{defn:SL2-type}, and lives in a mixed $L$-packet together with another non-supercuspidal representation described as
follows. 
In this case, $\rZ_{G^{\vee}}(\varphi)=\rZ_{\SL_2(\C)}(\varphi(W_F))=\mu_2$ by Lemma~\ref{solv}. Hence the $L$-packet $\Pi_\varphi(G)$ consists of a non-supercuspidal representation and a supercuspidal representation by Lemma~\ref{cusp}. Let $P$ be a parabolic subgroup of $G$ with Levi subgroup isomorphic to $\GL_2(F)$. Here the $\GL_2(F)$ corresponds to the short (resp.~long) root if $\varphi$ is of short (resp.~long) $\SL_2$-type. By Property~\ref{property:AMS-conjecture-7.8}, the non-supercuspidal member has the following supercuspidal support: $\varphi|_{W_F}\colon W_F\rightarrow \SL_2(\C)$, which gives us a supercuspidal $\tau$ of $\PGL_2(F)$. We consider $\tau_s:=\tau\otimes(\nu_F\circ\det)^s$ as a representation of $\GL_2(F)$. Then the non-supercuspidal of $G$ in this $L$-packet is the generic constituent $\pi(\tau_{\pm \frac{1}{2}})$ in $\ii_{P}^{G}\tau_{\pm\frac{1}{2}}$ (both $s=\pm\frac{1}{2}$ work), as detailed in Tables~\ref{tab:LLC} and \ref{num: M long 0}. Our $L$-packets are given by
\begin{equation}
    \Pi_{\varphi}(G)=\{\pi^{\mathrm{sing}},\pi(\tau_{\pm\frac{1}{2}})\}
\end{equation}
\end{enumerate}
Let $G$ be the group of $F$-rational points of the exceptional group $\rG_2$. We suppose that the residual characteristic of $F$ is different from $2$ and $3$. 
\begin{theorem}\label{main-thm} 
The local Langlands correspondence defined in \eqref{eqn:LLC} satisfies Properties~{\rm \ref{property:L-packets},
\ref{property:size of L-packets}, \ref{property:Langlands quotient},  \ref{property:AMS-conjecture-7.8}, \ref{property:generic tempered L-packets}} and {\rm \ref{property:Levi-inclusion}}, and satisfies Property~{\rm\ref{property: Shahidi-fdeg}} for depth-zero $L$-packets\footnote{we certainly expect this property to hold for positive-depth $L$-packets as well.}.

Moreover, Properties~{\rm \ref{property:L-packets},
\ref{property:size of L-packets}, \ref{property:Langlands quotient},  \ref{property:AMS-conjecture-7.8}},  \ref{property:Levi-inclusion} and \ref{property:atomic-stability} uniquely determine the bijection~\eqref{eqn:LLC}. 
\end{theorem}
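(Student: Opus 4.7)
The plan is to verify each listed property by a case analysis that respects the Bernstein decomposition $\Irr(G)=\bigsqcup_{\fs\in\fB(G)}\Irr^\fs(G)$, which is matched under our LLC with $\Phi_\enh(G)=\bigsqcup_{\fs^\vee\in\fB(G^\vee)}\Phi_\enh^{\fs^\vee}(G)$ by construction. First I would treat Property~\ref{property:L-packets} together with Property~\ref{property:Langlands quotient}: for principal series the real parts of the exponents appearing in the tables of \S\ref{sec:ppal-series} record which constituents are tempered, square-integrable, or Langlands quotients, and the infinitesimal characters recorded there match the ones computed from $\varphi_{\sigma,u}$ via Remark~\ref{remark:infinitesimal parameter}; for intermediate series the same follows from the Hecke algebra translation of \cite{aubert-xu-Hecke-algebra} together with \cite[Table~2.1]{Ram}; for supercuspidals the statement is tautological. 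Property~\ref{property:Langlands quotient} is then built into the formula defining $\varphi_{\sigma,u}$ in Definition~\ref{defn:phisu}, since the cocharacter $\chi_{\varphi,v}$ of \eqref{eqn:cocharacter} is precisely the dual translation of the Langlands parameter~$\nu$.

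Next I would verify Property~\ref{property:AMS-conjecture-7.8} blockwise: for principal and intermediate series blocks it is how our parametrization was set up; for unipotent supercuspidals it is \cite[Theorem~2]{FOS}; for regular and non-singular supercuspidals it is intrinsic to the construction of \cite{Kal-reg,Kaletha-nonsingular}; and for the depth-zero non-unipotent singular supercuspidals together with the $\SL_2$-type positive-depth singular supercuspidals, it is enforced by our explicit definition of the mixed $L$-packets in items (c)--(d) of \eqref{eqn:LLC}. Property~\ref{property:Levi-inclusion} is then automatic from the construction via parabolic induction from discrete series on Levi subgroups. Property~\ref{property:size of L-packets} is a finite check: in each block one compares the size of $\Pi_\varphi(G)$ (computed in \S\ref{sec:Galois-G2}--\S\ref{sec:ppal-series}) with $|\Irr(S_\varphi)|$; the genuinely interesting instances are the mixed packets $\{\pi_1,\pi(\sigma)\}$, $\{\pi_2^i(\zeta_3),\pi_2^i(\zeta_3^2),\pi(\eta_3^i)\}$ and $\{\pi_3^i(\zeta_2),\pi(\eta_2^i)\}$, where $S_\varphi$ is $\mu_2$, $\mu_3$ and $\mu_2$ respectively. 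Property~\ref{property:generic tempered L-packets} is handled by combining Rodier's criterion (Lemma~\ref{lem:generic}), Lemma~\ref{sc-packet-generic-lemma}, and the explicit identification of the unique generic constituent in each bounded block (marked $\mathrm{g.}$ in Tables~\ref{tableprin}--\ref{tableint}), which our assignment sends to the trivial character of $S_\varphi$. For Property~\ref{property: Shahidi-fdeg} in depth zero, the only non-trivial comparisons are the mixed packets, and they have already been recorded: \eqref{fdeg-singular-nonunipotent-sc} versus Table~\ref{num: M long 0}, \eqref{fdeg-pi-eta3} versus \eqref{eqn:fdegx1}, and \eqref{fdeg-pi-eta2} versus \eqref{eqn:fdegx2}; for pure unipotent, regular, and non-singular $L$-packets, the formal degree conjecture is respectively \cite[Theorem~3]{FOS}, \cite[Theorem~B]{Schwein}, and \cite[Theorem~9.2]{Ohara-fdegr}, each of which gives the claim.

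For uniqueness, the strategy is a reduction argument. Given any other bijection satisfying the same list, Property~\ref{property:AMS-conjecture-7.8} forces it to preserve the partition $\Irr(G)=\bigsqcup_\fs\Irr^\fs(G)$, so it suffices to pin it down blockwise. Inside each Bernstein block, Properties~\ref{property:L-packets}, \ref{property:Langlands quotient} and \ref{property:Levi-inclusion} together reduce the determination of non-discrete representations to the case of discrete $L$-parameters: the Langlands classification realizes any non-tempered irreducible as a unique quotient $J(P,\pi,\nu)$ with $\pi$ tempered on a proper Levi ($\GL_2(F)$ or a torus, both handled by Bushnell--Henniart and LLC for tori), and tempered non-discrete representations are covered by Property~\ref{property:Levi-inclusion}. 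Property~\ref{property:size of L-packets} then fixes the cardinality of each discrete $L$-packet. The main obstacle, and the point at which the argument is genuinely subtle, is rigidifying the assignment of enhancements $\rho_\pi$ within a single discrete $L$-packet --- in particular within the mixed packets of items~(c)--(d) above, which mix supercuspidal with non-supercuspidal members. This is precisely what Property~\ref{property:atomic-stability} achieves: atomicity forces the stable distribution's support to be the entire $L$-packet, and the prescription $z_\pi=\dim\rho_\pi$ rigidifies the matching once the regular/non-singular case of \cite{Fintzen-Kaletha-Spice} is known. The concrete verification that atomic stability pins down the singular/mixed cases is carried out in \cite{G2-stability} by computing leading terms of the local character expansions of the Harish-Chandra characters of the packet members, and this completes the uniqueness.
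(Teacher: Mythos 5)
Your proposal is correct and follows essentially the same route as the paper's own (much terser) proof: reduce non-tempered and tempered non-discrete representations to proper Levi subgroups via Properties~\ref{property:Langlands quotient} and \ref{property:Levi-inclusion}, verify the remaining properties blockwise against the tables and the cited results (Lemma~\ref{sc-packet-generic-lemma} for genericity, the formal-degree comparisons for the mixed packets), and invoke the stability results of \cite{G2-stability} to pin down the discrete and mixed $L$-packets. Your write-up simply makes explicit several steps the paper leaves implicit (e.g.\ that Property~\ref{property:AMS-conjecture-7.8} forces preservation of the Bernstein partition, and that the non-discrete tempered case rests on Property~\ref{property:Levi-inclusion}), which is a faithful expansion rather than a different argument.
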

\begin{proof}
By Property~\ref{property:Langlands quotient}, the $L$-parameter $\varphi_\pi$ of each irreducible non-tempered representation $\pi$ of $G$ is uniquely determined by \eqref{eqn:PhiJ}. Since the $L$-packets of the representations of the proper Levi subgroups of $G$ are all singletons, the $L$-packet $\Pi_{\varphi_\pi}(G)$ is a singleton (see also Theorem~\ref{constituents-part-one}). Hence, by Property~\ref{property:size of L-packets}, we have $\rho_\pi=1$. Thus the map 
\eqref{eqn:LLC} is uniquely characterized for non-tempered representations. 
This finishes the case of non-discrete series tempered representations. 

Property~\ref{property:generic tempered L-packets} holds for supercuspidal $L$-packets by Lemma~\ref{sc-packet-generic-lemma}. For the mixed $L$-packets, this can be seen directly from paragraph \eqref{summary-section-d0-mixed-packets} and the tables \textit{loc.cit.}, where we specify which member in a given $L$-packet is generic. 
By \cite{G2-stability}, the $L$-packets are uniquely pinned down by stability. 
Since we have already treated the discrete series in \eqref{summary-section-d0-mixed-packets}, we are done. 
\end{proof}

\subsection{Summary of $L$-packets}
\subsubsection{Unipotent \texorpdfstring{$L$}{L}-packets} \label{subsec:udL-packets}
For unipotent irreducible representations, a Local Langlands Correspondence was constructed by Lusztig in \cite{Lu-padicI} for any simple $p$-adic reductive group $\bG$ of adjoint type. In the special case where $G=\rG_2(F)$, when the representation is also supercuspidal, it follows from \cite{Morris-ENS}.
Moreover, a set of enhanced $L$-parameters 
\begin{equation}
\{(\varphi[\zeta],\rho[\zeta])\,:\, \zeta\in \{1,-1,\zeta_3,\zeta_3^2\}\}
\end{equation}
 was described by Morris in \cite[\S~5.6]{Morris-ENS}. Writing $s'[\zeta]:=s_{\varphi[\zeta]}$ and $u[\zeta]:=u_{\varphi[\zeta]}$, we have 
\begin{enumerate}
\item[(1)]\label{unipotent-case1} $\rZ_{G^\vee}(s'[1])\simeq \rG_2(\C)$, $u[1]\in \rG_2(a_1)$, $S_{\varphi[1]}\simeq S_3$, and $\rho[1]$ is the sign representation of $S_3$ (i.e., it sends every element to its sign: for example, $(12)\mapsto -1$ and $(132)\mapsto 1$);
\item[(2)]\label{unipotent-case2} $\rZ_{G^\vee}(s'[-1])\simeq \SO_4(\C)$, $u[-1]\in \rG_2(a_1)$, $S_{\varphi[-1]}\simeq \ZZ/2\ZZ$, and $\rho[-1]$ is the non-trivial representation of $\ZZ/2\ZZ$;
\item[(3)]\label{unipotent-case3} $\rZ_{G^\vee}(s'[\zeta])\simeq \SL_3(\C)$, $u[\zeta]\in \rG_2(a_1)$, $S_{\varphi[\zeta]}\simeq \ZZ/3\ZZ$, and $\rho[\zeta]$ is a non-trivial representation of $\ZZ/3\ZZ$ for $\zeta\in\{\zeta_3,\zeta_3^2\}$.
\end{enumerate}
As remarked in \cite[\S5.6]{Morris-ENS}, the unipotent class is always the subregular ones $\rG_2(a_1)$. However, since $s'[1]\neq s'[-1]$,  $s'[1]\ne s'[\zeta_3]$, and $s'[-1]\ne s'[\zeta_3]$, cases (1), (2) and (3) 
give different $L$-packets. We describe unipotent packets in these sections:
\begin{itemize}
    \item $\mathsection$\ref{d0s}(\ref{Gx=G2})(\ref{d0s-Gx=G2=unipotent}),
    \item $\mathsection$\ref{d0p}(\ref{d0-unipotent}).
\end{itemize} 
The unipotent discrete series of $\rG_2(F)$ belong to the following $L$-packets (see also \cite[p.~482]{Reeder-Iwahori-spherical-discrete-series} and \cite[Table~2.7.1]{CFZ}):
\begin{enumerate}
\item[(1)] $\Pi_{\varphi_1}(G)=\{\St_{\rG_2}\}$,
\item[(2)] $\Pi_{\varphi_2}(G)=\{\pi(1)',\pi(1),\pi[1]\}$,
\item[(3)] $\Pi_{\varphi_3}(G)=\{\pi(\eta_2),\pi[-1]\}$, with $\eta_2$ unramified,
\item[(4)] $\Pi_{\varphi_4}(G)=\{\pi(\eta_3),\pi[\zeta_3],\pi[\zeta_3^2]\}$, with $\eta_3$ unramified,
\end{enumerate}
where $\St_{\rG_2}$ is the Steinberg representation of $G$, and the representations $\St_{\rG_2}$, $\pi(1)'$, $\pi_(1)$, $\pi(\eta_2)$ and $\pi(\eta_3)$ are in the principal series of $G$ (these representations are Iwahori-spherical, i.e.~they have non-zero Iwahori invariant vectors).~Their enhanced $L$-parameters are also computed in \cite{Reeder-Iwahori-spherical-discrete-series}, using the Kazhdan-Lusztig parametrization established in \cite{Kazhdan-Lusztig}. 
Note that $\{\mathrm{St}_{G_2}\}$ from Table~\ref{table:unip-ppal-series-case-3bbis} is a singleton $L$-packet, and coincides precisely with the $L$-packet labeled as $\widetilde{\mathcal{E}}_2(\tau_1)$ in \cite[p.480]{Reeder-Iwahori-spherical-discrete-series}. 
The enhanced $L$-parameters for the representations $\St_{\rG_2}$, $\pi(1)'$, $\pi_(1)$, $\pi(\eta_2)$ and $\pi(\eta_3)$ belong to the same series $\Phi_\enh^{\fs^\vee}(G)$, where 
$\fs^\vee=[T^\vee,(\varphi_0,1)]_{G^\vee}$. Here $\varphi_0\colon W_F/I_F\to T^\vee$, and $1$ is the trivial representation of $S_{\varphi_0}=\{1\}$. 

\subsubsection{Explicit Supercuspidal $L$-packets}\
\begin{enumerate}
    \item \textit{Depth-zero supercuspidal $L$-packets}
\begin{itemize}
    \item \ref{d0p}(\ref{regular-sc-depthzero}): regular supercuspidal $L$-packets of sizes 4,3,1 
    \item \ref{d0s}(\ref{Gx=G2})(\ref{regular-d0s}): regular supercuspidal $L$-packets
    \item \ref{d0s}(\ref{Gx=SL3})(\ref{Gx=SL3-regular}): regular supercuspidal $L$-packets
    \item \ref{d0s}(\ref{Gx=SO4})(\ref{Gx=SO4-regular}): regular supercuspidal $L$-packets
\end{itemize}

\item \textit{Positive-depth supercuspidal $L$-packets}
\begin{itemize}
    \item \ref{d+p}(\ref{posdep-GL1}),  \ref{d+p}(\ref{posdep-2b}), \ref{d+p}(\ref{posdep-2c}): regular supercuspidal $L$-packets of size $1,2,3,4$.
    \item \ref{d+s}(\ref{posdep-most-regular}): regular supercuspidal $L$-packets
    \item \ref{d+s}(\ref{posdep-PGL2-reg}): regular supercuspidal $L$-packets
    \item \ref{d+p}(\ref{ns}) $\longleftrightarrow$ \ref{d+s}(\ref{posdep-PGL2-nonsing}): non-singular (non-regular) positive-depth supercuspidal $L$-packet of size 4. 
    \item \ref{d+s}(\ref{inner-form-PGL2-reg}): regular supercuspidal $L$-packets
\end{itemize}
\end{enumerate}

\subsubsection{Non-unipotent non-supercuspidal depth-zero packets}
We only list the non-singleton packets here. For a complete description of the singleton packets, see $\mathsection$\ref{sec:Galois-G2}. 
\begin{itemize}
\item \ref{d0s}(\ref{u3}) $\longleftrightarrow$ \ref{d0p}(\ref{2b}): In this case, we obtain one $L$-packet, consisting of one singular supercuspidal representation coming from reductive quotient $\bbG_{x_0}\simeq G_2(\Fq)$ and $\Cent_{\bbbG_{x_0}^{\vee}}(s)\simeq \mathrm{SU}_3(\Fq)$, and an intermediate series representation $\pi(\sigma)$ whose cuspidal support lives in $\mathrm{GL}_2^{\mathrm{l.r.}}$ and corresponds to a simple module of a Hecke algebra $\mathcal{H}^{\mathfrak{s}}(G)$ with unequal parameters $\{q^3,q\}$. In this case the formal degree for the $L$-packet should be $\frac{q^2}{q^3+1}$. This case only occurs when $q\equiv -1\mod 3$. 
\item \ref{d0s}(\ref{chals}) $\longleftrightarrow$ \ref{d0p}(\ref{chal}): In this case, we obtain three $L$-packets, each $L$-packet consisting of two singular supercuspidal representations coming from reductive quotient $\bbG_{x_1}\simeq \SL_3(\Fq)$ and $\rZ_{\bbbG_{x_1}^{\vee}}(s)=\bbbT^{\vee}\rtimes\mu_3$, and a generic principal series representation $\pi(\eta_3)$ from Table \ref{table:unip-ppal-series3c-ramified}, where $\eta_3$ is a ramified cubic character. The unipotent class attached to these three $L$-packets is the subregular unipotent class $\rG_2(a_1)$, which comes from the regular unipotent of $\SL_3(\C)$. 
\item \ref{d0s}(\ref{toys}) $\longleftrightarrow$ \ref{d0p}(\ref{toy}): In this case, we obtain two $L$-packets, each $L$-packet consisting of one singular supercuspidal representation coming from reductive quotient $\bbG_{x_2}\simeq \SO_4(\Fq)$ and $\Cent_{\bbbG_{x_2}^{\vee}}(s)\simeq \rS(\rO_2\times \rO_2)(\Fq)$ (here we take the non-split form of $\rO_2$), and a generic principal series representation $\pi(\eta_2)$ from Table \ref{table:unip-ppal-series-case-3aquad-ramified}, where $\eta_2$ is a ramified quadratic character.  
\end{itemize}

\subsubsection{Non-unipotent non-supercuspidal (i.e.~singular) positive-depth packets}
\begin{itemize}
    \item \ref{d+p}(\ref{posdep-2a}) $\longleftrightarrow$  \ref{d+s}(\ref{inner-form-PGL2-singular}): size two $L$-packet as described in Paragraph \ref{paragraph-after-SL2-type}, mixing an intermediate series representation with a singular supercuspidal. 
\end{itemize}

\appendix

\bibliographystyle{amsalpha}
\bibliography{bibfile}

\end{document}